\theoremstyle{plain}
\newtheorem{thm}{Theorem}[section]
\newtheorem{la}[thm]{Lemma}
\newtheorem{prop}[thm]{Proposition}
\newtheorem{cor}[thm]{Corollary}
\newtheorem*{thm*}{Theorem}
\newtheorem*{la*}{Lemma}
\newtheorem*{prop*}{Proposition}
\newtheorem*{cor*}{Corollary}
\theoremstyle{definition}
\newtheorem{mydef}[thm]{Definition}
\newtheorem{ex}[thm]{Example}
\newtheorem{rk}[thm]{Remark}
\newtheorem*{rk*}{Remark}
\newtheorem*{exer*}{Exercise}
\theoremstyle{remark}
\newcommand{\C}{\mathbb{C}}
\newcommand{\Z}{\mathbb{Z}}
\newcommand{\N}{\mathbb{N}}
\newcommand{\Q}{\mathbb{Q}}
\newcommand{\mono}{\hookrightarrow}
\newcommand{\ot}{\leftarrow}
\DeclareMathOperator{\id}{id}
\DeclareMathOperator{\spec}{Spec}
\newcommand{\Spec}{\spec}
\DeclareMathOperator{\qcoh}{QCoh}
\DeclareMathOperator{\supp}{supp}
\DeclareMathOperator{\Hom}{Hom}
\DeclareMathOperator{\Sym}{Sym}
\DeclareMathOperator{\End}{End}
\DeclareMathOperator{\Ext}{Ext}
\DeclareMathOperator{\Rep}{Rep}
\def\SheafHom{\mathop{\mathcal{H}\!\mathit{om}}\nolimits}
\newcommand{\GL}{\mathrm{GL}}
\newcommand{\PGL}{\mathrm{PGL}}
\newcommand{\Gr}{\mathrm{Gr}}
\newcommand{\twoByTwoMatrix}[4]{\left(\begin{matrix}#1 & #2 \\ #3 & #4 \end{matrix}\right)}
\DeclareMathOperator{\Proj}{Proj}
\newcommand{\shom}{\SheafHom}
\newcommand{\dnull}{{\delta_0}}
\newcommand{\cupprod}{\smile}
\newcommand{\tsst}{{\theta\text{-}\mathrm{sst}}}
\newcommand{\pt}{\{\mathrm{pt}\}}
\DeclareMathOperator{\Coha}{CoHA}
\DeclareMathOperator{\Chowha}{ChowHA}
\DeclareMathOperator{\IC}{IC}
\DeclareMathOperator{\reldim}{reldim}
\DeclareMathOperator{\codim}{codim}
\DeclareMathOperator{\rank}{rk}
\DeclareMathOperator{\Coh}{Coh}
\DeclareMathOperator{\Tor}{Tor}
\DeclareMathOperator{\Sch}{Sch}
\DeclareMathOperator{\Vect}{Vect}
\DeclareMathOperator{\MHS}{MHS}
\DeclareMathOperator{\MHM}{MHM}
\DeclareMathOperator{\Reg}{Reg}
\DeclareMathOperator{\StProj}{StProj}
\newcommand{\Pbb}{\mathbb{P}}
\newcommand{\sst}{\mathrm{sst}}
\newcommand{\simp}{\mathrm{simp}}
\newcommand{\Mf}{\mathfrak{M}}
\newcommand{\dimvectd}{{\mathbf{d}}}
\newcommand{\dimvecte}{{\mathbf{e}}}
\newcommand{\mycomment}[1]{}
\newcommand{\gitquotient}{/\!\!/}
\newcommand{\Pp}{\mathscr{P}}
\numberwithin{equation}{section}
\title{CoHAs of Torsion Sheaves on Weighted Projective Curves}
\author{Timm Peerenboom}
\begin{document}
	
		\maketitle

\begin{abstract}
	\noindent \textbf{Abstract.} We describe the cohomological Hall algebra of torsion sheaves on a weighted projective line with weights~$(2, \dots, 2)$ in terms of generators and relations.
\end{abstract}

\section*{Introduction} \addcontentsline{toc}{section}{Introduction}

The cohomological Hall algebra (CoHA) was introduced by Kontsevich and Soibelman in \cite{KSCoHADef} as a ``categorification'' of Donaldson--Thomas invariants for quivers with stability and potential.
Already the case of zero potential proved to be difficult to compute.
For a symmetric quiver with zero potential and trivial stability, it has been shown by Efimov in \cite{EfimovTheorem} that the CoHA is a free symmetric algebra on a (super) vector space and thus encodes the DT-invariants.
Now, one can add a stability condition and obtain a local version of the full CoHA.
If the Euler form restricted to dimension vectors of a fixed slope is symmetric -- a generalization of the quiver being symmetric to the semistable case -- the Poincaré series of these algebras is still the same as the Poincaré series of a free symmetric algebra, however it has been shown in \cite[Section~10.2]{FRChowHa} that for the Kronecker quiver~$K_2 =\begin{tikzcd}
	\bullet \rar[bend left] \rar[bend right] & \bullet
\end{tikzcd}$ and the ``standard'' stability condition (which guarantees that semistable representations are regular representations) the CoHA is not isomorphic to a free symmetric algebra.
With the same methods, Franzen and Reineke in \cite{FRKronecker} gave a description of the semistable CoHA of~$K_2$ explicitly in terms of generators and relations.

\mycomment{Using the methods of \cite{FRChowHa}, I calculated their CoHAs in terms of generators and relations and found algebras that I call~$\Pp_0$,~$\Pp_1$,~$\Pp_2$, and~$\Pp_3$, respectively, see \Cref{section4} for their definition in terms of generators and relations.
	It turns out that these algebras naturally extend to an infinite sequence of algebras~$\Pp_n$.
	The question this paper aims to answer is to find an interpretation of these algebras in terms of CoHAs.}

A classical theorem of Beilinson \cite{BeilinsonDbPn} states that we have an equivalence of triangulated categories $D^b(\Pbb^1) \simeq D^b(K_2)$; this is induced by a tilting object in $D^b(\Pbb^1)$ whose endomorphism ring is the path algebra of $K_2$.
Restricting this equivalence to semistable/regular representations of $K_2$, we obtain an equivalence of abelian categories between~$\Reg(K_2)$ and~$\Tor(\Pbb^1)$, the category of torsion sheaves on~$\Pbb^1$. 
This equivalence of categories induces an isomorphism between the associated CoHAs; here the CoHA of the stack of torsion sheaves of~$\Pbb^1$ has been computed by Schiffmann and Vasserort in \cite{SchiffmannCohaOfCurves} independently of \cite{FRKronecker}.

There is a generalization of this equivalence to coherent sheaves of weighted projective lines~$\Pbb^1(\boldsymbol{\lambda}; \mathbf{w})$ for~$\boldsymbol{\lambda} = (\lambda_1, \dots, \lambda_n) \in \Pbb^1(\C)^n$ and weights~$\mathbf{w} = (w_1,\dots, w_n)\in \Z_{\geq 2}^n$, where we recover Beilinson's theorem by taking~$n=0$.
Weighted projective lines were introduced by Geigle and Lenzing in \cite{GeigleLenzingWPCarisinginRepTh} and also possess so-called ``canonical'' tilting bundles.
We therefore obtain a derived equivalence~$D^b(\Pbb^1(\boldsymbol{\lambda} ; \mathbf{w})) \simeq D^b(C(\boldsymbol{\lambda} ; \mathbf{w}))$.
In general, the canonical algebra has global dimension $2$, however there is a hereditary abelian subcategory
of regular representations that corresponds to the category of torsion sheaves on the weighted projective line. 
We even obtain an isomorphism of algebras between the CoHA of torsion sheaves on~$\Pbb^1(\boldsymbol{\lambda}; \mathbf{w})$ and the CoHA of regular representations of~$C(\boldsymbol{\lambda}; \mathbf{w})$.


If all the weights are equal to~$2$, then the Euler form of this category is symmetric and we can study the CoHA as a~$\Z \times \N_0^{Q_0}$-graded algebra.
The main result of this paper is an isomorphism
\[
\Coha(\Tor(\Pbb^1(\lambda_1, \dots, \lambda_n ; 2, \dots, 2))) \cong \Pp_n ,
\]
where~$\Pp_n$ is an algebra defined in terms of generators and relations.


\subsection*{Structure of this Paper}\addcontentsline{toc}{subsection}{Structure of this Paper}
In \Cref{section1}, we recall the notions from \cite{meinhardt2015donaldsonthomasinvariantsvsintersection} to associate Donaldson--Thomas invariants to hereditary abelian categories and define an associated CoHA in order to make precise how equivalent categories induce isomorphic CoHAs.
We also introduce the ChowHA which is defined just like the CoHA, but uses Fulton's intersection theory \cite{FultonIntersectionTheory} instead of singular cohomology and allows us to use the localization exact sequence to obtain generators of Chow groups of open subsets of affine spaces, see \Cref{Chowha primitive part} and \Cref{Coha generating degrees}.
In \Cref{section2}, we give a quick introduction to weighted projective lines and canonical algebras and prove a version of the equivalence~$\Tor(\Pbb^1(\boldsymbol{\lambda}; \mathbf{w})) = \Reg(C(\boldsymbol{\lambda}; \mathbf{w}))$ which yields isomorphic stacks of objects.
In \Cref{section3}, we provide a cell decomposition of these stacks (in the case~$\mathbf{w} = (2,\dots, 2)$), which shows that the cohomology thereof is given by their Chow rings, i.e., the cycle map from the ChowHA to the CoHA of these categories 
is an isomorphism of algebras.

There are natural functors between the categories of regular representations for different weight data that induce morphisms of quotient stacks and thus of the cohomology thereof.
This will induce a map of graded vector spaces between these CoHAs.
We show in \Cref{section4}, that in our setting this map will be an algebra homomorphism.

In \Cref{section5}, we define our new class of algebras~$\Pp_n$ for~$n \in \N_0$ in terms of generators and relations and in \Cref{section6} we consider a set of elements of~$\Coha(\Tor(\Pbb^1(2^n)))$ for~$n=0,\, 1, \, 2$ and show that these satisfy the relations of the algebras considered in \Cref{section5}.
Finally, we show in \Cref{section7} that these relations lift to all~$n$ and that the induced map
\[
\Pp_n \to \Coha(\Tor(\Pbb^1(\boldsymbol{\lambda}; 2^n)))
\]
is an isomorphism.

\subsection*{Notation} \addcontentsline{toc}{subsection}{Notation}

A quiver~$Q$ is a finite oriented graph and we denote the associated set of vertices by~$Q_0$.
We work with categories of representations of finite dimension over the complex numbers~$\C$.
We write~$\dimvectd = (d_i)_{i\in Q_0} \in \N_0^{Q_0}$ for a dimension vector of~$Q$.
All schemes and stacks are (locally) of finite type over~$\C$.
A variety is a separated reduced scheme of finite type. 
We write~$\pt = \Spec \C$.
Cohomology is taken with rational coefficients.
Cohomology of a quotient stack~$[X/G]$ is defined as the~$G$-equivariant cohomology of~$X$.

\section*{Acknowledgments} \addcontentsline{toc}{section}{Acknowledgments}

I want to thank my advisor Markus Reineke for introducing me to the wonderful world of quivers and CoHAs, for the many discussions on mathematics, and for his great support throughout my time in Bochum.
I also want to thank Olivier Schiffmann for hosting me for a week in Paris, where he introduced me to weighted projective lines and canonical algebras.
It was his idea that the algebras~$\Pp_n$ could be related to weighted projective lines in the first place.
This work was supported by the DFG CRC-TRR 191 “Symplectic structures in geometry, algebra and dynamics” (281071066).

\section{CoHAs and ChowHAs for Categories of Homological Dimension One}\label{section1}
In this section we recall the definition of the cohomological Hall algebra for hereditary abelian categories.
We do this by using the formalism of Donaldson--Thomas invariants for such categories developed in \cite{meinhardt2015donaldsonthomasinvariantsvsintersection}.

\subsection{Moduli of Objects in Abelian Categories and Donaldson--Thomas Invariants}\label{Subsection Axiomatics}
We give a brief summary of \cite[Section~3]{meinhardt2015donaldsonthomasinvariantsvsintersection}.
We want to start out with a~$\C$-linear abelian category~$\mathcal{A}_\C$ of global dimension~$1$ -- such as representations of a quiver or coherent sheaves on a smooth projective curve -- and take its moduli of objects.
However, there might not be an intrinsic way to do this if one is just given the abelian category itself. 
%
%
What we need to do is to consider~$S$-valued points of the stack of objects of an abelian category~$\mathcal{A}_\C$ instead of just its groupoid of objects.
We briefly recall the axiomatics from \cite{meinhardt2015donaldsonthomasinvariantsvsintersection} which guarantees such a framework.
\begin{itemize}
	\item[(1)] Existence of a moduli theory, i.e., an extension of~$\mathcal{A}_\C$ to a functor
	\[
	\mathcal{A}\colon \Sch_\C \to \{\text{exact categories}\},
	\]
	\item[(2)] Existence of a good moduli stack~$\Mf = \coprod_{\dimvectd\in \N_0^{\oplus I}} \Mf_\dimvectd$ of objects in~$\mathcal{A}$, where the component~$\Mf_\dimvectd$ is a global quotient stack $ \Mf_\dimvectd = [R_\dimvectd / G_\dimvectd]$ for some variety~$R_\dimvectd$ and~$G_\dimvectd = \prod_{i \in I} \GL_{d_i}(\C)$,
	\item[(3)] Existence of a faithful fiber functor~$\omega \colon \mathcal{A} \to \Vect^I$ for some finite indexing set~$I$,
	\item[(4)] Existence of good GIT-quotients~$\mathcal{M}$, i.e., a GIT-construction of course moduli spaces~$\mathcal{M}_\dimvectd$ of~$\Mf_\dimvectd$,
	\item[(5)] Representability and properness of the universal Grassmannian, i.e., a good moduli stack~$\mathfrak{Exact} = \coprod_{\dimvectd,\dimvecte} [R_{\dimvectd,\dimvecte} / G_{\dimvectd,\dimvecte}]$ of extensions in~$\mathcal{A}$, where~$R_{\dimvectd,\dimvecte}$ is a variety and~$G_{\dimvectd,\dimvecte}\subseteq G_{\dimvectd+\dimvecte}$ is the obvious parabolic, such that the morphism~$\pi_2\colon \mathfrak{Exact}\to \Mf$ mapping a short exact sequence to its middle term is proper,
	\item[(6)] Existence of a good deformation theory (which we make no reference to in this paper),
	\item[(7)] The numbers~$\langle E, F \rangle \coloneqq \dim_\C \Hom_{\mathcal{A}_\C} (E,F) - \dim_\C\Ext^1_{\mathcal{A}_\C}(E,F)$ are constant on the product~$\Mf_\dimvectd \times \Mf_\dimvecte$ and the variety~$R_\dimvectd$ is smooth,
	\item[(8)] The pairing~$\langle - , - \rangle$ is symmetric.
\end{itemize}

We also write~$\langle \dimvectd , \dimvecte \rangle$ for~$\langle E , F \rangle$ if~$E \in \Mf_\dimvectd$ and~$F \in \Mf_\dimvecte$.
Note that~$\dim \Mf_\dimvectd = - \langle \dimvectd , \dimvectd \rangle$ follows from these axioms by \cite[Corollary~3.24 and Proposition~3.25]{meinhardt2015donaldsonthomasinvariantsvsintersection}.

Note that we do not assume the varieties~$R_\dimvectd$ to be connected.

\begin{ex}\label{Ex:QuiverNOstabilityAssumptions}
	For a quiver~$Q$ (without relations) the space~$R_\dimvectd$ is equal to the space of based representations of~$Q$ with dimension vector~$\dimvectd$, i.e.,~$R_\dimvectd=R_\dimvectd(Q) \coloneqq \prod_{i \xrightarrow{\alpha} j } \mathbb{A}^{d_j\times d_i}$, with~$G_\dimvectd= \prod_{i \in Q_0}\GL_{d_i}(\C)$ acting via base change.
	The space~$R_{\dimvectd,\dimvecte}$ is the closed subvariety of~$R_{\dimvectd + \dimvecte}$ of upper block triangular matrices and~$G_{\dimvectd , \dimvecte}$ is the corresponding parabolic in~$G_{\dimvectd + \dimvecte}$.
	The category~$\Rep(Q)$ always satisfies assumptions~(1)--(7), while assumption~(8) is equivalent to the quiver being symmetric.
\end{ex}

\begin{ex}\label{Ex:L1Space}
	For the~$1$-loop quiver we have that~$\N_0^{Q_0} = \N_0$ and so
	\begin{align*}
		R_d & = \C^{d\times d}, \quad 		G_d = \GL_d(\C),  \\
		R_{d,e} & = \left\{ \twoByTwoMatrix{A}{B}{0}{C} \in \C^{(d+e) \times (d+e)} \ \middle| \ A \in \C^{d\times d}, \, B\in\C^{d \times e}, \, C \in \C^{e\times e} \right\}, \, \text{and} \\
		G_{d,e} & = \left\{ \twoByTwoMatrix{A}{B}{0}{C} \in \GL_{d+e}(\C) \ \middle| \ A \in \GL_d(\C), \, B\in\C^{d \times e}, \, C \in \GL_e(\C) \right\}.		
	\end{align*}
\end{ex}

\begin{ex}
	For a quiver~$Q$ with relations~$I$ the space~$R_\dimvectd$ is the closed subscheme of~$R_\dimvectd(Q)$ given by the relations.
	The category~$\Rep(Q; I)$ of representations satisfying the relations~$I$ always satisfies assumptions~(1)--(6) (except that~$R_\dimvectd$ could be non-reduced).
\end{ex}

\begin{ex}
	Given a quiver~$Q$ together with a stability function~$\theta\colon \Z^{Q_0} \to \Z$ and slope~$\mu_0 \in \Q$, we consider the hereditary category~$\Rep^{\tsst, \mu_0}(Q)$ of $\theta$-semistable representations with slope $\mu_0$, see \cite{ReinekeModuliOfRepresentationsOfQuivers}.
	Then the stack~$\Mf_\dimvectd$ satisfies~$\Mf_\dimvectd = [ R_\dimvectd / G_\dimvectd ]$ where the variety~$R_\dimvectd = R_\dimvectd^\tsst(Q) \subseteq R_\dimvectd(Q)$ is the open subscheme of semistable representations.
	The category~$\Rep^{\tsst, \mu_0}(Q)$ satisfies assumptions~(1)--(7), while assumption~(8) is equivalent to the form~$\langle - , - \rangle$ being symmetric on dimension vectors of~$\theta$-slope~$\mu_0$. 
\end{ex}

\begin{rk}
	It follows from assumptions~(1)--(3) that~$\Mf_0 = \pt$.
\end{rk}

These assumptions allow us to define the Donaldson--Thomas invariants of~$\mathcal{A}$ as an element of the Grothendieck group of (polarizable) mixed Hodge structures (see \cite{PSMHS} for generalities on mixed Hodge structures).
We write~$\MHM(X)$ for the category of mixed Hodge modules on~$X$,~$\MHS = \MHM(\pt)$ for the category of polarizable mixed Hodge structures, and~$\mathbb{L} = [H^\bullet_c(\mathbb{A}^1)] \in K_0(\MHS)$ for the class of the dimension~$1$, weight~$2$ pure Hodge structure. 

To define DT-invariants however, we need~$\lambda$-ring structures on these Grothendieck rings.


\begin{rk}
	For a commutative monoid~$(\mathcal{M}, \oplus)$ in~$\Sch_\C$ such that~$\oplus$ is a finite morphism, we have that~$K_0(\MHM(\mathcal{M}))$ has the structure of a (special)~$\lambda$-ring by \cite[Proposition~4.3]{meinhardt2015donaldsonthomasinvariantsvsintersection}.
	If an element~$M$ of the completion
	\[
	\hat{K}_0(\MHM(\mathcal{M})) = \prod_{m \in \pi_0(\mathcal{M})} K_0(\MHM(\mathcal{M}_m))
	\]
	has support away from the~$0$-component~$\mathcal{M}_0$, the total symmetric power
	\[
	\Sym(M) \coloneqq \bigoplus_{n \geq 0} \Sym^n(M)
	\]
	is well-defined.	
	We will consider the monoid~$\N_0^I$ associated to the abelian category~$\mathcal{A}$.
	We have
	\[
	\hat{K}_0(\MHM(\N_0^I)) = \prod_{i \in \N_0^I} K_0(\MHM(\pt)) = K_0(\MHS) [\![t_i : i \in I]\!]
	\]
	and we consider the extended rings
	\begin{gather*}
		K_0(\MHS)[\mathbb{L}^{-1/2}], \, K_0(\MHS)[\mathbb{L}^{-1/2} , (\mathbb{L}^N - 1 )^{-1}: N \in \Z_{\geq 1}] , \, \text{and} \\
		\hat{K}_0(\MHM(\N_0^I)) [\mathbb{L}^{-1/2} , (\mathbb{L}^N - 1 )^{-1}: N \in \Z_{\geq 1}],
	\end{gather*}
	where
	\[
	\hat{K}_0(\MHM(\N_0^I))[\mathbb{L}^{-1/2} , (\mathbb{L}^N - 1 )^{-1}: N \in \Z_{\geq 1}] =  K_0(\MHS)[\mathbb{L}^{-1/2} , (\mathbb{L}^N - 1 )^{-1}] [\![t_i : i \in I]\!]
	\]
	is a completion of~${K}_0(\MHM(\N_0^I))[\mathbb{L}^{-1/2} , (\mathbb{L}^N - 1 )^{-1}: N \in \Z_{\geq 1}]$.
	All these rings inherit a~$\lambda$-ring structure from the~$\lambda$-ring structure on~$K_0(\MHM(\N_0^{I})$.
	For details see \cite[Section~4]{meinhardt2015donaldsonthomasinvariantsvsintersection}, \cite[Section~3.3]{mozgovoy2024intersectioncohomologymodulispaces}, or \cite{LaurentiuSaitoSchuermannSymmProdOfMHM,LaurentiuSchuermannTwistedGeneraOfSymmProds}.
\end{rk}

\begin{mydef}[{\cite[Definition~5.2]{meinhardt2015donaldsonthomasinvariantsvsintersection}}] \label{Def:DT}
	Given a category~$\mathcal{A}$ satisfying the assumptions~(1)--(8) and a dimension vector~$\dimvectd \in \N_0^I$, we define the Donaldson--Thomas invariant~$\mathrm{DT}_\dimvectd \in K_0(\MHS)[\mathbb{L}^{-1/2}]$ by the equation
	\[
	\omega_! \mathcal{IC}_\Mf(\Q) = \Sym\left(\frac{1}{ \mathbb{L}^{1/2} - \mathbb{L}^{-1/2} } \sum_{\dimvectd\in \N_0^I} \mathrm{DT}_\dimvectd t^\dimvectd \right)
	\]
	in the~$\lambda$-ring~$K_0(\MHS)[\mathbb{L}^{-1/2} , (\mathbb{L}^N - 1 )^{-1}] [\![t_i : i \in I]\!]$, where~$\omega \colon \Mf \to \coprod_{\N_0^I} \pt$ maps the component~$\Mf_\dimvectd$ to the point labeled~$\dimvectd$.	
	\mycomment{
		\[
		\hat{K}_0(\MHM(\N_0^I))[\mathbb{L}^{-1/2} , (\mathbb{L}^N - 1 )^{-1}: N \in \Z_{\geq 1}] =  K_0(\MHS)[\mathbb{L}^{-1/2} , (\mathbb{L}^N - 1 )^{-1}] [\![t_i : i \in I]\!],
		\]}
\end{mydef}
\begin{rk}
	We can deduce the existence of Donaldson--Thomas invariants as elements of the ring~$K_0(\MHS)[\mathbb{L}^{-1/2} , (\mathbb{L}^N - 1 )^{-1} : N \in \Z_{\geq 1}]$ from generalities of~$\lambda$-rings.
	That the element actually lives in~$K_0(\MHS)[\mathbb{L}^{-1/2}]$  (or more precisely in the image of the map~$K_0(\MHS)[\mathbb{L}^{-1/2}] \to K_0(\MHS)[\mathbb{L}^{-1/2} , (\mathbb{L}^N - 1 )^{-1}]$) is the integrality conjecture \cite[Corollary~6.8]{meinhardt2015donaldsonthomasinvariantsvsintersection}.
	For integrality, we need assumption~(8).
\end{rk}

\begin{mydef}
	The Deligne--Hodge--Euler polynomial~$\chi_E \colon K_0(\MHS) \to \Z[u^{\pm 1},v^{\pm 1}]$ is given by~$\chi_E(H) = \sum_{i,j} h^{i,j}(H) u^i v^j$ for a mixed Hodge structure~$H$.
	It extends to a map~$\chi_E \colon K_0(\MHS)[\mathbb{L}^{-1/2} , (\mathbb{L}^N - 1 )^{-1}]  \to \Z[u^{\pm 1}, v^{\pm 1}] [(uv)^{-1/2},((uv)^N-1)^{-1}]$ by mapping~$\mathbb{L}^{1/2}$ to~$(uv)^{1/2}$.
	
	Using this, we can also define the quantum DT-invariants of the category~$\mathcal{A}$ by
	\[
	\mathrm{DT}^{\mathrm{quant}}_\dimvectd(q) \coloneqq \chi_E(\mathrm{DT}_\dimvectd)|_{u= v = (uv)^{1/2} = - q}\in \Z [q^{\pm 1}].
	\]
\end{mydef}

\begin{rk} \label{BettiNumbersFromK0(MHS)}
	For a class $[H] = [H_0] - [H_1] + [H_2] - \dots \pm [H_n] \in K_0(\MHS)$ with~$H_i$ pure of weight~$i$, we have that 
	\[
	\chi_E([H])|_{u= v = (uv)^{1/2} = - q} = \sum_{i=0}^n \dim (H_i) q^i \in \N_0[q],
	\]
	where~$\dim H_i$ is the dimension of the underlying vector space.
	It follows that the Betti numbers of pure varieties can be recovered from the class of its cohomology in~$K_0(\MHS)$.
\end{rk}


\subsection{Convolution Product and Cohomological Hall Algebras}

We now categorify these DT-invariants and define the cohomological Hall algebra associated to a category~$\mathcal{A}$ satisfying assumption~(1)--(7).

We are given a diagram
\[\begin{tikzcd}
	& {[R_{\dimvectd , \dimvecte}/G_{\dimvectd , \dimvecte}]} \\
	{[R_\dimvectd/G_\dimvectd ] \times [ R_\dimvecte/G_\dimvecte]} && {[R_{\dimvectd+\dimvecte}/G_{\dimvectd+\dimvecte}]},
	\arrow["{\mathrm{pr}_1\times \mathrm{pr}_3}"', from=1-2, to=2-1]
	\arrow["{\mathrm{pr}_2}", from=1-2, to=2-3]
\end{tikzcd}\]
where~$\mathrm{pr}_2$ is proper.
We can therefore define a product on the~$\Z \times \N_0^{\oplus I}$-graded vector space
\[
\Coha(\mathcal{A}) \coloneqq \bigoplus_{\dimvectd\in \N_0^{\oplus I}} H^\bullet([R_\dimvectd/G_\dimvectd]; \Q^\mathrm{vir}) = \bigoplus_{\dimvectd\in \N_0^{\oplus I}} H^\bullet_{G_\dimvectd}(R_\dimvectd;\Q^\mathrm{vir}) 
\]
via the composition
\mycomment{\[
	H^\bullet([R_\dimvectd/G_\dimvectd])\otimes H^\bullet([R_\dimvecte/G_\dimvecte]) \to H^\bullet([R_\dimvectd/G_\dimvectd]\times [R_\dimvecte/G_\dimvecte])\xrightarrow{(\mathrm{pr}_1\times \mathrm{pr}_3)^*} H^\bullet( [ R_{\dimvectd , \dimvecte}/G_{\dimvectd , \dimvecte} ] ) \xrightarrow{(\mathrm{pr}_2)_*} H^{\bullet - 2 \langle \dimvectd,\dimvecte \rangle}( [ R_{\dimvectd+\dimvecte} / G_{\dimvectd+\dimvecte} ] ),
	\]}
\[\begin{tikzcd}
	{	H^\bullet([R_\dimvectd / G_\dimvectd])\otimes H^\bullet([R_\dimvecte / G_\dimvecte])} & {H^\bullet([R_\dimvectd / G_\dimvectd]\times [R_\dimvecte / G_\dimvecte])} \\
	{H^\bullet( [ R_{\dimvectd , \dimvecte} / G_{\dimvectd , \dimvecte} ] ) } & { H^{\bullet - 2 \langle \dimvectd,\dimvecte \rangle}( [ R_{\dimvectd+\dimvecte} / G_{\dimvectd+\dimvecte} ] ),}
	\arrow["{\mathrm{K\ddot{u}n}}", from=1-1, to=1-2]
	\arrow[out=-30, in=150,"{(\mathrm{pr}_1\times \mathrm{pr}_3)^*}"{description}, from=1-2, to=2-1]
	\arrow["{(\mathrm{pr}_2)_*}"', from=2-1, to=2-2]
\end{tikzcd}\]

yielding an~$\N_0^{\oplus I}$-graded associative unitial algebra as in \cite[Section~2]{KSCoHADef}, where 
\[
H^\bullet([R_\dimvectd/G_\dimvectd]; \Q^\mathrm{vir}) = H^{\bullet - \langle \dimvectd, \dimvectd\rangle}( [R_\dimvectd/G_\dimvectd] ; \Q)({-\langle \dimvectd , \dimvectd \rangle}/{2})
\]
denotes the usual cohomological shift which guarantees that for smooth~$X$ Poincaré duality becomes~$H^k(X; \Q^\mathrm{vir}) = H^{-k}_c(X; \Q^\mathrm{vir})^\vee$.

\begin{rk}
	The multiplication maps~$H^i(\Mf_\dimvectd;\Q^\mathrm{vir})\otimes H^j(\Mf_\dimvecte; \Q^\mathrm{vir})$ into the cohomology group~$H^{k}(\Mf_{\dimvectd+\dimvecte}; \Q^\mathrm{vir})$ with
	\[
	k = i - \langle \dimvectd , \dimvectd \rangle + j - \langle \dimvecte , \dimvecte \rangle - 2 \langle \dimvectd , \dimvecte \rangle + \langle \dimvectd + \dimvecte , \dimvectd + \dimvecte \rangle = i + j + \langle \dimvecte , \dimvectd \rangle - \langle \dimvectd , \dimvecte \rangle. 
	\]
	Under the symmetry assumption~(8) from \cite{meinhardt2015donaldsonthomasinvariantsvsintersection}, i.e.,~$\langle \dimvectd, \dimvecte \rangle = \langle \dimvecte , \dimvectd \rangle$,~$\Coha(\mathcal{A})$ becomes a~$\Z\times \N_0^{\oplus I}$-graded algebra with finite dimensional graded pieces.
\end{rk}

%
%

The following is the reason why we call the CoHA a ``categorification'' of the (quantum) DT-invariants in the case that~$\mathcal{A}$ satisfies assumptions~(1)--(8).
\begin{prop}\label{Poincare series of Coha = Sym(DT}
	If the cohomology of the stack~$\Mf_\dimvectd$ is pure for all dimension vectors~$\dimvectd$, then we can recover the Poincaré series of the algebra~$\Coha(\mathcal{A})$ from its class in the ring~$K_0(\MHS)[\mathbb{L}^{-1/2} , (\mathbb{L}^N - 1 )^{-1}] [\![t_i : i \in I]\!]$.
	Explicitly, we have
	\[
	P_{q^{-1},t}(\Coha(\mathcal{A})) = \chi_E(\Sym\left(\frac{1}{ \mathbb{L}^{1/2} - \mathbb{L}^{1/2} } \sum_{\dimvectd\in \N_0^I} \mathrm{DT}_\dimvectd t^\dimvectd \right))|_{u = v = - q}.
	\]
\end{prop}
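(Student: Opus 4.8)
The plan is to show that both sides of the asserted identity compute the same generating function, namely the series of virtual Betti numbers of the stacks $\Mf_\dimvectd$, and then to conclude by inserting the defining equation of the Donaldson--Thomas invariants from \Cref{Def:DT}.

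The first step is to reinterpret the right-hand side geometrically. By assumption~(7) the variety $R_\dimvectd$ is smooth, so the quotient stack $\Mf_\dimvectd = [R_\dimvectd / G_\dimvectd]$ is smooth of dimension $-\langle \dimvectd, \dimvectd\rangle$. With the normalization under which $\mathcal{IC}$ is pure of weight zero and self-dual, a smooth stack satisfies $\mathcal{IC}_{\Mf_\dimvectd}(\Q) = \Q[-\langle\dimvectd,\dimvectd\rangle](-\langle\dimvectd,\dimvectd\rangle / 2)$, which is precisely the virtual shift $\Q^\mathrm{vir}$ appearing in the definition of $\Coha(\mathcal{A})$. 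Hence the $\dimvectd$-component of $\omega_!\mathcal{IC}_\Mf(\Q)$ is the class $[H^\bullet_c(\Mf_\dimvectd; \Q^\mathrm{vir})]$ in $K_0(\MHS)$, where the lower-shriek pushforward along $\omega$ computes the (equivariant) compactly supported cohomology.

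The second step turns on the purity hypothesis. Poincaré duality $H^k(\Mf_\dimvectd; \Q^\mathrm{vir}) \cong H^{-k}_c(\Mf_\dimvectd; \Q^\mathrm{vir})^\vee$ propagates purity of $H^\bullet(\Mf_\dimvectd)$ to $H^\bullet_c(\Mf_\dimvectd; \Q^\mathrm{vir})$, so that $H^j_c(\Mf_\dimvectd; \Q^\mathrm{vir})$ is pure of weight $j$ for every $j$. The sign-cancellation computation of \Cref{BettiNumbersFromK0(MHS)}, applied verbatim to this $\Z$-graded pure class, then gives $\chi_E([H^\bullet_c(\Mf_\dimvectd; \Q^\mathrm{vir})])|_{u=v=-q} = \sum_j \dim H^j_c(\Mf_\dimvectd; \Q^\mathrm{vir})\, q^j$ with no cancellation. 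This is the heart of the matter: purity is exactly what ensures that the class in $K_0(\MHS)$ retains the full graded dimensions and not merely an Euler characteristic.

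It remains to match this with the left-hand side. A second application of Poincaré duality yields $\dim H^j_c(\Mf_\dimvectd; \Q^\mathrm{vir}) = \dim H^{-j}(\Mf_\dimvectd; \Q^\mathrm{vir})$, whence $\sum_j \dim H^j_c(\Mf_\dimvectd; \Q^\mathrm{vir})\, q^j = \sum_k \dim H^k(\Mf_\dimvectd; \Q^\mathrm{vir})\, q^{-k}$, which is the $t^\dimvectd$-coefficient of $P_{q^{-1},t}(\Coha(\mathcal{A}))$; this also accounts for the variable being $q^{-1}$ rather than $q$. Summing over $\dimvectd$ with weights $t^\dimvectd$ and substituting $\omega_!\mathcal{IC}_\Mf(\Q) = \Sym(\ldots)$ from \Cref{Def:DT} gives the claim. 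I expect the main difficulty to be bookkeeping rather than conceptual: one must pin down the normalization identifying $\mathcal{IC}$ on the smooth stack $\Mf_\dimvectd$ with the virtual shift $\Q^\mathrm{vir}$, and track the Poincaré-duality reflection $k \leftrightarrow -k$ that converts compactly supported into ordinary cohomology. A secondary point is that the equivariant cohomology is infinite dimensional, so all equalities are read in the completed $\lambda$-ring and $P_{q^{-1},t}$ is a genuine power series.
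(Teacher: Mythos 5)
Your proposal is correct and follows essentially the same route as the paper's (much terser) proof: both rest on \Cref{BettiNumbersFromK0(MHS)} applied to the pure class $\omega_!\mathcal{IC}_\Mf(\Q)$, together with the observation that this class computes compactly supported cohomology, so that Poincaré duality for the smooth stacks $\Mf_\dimvectd$ reverses the grading and produces the $q^{-1}$. Your write-up simply makes explicit the bookkeeping (the identification of $\mathcal{IC}_{\Mf_\dimvectd}$ with $\Q^{\mathrm{vir}}$ and the propagation of purity to $H^\bullet_c$) that the paper leaves implicit.
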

\begin{proof}
	The assertions follow from \Cref{BettiNumbersFromK0(MHS)}.
	Note that the~$q^{-1}$ comes from the fact that we need to reverse the cohomological grading, because in the definition of the DT-invariants we took~$\omega_! \mathcal{IC}_\Mf(\Q)$, which corresponds to cohomology with compact support.
\end{proof}

\begin{rk}
	The category~$\Rep^{\tsst, \mu_0}(Q)$ of semistable representations of a quiver satisfies the assumption of \Cref{Poincare series of Coha = Sym(DT} by \cite[Theorem~5.1]{FRChowHa}.
\end{rk}

\subsection{Chow Hall Algebra}

The morphism~$ [ R_{\dimvectd , \dimvecte} / G_{\dimvectd , \dimvecte} ] \to [ R_{\dimvectd} / G_\dimvectd ] \times [ R_\dimvecte / G_\dimvecte ]$ decomposes as 
\[
[R_{\dimvectd , \dimvecte}/G_{\dimvectd , \dimvecte}] \to [R_{\dimvectd , \dimvecte}/(G_\dimvectd\times G_\dimvecte)] \to [(R_\dimvectd\times R_\dimvecte)/(G_\dimvectd\times G_\dimvecte)] = [R_\dimvectd/G_\dimvectd] \times [R_\dimvecte/G_\dimvecte] .
\]
The first map is a fibration with fiber~$G_{\dimvectd , \dimvecte}/(G_\dimvectd\times G_\dimvecte) \cong \mathbb{A}^{N}$ and the second is a vector bundle by \cite[Corollary~3.28]{meinhardt2015donaldsonthomasinvariantsvsintersection}.
The composition is therefore smooth.
It follows that the definition of the CoHA and its convolution product can be generalized from cohomology (or rather Borel--Moore homology) to arbitrary oriented Borel--Moore homology theories, see \cite[Chapter~5]{LevineMorelAlgCobordism}.

We now use Chow groups -- another oriented Borel--Moore homology theory -- to define the analogue of the Chow Hall algebra (ChowHA) from \cite{FRChowHa} for hereditary categories instead of for quivers.
For a quotient stack~$ [ X / G ]$, we write
\[
A_\bullet( [ X / G ] ) = A^G_{\bullet + \dim G}( X ; \Q) \quad \text{ and } \quad A^\bullet([X/G]) = A^\bullet_G(X;\Q)
\]
for the equivariant Chow groups/rings with rational coefficients as defined in \cite{EquivariantIntersectionTheory}.

As with cohomology, we define an associative algebra structure on
\[
\Chowha(\mathcal{A})\coloneqq \bigoplus_{\dimvectd\in \N_0^{\oplus I}} A^\bullet([R_\dimvectd/G_\dimvectd] ; \Q^\mathrm{vir}).
\]
We have an algebra morphism~$\Chowha(\mathcal{A})\to \Coha(\mathcal{A})$ which is compatible with the~$\N_0^{\oplus I}$-grading and doubles the cohomological degree.


\subsection{The Simple ChowHA}

Let~$\Mf_\dimvectd^\mathrm{simp}\subseteq \Mf_\dimvectd$ be the stack classifying simple objects.
It is an open substack because it is the complement of the (finitely many) images of the proper morphisms~$\Mf_{\dimvectd_1\!,\dimvectd_2} \to \Mf_\dimvectd$, where~$\dimvectd=\dimvectd_1+\dimvectd_2$ and~$\dimvectd_1, \dimvectd_2\neq 0$.
We equip the graded vector space
\[
\Chowha^\simp(\mathcal{A})\coloneqq A^\bullet(\Mf_0)[0]\oplus \bigoplus_{\dimvectd\neq 0}A^\bullet(\Mf^\simp_\dimvectd) =  \Q[0] \oplus \bigoplus_{\dimvectd\neq 0}A^\bullet(\Mf^\simp_\dimvectd)
\]
with the trivial product, i.e.,~$f*g = 0$ for homogeneous elements of degree~$\neq 0$.

\mycomment{As before, we can equip the graded vector space
	\[
	\Chowha^\simp(\mathcal{A})\coloneqq A^\bullet(\Mf_0)[0]\oplus \bigoplus_{\dimvectd\neq 0}A^\bullet(\Mf^\simp_\dimvectd) =  \Q[0] \oplus \bigoplus_{\dimvectd\neq 0}A^\bullet(\Mf^\simp_\dimvectd)
	\]
	with the convolution product coming from the diagram
	\[
	\Mf^\simp_\dimvectd \times \Mf^\simp_\dimvecte \ot \Mf^\simp_{\dimvectd,\dimvecte} \to \Mf^\simp_{\dimvectd + \dimvecte},
	\]
	where~$\Mf^\simp_{\dimvectd , \dimvecte}$ classifies short exact sequences of the form~$0 \to M' \to M \to M'' \to 0$ in~$\mathcal{A}$, with~$M'$,~$M$,~$M''$ simple of dimension vector~$\dimvectd$,~$\dimvectd + \dimvecte$, and~$\dimvecte$ respectively.
	This is to say that
	\[
	\Mf^\simp_{\dimvectd , \dimvecte} = \begin{cases*}
		\emptyset & if~$\dimvectd, \dimvecte \neq 0$, \\
		\Mf_\dimvectd & if~$\dimvecte = 0$, \\
		\Mf_\dimvecte & if~$\dimvectd = 0$
	\end{cases*} 
	\]
	and thus the product is trivial, i.e., the product of two homogeneous elements of positive degree is zero.
	
	\begin{prop}
		The morphism~$\Chowha(\mathcal{A}) \to \Chowha^\simp(\mathcal{A})$ of graded vector spaces is a morphism of algebras.
	\end{prop}
	\begin{proof}
		The diagram
		\[\begin{tikzcd}
			{A_\bullet(\Mf_\dimvectd^\simp \times \Mf_\dimvecte^\simp)} & {A_\bullet(\Mf_{\dimvectd , \dimvecte}^\simp)} & {A_\bullet(\Mf_{\dimvectd + \dimvecte}^\simp)} \\
			{A_\bullet(\Mf_\dimvectd \times \Mf_\dimvecte)} & {A_\bullet(\Mf_{\dimvectd , \dimvecte})} & {A_\bullet(\Mf_{\dimvectd + \dimvecte})}
			\arrow[from=1-1, to=1-2]
			\arrow[from=1-2, to=1-3]
			\arrow[from=2-1, to=1-1]
			\arrow[from=2-2, to=1-2]
			\arrow[from=2-1, to=2-2]
			\arrow[from=2-2, to=2-3]
			\arrow[from=2-3, to=1-3]
		\end{tikzcd}\]
		commutes, because it is induced from the diagram	
		\[\begin{tikzcd}
			{\Mf_\dimvectd^\simp \times \Mf_\dimvecte^\simp} & {\Mf_{\dimvectd , \dimvecte}^\simp} & {\Mf_{\dimvectd + \dimvecte}^\simp} \\
			{\Mf_\dimvectd \times \Mf_\dimvecte} & {\Mf_{\dimvectd , \dimvecte}} & {\Mf_{\dimvectd + \dimvecte}}
			\arrow[from=1-1, to=2-1]
			\arrow[from=1-2, to=1-1]
			\arrow[from=1-2, to=1-3]
			\arrow[from=1-2, to=2-2]
			\arrow[from=1-3, to=2-3]
			\arrow[from=2-2, to=2-1]
			\arrow[from=2-2, to=2-3]
		\end{tikzcd}\]
		where the right square is a pullback.
\end{proof}}




\begin{prop}[{{{\cite[Theorem~9.1]{FRChowHa}}}}]\label{Chowha primitive part}
	Assume that for all~$\dimvectd_1,  \dimvectd_2 \in \N_0^I$ the Künneth morphism~$A^\bullet(\Mf_{\dimvectd_1})\otimes_\Q A^\bullet(\Mf_{\dimvectd_2}) \to A^\bullet(\Mf_{\dimvectd_1}\times \Mf_{\dimvectd_2})$ is surjective.
	Then the map
	\[
	\Chowha(\mathcal{A})\to \Chowha^\simp(\mathcal{A})
	\]
	is a surjective~$\mycomment{A^\bullet(\Mf_0)=}\Q$-algebra morphism with kernel given by the square of the augmentation ideal, i.e.,
	\[
	\Chowha^\simp(\mathcal{A}) \cong \Chowha(\mathcal{A})/(\Chowha(\mathcal{A})_+* \Chowha(\mathcal{A})_+),
	\]
	where~$\Chowha(\mathcal{A})_+ \coloneqq \bigoplus_{\dimvectd\neq 0} \Chowha(\mathcal{A})_\dimvectd$. 
	It follows that elements in~$\Chowha(\mathcal{A})$ that map to a~$\Q$-basis of~$\Chowha^\simp(\mathcal{A})$ will generate the ChowHA as a~$\Q$-algebra.
\end{prop}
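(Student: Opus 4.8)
The plan is to analyze the map $\Chowha(\mathcal{A}) \to \Chowha^{\simp}(\mathcal{A})$ one graded piece at a time. In degree $0$ it is the identity on $\Q$, and in each degree $\dimvectd \neq 0$ it is the restriction $r_\dimvectd \colon A^\bullet(\Mf_\dimvectd) \to A^\bullet(\Mf^{\simp}_\dimvectd)$ to the open simple locus. Two geometric facts drive the argument. First, the non-simple locus $Z_\dimvectd \coloneqq \Mf_\dimvectd \setminus \Mf^{\simp}_\dimvectd$ is precisely the finite union of the images of the proper morphisms $\mathrm{pr}_2 \colon \Mf_{\dimvectd_1, \dimvectd_2} \to \Mf_\dimvectd$ over decompositions $\dimvectd = \dimvectd_1 + \dimvectd_2$ with $\dimvectd_1, \dimvectd_2 \neq 0$. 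Second, the morphism $\Mf_{\dimvectd_1, \dimvectd_2} \to \Mf_{\dimvectd_1} \times \Mf_{\dimvectd_2}$ is smooth — a vector bundle composed with an affine-space fibration — so $(\mathrm{pr}_1 \times \mathrm{pr}_3)^*$ is an isomorphism on Chow groups by homotopy invariance.

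I would first dispatch the easy assertions. Since $R_\dimvectd$ is smooth, both $\Mf_\dimvectd$ and its open substack $\Mf^{\simp}_\dimvectd$ are smooth, so each $r_\dimvectd$ is surjective by the localization sequence, giving surjectivity of the whole map. For multiplicativity, take homogeneous $f, g$ of degrees $\dimvectd_1, \dimvectd_2 \neq 0$; then $f * g = (\mathrm{pr}_2)_* (\mathrm{pr}_1 \times \mathrm{pr}_3)^* (f \boxtimes g)$ is a proper pushforward from $\Mf_{\dimvectd_1, \dimvectd_2}$, so its class is supported on $Z_{\dimvectd_1 + \dimvectd_2}$ and restricts to $0$ on the simple locus. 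This matches the trivial product on the target, so the map is an algebra morphism, and simultaneously proves the inclusion $\Chowha(\mathcal{A})_+ * \Chowha(\mathcal{A})_+ \subseteq \ker r$.

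The substance is the reverse inclusion. By Poincaré duality on the smooth stack $\Mf_\dimvectd$ and the localization exact sequence, $\ker r_\dimvectd$ is the image of proper pushforward $i_* \colon A_\bullet(Z_\dimvectd) \to A_\bullet(\Mf_\dimvectd)$. I would identify this image with $\sum_{\dimvectd_1 + \dimvectd_2 = \dimvectd} (\mathrm{pr}_2)_* A_\bullet(\Mf_{\dimvectd_1, \dimvectd_2})$: one inclusion is clear since each $\mathrm{pr}_2$ factors through $Z_\dimvectd$, and for the other one uses that every integral closed substack of $Z_\dimvectd$ is irreducible, hence contained in one of the component images $\im(\mathrm{pr}_2)$, so that $A_\bullet(Z_\dimvectd)$ is spanned by classes coming from these, together with the fact that each proper surjection $\Mf_{\dimvectd_1, \dimvectd_2} \to \im(\mathrm{pr}_2)$ has surjective pushforward with rational coefficients via the generic-multisection argument. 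Finally, since $(\mathrm{pr}_1 \times \mathrm{pr}_3)^*$ is an isomorphism and the Künneth map $A^\bullet(\Mf_{\dimvectd_1}) \otimes_\Q A^\bullet(\Mf_{\dimvectd_2}) \to A^\bullet(\Mf_{\dimvectd_1} \times \Mf_{\dimvectd_2})$ is surjective by hypothesis, every class on $\Mf_{\dimvectd_1, \dimvectd_2}$ is $(\mathrm{pr}_1 \times \mathrm{pr}_3)^*(\sum_k f_k \boxtimes g_k)$, whose pushforward is $\sum_k f_k * g_k$. This shows $\ker r_\dimvectd = (\Chowha(\mathcal{A})_+ * \Chowha(\mathcal{A})_+)_\dimvectd$, yielding the presentation.

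I expect the main obstacle to be the identification $\ker r_\dimvectd = \sum (\mathrm{pr}_2)_* A_\bullet(\Mf_{\dimvectd_1, \dimvectd_2})$, where one must run the localization sequence and the surjectivity-of-pushforward arguments equivariantly, i.e.\ for the quotient stacks via Edidin--Graham approximation, and check that rational coefficients make proper-surjective pushforward surjective. The closing generation statement is then formal: if $B \subseteq \Chowha(\mathcal{A})$ is the subalgebra generated by lifts of a basis of $\Chowha^{\simp}(\mathcal{A})$, then $\Chowha(\mathcal{A}) = B + \Chowha(\mathcal{A})_+ * \Chowha(\mathcal{A})_+$, and a descending induction on the $\N_0^{\oplus I}$-degree — every factor of a nontrivial product having strictly smaller degree — gives $B = \Chowha(\mathcal{A})$.
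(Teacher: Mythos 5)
The paper itself gives no proof of this proposition---it is imported verbatim from \cite[Theorem~9.1]{FRChowHa}---so there is no internal argument to compare against; your proof is, however, correct, and it reconstructs essentially the argument of that reference: restriction to the open simple locus is surjective and multiplicative (products of positive-degree classes are pushed forward from $\Mf_{\dimvectd_1,\dimvectd_2}$, hence supported on the non-simple locus), its kernel is by the localization sequence the image of pushforward from the non-simple locus, that locus is the finite union of the images of the proper maps $\Mf_{\dimvectd_1,\dimvectd_2}\to\Mf_{\dimvectd}$, and Künneth surjectivity combined with the homotopy invariance of $(\mathrm{pr}_1\times\mathrm{pr}_3)^*$ (vector bundle composed with an $\mathbb{A}^N$-fibration) identifies those pushforwards with sums of convolution products. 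The technical points you flag---running the localization sequence and the proper-surjective-pushforward argument equivariantly via Edidin--Graham approximation, where rational coefficients and the generic-multisection construction make pushforward along proper surjections surjective---are exactly the points requiring care, and your treatment of them is sound.
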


\section{Weighted Projective Line and the Canonical Algebra}\label{section2}

We collect some facts from the theory of weighted projective lines as developed by Geigle--Lenzing in \cite{GeigleLenzingWPCarisinginRepTh} -- for a comprehensive introduction also see \cite{chen2010introductioncoherentsheavesweighted} -- and of canonical algebras which were introduced and studied by Ringel in \cite{RingelSLN1099,RingelCanonicalAlgs}.

\subsection{Weighted Projective Line}
We recall the definition of the weighted projective line given in \cite[Section~2.2]{ChanLernerModuliOfSerreStableReps}.
For us a weighted projective line will be a certain stacky curve, i.e., a proper smooth connected Deligne--Mumford stack of dimension~$1$ with a dense open subscheme.
In our case, it will be glued from stacks of the form~$[U/\mu_p]$ for~$U$ a~$1$-dimensional variety and~$\mu_p$ the cyclic group of~$p$-th roots of unity.

Starting with the projective line~$\Pbb^1\coloneqq \Pbb^1_\C$, choose~$n$ distinct closed points~$\lambda_1,\dots, \lambda_n \in \Pbb^1$ and~$n$ positive integers~$w_1, \dots, w_n$.
For this data we define the weighted projective line~$\Pbb^1(\boldsymbol{\lambda}; \mathbf{w}) \coloneqq \Pbb^1(\lambda_1,\dots, \lambda_n; w_1, \dots, w_n)$ (or also~$\Pbb^1(\mathbf{w}) = \Pbb^1(w_1,\dots, w_n)$ if the choice of points~$\lambda_i$ is implicit) together with a morphism~$\Pbb^1(\boldsymbol{\lambda}; \mathbf{w}) \to \Pbb^1$ as follows:
Take the open subscheme~$U_i\coloneqq \Pbb^1\setminus\{\lambda_1, \dots, \lambda_{i-1}, \lambda_{i+1}, \dots \lambda_n\}$ (which is affine for~$n\geq 2$) and set~$\widetilde{U}_i\coloneqq \Spec \mathcal{O}_{U_i}[s]/(s^{w_i}-(t-\lambda_i))$ where~$t$ is the local coordinate of~$\Pbb^1$.
We have a ramified~$w_i$-cover~$\widetilde{U}_i\to U_i$ which is totally ramified above~$\lambda_i$ and unramified everywhere else.
Now we glue the quotient stacks~$[\widetilde{U}_i/\mu_{w_i}]$ together along~$\Pbb^1\setminus \{ \lambda_1, \dots, \lambda_n \}$.
This is compatible with the morphisms~$[\widetilde{U}_i/\mu_{w_i}]\to U_i$ yielding a stack~$\Pbb^1(\lambda_1,\dots, \lambda_n; w_1, \dots, w_n)$ together with a morphism to~$\Pbb^1$.
On the open subscheme~$\Pbb^1\setminus \{\lambda_1, \dots, \lambda_n \}$ this is an isomorphism.

Note that the stack~$\Pbb^1(\lambda_1,\dots, \lambda_n; w_1, \dots, w_n)$ is independent of the order of the~$\lambda_i$ and that it satisfies~$\Pbb^1(\lambda_1,\dots, \lambda_n; w_1, \dots, w_n)\cong \Pbb^1(\lambda_1,\dots, \lambda_n, \lambda_{n+1} ; w_1, \dots, w_n, 1)$.
This also allows us to define the weighted projective line in the case that~$n=0,\, 1$ in a compatible way with the above.
In particular, for~$n=0$ the weighted projective line is just~$\Pbb^1$.


We can now define coherent (and torsion) sheaves on the weighted projective line as in \cite{ChampsAlg} or \cite[Tag:~08KA]{stacks-project}.

\begin{rk}
	For a discussion on why this approach to coherent sheaves on weighted projective lines is equivalent to the classical one from Geigle--Lenzing, see \cite[Section~11]{ChanLernerModuliOfSerreStableReps}.
	Roughly, this equivalence is proven in the following way:
	The classical approach in terms of quotient categories of the form~$\Gr(R)/\mathrm{tors}$, where~$R$ is a suitable~2-dimensional graded ring, yields an equivalence 
	\[
	\Gr(R)/\mathrm{tors} \simeq \qcoh \StProj(R),
	\]
	where~$\StProj(R)$ is the stacky projective scheme and so the classical weighted projective line of Geigle--Lenzing can be defined as~$\StProj(R)$.
	Similarly to the usual patching of the~$\Proj$ construction from affine opens, the stacky projective scheme can be glued together from open substacks of the from~$[U / G]$.
	This gives the equivalence with the above definition.
\end{rk}

As discussed in \Cref{Subsection Axiomatics}, we need not only the (hereditary) abelian category of torsion sheaves to talk about its stack of objects (and extensions), but we need exact categories of torsion sheaves on~$\Pbb^1(\boldsymbol{\lambda}; \mathbf{w})\times S$ for any test scheme~$S$.
\begin{mydef}
	A torsion sheaf on~$\Pbb^1(\boldsymbol{\lambda}; \mathbf{w})\times S$ is a coherent sheaf~$M$ on~$\Pbb^1(\boldsymbol{\lambda}; \mathbf{w})\times S$ that is flat over~$S$ such that the map~$\supp(M)\to S$ is finite.
	We denote this category by~$\Tor(\Pbb^1(\boldsymbol{\lambda}; \mathbf{w}))_S$ and endow it with the standard exact structure (as an extension closed subcategory of the abelian category~$\Coh(\Pbb^1(\boldsymbol{\lambda}; \mathbf{w})\times S)$).
\end{mydef}

\subsection{Canonical Algebra}
We now introduce the canonical algebra~$C(\boldsymbol{\lambda}; \mathbf{w})$.
It will turn out that
\[
D^b(\Coh(\Pbb^1(\boldsymbol{\lambda}; \mathbf{w}))) \cong D^b(C(\boldsymbol{\lambda}; \mathbf{w})) \quad \text{and} \quad \Tor(\Pbb^1(\boldsymbol{\lambda}; \mathbf{w})) \cong \Reg(C(\boldsymbol{\lambda}; \mathbf{w})),
\]
where~$\Reg(C(\boldsymbol{\lambda}; \mathbf{w})$ is the abelian subcategory of~$\Rep(C(\boldsymbol{\lambda}; \mathbf{w}))$ given in \Cref{definitionRegular}.
\begin{mydef}
	For~$n$ distinct elements~$\lambda_1,\dots, \lambda_n \in \Pbb^1(\C)$ and weights~$w_1, \dots w_n\geq 1$, we define the canonical algebra~$C(\lambda_1 , \dots, \lambda_n ; w_1, \dots, w_n)$ as the quiver
	\[
	Q(\mathbf{w}) \coloneqq Q(C(\mathbf{w})) \coloneqq \begin{tikzcd}
		& {1_1} & {1_2} & \cdots & {1_{w_1-1}} \\
		& {2_1} & {2_2} & \cdots & {2_{w_2-1}} \\
		0 &&&&& \infty \\
		& \vdots & \vdots && \vdots \\
		& {n_1} & {n_2} & \cdots & {n_{w_n-1}}
		\arrow["{x_1}", from=1-2, to=1-3]
		\arrow["{x_1}", from=1-3, to=1-4]
		\arrow["{x_1}", from=1-4, to=1-5]
		\arrow["{x_1}", from=1-5, to=3-6]
		\arrow["{x_2}", from=2-2, to=2-3]
		\arrow["{x_2}", from=2-3, to=2-4]
		\arrow["{x_2}", from=2-4, to=2-5]
		\arrow["{x_2}"{description}, from=2-5, to=3-6]
		\arrow["{x_1}", from=3-1, to=1-2]
		\arrow["{x_2}"{description}, from=3-1, to=2-2]
		\arrow["\alpha", shift left=1, from=3-1, to=3-6]
		\arrow["\beta"', shift right=1, from=3-1, to=3-6]
		\arrow["{x_n}"', from=3-1, to=5-2]
		\arrow["{x_n}"', from=5-2, to=5-3]
		\arrow["{x_n}"', from=5-3, to=5-4]
		\arrow["{x_n}"', from=5-4, to=5-5]
		\arrow["{x_n}"', from=5-5, to=3-6]
	\end{tikzcd}\]
	subject to the relations~$x_i^{w_i} = \lambda_i^{(0)} \alpha +  \lambda_i^{(1)} \beta$, where~$\lambda_i = [\lambda_i^{(0)} : \lambda_i^{(i)} ] \in \Pbb^1(\C)$.
	This is independent of the chosen representative~$\lambda_i = [ \lambda_i^{(0)} : \lambda_i^{(2)}]$ up to isomorphism.
\end{mydef}

\begin{rk}\label{PGL2 on C(lambdaw)}
	Any element of~$\varphi\in \PGL_2(\C)$ induces an isomorphism
	\[
	C(\lambda_1, \dots, \lambda_n ; w_1, \dots, w_n) \cong C(\varphi(\lambda_1), \dots, \varphi(\lambda_n ); w_1,\dots, w_n).
	\]
\end{rk}

\begin{rk}\label{CanonicalAlgebraAdd1}
	We have~$C(\lambda_1, \dots, \lambda_n, \lambda_{n+1} ; w_1, \dots, w_n,1 )\cong C(\lambda_1, \dots, \lambda_n ; w_1, \dots, w_n)$.
	This allows us to always assume that~$w_i\geq 2$.
\end{rk}

\begin{ex}\label{CanAlgsForSmalln}
	\textbullet\hskip\labelsep For~$n=0$, the canonical algebra is the path algebra of the Kronecker quiver~$\begin{tikzcd}
		\bullet \rar[bend left] \rar[bend right] & \bullet
	\end{tikzcd}$.
	\begin{itemize}
		\item 	For~$n=1$,~$\lambda_1 = 0 = [0:1]$, and~$w_1=2$, the canonical algebra is the path algebra of the~$\tilde{A}_2$ quiver
		\[
		\begin{tikzcd}[column sep = tiny, row sep = small]
			& \bullet \drar & \\ \bullet \urar \ar{rr} &  & \bullet \, \, .
		\end{tikzcd}
		\]
		\item For~$n=2$,~$\lambda_1 = [0 : 1]$,~$\lambda_2 = \infty = [1 : 0 ]$, and~$w_1=w_2=2$, the canonical algebra is the path algebra of the~$\tilde{A}_3$ quiver
		\[\begin{tikzcd}[row sep = small]
			& \bullet \\
			\bullet && \bullet \, \, . \\
			& \bullet
			\arrow[from=1-2, to=2-3]
			\arrow[from=2-1, to=1-2]
			\arrow[from=2-1, to=3-2]
			\arrow[from=3-2, to=2-3]
		\end{tikzcd}\]
	\end{itemize}
\end{ex}

\begin{prop}[{\cite[Corollary~3.11]{GeigleLenzingWPCarisinginRepTh}}]
	The canonical algebra~$C(\boldsymbol{\lambda}; \mathbf{w})$ has global dimension~$\leq 2$.
\end{prop}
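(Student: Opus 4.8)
The plan is to compute the global dimension as $\operatorname{gldim} C(\boldsymbol{\lambda};\mathbf{w}) = \sup_v \operatorname{pd} S_v$, the supremum of projective dimensions of the simple modules $S_v$ ranging over the vertices $v$ of $Q(\mathbf{w})$, and to bound each $\operatorname{pd} S_v$ by $2$ via an explicit projective resolution. Before doing so I would pass to a more convenient presentation. The displayed relations $x_i^{w_i}=\lambda_i^{(0)}\alpha+\lambda_i^{(1)}\beta$ mix the length-$w_i\ge 2$ arm paths with the length-$1$ arrows $\alpha,\beta$, so the generating ideal is not admissible and $\alpha,\beta$ are not part of a minimal generating set of $\rad P_0$ (modulo $\rad^2$ each relation forces $\lambda_i^{(0)}\alpha+\lambda_i^{(1)}\beta\equiv 0$). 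Since the $\lambda_i$ are distinct, for $n\ge 2$ the defining relations for $i=1,2$ express $\alpha$ and $\beta$ as $\mathbb{C}$-linear combinations of $x_1^{w_1}$ and $x_2^{w_2}$; solving and substituting lets me eliminate the arrows $\alpha,\beta$ and presents $C(\boldsymbol{\lambda};\mathbf{w})$ as the standard canonical algebra with $n$ arms and the $n-2$ admissible relations $x_i^{w_i}=\mu_i x_1^{w_1}+\nu_i x_2^{w_2}$ for $3\le i\le n$. For $n\le 2$ this elimination removes all relations, presenting $C(\boldsymbol{\lambda};\mathbf{w})$ as a path algebra (the Kronecker, $\tilde A_2$, resp.\ $\tilde A_3$ quivers of \Cref{CanAlgsForSmalln}), which is hereditary; hence I may assume $n\ge 3$.

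For the easy vertices the resolutions are immediate. The vertex $\infty$ is a sink, so $S_\infty=P_\infty$ is projective and $\operatorname{pd} S_\infty=0$. At an arm vertex $i_j$ there is a unique outgoing arrow and no relation starts there (all relations start at $0$), so $\rad P_{i_j}\cong P_{i_{j+1}}$ (with $i_{w_i}:=\infty$), and $0\to P_{i_{j+1}}\to P_{i_j}\to S_{i_j}\to 0$ gives $\operatorname{pd} S_{i_j}=1$. The only interesting case is the source $0$. Its outgoing arrows $x_1,\dots,x_n$ give the projective cover $P^1=\bigoplus_{i=1}^n P_{i_1}\twoheadrightarrow \rad P_0$, and the minimal relations $r_i'=x_i^{w_i}-\mu_i x_1^{w_1}-\nu_i x_2^{w_2}$ ($3\le i\le n$), all ending at $\infty$, produce the next term $P^2=P_\infty^{\oplus(n-2)}$ of the standard minimal resolution of $S_0$ for an admissible bound quiver algebra. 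To finish I would show the differential $P^2\to P^1$ is injective: the generator indexed by $r_i'$ maps to an element whose component in the summand $P_{i_1}$ is the path $x_i^{w_i-1}$, which is nonzero in $P_{i_1}$ since no relation starts at $i_1$; as these distinguished components sit in pairwise distinct summands for distinct $i$ (and $P_\infty$ being the sink simple, the coefficients are scalars), any kernel element must have all coefficients zero. Thus $0\to P_\infty^{\oplus(n-2)}\to \bigoplus_i P_{i_1}\to P_0\to S_0\to 0$ is exact, $\operatorname{pd} S_0=2$, and $\operatorname{gldim} C(\boldsymbol{\lambda};\mathbf{w})\le 2$.

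The main obstacle is the bookkeeping around the presentation rather than any deep computation: because the relations in the given quiver involve the length-$1$ arrows $\alpha,\beta$, the naive ``relations index $P^2$'' recipe does not apply to $Q(\mathbf{w})$ directly, and one must first secure a genuinely minimal (admissible) presentation before the shape $P^0=P_0$, $P^1=\bigoplus_i P_{i_1}$, $P^2=P_\infty^{\oplus(n-2)}$ of the resolution of $S_0$ can be trusted. The substantive point is then the claim that these minimal relations generate the entire second syzygy module $\Omega^2 S_0$, so that no further syzygies appear; this is exactly where the standard minimal resolution of simples for admissible bound quiver algebras is invoked, after which the termination at length $2$ reduces to the elementary arm-component argument above.
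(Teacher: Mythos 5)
Your proof is correct, and it takes a genuinely different route from the paper's source. The paper itself contains no argument: the statement is quoted from \cite{GeigleLenzingWPCarisinginRepTh}, where the bound comes out of the tilting-theoretic framework --- the canonical algebra is realized as $\End(T_{\mathrm{can}})^{\mathrm{op}}$ for the canonical tilting sheaf on $\Pbb^1(\boldsymbol{\lambda};\mathbf{w})$ (exactly as the paper recalls two subsections later), and endomorphism algebras of tilting objects in hereditary abelian categories have global dimension at most $2$. Your argument is instead an elementary, self-contained bound-quiver computation: you correctly note that the given presentation is not admissible and eliminate $\alpha,\beta$ (legitimate since $\lambda_1\neq\lambda_2$ makes the relevant $2\times 2$ matrix invertible); you then exploit that all remaining relations are uniform from the source $0$ to the sink $\infty$, so the ideal is just their linear span, the arm projectives agree with those of the path algebra (giving $\mathrm{pd}\,S_\infty=0$ and $\mathrm{pd}\,S_{i_j}=1$), and the resolution $0 \to P_\infty^{\oplus(n-2)} \to \bigoplus_i P_{i_1} \to P_0 \to S_0 \to 0$ is exact: exactness at the middle term is the standard fact (Bongartz) that uniform relations starting at a vertex generate the second syzygy of the corresponding simple, and injectivity on the left follows from your scalar argument since $P_\infty$ is one-dimensional and the components $x_i^{w_i-1}$ lie in distinct summands. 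What each approach buys: yours needs no geometry and produces the resolutions explicitly (showing $\mathrm{pd}\,S_0=2$ exactly for $n\geq 3$); the Geigle--Lenzing route requires the tilting machinery but explains structurally why the bound is $2$ and simultaneously yields the derived equivalence $D^b(\Coh(\Pbb^1(\boldsymbol{\lambda};\mathbf{w})))\simeq D^b(C(\boldsymbol{\lambda};\mathbf{w}))$ that the rest of the paper relies on.
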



\begin{mydef}\label{definitionRegular}
	A representation~$M$ of~$C(\boldsymbol{\lambda}; \mathbf{w})$ is called regular if it is semistable of slope~$0$ with respect to the stability~$\dimvectd = (d_i)_{i\in Q_0(\mathbf{w})} \mapsto d_0-d_\infty$, i.e., if the dimension vector~$\dimvectd$ of~$M$ satisfies~$d_0= d_\infty$ and for every subrepresentation~$N$ of~$M$ of dimension vector~$\dimvecte$ we have~$e_0 \leq e_\infty$.
	We denote the full abelian subcategory of all regular representations by~$\Reg(C(\boldsymbol{\lambda}; \mathbf{w}))$.
\end{mydef}

\begin{rk}
	Regular representations can equivalently be defined as direct sums of indecomposable representations that are neither preprojective nor preinjective, see {\cite[Section~3 Corollary]{RingelCanonicalAlgs}}.
\end{rk}

The following is immediate from \Cref{definitionRegular}.

\begin{cor}\label{rep of C sst iff Kronecker sst}
	A representation~$M=(\alpha, \beta, A_1^{(1)}, \dots A_1^{(w_1)}, A_2^{(1)}, \dots, A_n^{(w_n)})$ of~$C(\boldsymbol{\lambda}; \mathbf{w})$ is regular if and only if~$(\alpha, \beta)$ is a regular representation of the~$2$-Kronecker quiver~$K_2$, i.e., a semistable slope~$0$ representation of~$K_2$.
\end{cor}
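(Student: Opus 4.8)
The plan is to recognize that both notions of ``regular'' in the statement are instances of $\theta$-semistability of slope $0$ for the \emph{same} stability function $\theta(\dimvectd) = d_0 - d_\infty$ (on $K_2$ the intermediate vertices simply do not exist), and that this function records only the dimensions at the two vertices $0$ and $\infty$. In particular the slope-zero requirement $d_0 = d_\infty$ is literally identical for $M$ and for $(\alpha,\beta)$, so all the content lies in comparing the subrepresentation inequalities ``$e_0 \leq e_\infty$'' on the two sides, and I would prove the corollary as two implications, each transporting a subrepresentation from one side to the other.

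First I would treat the direction ``$(\alpha,\beta)$ semistable $\Rightarrow$ $M$ regular'', which is the formal one. Given any subrepresentation $N \subseteq M$ of $C(\boldsymbol{\lambda};\mathbf{w})$, its components $(N_0, N_\infty)$ at the vertices $0$ and $\infty$ automatically form a subrepresentation of the Kronecker quiver $(\alpha,\beta)$: this is because $\alpha$ and $\beta$ are among the arrows $0 \to \infty$ of $Q(\mathbf{w})$, so closedness of $N$ under all arrows forces $\alpha(N_0), \beta(N_0) \subseteq N_\infty$. Semistability of $(\alpha,\beta)$ then gives $n_0 \leq n_\infty$ for every such $N$, which by \Cref{definitionRegular} is exactly the regularity of $M$.

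For the converse, given a subrepresentation $W = (W_0, W_\infty)$ of $(\alpha,\beta)$, I would form the subrepresentation $N \subseteq M$ of $C(\boldsymbol{\lambda};\mathbf{w})$ generated by $W_0 \subseteq V_0$. Since $0$ is a source one gets $N_0 = W_0$; along the $i$-th arm, $N_{i_j}$ is the image of $W_0$ under the partial path $A_i^{(j)}\cdots A_i^{(1)}$; and at $\infty$ one gets $N_\infty = \alpha(W_0) + \beta(W_0) + \sum_i \bigl(A_i^{(w_i)}\cdots A_i^{(1)}\bigr)(W_0)$. The crucial step is invoking the defining relations $x_i^{w_i} = \lambda_i^{(0)}\alpha + \lambda_i^{(1)}\beta$, which identify each full arm composition on $W_0$ with an element of $\alpha(W_0) + \beta(W_0)$; hence $N_\infty = \alpha(W_0) + \beta(W_0) \subseteq W_\infty$. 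Regularity of $M$ then yields $\dim W_0 = n_0 \leq n_\infty = \dim\bigl(\alpha(W_0)+\beta(W_0)\bigr) \leq \dim W_\infty$, so $(\alpha,\beta)$ is semistable.

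The main (and essentially only) nontrivial point is this use of the relations to see that the generated subrepresentation acquires nothing at $\infty$ beyond $\alpha(W_0)+\beta(W_0)$; everything else is bookkeeping about which arrows enter each vertex. One can even make the symmetry manifest and bypass the two-implication structure: both semistability conditions are equivalent to the single family of inequalities $\dim W_0 \leq \dim\bigl(\alpha(W_0)+\beta(W_0)\bigr)$ ranging over all subspaces $W_0 \subseteq V_0$, since on each side the subrepresentation minimizing the dimension at $\infty$ for a prescribed degree-$0$ part is precisely the one generated by $W_0$. From this reformulation the corollary is immediate.
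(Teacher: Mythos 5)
Your proof is correct, and it takes the same route as the paper, which offers no argument at all but simply declares the statement immediate from \Cref{definitionRegular}: your two implications (restricting a subrepresentation to the vertices $0$ and $\infty$, and conversely using the relations $A_i^{(w_i)}\cdots A_i^{(1)} = \lambda_i^{(0)}\alpha + \lambda_i^{(1)}\beta$ to see that the subrepresentation generated by $W_0$ contributes only $\alpha(W_0)+\beta(W_0)$ at $\infty$) are exactly the details being suppressed. Your closing reformulation via the single family of inequalities $\dim W_0 \leq \dim\bigl(\alpha(W_0)+\beta(W_0)\bigr)$ is a nice way to make the equivalence manifestly symmetric.
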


\begin{prop}[{\cite[Section~4 Corollary~2]{RingelCanonicalAlgs}}]
	The category of regular representations has global dimension~$1$.
\end{prop}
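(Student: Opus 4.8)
The plan is to prove the equivalent statement that the Yoneda bifunctor $\Ext^2$ of $\Reg(C(\boldsymbol{\lambda};\mathbf{w}))$ vanishes identically, and to deduce this from the vanishing of $\Ext^2$ computed in the ambient module category $\Rep(C(\boldsymbol{\lambda};\mathbf{w}))$, which has global dimension $\le 2$ by the preceding proposition. First I would record that the inclusion $\Reg(C(\boldsymbol{\lambda};\mathbf{w}))\hookrightarrow \Rep(C(\boldsymbol{\lambda};\mathbf{w}))$ is exact with extension-closed image. This is formal from \Cref{definitionRegular}: the semistable representations of a fixed slope form an abelian category in which kernels and cokernels coincide with those formed in $\Rep$, and an extension of two slope-$0$ semistables is again semistable of slope $0$. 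For a full subcategory that is closed under extensions and whose inclusion into an abelian category is exact, the induced comparison maps on Yoneda Ext-groups $\Ext^i_{\Reg}(M,N)\to \Ext^i_{\Rep}(M,N)$ are isomorphisms for $i\le 1$ and monomorphisms for $i=2$. Hence it suffices to show $\Ext^2_{\Rep}(M,N)=0$ for all regular $M,N$; granting this, the bifunctor $\Ext^2$ on $\Reg$ vanishes, which is precisely the assertion that $\Reg(C(\boldsymbol{\lambda};\mathbf{w}))$ has homological (global) dimension at most $1$.

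For the remaining vanishing $\Ext^2_{\Rep}(M,N)=0$ on regular modules I would use Serre duality on the bounded derived category. Since $A\coloneqq C(\boldsymbol{\lambda};\mathbf{w})$ has finite global dimension, $D^b(A)$ carries a Serre functor $\mathbb{S}$, and $\Ext^2_A(M,N)=\Hom_{D^b(A)}(M,N[2])$ is dual to $\Hom_{D^b(A)}(N,\mathbb{S}M[-2])$. The key point is that on the regular part the Serre functor agrees, up to the cohomological shift $[1]$, with the Auslander--Reiten translation, so that $\mathbb{S}M[-2]$ sits in cohomological degree $-1$ and the relevant $\Hom$ computes an $\Ext^{-1}$, which is zero. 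It is exactly here that the regularity hypothesis, i.e.\ semistability of slope $0$ via \Cref{rep of C sst iff Kronecker sst}, is indispensable: preprojective or preinjective indecomposables are precisely the summands on which this shift identification fails, and they are excluded by semistability.

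The main obstacle is this second step, and more precisely the clean identification of the Serre functor on the regular locus with a shift of $\tau$. A conceptually transparent alternative, which I would use to cross-check, is geometric: the canonical algebra is the endomorphism algebra of a tilting bundle $T$ on the smooth stacky curve $X=\Pbb^1(\boldsymbol{\lambda};\mathbf{w})$, and for any torsion sheaf $\cG$ one has $\Ext^1_{\Coh(X)}(T,\cG)=0$ because $T$ is locally free and, by Serre duality on $X$, there are no nonzero maps from $\cG$ to a bundle. Thus torsion sheaves are carried to honest modules in degree $0$, recovering the identification with $\Reg$, and $\Ext^2_A(M,N)\cong \Ext^2_{\Coh(X)}(\cG,\cG')$, which vanishes since $\Coh(X)$ is hereditary (a smooth curve has local homological dimension $1$ and cohomological dimension $1$). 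Because this route presupposes the tilting equivalence proved only later in the paper, for a self-contained argument I would instead carry out the representation-theoretic computation of the previous paragraph, whose technical heart is showing that every regular module has projective dimension at most one.
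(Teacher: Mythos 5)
The paper itself gives no proof here: the proposition is quoted verbatim from Ringel's \emph{Canonical algebras} (Section~4, Corollary~2), so your attempt must stand on its own. Its first half does: the reduction to showing $\Ext^2_{\Rep}(M,N)=0$ for regular $M,N$ is correct, because $\Reg(C(\boldsymbol{\lambda};\mathbf{w}))$ is a full, extension-closed abelian subcategory of $\Rep(C(\boldsymbol{\lambda};\mathbf{w}))$ whose inclusion is exact (kernels, cokernels and extensions of slope-$0$ semistable representations are again slope-$0$ semistable, by the additivity of $\dimvectd\mapsto d_0-d_\infty$), and for such a subcategory the comparison maps on Yoneda Ext-groups are indeed isomorphisms in degrees $\le 1$ and injective in degree $2$.

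The genuine gap is that the remaining vanishing $\Ext^2_{\Rep}(M,N)=0$ --- which is the entire content of Ringel's theorem --- is never actually established. Your Serre-duality route rests on the identification $\mathbb{S}M\cong\tau M[1]$ for regular $M$, and for an algebra of global dimension $2$ this is not a formal property of the Serre functor: $\mathbb{S}M[-1]$ is a priori a genuine two-term complex, and asserting that it is the module $\tau M$ is essentially equivalent to the assertions $\operatorname{pd}M\le 1$ and $\Hom_A(M,A)=0$ that you are trying to prove, so this route is circular in content. You flag this yourself (``the main obstacle''), and your fallback --- ``showing that every regular module has projective dimension at most one'' --- is exactly the missing step, stated but not carried out; the geometric route via the tilting bundle is, as you also concede, circular in the paper's logical order, since the identification of torsion sheaves with regular representations is only proved later using that equivalence. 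A complete representation-theoretic argument would, for instance, use the standard criterion $\operatorname{pd}M\le 1\iff\Hom_A(DA,\tau M)=0$ (and its dual for injective dimension), together with the fact that $\tau$ preserves regular modules and that there are no nonzero homomorphisms from injectives to regulars; this separation property of the regular family is precisely the substance of Ringel's analysis and cannot be waved through. As it stands, your text is a correct reduction plus a plan, with the core of the proof missing.
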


\begin{mydef}
	The Euler form~$\langle M, N\rangle$ of two representations~$M,N$ of~$C(\boldsymbol{\lambda}; \mathbf{w})$ is defined as
	\[
	\langle M, N \rangle \coloneqq \dim \Hom(M,N) - \dim \Ext^1(M,N) + \dim \Ext^2(M,N).
	\]
	For~$M,N$ regular this simplifies to~$\langle M, N \rangle = \dim \Hom(M,N) - \dim \Ext^1(M,N)$.
\end{mydef}

\begin{prop}[{{\cite[Proposition~3.13]{ASS1}}}] \label{EulerFormFromDimension}
	The Euler form~$\langle - , - \rangle$ of two representations~$M, N$ of~$C(\boldsymbol{\lambda}; \mathbf{w})$ only depends on their dimension vectors~$\dimvectd=\dim M$ and~$\dimvecte=\dim N$.
	We therefore also write~$\langle \dimvectd, \dimvecte \rangle \coloneqq \langle M, N \rangle$.
	Explicitly, it is given by 
	\[
	\langle \dimvectd , \dimvecte \rangle = \langle \dimvectd , \dimvecte \rangle_{Q(\mathbf{w})} + n d_0 e_\infty = \sum_{i\in Q_0(\mathbf{w})} d_i e_i - \sum_{i \to j} d_i e_j + n d_0 e_\infty.
	\]
	Here, the summand~$n d_0 e_\infty$ comes from the~$n$ relations in the algebra.
\end{prop}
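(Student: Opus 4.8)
The plan is to split the statement into its two assertions: that $\langle M,N\rangle$ depends only on the dimension vectors $\dimvectd = \dim M$ and $\dimvecte = \dim N$, and then the explicit combinatorial formula. For the first assertion I would invoke that $C(\boldsymbol{\lambda};\mathbf{w})$ has global dimension at most~$2$ (as recorded just above). For a finite-dimensional algebra of finite global dimension the expression $\sum_{i\geq 0}(-1)^i\dim\Ext^i(M,N)$ is a well-defined integer, and the long exact $\Ext$-sequences (which terminate because $\Ext^i = 0$ for $i>2$) show that it is additive on short exact sequences in each argument. Hence it descends to a bilinear form on the Grothendieck group $K_0(\Rep(C(\boldsymbol{\lambda};\mathbf{w})))$, which is free abelian on the classes of the simple modules $S_i$ and is identified with $\Z^{Q_0(\mathbf{w})}$ via the dimension vector. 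This is precisely the content of the cited \cite[Proposition~3.13]{ASS1} and legitimizes the notation $\langle\dimvectd,\dimvecte\rangle$.

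For the explicit formula it then suffices, by bilinearity, to compute $\langle S_i,S_j\rangle$ on simple modules and extend. The tool is the beginning of a minimal projective resolution attached to the bound-quiver presentation: for a bound quiver algebra the minimal resolution of $S_i$ begins $\bigoplus_{\rho\colon i\to j}P_j \to \bigoplus_{a\colon i\to j}P_j \to P_i \to S_i$, where the middle sum runs over the arrows of $Q(\mathbf{w})$ emanating from $i$ and the left sum over a minimal set of relations emanating from $i$. Applying $\Hom(-,S_j)$ and using $\dim\Hom(P_k,S_j)=\delta_{kj}$ collapses the alternating sum of dimensions to $\langle S_i,S_j\rangle = \delta_{ij} - \#\{\text{arrows }i\to j\} + \#\{\text{relations }i\to j\}$. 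Since all $n$ relations $x_i^{w_i}=\lambda_i^{(0)}\alpha+\lambda_i^{(1)}\beta$ run from $0$ to $\infty$, the relation count equals $n$ at $(i,j)=(0,\infty)$ and vanishes otherwise; recognizing the diagonal-and-arrow contribution as the Euler form $\langle\dimvectd,\dimvecte\rangle_{Q(\mathbf{w})}$ of the underlying path algebra then yields $\langle\dimvectd,\dimvecte\rangle = \langle\dimvectd,\dimvecte\rangle_{Q(\mathbf{w})} + n\,d_0 e_\infty$ after extending bilinearly.

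The one delicate point—and where I expect to spend the most care—is that the given presentation is \emph{not} admissible: the relations contain the length-one arrows $\alpha,\beta$, so the displayed quiver is not the Gabriel quiver and the naive length-two complex is a priori non-minimal, meaning the clean count above does not literally apply on the nose. The remedy is that the alternating sum of dimensions is insensitive to such redundancy: each redundant arrow is paired with a relation expressing it, and the two contributions cancel in the formula. Concretely, after normalizing two of the $\lambda_i$ to $[1:0]$ and $[0:1]$ via \Cref{PGL2 on C(lambdaw)} one eliminates $\alpha$ and $\beta$ and passes to the genuinely admissible Gabriel presentation; the $-2\,d_0 e_\infty$ lost by deleting the two arrows is exactly offset by the two relations consumed, leaving the coefficient $(n-2)\,d_0 e_\infty$ unchanged and agreeing with the presented form. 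What remains is to confirm that the length-two complex over the admissible presentation is exact, i.e.\ that the surviving relations form a minimal linearly independent generating set of the ideal; this holds because each relation involves a distinct branch path $x_i^{w_i}$ not occurring in the others, so they are manifestly independent. Combined with $\mathrm{gl.dim}\,C(\boldsymbol{\lambda};\mathbf{w})\leq 2$, this makes the complex a genuine projective resolution and completes the computation of the Euler form. As a sanity check I would verify the formula against the small cases of \Cref{CanAlgsForSmalln}, where it reproduces the hereditary Euler forms of $K_2$, $\tilde{A}_2$, and $\tilde{A}_3$.
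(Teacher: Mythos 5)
Your proof is correct. Note, though, that the paper does not prove this proposition at all: it is stated with the citation \cite[Proposition~3.13]{ASS1}, and your argument is essentially the standard proof behind that citation --- descend the alternating sum $\sum_i (-1)^i \dim\Ext^i(M,N)$ to $K_0$ using $\mathrm{gl.dim}\, C(\boldsymbol{\lambda};\mathbf{w})\leq 2$ and long exact sequences, then evaluate on simples via the start of a minimal projective resolution, where the $\Ext^2$-contribution counts minimal relations (Bongartz). Where your write-up genuinely adds value is precisely the point the bare citation glosses over: the displayed presentation of $C(\boldsymbol{\lambda};\mathbf{w})$ is \emph{not} admissible (the relations involve the arrows $\alpha,\beta$), so the arrows-minus-relations formula cannot be applied verbatim to $Q(\mathbf{w})$ with its $n$ relations; your elimination of $\alpha,\beta$ via \Cref{PGL2 on C(lambdaw)} and the two-arrows-for-two-relations cancellation is exactly the bookkeeping needed to see that the stated formula nevertheless holds. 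Two small points to tighten, neither a real gap: (i) normalizing two of the $\lambda_i$ to $[1:0]$ and $[0:1]$ presupposes $n\geq 2$; for $n=0,1$ the algebra is hereditary after eliminating at most one arrow, and the formula is immediate. (ii) For the $\Ext^2$-count, linear independence of the $n-2$ surviving relations inside the ideal $I$ is not by itself the right condition; they must be independent modulo $I\mathfrak{a}+\mathfrak{a}I$ (with $\mathfrak{a}$ the arrow ideal), i.e.\ be \emph{minimal} relations. This is immediate here because $Q(\mathbf{w})$ has no arrows into $0$ and none out of $\infty$, so $e_\infty(I\mathfrak{a}+\mathfrak{a}I)e_0=0$ and independence in $I$ suffices.
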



\begin{cor}\label{Euler Form symmetric iff w=2^n}
	Let $w_i \geq 2$.
	The category~$\Reg(C(\boldsymbol{\lambda}; \mathbf{w}))$ has symmetric Euler form if and only if all~$w_i=2$.
	In this case we write~$\dimvectd= (d_0, d_1, \dots, d_n, d_0)$ for the dimension vector and we can simplify the Euler form to 
	\[
	\langle \dimvectd , \dimvecte \rangle = n d_0e_0+ \sum_{i=1}^n (d_i e_i- d_0 e_i - d_ie_0).
	\]
\end{cor}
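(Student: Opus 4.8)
The plan is to read off both assertions directly from the explicit formula in \Cref{EulerFormFromDimension}, exploiting that dimension vectors of regular representations satisfy $d_0 = d_\infty$ by \Cref{definitionRegular}. The key observation is that on this sublattice the contributions of the two arrows $\alpha, \beta$ and of the $n$ relations cancel in the antisymmetric part: substituting $d_\infty = d_0$ and $e_\infty = e_0$, the correction term $n d_0 e_\infty$ becomes the symmetric expression $n d_0 e_0$, and the arrows $\alpha, \beta\colon 0 \to \infty$ contribute $2(d_0 e_\infty - d_\infty e_0) = 0$ to $\langle \dimvectd, \dimvecte\rangle - \langle \dimvecte, \dimvectd\rangle$. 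Hence on regular dimension vectors the antisymmetric part of the Euler form reduces to that of the path-algebra form $\langle -, -\rangle_{Q(\mathbf{w})}$, which splits as a sum over the branches $0 \to i_1 \to \cdots \to i_{w_i-1} \to \infty$.

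For the ``if'' direction I would set all $w_i = 2$ and substitute $d_\infty = d_0$, $e_\infty = e_0$ into \Cref{EulerFormFromDimension}. Each branch then has a single interior vertex, whose dimension I abbreviate $d_i \coloneqq d_{i_1}$; collapsing the vertex sum, the two chain-arrows $0 \to i_1 \to \infty$ per branch, the arrows $\alpha, \beta$, and the correction term yields
\[
\langle \dimvectd, \dimvecte\rangle = n d_0 e_0 + \sum_{i=1}^n (d_i e_i - d_0 e_i - d_i e_0),
\]
which is manifestly invariant under $\dimvectd \leftrightarrow \dimvecte$. This single computation simultaneously proves symmetry and establishes the claimed formula.

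For the ``only if'' direction I would argue by contraposition. If some $w_i \geq 3$, then branch $i$ contains two consecutive interior vertices $i_1, i_2$ joined by a single arrow $i_1 \to i_2$ and none in the opposite direction. The simple modules $S_{i_1}$ and $S_{i_2}$ satisfy $d_0 = d_\infty = 0$, so they lie in $\Reg(C(\boldsymbol{\lambda}; \mathbf{w}))$ by \Cref{definitionRegular}; evaluating \Cref{EulerFormFromDimension} on them gives $\langle S_{i_1}, S_{i_2}\rangle = -1$ but $\langle S_{i_2}, S_{i_1}\rangle = 0$, so the form is not symmetric. The only delicate point is the bookkeeping on the regular lattice described above: once one checks that the $(0,\infty)$-contributions cancel when $d_0 = d_\infty$, the antisymmetric part localizes on each branch, and a branch contributes zero exactly when it has a single interior vertex, i.e.\ precisely when $w_i = 2$.
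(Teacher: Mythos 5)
Your proposal is correct and follows the paper's (implicit) approach: the paper states this corollary without proof as an immediate consequence of the explicit formula in \Cref{EulerFormFromDimension}, and your computation --- substituting $d_\infty = d_0$, $e_\infty = e_0$, collapsing each arm with a single interior vertex, and observing that the resulting expression is symmetric --- is exactly that verification. Your contrapositive argument for the ``only if'' direction, using the simples $S_{i_1}$, $S_{i_2}$ at consecutive interior vertices of an arm with $w_i \geq 3$ (which are indeed regular, since their dimension vectors satisfy $d_0 = d_\infty = 0$ and every subrepresentation trivially satisfies $e_0 \leq e_\infty$), correctly supplies the detail the paper leaves unstated.
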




\subsection{The Stack of Regular Representations}
We can now talk about the stack of objects of regular representations of a canonical algebra.
We construct the functor~$S\mapsto \Reg(C(\boldsymbol{\lambda}; \mathbf{w}))_S$ in such a way that this stack will be given by the disjoint union over all dimension vectors~$(d_0, d_1, \dots, d_n, d_0)$ of quotient stacks.
\begin{mydef}
	For a scheme~$S$ let the category of~$S$-valued regular representations be the category of finite locally free~$\mathcal{O}_S$-module representations of the quiver of~$C(\boldsymbol{\lambda}; \mathbf{w})$ that satisfy the defining relations and are semistable.
\end{mydef}
\begin{cor}\label{Mfd as stack}
	The stack of objects~$\Mf$ in~$\Reg(C(\boldsymbol{\lambda}; \mathbf{w}))$ is a disjoint union
	\[
	\Mf = \coprod_{\substack{\dimvectd\in \N_0^{Q_0( \mathbf{w} )} \! , \\ d_0 = d_\infty}} \Mf_\dimvectd ,
	\]
	where~$\Mf_\dimvectd$ is the quotient stack~$[R_\dimvectd / G_\dimvectd]$ with
	\[
	R_\dimvectd = \left\{ \left(\alpha, \beta, A_1^{(1)} \! , \dots, A_{1}^{(w_1)} \! , A_2^{(1)} \! , \dots, A_{n}^{(w_n)}\right) \ \middle| \  \begin{aligned}
		&\lambda^{(0)}_i \alpha +  \lambda^{(1)}_i \beta = A_{ i }^{( w_i )}\cdots A_i^{(1)} \text{ and}\\ &\text{$(\alpha, \beta)$ is a semistable representation}\\ &\text{of the quiver~$K_2$}
	\end{aligned} \right\}
	\]
	for~$\alpha, \, \beta \in \C^{d_\infty \! \times d_0}$ and~$(A_1^{(1)} \! , \dots, A_{n}^{ ( w_n ) }) \in \C^{d_{1_1} \! \times d_0} \times \C^{d_{1_2} \! \times d_{1_1}} \times \dots \times \C^{d_\infty \! \times d_{n_{w_n-1}}}$, and
	\[
	G_\dimvectd = \prod_{i\in Q_0( \mathbf{w} )}\GL_{d_i}(\C).
	\]
	
	Similarly, we have that~$\mathfrak{Exact}=\coprod_{ \dimvectd , \dimvecte } [ R_{ \dimvectd , \dimvecte }/G_{ \dimvectd , \dimvecte } ]$ with
	\[
	R_{ \dimvectd , \dimvecte } = \left\{ (\alpha, \dots, A_{n}^{(w_n)}) \in R_{ \dimvectd + \dimvecte } \ \middle| \  	\alpha, \dots , A_{n}^{(w_n)} \text{ are block upper triangular} \right\} \subseteq R_{ \dimvectd + \dimvecte }
	\]
	and
	\[
	G_{ \dimvectd , \dimvecte } = \left\{(g_i)\in G_{ \dimvectd + \dimvecte } \ \middle| \   \text{$g_i$ is block upper triangular} \right\}.
	\]
\end{cor}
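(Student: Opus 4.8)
The plan is to obtain $\Mf$ as an open substack of the stack of all representations of the quiver $Q(\mathbf{w})$ subject to the canonical relations, and then to read off the explicit description by combining the quiver combinatorics with the characterization of regularity in \Cref{rep of C sst iff Kronecker sst}. Since regular representations form a full subcategory of $\Rep(C(\boldsymbol{\lambda}; \mathbf{w}))$ cut out by a (fiberwise) semistability condition, the construction of $\Mf$ is governed by the general framework for quivers with relations recorded in \Cref{Ex:QuiverNOstabilityAssumptions}.

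First I would recall from \Cref{Ex:QuiverNOstabilityAssumptions} that the based representations of $Q(\mathbf{w})$ without relations and with dimension vector $\dimvectd$ form the affine space $\prod_{i \to j} \mathbb{A}^{d_j \times d_i}$ indexed by the arrows of $Q(\mathbf{w})$: the two arrows $\alpha, \beta \colon 0 \to \infty$ contribute matrices in $\C^{d_\infty \times d_0}$, and the $i$-th arm contributes the chain $A_i^{(1)}, \dots, A_i^{(w_i)}$ of the indicated sizes. Imposing the relations $A_i^{(w_i)} \cdots A_i^{(1)} = \lambda_i^{(0)} \alpha + \lambda_i^{(1)} \beta$ carves out the closed subscheme of representations of $C(\boldsymbol{\lambda}; \mathbf{w})$, on which $G_\dimvectd = \prod_{i \in Q_0(\mathbf{w})} \GL_{d_i}(\C)$ acts by change of basis.

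Next, by \Cref{rep of C sst iff Kronecker sst} a representation is regular exactly when the pair $(\alpha, \beta)$ is a slope-$0$ semistable representation of $K_2$; in particular the slope-$0$ condition in \Cref{definitionRegular} forces $d_0 = d_\infty$, which accounts for the indexing of the disjoint union. As semistability is an open condition on quiver representation spaces, this cuts out precisely the open subscheme $R_\dimvectd$ of the statement, giving $\Mf_\dimvectd = [R_\dimvectd / G_\dimvectd]$. The description of $\mathfrak{Exact}$ then follows from the general parabolic construction of assumption~(5): a short exact sequence of representations amounts to a representation supported on a two-step flag, which is encoded by requiring every structure matrix to be block upper triangular and the automorphism group to be the corresponding parabolic $G_{\dimvectd, \dimvecte}$.

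The only point requiring genuine care is matching the moduli functor $S \mapsto \Reg(C(\boldsymbol{\lambda}; \mathbf{w}))_S$ with the functor of points of $[R_\dimvectd / G_\dimvectd]$: an $S$-family of regular representations is a finite locally free $\mathcal{O}_S$-module representation satisfying the relations and fiberwise semistable, and I would check that this is the same datum as a $G_\dimvectd$-torsor over $S$ together with an equivariant map to $R_\dimvectd$. Here local freeness supplies the torsor while the equivariant map records the structure morphisms, and openness of the semistable locus guarantees that fiberwise semistability is captured by passing to the open subscheme $R_\dimvectd \subseteq R_\dimvectd(Q(\mathbf{w}))$. I expect this verification -- that fiberwise semistability is open and that the tautological family is universal -- to be the main obstacle, since the remainder is a direct translation of the combinatorics of $Q(\mathbf{w})$.
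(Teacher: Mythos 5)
Your proposal is correct and is essentially the argument the paper intends: the paper gives no explicit proof, having defined the functor $S \mapsto \Reg(C(\boldsymbol{\lambda}; \mathbf{w}))_S$ (finite locally free $\mathcal{O}_S$-module representations satisfying the relations and fiberwise semistability) precisely so that the quotient-stack description follows from the standard quiver framework of \Cref{Ex:QuiverNOstabilityAssumptions}, the relations cutting out a closed subscheme, openness of the semistable locus, and \Cref{rep of C sst iff Kronecker sst} forcing $d_0 = d_\infty$. Your final paragraph (matching locally free families with $G_\dimvectd$-torsors plus equivariant maps to $R_\dimvectd$) spells out the same verification the paper leaves implicit.
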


\begin{prop}\label{Rd smooth + dim}
	The variety~$R_\dimvectd$ is a smooth irreducible quasi-projective variety of dimension~$-\langle \dimvectd , \dimvectd\rangle + \dim G_\dimvectd$. 
	Additionally, the closed immersion~$R_\dimvectd \mono R_\dimvectd^\sst(Q(\mathbf{w}))$ is regular.
\end{prop}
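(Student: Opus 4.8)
The plan is to present $R_\dimvectd$ as a complete intersection inside the smooth ambient variety $R_\dimvectd^\sst(Q(\mathbf{w}))$ and to extract smoothness, the dimension, and regularity of the immersion from a single tangent-space computation, dealing with irreducibility separately. Recall from \Cref{Mfd as stack} that $R_\dimvectd^\sst(Q(\mathbf{w}))$ is an open subscheme of the affine space $R_\dimvectd(Q(\mathbf{w})) = \prod_{a\colon i\to j}\C^{d_j\times d_i}$, hence a smooth irreducible variety of dimension $\sum_{a\colon i\to j} d_i d_j$. Inside it, $R_\dimvectd$ is cut out by the $n$ matrix relations $\lambda_i^{(0)}\alpha + \lambda_i^{(1)}\beta = A_i^{(w_i)}\cdots A_i^{(1)}$, that is, by $n\, d_0 d_\infty$ scalar equations. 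Consequently every irreducible component of $R_\dimvectd$ has codimension at most $n\, d_0 d_\infty$, hence dimension at least
\[
D \coloneqq \sum_{a\colon i\to j} d_i d_j - n\, d_0 d_\infty ,
\]
and a short computation with \Cref{EulerFormFromDimension} (using $d_0 = d_\infty$) identifies $D = -\langle \dimvectd,\dimvectd\rangle + \dim G_\dimvectd$, the claimed dimension.

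Next I would compute the Zariski tangent space. Since the ambient is open in affine space, at a point $M = (\alpha,\beta,(A_i^{(j)}))$ the tangent space $T_M R_\dimvectd$ is the kernel of the linearization $d^1\colon C^1 \to C^2$ of the relations, where $C^1 = \bigoplus_{a\colon i\to j}\Hom(M_i,M_j)$ and $C^2 = \bigoplus_{i=1}^{n}\Hom(M_0,M_\infty)$; explicitly, $(\dot\alpha,\dot\beta,(\dot A_i^{(j)}))$ lies in $\ker d^1$ if and only if
\[
\lambda_i^{(0)}\dot\alpha + \lambda_i^{(1)}\dot\beta = \sum_{k=1}^{w_i} A_i^{(w_i)}\cdots A_i^{(k+1)}\,\dot A_i^{(k)}\,A_i^{(k-1)}\cdots A_i^{(1)}
\]
for every $i$. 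Prepending the differential $d^0$ induced by the infinitesimal $G_\dimvectd$-action, the complex $C^0 \xrightarrow{d^0} C^1 \xrightarrow{d^1} C^2$ is the standard complex attached to the presentation of the global-dimension-$2$ algebra $C(\boldsymbol{\lambda};\mathbf{w})$ by the quiver $Q(\mathbf{w})$ together with its $n$ relations, so that $\coker d^1 \cong \Ext^2_{C(\boldsymbol{\lambda};\mathbf{w})}(M,M)$. As $M$ is regular, the Euler form omits its $\Ext^2$-term (the simplification recorded in \Cref{section2}), i.e.\ $\Ext^2(M,M) = 0$; hence $d^1$ is surjective and
\[
\dim T_M R_\dimvectd = \dim C^1 - n\, d_0 d_\infty = D .
\]
Combining the two estimates, at every point one has $D \leq \dim_M R_\dimvectd \leq \dim T_M R_\dimvectd = D$, so $R_\dimvectd$ is smooth and of pure dimension $D$. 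Being a smooth closed subvariety of the smooth variety $R_\dimvectd^\sst(Q(\mathbf{w}))$ of codimension equal to the number $n\, d_0 d_\infty$ of defining functions, those functions form a regular sequence, so the immersion $R_\dimvectd \mono R_\dimvectd^\sst(Q(\mathbf{w}))$ is regular.

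It remains to prove irreducibility, which for a smooth variety amounts to connectedness; I expect this to be the main obstacle, all the more since the general framework of \Cref{Subsection Axiomatics} explicitly does not grant connectedness of the $R_\dimvectd$. Here I would argue by induction on the number $n$ of marked points. The base case $n=0$ is immediate: then $Q(\mathbf{w})$ is $K_2$ with no relations and $R_\dimvectd = R_{(d_0,d_0)}^\sst(K_2)$ is open in affine space, hence irreducible. For the inductive step I would use the forgetful morphism to $R_{\dimvectd'}(C(\lambda_1,\dots,\lambda_{n-1};\mathbf{w}'))$ that drops the chain at $\lambda_n$, whose fiber over a given representation is the variety of factorizations of the pencil matrix $\lambda_n^{(0)}\alpha + \lambda_n^{(1)}\beta$ through the intermediate dimensions $d_{n_1},\dots,d_{n_{w_n-1}}$. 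The delicate point is that these factorization fibers jump in dimension as the rank of the pencil drops, so a naive fibration argument fails; instead one must establish connectedness directly, and the pure-dimensionality already proven is the crucial input preventing spurious components over the rank-degeneration locus. Alternatively, one may invoke the equivalence $\Tor(\Pbb^1(\boldsymbol{\lambda};\mathbf{w})) \cong \Reg(C(\boldsymbol{\lambda};\mathbf{w}))$ of \Cref{section2} and deduce connectedness from the fact that any torsion sheaf on the smooth stacky curve $\Pbb^1(\boldsymbol{\lambda};\mathbf{w})$ degenerates, within its fixed numerical type, to a generically supported one.
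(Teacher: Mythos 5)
Your treatment of smoothness, the dimension formula, and regularity of the immersion is essentially sound, and it is genuinely more self-contained than the paper's proof, which simply cites \cite[Corollary~3.28]{meinhardt2015donaldsonthomasinvariantsvsintersection} for smoothness and \cite[3.6 and~3.7]{BobinskiGeomOfRegModulesOverCanAlgs} for irreducibility and the dimension, and then runs the same codimension-equals-number-of-equations argument for regularity that you give. One caveat on your tangent-space step: the presentation of $C(\boldsymbol{\lambda};\mathbf{w})$ by $Q(\mathbf{w})$ and the $n$ relations is \emph{not} admissible (the relations involve the arrows $\alpha,\beta$ linearly), so the standard identification $\coker d^1\cong\Ext^2(M,M)$ for minimal presentations of algebras by quivers with relations cannot be quoted off the shelf. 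What you actually need is only the dimension identity $\dim\coker d^1=\dim\Ext^2(M,M)$, and that does follow from $\ker d^0=\Hom(M,M)$, the deformation-theoretic isomorphism $\ker d^1/\im d^0\cong\Ext^1(M,M)$, and the Euler-form formula of \Cref{EulerFormFromDimension}; you should argue via this count rather than assert the isomorphism.

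The genuine gap is irreducibility, which is part of the statement and which you defer rather than prove. Neither of your sketches closes it. The inductive fibration over the representation variety for $(\lambda_1,\dots,\lambda_{n-1})$ needs two unproved inputs: (a) irreducibility of the locus lying over the stratum where the pencil $\lambda_n^{(0)}\alpha+\lambda_n^{(1)}\beta$ has generic rank, i.e.\ irreducibility of a matrix-factorization fibration, which is not obvious; and (b) the estimate that the preimage of the rank-degenerate strata has dimension strictly less than $D$. Pure-dimensionality only tells you how to conclude \emph{once} (a) and (b) are known; it supplies neither. The alternative sketch via torsion sheaves is unsubstantiated and, as stated, false: a torsion sheaf supported away from the stacky points has class a multiple of $\dnull$, so for dimension vectors such as $\mathrm{e}_k$ or $\mathrm{f}_k$ (or any $\dimvectd$ with some $d_k\neq d_0$) there is no generically supported sheaf in the given class at all, and even where the statement makes sense one would still have to prove density and connectedness of the generically supported locus. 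This is precisely the point the paper disposes of by citing \cite[3.6 and~3.7]{BobinskiGeomOfRegModulesOverCanAlgs}; without that input, or an actual argument replacing it, your proposal does not establish the proposition.
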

\begin{proof}
	The variety is irreducible of dimension~$-\langle \dimvectd , \dimvectd \rangle + \dim G_\dimvectd$ by \cite[3.6 and~3.7]{BobinskiGeomOfRegModulesOverCanAlgs}.
	It is smooth by \cite[Corollary~3.28]{meinhardt2015donaldsonthomasinvariantsvsintersection}.
	Regularity of the immersion follows as the codimension of~$R_\dimvectd$ in~$R_\dimvectd^\sst(Q(\mathbf{w}))$ is given by
	\[
	(-\langle \dimvectd , \dimvectd \rangle_{Q(\mathbf{w})} + \dim G_\dimvectd) - (-\langle \dimvectd , \dimvectd \rangle + \dim G_\dimvectd) = n\cdot d_0^2,
	\]
	which is equal to the number of equations used to define~$R_\dimvectd$ in~$R_\dimvectd^\sst(Q( \mathbf{w} ))$.
\end{proof}


\begin{prop}\label{Rde smooth + dim}
	The map~$R_{ \dimvectd , \dimvecte }\to R_\dimvectd \times R_\dimvecte$ is a vector bundle with fibers of dimension~$-\langle \dimvecte , \dimvectd \rangle + \dim G_{ \dimvectd , \dimvecte } - \dim G_\dimvectd - \dim G_\dimvecte$.
	It follows that~$R_{ \dimvectd , \dimvecte }$ is a smooth connected variety.
	Additionally, the closed immersion~$R_{ \dimvectd , \dimvecte }\mono R_{ \dimvectd , \dimvecte }^\sst(Q(\mathbf{w}))$ is regular.
\end{prop}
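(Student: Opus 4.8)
The plan is to exhibit $R_{\dimvectd,\dimvecte}\to R_\dimvectd\times R_\dimvecte$ as the kernel of a fibrewise surjective map of vector bundles and to read off all three assertions from this description. The map sends a block upper triangular representation to its diagonal blocks $(M',M'')$; over a fixed pair the fibre consists of the off-diagonal data, namely one block $B_a\in\Hom(\C^{e_i},\C^{d_j})$ for each arrow $a\colon i\to j$ of $Q(\mathbf{w})$, subject to the off-diagonal part of each of the $n$ relations. Because the diagonal parts of the relations already hold -- they are precisely the relations defining $M'$ and $M''$ -- the surviving conditions are \emph{linear} in the $B_a$, so the fibre is the kernel of a linear map
\[
d^1\colon C^1\coloneqq\bigoplus_{a\colon i\to j}\Hom(\C^{e_i},\C^{d_j})\longrightarrow C^2\coloneqq\bigoplus_{\text{relations}}\Hom(\C^{e_0},\C^{d_\infty}).
\]
I would identify $d^1$ with the top differential of the standard complex $C^0\to C^1\xrightarrow{d^1}C^2$ (vertices, arrows, relations) whose cohomology computes $\Ext^\bullet(M'',M')$, so that $\coker d^1=\Ext^2(M'',M')$ and the Euler characteristic of $C^\bullet$ equals $\langle\dimvecte,\dimvectd\rangle$, matching \Cref{EulerFormFromDimension}.

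The crucial input is that $\Reg(C(\boldsymbol{\lambda};\mathbf{w}))$ is hereditary, so $\Ext^2(M'',M')=0$ for all regular $M',M''$ (equivalently, the Euler form reduces to $\dim\Hom-\dim\Ext^1$). Hence $d^1$ is surjective at every point of $R_\dimvectd\times R_\dimvecte$. Since the entries of $d^1$ are polynomial in those of $(M',M'')$, it is a morphism of trivial vector bundles on $R_\dimvectd\times R_\dimvecte$, and a fibrewise surjective bundle map has kernel a subbundle of constant rank $\dim C^1-\dim C^2$; this is exactly the vector bundle claim (alternatively the bundle property is \cite[Corollary~3.28]{meinhardt2015donaldsonthomasinvariantsvsintersection}, as already invoked in \Cref{section1}). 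For the rank I would then do the bookkeeping $\dim C^1=\sum_{a\colon i\to j}d_je_i$, $\dim C^2=n\,d_0e_0$ and $\dim G_{\dimvectd,\dimvecte}-\dim G_\dimvectd-\dim G_\dimvecte=\sum_i d_ie_i$, which combine with the formula of \Cref{EulerFormFromDimension} to give
\[
\dim C^1-\dim C^2=-\langle\dimvecte,\dimvectd\rangle+\dim G_{\dimvectd,\dimvecte}-\dim G_\dimvectd-\dim G_\dimvecte,
\]
the asserted fibre dimension.

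With the vector bundle established the rest is formal. The base $R_\dimvectd\times R_\dimvecte$ is smooth and irreducible by \Cref{Rd smooth + dim}, hence so is its total space $R_{\dimvectd,\dimvecte}$, which is in particular a smooth connected variety. Finally $R_{\dimvectd,\dimvecte}^\sst(Q(\mathbf{w}))$ is open in the affine space of block upper triangular representations of $Q(\mathbf{w})$, hence smooth, and $R_{\dimvectd,\dimvecte}$ is the closed subscheme of it cut out by the relations; a closed immersion of smooth $\C$-varieties is automatically a regular immersion, which gives the last claim (one checks as in \Cref{Rd smooth + dim} that the codimension equals $n(d_0^2+d_0e_0+e_0^2)$, the number of scalar relation equations surviving on block upper triangular matrices). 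The one genuinely nonformal step is the surjectivity of $d^1$: this is where hereditariness of $\Reg$ is indispensable, for it is precisely the vanishing $\Ext^2=0$ that keeps the kernel rank constant and equal to the stated value, and without it the map $R_{\dimvectd,\dimvecte}\to R_\dimvectd\times R_\dimvecte$ would fail to be a vector bundle.
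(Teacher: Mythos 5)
Your proof is correct, but it takes a genuinely different route from the paper's. The paper settles the first two assertions by citation to the axiomatic framework it has already verified: the bundle property is \cite[Corollary~3.28]{meinhardt2015donaldsonthomasinvariantsvsintersection}, the fibre dimension $\sum_i d_ie_i-\langle\dimvecte,\dimvectd\rangle$ is \cite[Proposition~3.25]{meinhardt2015donaldsonthomasinvariantsvsintersection}, and regularity of the immersion is then deduced exactly as in \Cref{Rd smooth + dim}, by checking that the codimension $n(d_0^2+e_0^2+d_0e_0)$ equals the number of defining equations, so that these form a regular sequence in the smooth (hence Cohen--Macaulay) ambient variety. You instead unpack what sits behind those citations: the fibre over $(M',M'')$ is the space of cocycles $\ker d^1$ of the standard complex of the quiver with relations, $d^1$ is fibrewise surjective because $\Ext^2(M'',M')=0$ for regular modules, and the rank bookkeeping reproduces the asserted fibre dimension via \Cref{EulerFormFromDimension}. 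This buys a self-contained argument that makes visible exactly where homological dimension one enters -- something the paper's citation hides -- at the cost of redoing general theory the paper gets for free from its axioms. Your route to the last claim (a closed immersion of smooth varieties is regular) also differs from the paper's equation count; it is legitimate here because your bundle description shows that the scheme-theoretic $R_{\dimvectd,\dimvecte}$, cut out by linear equations of constant full rank over the smooth base $R_\dimvectd\times R_\dimvecte$, is itself smooth.

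Two points of precision, neither fatal. First, the vanishing you need is $\Ext^2(M'',M')=0$ computed in $\Rep(C(\boldsymbol{\lambda};\mathbf{w}))$, not Yoneda-$\Ext^2$ inside the extension-closed subcategory $\Reg$; hereditariness of $\Reg$ alone does not formally give this, since higher Ext groups in a full exact subcategory need not agree with those of the ambient module category. The correct reference is the paper's own statement that for regular $M,N$ the Euler form reduces to $\dim\Hom-\dim\Ext^1$, which you mention as the ``equivalent'' formulation and should treat as the primary one. Second, the identification $\coker d^1\cong\Ext^2(M'',M')$ for a quiver with relations requires minimality of the relations in general; but you only need the equality of dimensions, and that follows from the standard facts $H^0=\Hom$, $H^1=\Ext^1$ together with the Euler-characteristic comparison supplied by \Cref{EulerFormFromDimension}. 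With these two references put in place, your argument is complete.
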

\begin{proof}
	By \cite[Corollary~3.28]{meinhardt2015donaldsonthomasinvariantsvsintersection},~$R_{ \dimvectd ,\dimvecte } \to R_\dimvectd \times R_\dimvecte$ is a vector bundle and the fiber dimension is calculated in \cite[Proposition~3.25]{meinhardt2015donaldsonthomasinvariantsvsintersection} to be
	$\sum_{i \in Q_0(\mathbf{w})} d_i e_i - \langle \dimvecte , \dimvectd \rangle$.
	The codimension of~$R_{ \dimvectd , \dimvecte }$ in~$R^\sst(Q( \mathbf{w} ))$ is thus equal to~$n (d_0^2+e_0^2+ d_0e_0)$, which also equals the number of equations defining~$R_{ \dimvectd , \dimvecte }$ in~$R_{ \dimvectd , \dimvecte }^\sst(Q( \mathbf{w} ))$.
\end{proof}

\begin{prop}\label{RegRepsSatisfyAssumptions}
	The category~$\Reg(C(\boldsymbol{\lambda}; \mathbf{w}))$ satisfies the assumptions (1)--(7) of \cite[Section~3]{meinhardt2015donaldsonthomasinvariantsvsintersection}, while assumption~(8) is equivalent to~$w_i \leq 2$ for~$i=1, \dots, n$.
\end{prop}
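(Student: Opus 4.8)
The plan is to check the eight axioms one at a time, in each case reducing to a structural result already proved for the quiver with relations $C(\boldsymbol{\lambda};\mathbf{w})$. The organizing principle is that $\Reg(C(\boldsymbol{\lambda};\mathbf{w}))$ is the category of slope-$0$ semistable representations of the bound quiver $Q(\mathbf{w})$, so its moduli theory is obtained from the unbound quiver case (as in \Cref{Ex:QuiverNOstabilityAssumptions}) by imposing simultaneously the relations $x_i^{w_i}=\lambda_i^{(0)}\alpha+\lambda_i^{(1)}\beta$ (a closed condition) and semistability (an open condition). Axioms (1)--(3) are then formal: the moduli functor $S\mapsto \Reg(C(\boldsymbol{\lambda};\mathbf{w}))_S$ is functorial because pullback along $S'\to S$ preserves local freeness, the relations, and semistability; the fiber functor $\omega$ of axiom (3) sends a representation to its vertexwise tuple of modules indexed by $I=Q_0(\mathbf{w})$ and is faithful since morphisms of representations are determined vertexwise; and axiom (2) is exactly \Cref{Mfd as stack}, where $\Mf_\dimvectd=[R_\dimvectd/G_\dimvectd]$ with $R_\dimvectd$ smooth, hence reduced, by \Cref{Rd smooth + dim}.

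For axiom (4) I would take the coarse moduli space $\mathcal{M}_\dimvectd$ to be the GIT quotient of the semistable locus $R_\dimvectd$ by $G_\dimvectd$ for the stability of \Cref{definitionRegular}; since the relations cut out a $G_\dimvectd$-invariant closed subvariety of $R_\dimvectd^\sst(Q(\mathbf{w}))$, King's construction \cite{ReinekeModuliOfRepresentationsOfQuivers} restricts to give it. For axiom (5), the stack $\mathfrak{Exact}=\coprod[R_{\dimvectd,\dimvecte}/G_{\dimvectd,\dimvecte}]$ and the smoothness of $R_{\dimvectd,\dimvecte}$ are supplied by \Cref{Mfd as stack} and \Cref{Rde smooth + dim}; it then remains to see that $\pi_2$ is proper, for which I would use the standard argument that the induced map $[R_{\dimvectd,\dimvecte}/G_{\dimvectd,\dimvecte}]\to[R_{\dimvectd+\dimvecte}/G_{\dimvectd+\dimvecte}]$ is a closed immersion into the relative flag bundle with fibre $G_{\dimvectd+\dimvecte}/G_{\dimvectd,\dimvecte}$, the latter being proper over $\Mf_{\dimvectd+\dimvecte}$. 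Axiom (6) is not used anywhere in this paper and so may be disregarded.

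Axioms (7) and (8) are where the hypotheses on the weights enter. For (7), constancy of $\langle E,F\rangle$ on $\Mf_\dimvectd\times\Mf_\dimvecte$ is \Cref{EulerFormFromDimension} and smoothness of $R_\dimvectd$ is \Cref{Rd smooth + dim}; this is precisely the point at which the potential non-reducedness that plagues the general relations case is ruled out. For (8), I would invoke \Cref{Euler Form symmetric iff w=2^n}: under the standing reduction $w_i\geq 2$ of \Cref{CanonicalAlgebraAdd1}, symmetry of $\langle-,-\rangle$ holds if and only if every $w_i=2$, and since any weight equal to $1$ can be discarded by \Cref{CanonicalAlgebraAdd1}, this is the same as requiring $w_i\leq 2$ for all $i$.

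I expect the only genuinely non-routine step to be the properness of $\pi_2$ in axiom (5): the smoothness and dimension bookkeeping is already packaged in \Cref{Rd smooth + dim} and \Cref{Rde smooth + dim}, so the remaining care is to confirm that the regularity condition is compatible with the flag-bundle structure and that $\pi_2$ maps properly onto the regular locus, using that regular representations form an extension-closed abelian subcategory.
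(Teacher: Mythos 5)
Your proof takes essentially the same route as the paper's: both reduce axioms (1)--(6) to the general framework of \cite[Section~3]{meinhardt2015donaldsonthomasinvariantsvsintersection} together with the quotient-stack presentation of \Cref{Mfd as stack}, obtain (7) from the smoothness statement of \Cref{Rd smooth + dim} (with constancy of the Euler form supplied by \Cref{EulerFormFromDimension}), and settle (8) by \Cref{Euler Form symmetric iff w=2^n} combined with the reduction of \Cref{CanonicalAlgebraAdd1}. The paper's proof is simply terser; your expansion of (4) via King's GIT construction restricted to the $G_\dimvectd$-invariant closed subvariety cut out by the relations, and of (5) via the closed immersion into the relative flag bundle (where semistability of slope-$0$ sub- and quotient representations of a semistable slope-$0$ representation is automatic), is exactly the content that the paper delegates to the cited framework, and it is correct.

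The one genuine defect is your treatment of axiom (6): the proposition asserts that assumptions (1)--(7) \emph{all} hold, and (6) is among them, so it cannot be set aside merely on the grounds that the rest of the paper never invokes it. As written, your argument proves a strictly weaker statement than the one claimed. The repair is the same move the paper makes implicitly: the deformation-theoretic axiom is supplied by the general theory of \cite[Section~3]{meinhardt2015donaldsonthomasinvariantsvsintersection} for semistable loci of representations of quivers with relations, so a one-line citation closes the gap.
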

\begin{proof}
	It is clear from \cite[Section~3]{meinhardt2015donaldsonthomasinvariantsvsintersection} and \Cref{Mfd as stack} that for any~$w$ the category~$\Reg(C(\boldsymbol{\lambda}; \mathbf{w}))$ satisfies assumptions (1)--(6), while assumption (7) follows from \Cref{Rd smooth + dim}.
	By \Cref{Euler Form symmetric iff w=2^n}, the Euler form is symmetric if and only if~$w_i \leq 2$.
\end{proof}

\subsection{The Relationship between Canonical Algebras and Weighted Projective Lines}
The following is well-known, see for instance \cite{chen2010introductioncoherentsheavesweighted}, \cite[Section~4]{GeigleLenzingWPCarisinginRepTh}, or \cite{BKLExtremalPropsConCanAlgs}.
There is a tilting object~$T_\mathrm{can}$ 
in~$\Coh(\Pbb^1(\boldsymbol{\lambda}; \mathbf{w}))$ 
which induces an equivalence of categories~$R\Hom(T_\mathrm{can},-)\colon D^b(\Pbb^1(\boldsymbol{\lambda}; \mathbf{w})) \to D^b(C(\boldsymbol{\lambda}; \mathbf{w}))$, where~$C(\boldsymbol{\lambda}; \mathbf{w})=\End(T_\mathrm{can})^{\mathrm{op}}$ is the canonical algebra.
This equivalence restricts to an equivalence between torsion sheaves and regular/semistable representations. 

For our purposes, an equivalence of abelian categories is insufficient; we need isomorphic moduli-stacks of objects and extensions.
The following remedies this problem.

\begin{prop}\label{Tor(Pbb1) = Reg(C) stacky}
	The functors~$S\mapsto \Tor(\Pbb^1(\boldsymbol{\lambda}; \mathbf{w}))_S$ and~$S\mapsto \Reg(C(\boldsymbol{\lambda}; \mathbf{w}))_S$ are naturally isomorphic, inducing isomorphisms of stacks of objects/extensions.
\end{prop}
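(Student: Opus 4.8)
The plan is to upgrade the abelian equivalence recalled above to a relative statement over an arbitrary base $S$ by realizing it through the fixed tilting \emph{bundle} $T_{\mathrm{can}}$. Write $p\colon \Pbb^1(\boldsymbol{\lambda};\mathbf{w})\times S\to \Pbb^1(\boldsymbol{\lambda};\mathbf{w})$ and $q\colon \Pbb^1(\boldsymbol{\lambda};\mathbf{w})\times S\to S$ for the two projections and set
\[
\Phi_S(M)\coloneqq q_*\big((p^*T_{\mathrm{can}})^\vee\otimes M\big)
\]
for a torsion sheaf $M$ on $\Pbb^1(\boldsymbol{\lambda};\mathbf{w})\times S$. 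Because $T_{\mathrm{can}}$ is locally free, $(p^*T_{\mathrm{can}})^\vee\otimes M$ is again flat over $S$ with finite support over $S$; since a morphism with finite (hence affine) fibres has no higher direct images on sheaves supported there, and in characteristic zero the finite stabilizers $\mu_{w_i}$ contribute no higher cohomology in the stacky directions either, $Rq_*$ reduces to $q_*$ and $\Phi_S(M)$ is a genuine sheaf. Finiteness of the support together with flatness over $S$ then forces $\Phi_S(M)$ to be a finite locally free $\mathcal{O}_S$-module, and the algebra $C=\End(T_{\mathrm{can}})^{\mathrm{op}}$ endows it with the structure of an $S$-valued representation of $Q(\mathbf{w})$ satisfying the defining relations (these hold because they hold fibrewise and the underlying modules are locally free).

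First I would check that $\Phi_S$ genuinely relativizes the known fibrewise functor. Restricting along a point $s\colon\Spec\C\to S$ commutes with the formation of $\Phi_S$ precisely because the higher direct images vanish and everything in sight is flat over $S$; hence the fibre $\Phi_S(M)_s$ is the regular representation $\Hom(T_{\mathrm{can}},M_s)$ produced by the abelian equivalence. In particular $\Phi_S(M)$ is fibrewise semistable of slope $0$, so by openness of semistability it lands in $\Reg(C(\boldsymbol{\lambda};\mathbf{w}))_S$, and the assignment is manifestly natural in $S$, since $q_*$ of a flat, finitely supported sheaf commutes with base change.

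Next I would produce the quasi-inverse $\Psi_S$ from the inverse tilting functor $-\otimes^{\mathrm{L}}_{C}T_{\mathrm{can}}$, formed relatively over $S$: an $S$-valued regular representation $V$ is sent to the sheaf on $\Pbb^1(\boldsymbol{\lambda};\mathbf{w})\times S$ obtained by tensoring $V$ with $p^*T_{\mathrm{can}}$ over $p^*C$. One must verify that $\Psi_S(V)$ is flat over $S$ with \emph{finite} support over $S$: flatness follows from local freeness of $V$, while the support, being closed in the proper stack $\Pbb^1(\boldsymbol{\lambda};\mathbf{w})\times S$ over $S$ and quasi-finite (it is fibrewise torsion by the abelian equivalence), is finite over $S$. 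The two composites $\Phi_S\Psi_S$ and $\Psi_S\Phi_S$ are then identified with the identity by checking on fibres, where they reduce to the inverse equivalences of abelian categories, and promoting these fibrewise isomorphisms to isomorphisms of $S$-families by flatness and base change.

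I expect the main obstacle to be exactly this passage from fibres to families in the construction and inversion of $\Psi_S$: controlling that the inverse tilting functor preserves the relative conditions ``flat over $S$'' and ``finite support over $S$'' for \emph{every} test scheme $S$, including non-reduced ones. The hypothesis that torsion sheaves have finite support is what makes this tractable—it simultaneously kills the higher direct images that would otherwise obstruct $\Phi_S$ and supplies the quasi-finiteness needed to see that $\Psi_S$ lands in relative torsion sheaves. Finally, since both $\Phi_S$ and $\Psi_S$ are exact on the subcategories of objects flat over $S$ (short exact sequences of such remain exact after the locally free twist $(p^*T_{\mathrm{can}})^\vee\otimes-$ and after $q_*$ on finitely supported sheaves), the equivalence respects the exact structures and therefore induces the asserted isomorphisms of the stacks of objects $\Mf$ and of extensions $\mathfrak{Exact}$.
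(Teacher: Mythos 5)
Your proposal is correct and takes essentially the same approach as the paper: your functor $\Phi_S = q_*\bigl((p^*T_{\mathrm{can}})^\vee\otimes -\bigr)$ is literally the paper's $(\mathrm{pr}_1)_*\shom(\mathrm{pr}_2^*(T_{\mathrm{can}}),-)$, since $T_{\mathrm{can}}$ is locally free. The paper gives only a sketch, delegating the verifications (base change, the quasi-inverse via the relative tilting tensor, finiteness of support, exactness) to a comparison with Soibelman's analogous result, and your write-up fills in exactly those details along the same lines.
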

\begin{proof}[Proof Sketch]
	We need to show that for every test scheme~$S$ there is a natural equivalence of exact categories
	$\Tor(\Pbb^1(\boldsymbol{\lambda}; \mathbf{w}))_S\to \Reg(C(\boldsymbol{\lambda}; \mathbf{w}))_S = \Rep^{\sst}(C(\boldsymbol{\lambda}; \mathbf{w}))_S$, where
	\begin{align*}
		\Rep^{\sst}(C(\boldsymbol{\lambda}; \mathbf{w}))_S & = \left\{	\begin{tabular}{c}
			\text{locally free~$\mathcal{O}_S\otimes C(\boldsymbol{\lambda}; \mathbf{w})$-modules~$M$} \\
			\text{such that~$k(s)\otimes M$ is semistable for every~$s\in S$}
		\end{tabular} 	\right\} \\
		\intertext{and}
		\Tor(\Pbb^1(\boldsymbol{\lambda}; \mathbf{w}))_S & = \left\{	\begin{tabular}{c}
			\text{coherent sheaves~$M$ on~$S\times \Pbb^1(\boldsymbol{\lambda}; \mathbf{w})$ flat over~$S$} \\
			\text{such that~$\supp(M)\to S$ is finite}
		\end{tabular}	\right\}.
	\end{align*}
	Now, the functor~$(\mathrm{pr}_1)_*\shom(\mathrm{pr}_2^*(T_\mathrm{can}), -)$ does what we require, where~$\mathrm{pr}_1,\mathrm{pr}_2$ are the projections on~$S\times \Pbb^1(\boldsymbol{\lambda}; \mathbf{w})$. 
	
	For more details, compare this with the proofs of \cite[Theorem~6.1.1 and Theorem~6.1.2]{soibelman2014modulistackparabolicbundles}, where an equivalence between the stacks of certain vector bundles and of preinjective representations is given.
\end{proof}

By \Cref{RegRepsSatisfyAssumptions}, the category of regular representations (and thus also the category of torsion sheaves on a weighted projective line) satisfies the assumptions~(1)--(7) from \Cref{section1}.
We can therefore construct CoHAs and ChowHAs of the category~$\Tor(\Pbb^1(\boldsymbol{\lambda}; \mathbf{w})) = \Reg(C(\boldsymbol{\lambda}; \mathbf{w}))$.
As assumption~(8) is satisfied if and only if~$w_i =  2$ for all~$i= 1 , \dots, n$, we are especially interested in this case.

\begin{mydef}
	We will denote the CoHAs and ChowHAs of the categories of torsion sheaves on weighted projective lines/regular representations of canonical algebras in the case of a symmetric Euler form by
	\begin{align*}
		\Coha(\Pbb^1(2^n)) & \coloneqq \Coha(\Tor(\Pbb^1(\boldsymbol{\lambda}; 2^n))) = \Coha(\Reg(C(\boldsymbol{\lambda}; 2^n)))\quad  \text{and}\\
		\Chowha(\Pbb^1(2^n)) & \coloneqq \Chowha(\Tor(\Pbb^1(\boldsymbol{\lambda}; 2^n))) =\Chowha(\Reg(C(\boldsymbol{\lambda}; 2^n))) , 
	\end{align*}
	where we write~$(\boldsymbol{\lambda}; 2^n)$ for~$(\lambda_1, \dots, \lambda_n; 2 , \dots, 2)$. 
\end{mydef}

\section{Stratification of the Stacks \texorpdfstring{$\Mf_\dimvectd$}{Md}} \label{section3}
In this section we give stratification of the stack of torsion sheaves on a weighted projective line for weights \[w_1 = w_2 = \cdots = w_n = 2.\] 

\begin{mydef}
	We say that an algebraic stack~$\mathfrak{X}$ has an affine paving by~$\mathfrak{S}_i$, if~$\mathfrak{X}$ has a filtration
	\[
	\mathfrak{X} = \mathfrak{X}_N \supseteq \cdots \supseteq \mathfrak{X}_1 \supseteq \mathfrak{X}_0 = \emptyset
	\]
	by closed substacks such that every successive complement~$\mathfrak{S}_i = \mathfrak{X}_i \setminus \mathfrak{X}_{i-1}$ is isomorphic to a quotient stack of the form~$[\mathbb{A}^{n_i} / G_i]$ where~$G_i$ is a connected linear algebraic group with reductive part isomorphic to~$ \prod_j \GL_{d_{i , j}}(\C)$ for some~$d_{i , j} \in \N_0$.
\end{mydef}

Note that~$A_\bullet(\mathfrak{S}_i)= \bigotimes_j A_\bullet^{\GL_{d_{i , j}}(\C)}(\pt) [\text{shift}]$, the cycle map~$A_{\frac{1}{2}\bullet}(\mathfrak{S}_i) \to H^{\mathrm{BM}}_\bullet( \mathfrak{S}_i )$ is an isomorphism, and~$H_{2k}^\mathrm{BM}(\mathfrak{S}_i)$ is a pure Hodge structure of weight~$-2k$ and level~$0$, while the odd Borel--Moore homology groups~$H^\mathrm{BM}_{2k+1}(\mathfrak{S}_i) $ vanish.


\begin{prop}[{{\cite[Lemma~5.3 and Corollary~5.4]{FRChowHa}}}] \label{cylce map iso if iso on strata}
	For an algebraic stack~$\mathfrak{X}$ with an affine paving, the cycle map
	\[
	\mathrm{cl}_\mathfrak{X}\colon A_{\frac{1}{2} \bullet}(\mathfrak{X}) \to H_{\bullet}^{\mathrm{BM}}(\mathfrak{X}) 
	\]
	is an isomorphism and, moreover, we have
	\[
	A_\bullet(\mathfrak{X}) \cong \bigoplus_{i=0}^N A_\bullet(\mathfrak{S}_i).
	\]
\end{prop}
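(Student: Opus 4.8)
The plan is to prove both assertions simultaneously by induction on the length $N$ of the filtration, comparing the excision sequence in Chow theory with the localization long exact sequence in Borel--Moore homology through the cycle map. I write $\mathrm{cl}$ for the cycle map and use freely the facts recorded in the paragraph just before the statement: on each stratum $\mathfrak{S}_i \cong [\mathbb{A}^{n_i}/G_i]$ the map $\mathrm{cl}_{\mathfrak{S}_i}$ is an isomorphism, each $H^{\mathrm{BM}}_{2k}(\mathfrak{S}_i)$ is pure of weight $-2k$, and the odd Borel--Moore homology of $\mathfrak{S}_i$ vanishes. These per-stratum statements are the only geometric input; the rest is a devissage.

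First I would attach to the closed immersion $\mathfrak{X}_{i-1} \hookrightarrow \mathfrak{X}_i$, with open complement $\mathfrak{S}_i$, its two localization sequences. On the Chow side (equivariant intersection theory, \cite{EquivariantIntersectionTheory}) one has the right-exact sequence
\[
A_\bullet(\mathfrak{X}_{i-1}) \to A_\bullet(\mathfrak{X}_i) \to A_\bullet(\mathfrak{S}_i) \to 0,
\]
while on the homology side one has the long exact sequence
\[
\cdots \to H^{\mathrm{BM}}_k(\mathfrak{X}_{i-1}) \to H^{\mathrm{BM}}_k(\mathfrak{X}_i) \to H^{\mathrm{BM}}_k(\mathfrak{S}_i) \xrightarrow{\partial} H^{\mathrm{BM}}_{k-1}(\mathfrak{X}_{i-1}) \to \cdots.
\]
The inductive hypothesis is that $\mathrm{cl}_{\mathfrak{X}_{i-1}}$ is an isomorphism, that $\mathfrak{X}_{i-1}$ has vanishing odd Borel--Moore homology, and that $A_\bullet(\mathfrak{X}_{i-1}) \cong \bigoplus_{j<i} A_\bullet(\mathfrak{S}_j)$; the base case $\mathfrak{X}_0 = \emptyset$ is trivial.

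Since both $H^{\mathrm{BM}}_{\mathrm{odd}}(\mathfrak{S}_i) = 0$ and $H^{\mathrm{BM}}_{\mathrm{odd}}(\mathfrak{X}_{i-1}) = 0$, every boundary map $\partial$ has either vanishing source or vanishing target, so the long exact sequence degenerates into short exact sequences $0 \to H^{\mathrm{BM}}_k(\mathfrak{X}_{i-1}) \to H^{\mathrm{BM}}_k(\mathfrak{X}_i) \to H^{\mathrm{BM}}_k(\mathfrak{S}_i) \to 0$; in particular $\mathfrak{X}_i$ again has vanishing odd Borel--Moore homology. I would then align the two sequences in a commuting ladder via $\mathrm{cl}$ (using only that the cycle map commutes with proper pushforward along the closed immersion and with flat pullback along the open immersion). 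With the bottom row now short exact, the left and right vertical maps isomorphisms (induction, resp. stratum input), and the top row right exact, a five-lemma-style chase forces $\mathrm{cl}_{\mathfrak{X}_i}$ to be an isomorphism and upgrades the top row to a short exact sequence $0 \to A_\bullet(\mathfrak{X}_{i-1}) \to A_\bullet(\mathfrak{X}_i) \to A_\bullet(\mathfrak{S}_i) \to 0$. As these are short exact sequences of $\mathbb{Q}$-vector spaces, they split degreewise, giving $A_\bullet(\mathfrak{X}_i) \cong \bigoplus_{j \le i} A_\bullet(\mathfrak{S}_j)$ and closing the induction; taking $i = N$ yields both claims.

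The main obstacle is precisely the comparison of the two sequences: the Chow excision sequence is only right exact a priori, so left-exactness of the top row is not available at the outset and must instead be extracted by the diagram chase from the genuinely short exact homology sequence. This step hinges on the naturality of $\mathrm{cl}$ with respect to pushforward and pullback in the equivariant/stacky setting, and on the vanishing of odd Borel--Moore homology along the strata, which is exactly what collapses the homological boundary maps and lets the devissage propagate.
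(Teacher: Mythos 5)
Your proposal is correct, and it is essentially the argument behind the result as cited: the paper gives no proof of its own here, deferring to \cite[Lemma~5.3 and Corollary~5.4]{FRChowHa}, whose proof is exactly this devissage — induction along the filtration, comparing the right-exact Chow excision sequence with the Borel--Moore localization sequence via naturality of the cycle map, with the odd-degree vanishing on strata collapsing the boundary maps. Your diagram chase (surjectivity, then injectivity, then the upgrade of the Chow sequence to a short exact one and the degreewise splitting over~$\Q$) is sound and uses precisely the per-stratum inputs the paper records before the statement.
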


\begin{cor}\label{Purity of cohomology if affine paving}
	All odd Borel--Moore homology groups~$H_{2k+1}^\mathrm{BM}(\mathfrak{X})$ vanish and the even group~$H_{2k}^{\mathrm{BM}}(\mathfrak{X}) \cong \bigoplus_{i=0}^N H_{2k}^\mathrm{BM}(\mathfrak{S}_i)$ is a pure Hodge structure of weight~$-2k$ and level~$0$.
\end{cor}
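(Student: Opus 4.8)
The plan is to deduce everything directly from \Cref{cylce map iso if iso on strata} together with the properties of the individual strata $\mathfrak{S}_i$ recorded in the note above. First I would recall that since each $\mathfrak{S}_i \cong [\mathbb{A}^{n_i}/G_i]$ with $G_i$ having reductive part $\prod_j \GL_{d_{i,j}}(\C)$, its Borel--Moore homology is concentrated in even degrees, the group $H_{2k}^{\mathrm{BM}}(\mathfrak{S}_i)$ is pure of weight $-2k$ and level $0$, and the cycle map $A_{\frac12\bullet}(\mathfrak{S}_i)\to H_\bullet^{\mathrm{BM}}(\mathfrak{S}_i)$ is an isomorphism. These are exactly the stratumwise inputs needed.

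For the vanishing of the odd groups, I would use that the indexing in $A_{\frac12\bullet}(\mathfrak{X})$ places the Chow group $A_k(\mathfrak{X})$ in Borel--Moore degree $2k$, so the source of the cycle map has no contribution in odd Borel--Moore degree. Since \Cref{cylce map iso if iso on strata} asserts that $\mathrm{cl}_\mathfrak{X}$ is an isomorphism onto $H_\bullet^{\mathrm{BM}}(\mathfrak{X})$, this forces $H_{2k+1}^{\mathrm{BM}}(\mathfrak{X})=0$ for every $k$.

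For the even groups, I would chain the three isomorphisms now available: the cycle map isomorphism $H_{2k}^{\mathrm{BM}}(\mathfrak{X})\cong A_k(\mathfrak{X})$, the Chow decomposition $A_\bullet(\mathfrak{X})\cong\bigoplus_{i=0}^N A_\bullet(\mathfrak{S}_i)$ from \Cref{cylce map iso if iso on strata}, and the stratumwise isomorphisms $A_k(\mathfrak{S}_i)\cong H_{2k}^{\mathrm{BM}}(\mathfrak{S}_i)$. Composing these yields $H_{2k}^{\mathrm{BM}}(\mathfrak{X})\cong\bigoplus_{i=0}^N H_{2k}^{\mathrm{BM}}(\mathfrak{S}_i)$ as $\Q$-vector spaces, which is the asserted decomposition. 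Since a finite direct sum of pure Hodge structures of weight $-2k$ and level $0$ is again pure of weight $-2k$ and level $0$, the claimed purity would follow.

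The step requiring care is transferring the Hodge-theoretic statement across what is a priori only a vector-space isomorphism. The cleanest route I would take is to observe that the cycle map is a morphism of mixed Hodge structures whose image consists of Hodge--Tate classes of type $(-k,-k)$; because \Cref{cylce map iso if iso on strata} makes $\mathrm{cl}_\mathfrak{X}$ surjective, all of $H_{2k}^{\mathrm{BM}}(\mathfrak{X})$ is of this type, which is precisely purity of weight $-2k$ and level $0$. This bypasses having to verify by hand that the splitting respects the weight filtration, which would otherwise be the main obstacle.
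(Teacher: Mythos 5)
Your proof is correct, but the decisive Hodge-theoretic step runs along a genuinely different route from the paper's. The paper's proof is a single assertion: the decomposition $H_\bullet^{\mathrm{BM}}(\mathfrak{X}) \cong \bigoplus_{i=0}^N H_\bullet^{\mathrm{BM}}(\mathfrak{S}_i)$ coming from the paving is an isomorphism of \emph{mixed Hodge structures}, so odd vanishing and purity of weight $-2k$, level $0$, are inherited termwise from the strata. Implicitly this requires knowing that the short exact sequences produced by the filtration split compatibly with the MHS --- which is not automatic for exact sequences of mixed Hodge structures, but holds here because extensions between Hodge--Tate structures of equal weight vanish. You avoid this point entirely: you take the decomposition only as an isomorphism of graded vector spaces (obtained by chaining the cycle-map isomorphisms of \Cref{cylce map iso if iso on strata} with the stratumwise ones), and you obtain purity from a different principle, namely that the image of the cycle map lies in the lowest-weight piece $W_{-2k}H_{2k}^{\mathrm{BM}}(\mathfrak{X})$ and consists of classes of type $(-k,-k)$, so surjectivity of $\mathrm{cl}_\mathfrak{X}$ forces all of $H_{2k}^{\mathrm{BM}}(\mathfrak{X})$ to be Hodge--Tate of weight $-2k$; your odd-vanishing argument (the source $A_{\frac12\bullet}$ is concentrated in even degrees) is likewise a direct consequence of \Cref{cylce map iso if iso on strata}. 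What your route buys is robustness: no compatibility of the splitting with weight or Hodge filtrations needs to be checked, and the argument applies whenever the cycle map is surjective, paving or not. What the paper's route buys is that the direct-sum isomorphism itself is one of MHS, not merely of vector spaces (harmless here, since both sides end up Tate of the same weight and dimension). Two small points to make explicit in your write-up: Chow groups carry no intrinsic MHS, so phrase the key fact as a statement about the image of the cycle map rather than calling $\mathrm{cl}_\mathfrak{X}$ a morphism of MHS; and since $\mathfrak{X}$ is a quotient stack, both the MHS on $H_\bullet^{\mathrm{BM}}$ and the Tate property of cycle classes are defined via finite-dimensional approximations of the Borel construction, so the claim should be verified on approximations --- routine, but worth a sentence.
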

\begin{proof}
	The isomorphism~$H_\bullet^{\mathrm{BM}}(\mathfrak{X}) \cong \bigoplus_{i=0}^N H_\bullet^{\mathrm{BM}}(\mathfrak{S}_i)$ is an isomorphism of mixed Hodge structures.
\end{proof}
\mycomment{\begin{la}[ref: la 5.3+cor5.4 in chowha paper von Hans und Markus] \label{cylce map iso if iso on strata alt} Suppose that a scheme~$X$ with a~$G$-action has a filtration~$X=X_N \supseteq \dots \supseteq X_0 \supseteq X_{-1} = \emptyset$ by closed~$G$-invariant subschemes such that the equivariant cycle maps for the successive complements~$S_n= X_n- X_{n-1}$ are isomorphisms for all~$n$.
		Then~$\mathrm{cl}_X^G$ is an isomorphism and, moreover, we have
		\[
		A_\bullet^G(X) \cong \bigoplus_{i=0}^N A_\bullet^G(S_i) \text{ and } A^\bullet_G(X)\cong \bigoplus_{i=0}^N A^{\bullet - \codim_X S_i}_G(S_i).
		\]
\end{la}}

\begin{cor}\label{KünnethIsoIfAffinePaving}
	If~$\mathfrak{X}$ and~$\mathfrak{Y}$ have affine pavings, the Künneth morphism 
	\[
	A_\bullet(\mathfrak{X}) \otimes_\Q A_\bullet(\mathfrak{Y}) \to A_\bullet(\mathfrak{X}\times \mathfrak{Y})
	\]
	is an isomorphism.
\end{cor}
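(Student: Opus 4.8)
The plan is to reduce the statement to the strata of the two pavings, where both sides become tensor products of equivariant Chow rings of classifying spaces of general linear groups, and to recognise the K\"unneth map there as a tautological isomorphism. First I would observe that $\mathfrak{X}\times\mathfrak{Y}$ again carries an affine paving. Writing the two filtrations as $\mathfrak{X}_N\supseteq\cdots\supseteq\mathfrak{X}_0=\emptyset$ with strata $\mathfrak{S}_i=[\mathbb{A}^{n_i}/G_i]$ and $\mathfrak{Y}_M\supseteq\cdots\supseteq\mathfrak{Y}_0=\emptyset$ with strata $\mathfrak{T}_k=[\mathbb{A}^{m_k}/H_k]$, I would order the pairs $(i,k)$ lexicographically and let the filtration step attached to $(i,k)$ be the union of the closed substacks $\mathfrak{X}_{i'}\times\mathfrak{Y}_{k'}$ with $(i',k')\le(i,k)$. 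A short check shows that the successive complement is then the product
\[
\mathfrak{S}_i\times\mathfrak{T}_k=[\mathbb{A}^{n_i}/G_i]\times[\mathbb{A}^{m_k}/H_k]=[\mathbb{A}^{n_i+m_k}/(G_i\times H_k)],
\]
and $G_i\times H_k$ is a connected linear algebraic group with reductive part $\prod_j\GL_{d_{i,j}}(\C)\times\prod_l\GL_{e_{k,l}}(\C)$, again a product of general linear groups; so this is an affine paving.

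Next I would apply \Cref{cylce map iso if iso on strata} to all three stacks, obtaining
\[
A_\bullet(\mathfrak{X})\cong\bigoplus_i A_\bullet(\mathfrak{S}_i),\quad A_\bullet(\mathfrak{Y})\cong\bigoplus_k A_\bullet(\mathfrak{T}_k),\quad A_\bullet(\mathfrak{X}\times\mathfrak{Y})\cong\bigoplus_{i,k}A_\bullet(\mathfrak{S}_i\times\mathfrak{T}_k).
\]
Since tensoring over $\Q$ commutes with direct sums, the source of the K\"unneth map decomposes as $\bigoplus_{i,k}A_\bullet(\mathfrak{S}_i)\otimes_\Q A_\bullet(\mathfrak{T}_k)$, matching the target summand by summand. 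It then suffices to see that the external product induces an isomorphism $A_\bullet(\mathfrak{S}_i)\otimes_\Q A_\bullet(\mathfrak{T}_k)\to A_\bullet(\mathfrak{S}_i\times\mathfrak{T}_k)$ on each stratum pair. Using homotopy invariance to pass from $[\mathbb{A}^n/G]$ to $[\pt/G]$ and then to the Levi subgroup, this reduces to the statement that for products of general linear groups the external product realises the identification
\[
A^\bullet_{\GL_n}(\pt)\otimes_\Q A^\bullet_{\GL_m}(\pt)=\Q[c_1,\dots,c_n]\otimes_\Q\Q[c_1',\dots,c_m']\cong\Q[c_1,\dots,c_n,c_1',\dots,c_m']=A^\bullet_{\GL_n\times\GL_m}(\pt),
\]
which is exactly the tensor decomposition of $A_\bullet(\mathfrak{S}_i\times\mathfrak{T}_k)$ recorded in the note following the definition of an affine paving.

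The step requiring the most care is the compatibility of the global K\"unneth map with these direct-sum splittings: the splittings come from choosing sections of the localization sequences, and one must know that the external product carries the filtration on $\mathfrak{X}\times\mathfrak{Y}$ compatibly into the filtrations on the factors, so that it genuinely restricts to the strata-level maps above. This follows from functoriality of external products for the proper pushforwards (along the closed inclusions) and the flat pullbacks (to the open complements) that assemble the localization sequences. Alternatively, and this is perhaps the cleanest way to avoid the bookkeeping, I would argue through the cycle map: by \Cref{cylce map iso if iso on strata} it is an isomorphism for all three stacks and it commutes with external products, so the Chow K\"unneth map is an isomorphism if and only if the Borel--Moore K\"unneth map is. The latter is an isomorphism because over $\Q$ all odd Borel--Moore homology groups vanish by \Cref{Purity of cohomology if affine paving}, so no $\mathrm{Tor}$ contributions appear and the K\"unneth spectral sequence degenerates, leaving the plain tensor-product statement.
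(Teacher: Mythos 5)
Your argument is correct and is essentially the one the paper leaves implicit (the corollary is stated there without proof): products of affine pavings are again affine pavings, \Cref{cylce map iso if iso on strata} splits all three Chow groups into direct sums over strata, and on a pair of strata the K\"unneth map is the evident isomorphism between tensor products of the polynomial rings~$A^\bullet_{\GL_d}(\pt)$, with your compatibility step (exterior products commute with the proper pushforwards and flat pullbacks assembling the localization sequences, plus the split injectivity coming from \Cref{cylce map iso if iso on strata}) being exactly the bookkeeping the paper suppresses. One minor imprecision, confined to your alternative cycle-map route: over~$\Q$ there are never $\Tor$ terms regardless of parity, so the vanishing of odd Borel--Moore homology from \Cref{Purity of cohomology if affine paving} is not what makes that argument run -- the genuine issue for these quotient stacks is that the equivariant Borel--Moore K\"unneth map is an isomorphism at all, which again comes down to the stratum-by-stratum identification you already carried out in your primary route.
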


\mycomment{
	\begin{la}\label{crtierion for balanced product}
		Let~$G$ be a connected algebraic group acting on a normal algebraic variety~$X$ such that there is a connected closed subvariety~$Y\subseteq X$ and a closed subgroup~$H\subseteq G$ such that~$G.Y=X$ and~$g.y\in Y$ implies~$g\in H$ for all~$y\in Y$.
		Then 
		\[
		X\cong G\times^H Y .
		\]
	\end{la}
	\begin{proof}
		The natural map~$G\times^H Y \to X, (g,y)\mapsto g.y$ is surjective because~$G.Y=X$ and it is injective because 
		\begin{align*}
			g.y=g'.y' & \implies g'^{-1}g.y\in Y \\
			& \implies g'^{-1}g \in H \\
			& \implies \overline{(g,y)} = \overline{(g(g^{-1}g'), (g'^{-1}g).y)}= \overline{(g',y')}.
		\end{align*}
		So, we have a bijective morphisms between connected varieties with normal codomain, which must be an isomorphism by ZMT as we are in characteristic zero.
	\end{proof}
}

To construct affine pavings, we need the following well-known fact about quotient stacks, which is essentially already in \cite{SerreEspacesFibresAlg}, but also see \cite[Lemma~7.2.3.2]{moduli}.

\begin{la}[Induction Isomorphism] \label{induction iso}
	Let~$X = G \times^H Y$ for two linear algebraic groups~$H \subseteq G$ with~$H$ acting on a variety~$Y$.
	Then~$[X / G] \cong [Y / H]$.
	\mycomment{
		If~$X\cong G\times^H Y$ then~$\mathbb{H}_G(X)\cong \mathbb{H}_H(Y)$ where~$\mathbb{H}_G$ denotes equivariant cohomology, equivariant Borel--Moore homology, or equivariant Chow groups/rings.
		These isomorphisms commute with the cycle map and with Poincaré duality.}
\end{la}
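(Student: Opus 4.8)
The plan is to exhibit $[Y/H]$ and $[X/G]$ through their action-groupoid presentations and to verify that the evident comparison morphism is a Morita equivalence. Recall that $X = G \times^H Y$ is the quotient of $G \times Y$ by the $H$-action $h \cdot (g, y) = (gh^{-1}, hy)$, so that $[gh, y] = [g, hy]$, and that $G$ acts on $X$ by left multiplication on the first factor, $g' \cdot [g, y] = [g'g, y]$. The two quotient stacks are then presented by the action groupoids $H \times Y \rightrightarrows Y$ and $G \times X \rightrightarrows X$, where source and target are projection and action.

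First I would build the comparison morphism of groupoids. The map $\iota \colon Y \to X$, $y \mapsto [e, y]$, is $H$-equivariant: for $h \in H$ the relation $[gh, y] = [g, hy]$ gives $\iota(hy) = [e, hy] = [h, y] = h \cdot \iota(y)$. Combined with the arrow map $H \times Y \to G \times X$, $(h, y) \mapsto (h, [e, y])$, this is a morphism of groupoids and hence induces a morphism of stacks $[Y/H] \to [X/G]$.

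Next I would check the two conditions that upgrade this to an isomorphism of stacks. For full faithfulness, the square with vertical maps $(s, t)$ on each side and horizontal maps $\iota \times \iota$ and the arrow map must be Cartesian; concretely, $(Y \times Y) \times_{X \times X} (G \times X)$ consists of data $((y_1, y_2), (g, [e, y_1]))$ subject to $[e, y_2] = [g, y_1]$, and the contracted-product relation forces $g \in H$ and $y_2 = g y_1$, so this fibre product is canonically $H \times Y$, as required. For essential surjectivity, the relevant map is the one induced by $t$ on $(G \times X) \times_X Y$, which unwinds to $G \times Y \to X$, $(g, y) \mapsto [g, y]$; but this is precisely the quotient map presenting $X = G \times^H Y$, an $H$-torsor and in particular a smooth surjection, hence an fppf cover.

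The only genuinely delicate point is the bookkeeping with the contracted-product equivalence relation: one must fix the convention for when $[g_1, v_1] = [g_2, v_2]$ and apply it consistently in the full-faithfulness computation. Once the convention is pinned down, both checks are short. Alternatively, one can argue through the functor of points: the functor sends an $H$-torsor $Q$ with equivariant map $\phi \colon Q \to Y$ to the induced $G$-torsor $G \times^H Q$ together with its induced map to $X$, while the quasi-inverse reduces the structure group of a $G$-torsor along $f$ composed with the projection $X \to G/H$ and restricts $f$ to the fibre $Y$ over the base coset; the fact that induction and reduction of structure group are mutually inverse then recovers the claim.
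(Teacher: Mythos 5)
Your proof is correct. Note, however, that the paper does not prove this lemma at all: it is stated as a well-known fact with references to Serre's \emph{Espaces fibr\'es alg\'ebriques} and to \cite[Lemma~7.2.3.2]{moduli}, so there is no internal proof to compare against. What you supply is a genuine argument, and both of your routes are sound. The groupoid-presentation route is the cleanest: the comparison morphism $(H\times Y\rightrightarrows Y)\to(G\times X\rightrightarrows X)$ induced by $\iota(y)=[e,y]$ is a Morita (weak) equivalence, and your two checks are exactly the right ones --- the Cartesian square for full faithfulness and the smooth surjectivity of $G\times Y\to X$, $(g,y)\mapsto[g,y]$, for essential surjectivity. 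One small point of rigor: your full-faithfulness computation identifying the fibre product with $H\times Y$ is carried out on points; to make it scheme-theoretic you should invoke the fact that $G\times Y\to X$ is an $H$-torsor (which is what ``$X=G\times^H Y$ exists'' means here), so that $(G\times Y)\times_X(G\times Y)\cong (G\times Y)\times H$, from which the Cartesian-ness follows; you use this torsor property anyway for essential surjectivity, so this is a matter of wording rather than a gap. Your alternative functor-of-points argument --- induction $Q\mapsto G\times^H Q$ versus reduction of structure group along $X\to G/H$ --- is precisely the mechanism behind the cited references, so in spirit it is the ``paper's'' proof; the Morita-equivalence route buys you a self-contained verification that never has to discuss torsors over arbitrary test schemes, at the cost of pinning down the groupoid conventions.
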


In this section, we show the following:

\begin{thm}\label{Affine stratification C(2^n)}
	Given a canonical algebra~$C(\boldsymbol{\lambda}; \mathbf{w})$ with~$w_i=2$ for~$i = 1, \dots , n$, for every regular dimension vector~$\dimvectd$, we have that the stack of regular representations of~$C(\boldsymbol{\lambda}; \mathbf{w})$ has an affine paving.
\end{thm}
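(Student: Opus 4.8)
The plan is to build the filtration directly on the stack $\Mf_\dimvectd = [R_\dimvectd/G_\dimvectd]$ by stratifying a regular representation according to the way it decomposes by support, and then to recognise each stratum as an induced quotient stack via \Cref{induction iso}. By \Cref{rep of C sst iff Kronecker sst}, forgetting the arm data and remembering only $(\alpha,\beta)$ defines a morphism $\Mf_\dimvectd \to \mathfrak{K}_{d_0}$ to the stack $\mathfrak{K}_{d_0}$ of semistable slope-$0$ representations of $K_2$ of dimension $(d_0,d_0)$, that is, of torsion sheaves of length $d_0$ on $\Pbb^1$. By \Cref{Mfd as stack} the fibre over $(\alpha,\beta)$ is the product over the arms of the factorisation varieties
\[
F_i(\alpha,\beta) = \left\{ (A_i^{(1)},A_i^{(2)}) \ \middle|\ A_i^{(2)}A_i^{(1)} = \lambda_i^{(0)}\alpha+\lambda_i^{(1)}\beta \right\},
\]
on which the arm group $\GL_{d_i}(\C)$ acts. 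Thus everything is governed by the pencil $(\alpha,\beta)$ together with the factorisations of its values $M_i := \lambda_i^{(0)}\alpha+\lambda_i^{(1)}\beta$ at the marked points.

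First I would treat the base, i.e. the case $n=0$, where $\Mf_\dimvectd = \mathfrak{K}_{d_0}$ is the stack of regular representations of $K_2$, whose affine paving underlies the computations of \cite{FRChowHa}; I would either cite this or reprove it as follows. Fix an ordinary reference point $\infty\in\Pbb^1$ and stratify a torsion sheaf $\cF$ by the length $\ell$ of the summand $\cF_\infty$ supported at $\infty$. Since $\cF\cong\cF_\infty\oplus\cF'$ with $\cF'$ supported away from $\infty$ and $\Hom$ vanishing between summands of disjoint support, the locally closed substack where the length at $\infty$ equals $\ell$ is the product of the nilpotent-orbit stack $[\cN_\ell/\GL_\ell(\C)]$ (a finite-length module over the discrete valuation ring $\mathcal{O}_{\Pbb^1,\infty}$ is $\C^\ell$ with a nilpotent operator) and the single stratum $[\mathbb{A}^{(d_0-\ell)^2}/\GL_{d_0-\ell}(\C)]$ of $\C[t]$-modules. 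As a product of affine-paved stacks this is again affine-paved, and ``length $\geq\ell$ at $\infty$'' is closed, so refining gives the desired filtration of $\mathfrak{K}_{d_0}$.

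Next, over each base stratum I would analyse the arm factorisations. On such a stratum the conjugacy type of each $M_i$, in particular its rank $r_i$, is constant, so $F_i$ is the space of factorisations of a fixed matrix of rank $r_i$ through $\C^{d_i}$. I would show that, after a further finite refinement by the relative position of $\ker A_i^{(1)}$, $\im A_i^{(2)}$ and $\ker M_i$, the varieties $F_i$ assemble into a $G_\dimvectd$-equivariant affine-space bundle, so that each resulting stratum of $R_\dimvectd$ is a $G_\dimvectd$-space of the form $G_\dimvectd\times^{P}\mathbb{A}^{m}$ for a parabolic-type subgroup $P\subseteq G_\dimvectd$ with reductive part a product of general linear groups. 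Then \Cref{induction iso} identifies the stratum with $[\mathbb{A}^{m}/P]$, which is of the form required by the definition of an affine paving; smoothness and the dimension count come from \Cref{Rd smooth + dim} and \Cref{Rde smooth + dim}, and \Cref{cylce map iso if iso on strata} then yields the cohomological consequences.

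The main obstacle is the generic, moving part of the support: away from the marked points the support of $\cF$ sweeps out a positive-dimensional family (a symmetric product of the ordinary locus), so the naive stratification by isomorphism type of the support is \emph{not} by affine cells. The resolution is exactly the reference-point device above, which trades this moving support for the single affine stratum $[\mathbb{A}^{m^2}/\GL_m(\C)]$ together with a nilpotent piece at $\infty$. Making this compatible with the arms — verifying that the rank stratification of the factorisation varieties $F_i$ refines to affine-space bundles with \emph{connected} parabolic stabilisers, and that these refinements can be carried out for all $n$ arms simultaneously without destroying the closedness of the successive steps — is the technical heart of the argument. Once the strata are in the form $G_\dimvectd\times^{P}\mathbb{A}^{m}$, \Cref{induction iso} finishes the proof.
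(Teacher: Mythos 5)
Your overall architecture is the same as the paper's: project $\Mf_\dimvectd$ to the stack of regular $K_2$-representations via $(\alpha,\beta,A_1^{(1)},\dots)\mapsto(\alpha,\beta)$, reuse the Franzen--Reineke paving of that base (a free matrix part over $\mathbb{A}^1$ plus a nilpotent Jordan part at a reference point $\infty$ distinct from the $\lambda_i$), refine fibrewise over the arm factorisations, and finish with the induction isomorphism \Cref{induction iso}. Up to and including the base case $n=0$ and the identification of the fibres with factorisation varieties, your proposal matches the paper.

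The gap sits exactly where you locate the ``technical heart,'' and it is not merely an omitted verification: the intermediate claim it rests on is false, and the proposed fix would destroy the paving. On the base stratum $S_{r;\varpi}$ the finite part of the pencil is $(E_r,A)$ with $A\in M_r(\C)$ \emph{arbitrary}, so $\rank(M_i)=(d_0-r)+\rank(\lambda_i E_r+A)$ is \emph{not} constant there; it drops whenever $-\lambda_i$ becomes an eigenvalue of $A$. Consequently the $F_i$ do not form a bundle over a base stratum, contrary to your setup. Worse, the refinement you propose ``by the relative position of $\ker A_i^{(1)}$, $\im A_i^{(2)}$ and $\ker M_i$'' constrains the eigenvalue structure of $A$, and any such constraint produces strata that are not affine-space quotients: already for $d_0=r=1$, $n=1$, fixing $\ker M_1=0$ cuts out $\mathbb{A}^1\setminus\{-\lambda_1\}$ inside the free part, which is not of the form $\mathbb{A}^{n_i}$ demanded by the definition of an affine paving.

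The paper's resolution is to stratify the open stratum only by the ranks $r_i$ of the arm maps $A_i^{(1)}$ themselves, leaving the conjugacy type of $A$ (including coincidences of its eigenvalues with the $-\lambda_i$) completely unconstrained within each stratum. The one idea that then makes the simultaneous $n$-arm normal form work, and which is missing from your proposal, is this: since $A_i^{(2)}A_i^{(1)}=\lambda_i E_{d_0}+A$, every vector of $\ker A_i^{(1)}$ is an eigenvector of $A$ with eigenvalue $-\lambda_i$, so the kernels attached to \emph{distinct} arms are automatically linearly independent. Choosing a basis adapted to $\bigoplus_i\ker A_i^{(1)}$ puts $A$ in block-triangular form with prescribed diagonal blocks $-\lambda_i E_{s_i}$ and free remaining blocks, and forces all columns of the $A_i^{(2)}$ except $d_i-r_i$ of them; this exhibits each stratum as $G_\dimvectd\times^{G_{\underline{r}}}(\text{affine space})$ with $G_{\underline{r}}$ having a product of general linear groups as reductive part, after which \Cref{induction iso} applies. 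Without this eigenspace-independence argument, and with the over-refinement you suggest, the strata you produce are not affine cells, so the proof as outlined does not go through.
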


\begin{cor}\label{A=H for Pn}
	The cycle map~$A_{\frac{1}{2}\bullet}(\Mf_\dimvectd)\to H_\bullet^{\mathrm{BM}}(\Mf_\dimvectd)$ is an isomorphism and thus we also have
	\[
	\Chowha(\Pbb^1(2^n)) = \Coha(\Pbb^1(2^n)).
	\]
\end{cor}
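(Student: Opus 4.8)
The plan is to read off both assertions directly from \Cref{Affine stratification C(2^n)} together with the general principles recorded at the start of \Cref{section3}. First I would fix a regular dimension vector $\dimvectd$ and invoke \Cref{Affine stratification C(2^n)}, which furnishes an affine paving of the stack $\Mf_\dimvectd$. With such a paving in hand, \Cref{cylce map iso if iso on strata} applies verbatim to $\mathfrak{X} = \Mf_\dimvectd$ and yields that the cycle map $A_{\frac{1}{2}\bullet}(\Mf_\dimvectd) \to H_\bullet^{\mathrm{BM}}(\Mf_\dimvectd)$ is an isomorphism. Since this holds for every regular $\dimvectd$, the first claim is established.

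For the identification of algebras I would exploit that the cycle map $\Chowha(\Pbb^1(2^n)) \to \Coha(\Pbb^1(2^n))$ is, by its construction in the ChowHA subsection, an algebra homomorphism compatible with the $\N_0^{\oplus I}$-grading; it therefore suffices to check that it is bijective on each graded component. On the $\dimvectd$-piece this map is the cohomological cycle map $A^\bullet(\Mf_\dimvectd; \Q^\mathrm{vir}) \to H^\bullet(\Mf_\dimvectd; \Q^\mathrm{vir})$. Because $R_\dimvectd$ is smooth by \Cref{Rd smooth + dim}, the quotient stack $\Mf_\dimvectd = [R_\dimvectd / G_\dimvectd]$ is smooth, so Poincaré duality identifies this cohomological cycle map with the homological one already shown to be an isomorphism in the previous paragraph; the virtual twist $\Q^\mathrm{vir}$ only relabels degrees and preserves isomorphisms. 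Hence the cycle map is bijective in each bidegree, and I conclude that it is an isomorphism of graded algebras, i.e. $\Chowha(\Pbb^1(2^n)) = \Coha(\Pbb^1(2^n))$.

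The substantive content has already been absorbed into \Cref{Affine stratification C(2^n)}, so the only point in this corollary that requires care is the passage from the Borel--Moore/Chow-homological statement of \Cref{cylce map iso if iso on strata} to the cohomological formulation in which $\Chowha$ and $\Coha$ are defined. I expect this bookkeeping---matching the degree shifts under Poincaré duality against the $\Q^\mathrm{vir}$-convention, and checking compatibility across the direct sum over all $\dimvectd$---to be the only subtlety, and it is routine once the smoothness of $\Mf_\dimvectd$ from \Cref{Rd smooth + dim} is in hand. (I note in passing that \Cref{Purity of cohomology if affine paving} simultaneously records the purity of $H_\bullet^{\mathrm{BM}}(\Mf_\dimvectd)$, which is what makes \Cref{Poincare series of Coha = Sym(DT} applicable in this setting, though it is not needed for the isomorphism itself.)
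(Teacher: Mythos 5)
Your proposal is correct and follows exactly the route the paper intends: the corollary is stated without separate proof precisely because it is the combination of \Cref{Affine stratification C(2^n)} with \Cref{cylce map iso if iso on strata}, plus the observation that the already-constructed algebra morphism $\Chowha(\Pbb^1(2^n))\to\Coha(\Pbb^1(2^n))$ is bijective on each graded piece via smoothness of $R_\dimvectd$ (\Cref{Rd smooth + dim}) and Poincaré duality. Your explicit bookkeeping of the passage from the Borel--Moore statement to the cohomological one is a faithful filling-in of what the paper leaves implicit.
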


Before we prove \Cref{Affine stratification C(2^n)}, we recall the special case of~$n = 0$, that is the case where~$C(\boldsymbol{\lambda}; \mathbf{w}) = \C K_2$ is the path algebra of the~$2$-Kronecker quiver~$K_2 = \begin{tikzcd}
	\bullet \rar[bend left] \rar[bend right] & \bullet
\end{tikzcd}$.
We recall the stratification from \cite[Section~10.2]{FRChowHa}, which was constructed in order to calculate the Poincaré series of the semistable slope~$0$ CoHA of~$K_2$.

\begin{prop}\label{Affine stratification K_2}
	Given the~$2$-Kronecker quiver~$K_2$, for every regular dimension vector~$(d_0,d_0)$, the stack of regular representations has an affine paving.
\end{prop}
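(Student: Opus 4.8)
The plan is to produce an explicit affine paving of $\Mf_{(d_0,d_0)} = [R^{\sst}_{(d_0,d_0)}/(\GL_{d_0}\times\GL_{d_0})]$ by stratifying a regular pencil $(\alpha,\beta)$ according to the size of its infinite elementary divisors. Under the ($n=0$ case of the) identification of $\Mf_{(d_0,d_0)}$ with the stack of length-$d_0$ torsion sheaves on $\Pbb^1$, the point $\infty\in\Pbb^1$ corresponds to the degeneracy locus of $\beta$, and the multiplicity of the sheaf at $\infty$ equals the dimension of the generalized kernel $\ker\beta^{N}$ for $N\gg 0$. The key geometric input I would use is the Fitting decomposition $\C^{d_0}=\ker\beta^{N}\oplus\im\beta^{N}$ of both the source and the target: both summands are $(\alpha,\beta)$-invariant, on the first $\beta$ is nilpotent and (by regularity of the pencil) $\alpha$ is invertible, while on the second $\beta$ is invertible. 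This canonical splitting is what separates the ``$\infty$-part'' from the ``$\mathbb{A}^1$-part'' and makes each stratum a product.

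Concretely, for $0\le k\le d_0$ let $\Sigma_{\ge k}\subseteq\Mf_{(d_0,d_0)}$ be the locus where $\rank\beta^{N}\le d_0-k$; this is a rank condition, hence a closed substack, giving a filtration $\Mf_{(d_0,d_0)}=\Sigma_{\ge 0}\supseteq\Sigma_{\ge 1}\supseteq\cdots\supseteq\Sigma_{\ge d_0+1}=\emptyset$. On the locally closed stratum $\Sigma_k=\Sigma_{\ge k}\setminus\Sigma_{\ge k+1}$ the summand $\ker\beta^{N}$ has constant rank $k$, so the Fitting decomposition yields an isomorphism of stacks $\Sigma_k\cong \mathcal{T}^{\infty}_{k}\times\mathcal{T}^{\mathbb{A}^1}_{d_0-k}$. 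Here $\mathcal{T}^{\mathbb{A}^1}_{m}$ is the stack of pencils with $\beta$ invertible; normalizing $\beta$ to the identity using one copy of $\GL_{m}$ leaves $\alpha$ as an arbitrary matrix modulo conjugation, so $\mathcal{T}^{\mathbb{A}^1}_{m}\cong[\mathbb{A}^{m^2}/\GL_{m}]$, a single cell. Symmetrically, normalizing $\alpha$ to the identity on the $\infty$-part identifies $\mathcal{T}^{\infty}_{k}$ with the quotient $[\mathcal{N}_k/\GL_k]$ of the nilpotent cone $\mathcal{N}_k\subseteq\C^{k\times k}$ by conjugation.

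It remains to affine-pave $[\mathcal{N}_k/\GL_k]$, which I would do by its finitely many nilpotent orbits $\mathcal{O}_\lambda$ indexed by partitions $\lambda\vdash k$ and ordered by orbit closure (dominance): each $[\mathcal{O}_\lambda/\GL_k]\cong [\pt/Z_\lambda]$, and the centralizer $Z_\lambda$ is connected with reductive part $\prod_i\GL_{m_i(\lambda)}$, where $m_i(\lambda)$ counts the parts of $\lambda$ of size $i$. By the Induction Isomorphism (\Cref{induction iso}) each product cell $[\mathcal{O}_\lambda/\GL_k]\times[\mathbb{A}^{(d_0-k)^2}/\GL_{d_0-k}]$ is isomorphic to $[\mathbb{A}^{(d_0-k)^2}/(Z_\lambda\times\GL_{d_0-k})]$, a quotient of affine space by a connected group whose reductive part is a product of general linear groups, exactly as required. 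Interleaving the dominance filtrations inside each $\Sigma_k$ with the filtration by the $\Sigma_{\ge k}$ assembles these into one global filtration by closed substacks with cells indexed by the pairs $(k,\lambda)$; this recovers the stratification of \cite[Section~10.2]{FRChowHa}. I expect the main obstacle to be upgrading the stratum identification $\Sigma_k\cong\mathcal{T}^{\infty}_{k}\times\mathcal{T}^{\mathbb{A}^1}_{d_0-k}$ from closed points to a statement over an arbitrary test scheme $S$: one must check that, because $\rank\beta^{N}$ is locally constant on $\Sigma_k$, the generalized kernel is a subbundle and the Fitting decomposition is $S$-flat, so the splitting globalizes in families. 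The remaining bookkeeping, namely that the assembled partial unions are genuinely closed substacks, is then a routine consequence of the closedness of the $\Sigma_{\ge k}$.
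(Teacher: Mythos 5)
Your proposal produces exactly the same cells as the paper's proof --- strata indexed by pairs $(k,\lambda)$ with $\lambda\vdash k$, each isomorphic to $[\mathbb{A}^{(d_0-k)^2}/(Z_\lambda\times\GL_{d_0-k})]$ --- but it reaches them by a genuinely different route. The paper, following \cite[Section~10.2]{FRChowHa}, defines $S_{r;\varpi}$ as the $G_{(d_0,d_0)}$-saturation of explicit Weierstrass normal forms, imports from that reference the local closedness, the closure relations, and the balanced-product description $S_r\cong G_{(d_0,d_0)}\times^{\GL_r\times\GL_{d_0-r}}(M_r(\C)\times N_{d_0-r}(\C))$, and then refines by nilpotent orbits and applies \Cref{induction iso}. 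You instead characterize the strata intrinsically: the multiplicity at $\infty$ is (upper semi)continuous, so the $\Sigma_{\geq k}$ are closed for free, and the product splitting of each stratum comes from the canonical support decomposition of the corresponding torsion sheaf (conceptually: no homomorphisms or extensions between sheaves with disjoint support), after which the two factors are handled separately. What your route buys is independence from the computations of \cite{FRChowHa} for the coarse strata; what it costs is that you must carry out yourself the globalization-in-families argument you flag at the end (constant fibre rank $\Rightarrow$ subbundle $\Rightarrow$ the splitting works over any test scheme), which in the paper's approach is absorbed into the cited reference.

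Two points in your write-up need repair, though neither is fatal. First, $\beta$ is a map between the two \emph{different} vector spaces attached to the vertices $0$ and $\infty$, so $\beta^N$, $\ker\beta^N$ and $\rank\beta^N$ are literally undefined. The standard fix: by regularity of the pencil, Zariski-locally on the base some element $\gamma=\lambda\alpha+\mu\beta$ is invertible, and one uses $\ker\big((\gamma^{-1}\beta)^N\big)\subseteq V_0$ and $\ker\big((\beta\gamma^{-1})^N\big)\subseteq V_\infty$, checking independence of $\gamma$; equivalently, $\Sigma_{\geq k}$ is cut out by the vanishing of the top $k$ coefficients of the binary form $\det(s\alpha+t\beta)$, which makes closedness manifest. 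Second, with the paper's conventions ($R_n(\infty)=(J_n,E_n)$) it is $\alpha$, not $\beta$, that degenerates at $\infty$; your convention is the opposite one. This is only a relabelling of a point of $\Pbb^1$ and is harmless, but one convention must be fixed consistently throughout.
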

\begin{proof}
	Recall that all indecomposable representations of~$K_2$ are known:
	up to isomorphism, there is a unique indecomposable representation~$P_n$, resp.~$I_n$, for each dimension vector~$(n, n +1 )$, resp.~$(n+1 , n)$, for~$n\geq 0$, and there exist one-parametric families~$R_n(\lambda)$ of indecomposables for the dimension vectors~$(n,n)$ for~$n\geq 0$ and~$\lambda \in \Pbb^1(\C)$.
	The indecomposable representations of dimension vector~$(n,n)$ are exactly the regular indecomposables.
	Here, the representation~$R_n(\lambda)$ is given explicitly by the matrices~$(E_n, \lambda E_n + J_n)$ for~$\lambda \neq \infty$, and by~$(J_n, E_n)$ for~$\lambda =\infty$, where~$E_n$ denotes the~$n \times n$-identity matrix and~$J_n$ is the nilpotent~$n\times n$-Jordan block.
	
	Any semistable representation~$M$ of dimension~$(d_0, d_0)$ is therefore of the form
	\[
	M = R_{n_1}(\lambda_1) \oplus \cdots \oplus R_{n_k}(\lambda_k)
	\]
	for~$d_0 = n_1 + \cdots + n_k$ and~$\lambda_1, \dots, \lambda_k \in \Pbb^1(\C)$ uniquely defined up to reordering.
	We reorder the direct sum and assume~$\lambda_1, \dots, \lambda_j \neq \infty$,~$\lambda_{j+1} = \cdots = \lambda_k = \infty$, and~$n_{j+1}\geq n_{j+2} \geq \cdots \geq n_k$.
	We see that~$M$ is represented by a pair of block matrices
	\[
	\left( \twoByTwoMatrix{E_r}{0}{0}{J_\varpi}, \twoByTwoMatrix{A}{0}{0}{E_{d_0-r}}	\right)
	\]
	for some~$0 \leq r \leq d_0$ and some partition~$\varpi$ of~$d_0 - r$.
	We define~$S_{r; \varpi}$ as the~$G_{(d_0,d_0)}$-saturation of the set of pairs of such matrices and also set~$S_r\coloneqq \coprod_{\varpi \vdash (d_0 - r)} S_{r; \varpi}$.
	
	In \cite[Section~10.2]{FRChowHa}, it is shown that every~$S_r$ is locally closed, their union equals the semistable locus, the closure of~$S_r$ equals the union of~$S_{r'}$ for~$r'\leq r$, and 
	\[
	S_r \cong G_{(d_0 , d_0)} \times^{\GL_r(\C) \times \GL_{d_0 - r }(\C)} (M_r(\C) \times N_{d_0 - r}(C)),
	\]
	where~$N_{d_0 - r}(\C)$ denotes the set of all nilpotent~$(d_0 - r)\times (d_0 - r)$-matrices and the group~$\GL_r(\C)\times \GL_{d_0 - r}(\C)$ is considered as a subgroup of~$G_{(d_0,d_0)} = \GL_{d_0}(\C)^2$ by mapping a pair~$(g_1, h_4)$ to~$\left( \left(\begin{smallmatrix} g_1 & 0 \\ 0 & h_4 \end{smallmatrix}\right) ,  \left(\begin{smallmatrix} g_1 & 0 \\ 0 & h_4 \end{smallmatrix}\right) \right)$.
	
	\hyphenation{para-metrized}
	As the nilpotent cone~$N_{d_0 - r}(\C)$ is stratified by the~$\GL_{d_0 - r }(\C)$-orbits, which are parametrized by partitions of~$d_0 - r$, the variety~$S_r$ is stratified by the~$S_{r ; \varpi}$ and thus~$S_{r; \varpi}$ is locally closed, their union equals the semistable locus, the closure equals a union of smaller (in a suitable sense) such strata, and we have
	\begin{equation} \label{strata of R(K2) balanced product form}
		\begin{aligned}
			S_{r; \varpi} & \cong G_{(d_0,d_0)} \times^{\GL_r(\C) \times \GL_{d_0 - r }(\C)} (M_r(\C) \times (\GL_{d_0 - r }(\C) \cdot J_\varpi)) \\
			& \cong G_{(d_0,d_0)} \times^{\GL_r(\C) \times \GL_{d_0 - r }(\C)} (M_r(\C) \times \GL_{d_0 - r }(\C)/G_\varpi) \\
			& \cong G_{(d_0,d_0)} \times^{\GL_r(\C) \times G_\varpi} (M_r(\C) \times \{ J_\varpi\}),
		\end{aligned}
	\end{equation}
	where~$G_\varpi$ is the stabilizer of~$J_\varpi$ in~$\GL_{d_0 - r}(\C)$.
	\mycomment{These stabilizer groups have dimension~$\langle \varpi , \varpi \rangle = \sum_{i,j} \min(\varpi_i, \varpi_j)$,}
	The reductive part of~$G_\varpi$ is isomorphic to~$\prod_i \GL_{m_i}(\C)$, where~$m_i$ denotes the multiplicity of~$i$ as a part of~$\varpi$ for~$i \geq 1$.
	
	We therefore have for any dimension vector~$(d_0, d_0)$ an affine paving of the quotient stack~$[R_{{(d_0,d_0)}}^\sst(K_2) / G_{(d_0,d_0)} ]$ by 
	\[
	[ S_{r; \varpi} / G_{(d_0,d_0)} ] = [ (M_r(\C)\times \{ J_\varpi \}) / (\GL_r(\C)\times G_\varpi) ] \cong [ \mathbb{A}^{r^2} / (\GL_r(\C) \times G_\varpi) ],
	\]
	where the first equality follows from~\Cref{induction iso}.
\end{proof}

\begin{proof}[Proof of \Cref{Affine stratification C(2^n)}]
	
	We make the assumption~$\lambda_i\neq \infty$ and write in a slight abuse of notation~$\lambda_i = [\lambda_i : 1]$.
	Note that this assumption is fine, because of the action of~$\PGL_2(\C)$, see \Cref{PGL2 on C(lambdaw)}.
	(The case~$\lambda_i = \infty$ for some~$i$ could be considered in exactly the same way, but it would require an additional case distinction.)
	
	Consider the morphism
	\[
	R_\dimvectd\to R_{(d_0,d_0)}(\Pbb^1(2^{0})) = R^\sst_{(d_0, d_0)}(K_2), \quad (\alpha, \beta, A_1^{(1)}\!, \dots, A_n^{(2)}) \mapsto (\alpha, \beta)
	\]
	and define~$\widetilde{S}_{r ; \varpi}\coloneqq \widetilde{S}_{r ; \varpi}(\dimvectd)$ as the inverse image of~$S_{r ; \varpi}$ under this map.
	Obviously, we have an equivariant stratification~$R_\dimvectd=\coprod \widetilde{S}_{r ; \varpi}$ with locally closed~$G_\dimvectd$-invariant strata such that the closure of any one stratum is again a union of strata.
	It thus suffices to show that every~$\widetilde{S}_{r ; \varpi}$ has an affine paving.
	
	Note that for~$r = d_0$, there is only one partition and~$\widetilde{S}_{d_0}(\dimvectd)$ is open in~$R_\dimvectd$.
	We first reduce the case of a general~$\widetilde{S}_{r ; \varpi}$ to this special case.
	
	
	\newcommand{\vin}{\rotatebox[origin=c]{-90}{$\in$}}
	\textbf{Step 1:} We show that
	\begin{equation}\label{Sr tilde as balanced product}
		\widetilde{S}_{r ; \varpi}(\dimvectd)\cong G_\dimvectd\times^{\GL_r\times \GL_{d_1-(d_0-r)}\times \dots \times \GL_{d_n-(d_0-r)}\times G_\varpi}(\widetilde{M}_r(\dimvectd)\times \{J_\varpi\}),
	\end{equation}
	where
	\[
	\widetilde{M}_r(\dimvectd) \coloneqq \{ (\underset{\substack{\vin \\ \C^{r\times r}}}{A}, \underset{\substack{\vin \\ \C^{(d_1-d_0+r)\times r}}}{A_1^{(1)}} , \dots, \underset{\substack{\vin \\ \C^{r\times (d_n-d_0+r)}}}{ A_n^{ ( 2 ) } } ) \ \vert \ \lambda_i E_r + A = A_i^{ ( 2 ) } A_i^{ ( 1 ) } \! , \text{ for~$i=1,\dots, n$}\};
	\]
	this implies -- using \Cref{induction iso} -- that~$[\widetilde{S}_{r ; \varpi}(\dimvectd) / G_\dimvectd ] \cong [ \widetilde{S}_{r}(\dimvectd') / G_{\dimvectd'} ] \times [ \pt / G_\varpi]$ for
	\[
	\dimvectd'=(r,d_1-(d_0-r),\dots, d_n-(d_0-r), r),
	\]
	thereby reducing the whole problem to the open stratum, i.e., the special case of~$r = d_0$.
	
	By the construction of the stratification of regular representations of the Kronecker quiver, any element~$M = (\alpha,\beta, A_1^{ ( 1 ) } , \dots, A_n^{ ( 2 ) } ) \in \widetilde{S}_{r ; \varpi}$ is equivalent to one of the form 
	\[
	\left( \twoByTwoMatrix{E_r}{0}{0}{J_\varpi}, \twoByTwoMatrix{A}{0}{0}{E_{d_0-r}}, A_1^{(1)}, \dots, A_n^{(2)} \right)
	\]
	and there is an open subset on which the elements~$A$ and~$A^{(i)}_k$ are constructed by polynomials from~$M$, because the group~$\GL_r(\C)\times \GL_{d_0 - r}(\C)$ is special.
	
	Note that because~$\lambda_i \alpha +\beta$ factors through~$\C^{d_i}$, we have~$d_i \geq \rank(\lambda_i \alpha +\beta)\geq d_0-r$; i.e., for other choices of~$r$ and~$\dimvectd$ the scheme~$\widetilde{S}_r(\dimvectd)$ is empty.
	We now write
	\mycomment{\[
		A_i^{ ( 2 ) } = \left(\begin{smallmatrix} A_i^{(2)'} \\ A_i^{(2)''} \end{smallmatrix} \right)
		\]
		and~$A_i^{(1)} = \left(\begin{smallmatrix}
			A_i^{(1)'} & A_i^{ ( 1 )'' }
		\end{smallmatrix}\right)$ via~$d_0= r + (d_0-r)$.}
	$
	A_i^{(1)} = \left(\begin{smallmatrix}
		A_i^{(1),1} & A_i^{ ( 1 ),2 }
	\end{smallmatrix}\right)$ {and} $A_i^{ ( 2 ) } = \left(\begin{smallmatrix} A_i^{(2),1} \\ A_i^{(2),2} \end{smallmatrix} \right)
	$
	via~$d_0= r + (d_0-r)$.
	Because
	\[
	A_i^{ ( 2 ) } A_i^{ ( 1 ) } = \twoByTwoMatrix{ A + \lambda_i E_r  }{0}{0}{E_{d_0-r} + \lambda_i J_\varpi },
	\]
	we know that~$A_i^{(2),2} A_i^{(1),2} = E_{d_0-r}+\lambda_i J_\varpi$ is invertible.	
	We may therefore find matrices~$C_i \in \GL_{d_i}(\C)$ such that
	\mycomment{\[
		C_i A_i^{ ( 1 ) } = \twoByTwoMatrix{\widetilde{\widetilde{A_i^{ ( 1 ) }}}}{E_{d_0-r}}{\widetilde{A_i^{(1)}}}{0}.
		\]}
	\[
	C_i A_i^{ ( 1 ) } = \twoByTwoMatrix{ B_{i,2} }{ E_{d_0-r} }{ B_{i,1} }{ 0 }.
	\]
	The matrix~$C_i$ is also constructed in a polynomial way (on the open subset where a given~$(d_0 - r)\times (d_0 - r)$-minor of~$(E_{d_0 - r}  + \lambda_i J_\varpi)^{-1} A_i^{(2),2}$ does not vanish, it can be given explicitly).
	We replace~$A_i^{ ( 1 ) }$ by~$C_i A_i^{ ( 1 ) }$ and~$A_i^{ ( 2 ) }$ by~$A_i^{ ( 2 ) }C_i^{-1}$.
	
	We see that if we write
	\[
	A_i^{(2)} = \twoByTwoMatrix{ a_{ i, 1 } }{ a_{ i , 2 } }{ a_{ i , 3 } }{ a_{ i , 4 } },
	\]
	we obtain~$ a_{ i , 1 } = 0~$ and~$a_{ i , 3 }  = E_{d_0-r} + \lambda_i J_\varpi$ from the product of~$A_i^{ ( 2 ) }$ and~$A_i^{ ( 1 ) }$.
	We have
	\[
	A_i^{ ( 2 ) }= \twoByTwoMatrix{0}{ a_{ i , 2 } }{ E_{d_0-r} + \lambda_i J_\varpi }{ a_{ i , 4 } }, 
	\]
	which multiplied with
	\[
	D_i = \twoByTwoMatrix{ E_{d_0-r} }{ - (E_{d_0-r} + \lambda_i J_\varpi)^{-1} a_{ i , 4 } }{ 0 }{ E_{ d_i - ( d_0 - r ) }}
	\]
	from the right gives
	\[
	A_i^{(2)} D_i = \twoByTwoMatrix{0}{ \widehat{A}_i^{ ( 2 ) } }{E_{d_0-r} + \lambda_i J_\varpi}{0}.
	\]
	It follows that
	\[
	D_i^{-1} A_i^{(1)} = \twoByTwoMatrix{0}{E_{d_0-r}}{\widehat{A}_i^{ ( 1 ) }}{0}, 
	\]
	and so we replace~$A_i^{ ( 1 ) }$ by~$D_i^{-1} A_i^{(1)}$ and~$A_i^{ ( 2 ) }$ by~$ A_i^{ ( 2 ) } D_i$.
	
	We have now found a closed subset~$Y = \widetilde{M}_r(\dimvectd')\times \{ J_\varpi \}$ in~$\widetilde{S}_{r ; \varpi}$, such that~$G_\dimvectd Y = \widetilde{S}_{r ; \varpi}$, as well as an open cover of~$\widetilde{S}_{r ; \varpi} = \bigcup_i U_i$ together with morphisms
	\[
	(\varphi_1^{(i)}, \varphi_2^{(i)})\colon U_i \to G_\dimvectd \times Y
	\]
	such that~$\varphi_1^{(i)}(u) \cdot \varphi_2^{(i)}(u) = u$.
	
	Now consider the stabilizer of~$Y$.
	We take~$g_0,\, g_\infty \in \GL_{d_0}(\C)$ and~$g_i \in \GL_{d_i}(\C)$, written as block matrices
	\[
	g_0 = \twoByTwoMatrix{ g^{(1)} }{ g^{(2)} }{ g^{(3)} }{ g^{(4)} }\!, \qquad g_\infty = \twoByTwoMatrix{ g_\infty^{(1)} }{ g_\infty^{(2)} }{ g_\infty^{(3)} }{ g_\infty^{(4)} }\!,\qquad  g_i = \twoByTwoMatrix{ g^{(1)}_i }{ g^{ ( 2 ) }_i }{ g^{ ( 3 ) }_i }{ g^{ ( 4 )}_i}\! .
	\]
	Because~$(g_0, g_\infty)$ stabilizes elements of the form
	\[
	\left( \twoByTwoMatrix{E_r}{0}{0}{J_\varpi}, \twoByTwoMatrix{A}{0}{0}{E_{d_0-r}}	\right), 
	\]
	we have by~\eqref{strata of R(K2) balanced product form} that
	\[
	g_0 = g_\infty = \twoByTwoMatrix{ g_0^{(1)} }{0}{0}{ g_0^{(4)} }.
	\]
	Now we consider
	\begin{align*}
		\twoByTwoMatrix{ g^{(1)}_i }{ g^{(2)}_i }{ g^{(3)}_i }{g^{(4)}_i } \twoByTwoMatrix{0}{E_{d_0-r}}{a_i^{(1)}}{0} & = \twoByTwoMatrix{0}{E_{d_0-r}}{ b_i^{(1)}}{0} \twoByTwoMatrix{ g_0^{(1)} }{ 0 }{ 0 }{ g_0^{(4)} } \\
		\twoByTwoMatrix{ g_0^{(1)} }{ 0 }{ 0 }{ g_0^{(4)} } \twoByTwoMatrix{0}{ a_i^{(2)} }{ E_{d_0-r } + \lambda_i N }{ 0 } & = \twoByTwoMatrix{ 0 }{ b_i^{(2)} }{ E_{d_0-r} + \lambda_i N' }{ 0 } \twoByTwoMatrix{g^{(i)}_1}{g^{(i)}_2}{g^{(i)}_3}{g^{(i)}_4} \! .
	\end{align*}
	We deduce that~$g^{(2)}_i = 0$,~$g^{(3)}_i = 0$, and~$g^{(1)}_i = g_0^{(4)}$.
	We therefore have a bijective morphism
	\[
	G_\dimvectd\times^{\GL_r\times \GL_{d_1-(d_0-r)}\times \dots \times \GL_{d_n-(d_0-r)}\times \GL_{d_0-r}}(\widetilde{M}_r(\dimvectd) \times \{ J_\varpi \}) \to \widetilde{S}_{r ; \varpi}(\dimvectd).
	\]
	However, on the open subsets~$U_i\subseteq \widetilde{S}_{r ; \varpi}(\dimvectd)$ we have a polynomial inverse.
	The map is therefore an isomorphism.
	
	\textbf{Step 2:} Find an affine paving of~$\widetilde{S}_{d_0} = \widetilde{S}_{d_0}((d_0,d_1,\dots, d_n, d_0))$.
	
	For~$\underline{r}= (r_1,\dots , r_n)\in \N_0^n$ we denote by~$\widetilde{S}_{\underline{r}}$ the set of elements~$(\alpha, \beta, A_1^{(1)} \! , \dots , A_i^{(2)})$ in~$\widetilde{S}_{d_0}$ such that~$\rank(A_i^{(1)}) = r_i$.
	Obviously, this gives a finite~$G_\dimvectd$-invariant stratification of~$\widetilde{S}_{d_0}$ and we claim that~$[ \widetilde{S}_{\underline{r}} / G_\dimvectd ] \cong [\mathbb{A}^{N_{\underline{r} }} / G_{\underline{r} } ]$ for a group~$G_{\underline{r}}$ whose reductive part is a product of general linear groups.
	
	We write~$s_i\coloneqq  \dim \ker(A_i^{ (1) }) = d_0-r_i$ and~$ s = \sum_i s_i$.
	We also abbreviate~$s_{<i} \coloneqq \sum_{j=1}^{i-1} s_j$ and~$s_{>i}\coloneqq \sum_{j=i+1}^{n} s_j$.
	
	Choose a basis for each~$\ker(A_i^{(1)})$ in~$\C^{d_0}$.
	These are linearly independent, because an element of~$\ker( A_i^{ (1) } )$ is an eigenvector of~$A$ with eigenvalue~$-\lambda_i$, as~$A_i^{ (2) }A_i^{ (1) } = \lambda_i E_{d_0} + A$. 
	(Here we need the assumption~$\lambda_i \neq \infty$.)
	We can therefore find a basis of~$\C^{d_0}$ such that
	\mycomment{
		\[\NiceMatrixOptions{renew-dots,renew-matrix}
		A = \begin{pNiceArray}{ccccccc|c}
			-\lambda_1 &  			 &		 					&				 & 							& 				& 					   		&	\Block{3-1}{A^{(1)}}\\
			& \ddots & 							&				 & 							&				&  							& 	\phantom{A}\\
			& 			  & \!\! -\lambda_1	&				 & 							&				&  							&		\\
			& 			  &							&	\ddots	 &  						&				&  							&	\vdots \\
			& 			  & 						&				 & \!\! -\lambda_n   &				&  				   			&	\Block{3-1}{A^{(n)}}	\\
			& 			  & 						&				 &					  		&	\ddots	&  							& 					\\
			& 			  & 						&				 &				      		&				 & \!\! -\lambda_n 	&  				\\
			\hline
			\Block{1-7}{\mathbf{0}}
			&			&							&				&							&				&							& A'					
		\end{pNiceArray}.
		\]
	}
	
	\[\NiceMatrixOptions{renew-dots,renew-matrix}
	A = \begin{pNiceArray}{ccc|c}
		-\lambda_1 E_{s_1}  &  					& 					   		&	{A^{(1)}}\\
		&	\ddots	 &  					&	\vdots \\
		&				 & \!\! -\lambda_n E_{s_n}  &	{A^{(n)}}	\\
		\hline
		\Block{1-3}{\mathbf{0}}
		&				&							& A'					
	\end{pNiceArray}.
	\]
	We now also choose a basis of~$\C^{d_i}$ such that~$A_i^{ (1) }$ becomes
	\[
	A_i^{ (1) } = \begin{pmatrix}
		E_{s_{<i}} & 0 & 0 & 0 \\
		0 & 0 & E_{s_{>i}} & 0 \\
		0 & 0 & 0 & E_{d_0-s} \\
		0& 0 & 0& 0
	\end{pmatrix}
	\]
	as a~$(s_{<i}+s_{>i}+(d_0-s)+ (d_i-r_i))\times (s_{<i}+ s_i+s_{>i}+ (d_0-s))$-matrix.
	We also write
	\[
	A_i^{ (2) } = \begin{pmatrix}
		a^{(i)}_{11} & a^{(i)}_{12} & a^{(i)}_{13} & a^{(i)}_{14}  \\
		a^{(i)}_{21} & a^{(i)}_{22} & a^{(i)}_{23} & a^{(i)}_{24}  \\
		a^{(i)}_{31} & a^{(i)}_{32} & a^{(i)}_{33} & a^{(i)}_{34}  \\
		a^{(i)}_{41} & a^{(i)}_{42} & a^{(i)}_{43} & a^{(i)}_{44}  
	\end{pmatrix}
	\]
	as a~$(s_{<i}+ s_i + s_{>i} + (d_0-s))\times (s_{<i}+ s_{>i}+ (d_0-s) + (d_i-r_i))$-matrix and obtain from 
	\begin{align*}
		A_i^{ (2) } A_i^{(1)} & =
		\begin{pmatrix}
			a^{(i)}_{11} & 0 & a^{(i)}_{12} & a^{(i)}_{13}  \\
			a^{(i)}_{21} & 0 &  a^{(i)}_{22} & a^{(i)}_{23} \\
			a^{(i)}_{31} & 0 &  a^{(i)}_{32} & a^{(i)}_{33} \\
			a^{(i)}_{41} & 0 & a^{(i)}_{42} & a^{(i)}_{43} 
		\end{pmatrix} 
		= \lambda_i E_{d_0} + A
	\end{align*}
	
	\mycomment{ 
		\[
		\begin{pNiceArray}{ccccccccccc|c}
			\lambda_i-\lambda_1 &  			 &		 										&				 & 			& 				& 		&			&											& 				& 										& \Block{3-1}{A^{(1)}} \\
			& \Ddots & 												&				 & 			&				&  		& 			&											&				&										& 	\phantom{A}\\
			& 			  & \lambda_i -\lambda_1		&				 & 			 &				&  		&			&											& 				&										&	\\
			& 			  &												&	\Ddots	 & 			&				&  		&			&											&  				&										& 	\Vdots \\
			& 			  &												&				 & 0		&				&		& 			&											&				&										&	\Block{3-1}{A^{(i)}} \\
			& 			  &												&				 & 			&\Ddots 	&		& 			&											&				&										&	\phantom{A} \\
			& 			  &												&				 & 			&				&	0	& 			&											&				&										&	\\
			& 			  &												&				&			&				&		& \Ddots &											&				&										& \Vdots \\
			& 			 &  											&				 & 			&				&		&				& \lambda_i -\lambda_n   &				&  				   						&	\Block{3-1}{A^{(n)}}	\\
			& 			  & 											&				 &			&		  		&		&				&										  &	\Ddots	&  										& 					\\
			& 			  & 											&				 &			&	      		&		&				&	 									& 				& \lambda_i -\lambda_n 	&  				\\
			\hline																																																																														
			\Block{1-11}{\mathbf{0}}																																																																			
			&			&												&				&			&				&		&				&										&				&											& \lambda_i E_{d_0-s} + A'					
		\end{pNiceArray}
		\]
	}
	
	that 
	\begin{align*}
		a_{11}^{(i)} & = \begin{pNiceMatrix}
			(\lambda_i-\lambda_1) E_{s_{1}} \!\!\! & & \\
			& \Ddots & \\
			& & \!\!\!\!\!\!\!\!\!\!\!\! (\lambda_i-\lambda_{i-1}) E_{s_{i-1}}
		\end{pNiceMatrix}\mycomment{ \eqqcolon \lambda_i - \lambda_{<i}} ,\quad	a_{21}^{ (i) } = 0,\quad a_{31}^{ (i) } =  0,\quad a_{41}^{ (i) } = 0 \\
		a_{12}^{ (i) } & = 0,\quad a_{22}^{ (i) } = 0,\quad a_{32}^{ (i) } = \begin{pNiceMatrix}
			(\lambda_i-\lambda_{i+1}) E_{ s_{i+1} } \!\!\!\!\!\!\!\!\! & & \\
			& \Ddots & \\
			& & \!\!\!\!\!\!\!\!\!\!\!\! (\lambda_i-\lambda_{n}) E_{ s_n }
		\end{pNiceMatrix} \mycomment{\eqqcolon \lambda_i- \lambda_{>i}} ,\quad a_{42}^{ (i) } = 0, \\
		a_{13}^{ (i) } & = \begin{pNiceMatrix}
			A^{(1)} \\
			\Vdots \\
			A^{(i-1)}
		\end{pNiceMatrix}\!,\quad
		a_{23}^{ (i) } = A^{(i)}\!,\quad
		a_{33}^{ (i) } = \begin{pNiceMatrix}
			A^{(i+1)} \\
			\Vdots \\
			A^{(n)}
		\end{pNiceMatrix}\!,\quad a_{43}^{ (i) } = A'+\lambda_i E_{d_0-s}. 
	\end{align*}
	\mycomment{	i.e., we have
		\mycomment{
			\begin{align*}
				A_2^{(i)} & = \begin{pNiceArray}{cccccccc}
					\lambda_i-\lambda_1 & & &		\Block[borders={left}]{3-3}<\Large>{\mathbf{0}} 	& & & 			A^{(1)} & a_1^{(i)}	 \\
					& \Ddots & &			& & & 			\Vdots & \Vdots	 \\
					& & \lambda_i-\lambda_{i-1} &		& & & 			A^{(i-1)} & 	 \\
					\Block[borders={top}]{1-3}<\Large>{\mathbf{0}} & & &		\Block[borders={bottom}]{1-3}<\Large>{\mathbf{0}} 	& & & 			A^{(i)} & 	 \\
					\Block[borders={right}]{3-3}<\Large>{\mathbf{0}} & & &		 	\lambda_i - \lambda_{i+1} & & & 			\Block[borders={left}]{1-1}{A^{(i+1)}} & 	 \\
					& & &		 	& \Ddots & & 			\Block[borders={left}]{1-1}{\Vdots} & 	 \\
					& & &			& & \lambda_i-\lambda_{n} & 		\Block[borders={left}]{1-1}{A^{(n)}}	& 	 \\
					\Block{1-3}<\Large>{\mathbf{0}} & & &		\Block[borders={top}]{1-3}<\Large>{\mathbf{0}} 	& & & 			\lambda_i + A' & a_{n+1}^{(i)}	 			
				\end{pNiceArray} \\
				& = \begin{pNiceArray}{cccccccc}
					(\lambda_i-\lambda_1) E_{s_1} & & &		\Block[borders={left}]{3-3}<\Large>{\mathbf{0}} 	& & & 			A^{(1)} & a_1^{(i)}	 \\
					& \Ddots & &			& & & 			\Vdots & \Vdots	 \\
					& & (\lambda_i-\lambda_{i-1}) E_{s_{i-1}} &		& & & 			A^{(i-1)} & 	 \\
					\Block[borders={top}]{1-3}<\Large>{\mathbf{0}} & & &		\Block[borders={bottom}]{1-3}<\Large>{\mathbf{0}} 	& & & 			A^{(i)} & 	 \\
					\Block[borders={right}]{3-3}<\Large>{\mathbf{0}} & & &		 	(\lambda_i - \lambda_{i+1}) E_{s_{i+1}}  & & & 			\Block[borders={left}]{1-1}{A^{(i+1)}} & 	 \\
					& & &		 	& \Ddots & & 			\Block[borders={left}]{1-1}{\Vdots} & 	 \\
					& & &			& & (\lambda_i-\lambda_{n}) E_{s_n} & 		\Block[borders={left}]{1-1}{A^{(n)}}	& 	 \\
					\Block{1-3}<\Large>{\mathbf{0}} & & &		\Block[borders={top}]{1-3}<\Large>{\mathbf{0}} 	& & & 			\lambda_i + A' & a_{n+1}^{(i)}	 			
				\end{pNiceArray} \\
				& = 	 \begin{pNiceArray}{cccccccc}
					\lambda_i-\lambda_1 & & &		\Block[borders={left}]{3-3}<\Large>{\mathbf{0}} 	& & & 			A^{(1)} & a_1^{(i)}	 \\
					& \Ddots & &			& & & 			\Vdots & \Vdots	 \\
					& & \lambda_i-\lambda_{i-1} &		& & & 			 & 	 \\
					\Block[borders={top}]{1-3}<\Large>{\mathbf{0}} & & &		\Block[borders={bottom}]{1-3}<\Large>{\mathbf{0}} 	& & & 			 & 	 \\
					\Block[borders={right}]{3-3}<\Large>{\mathbf{0}} & & &		 	\lambda_i - \lambda_{i+1} & & & 			\Block[borders={left}]{1-1}{} & 	 \\
					& & &		 	& \Ddots & & 			\Block[borders={left}]{1-1}{\Vdots} & 	 \\
					& & &			& & \lambda_i-\lambda_{n} & 		\Block[borders={left}]{1-1}{A^{(n)}}	& 	 \\
					\Block{1-3}<\Large>{\mathbf{0}} & & &		\Block[borders={top}]{1-3}<\Large>{\mathbf{0}} 	& & & 			\lambda_i + A' & a_{n+1}^{(i)}	 			
				\end{pNiceArray} \\
				& = \begin{pNiceMatrix}
					\lambda_i-\lambda_{<i} & 0 & A^{(<i)} & a^{(i)}_{<i} \\
					0 & 0 & A^{(i)} & a^{(i)}_i \\
					0 & \lambda_i-\lambda_{>i} & A^{(>i)} & a^{(i)}_{>i} \\
					0 & 0 & \lambda_i +A' & a^{(i)}_{n+1}
				\end{pNiceMatrix}.
		\end{align*}}
		
		\begin{align*}
			A_2^{(i)} & = \begin{pNiceArray}{cccccccc}
				(\lambda_i-\lambda_1) E_{s_1} \!\!\!\! & & &		\Block[borders={left}]{3-3}<\Large>{\mathbf{0}} 	& & & 			A^{(1)} & a_1^{(i)}	 \\
				& \Ddots & &			& & & 			\Vdots & \Vdots	 \\
				& & \!\!\!\!\!\!\!\!\!\!\!\! (\lambda_i-\lambda_{i-1}) E_{s_{i-1}}  &		& & & 			A^{(i-1)} & 	 \\
				\Block[borders={top}]{1-3}<\Large>{\mathbf{0}} & & &		\Block[borders={bottom}]{1-3}<\Large>{\mathbf{0}} 	& & & 			A^{(i)} & 	 \\
				\Block[borders={right}]{3-3}<\Large>{\mathbf{0}} & & &		 	(\lambda_i - \lambda_{i+1}) E_{s_{i+1}} \!\!\!\!\!\!\!\!\! & & & 			\Block[borders={left}]{1-1}{A^{(i+1)}} & 	 \\
				& & &		 	& \Ddots & & 			\Block[borders={left}]{1-1}{\Vdots} & 	 \\
				& & &			& & \!\!\!\!\!\!\!\!\!\!\!\! (\lambda_i-\lambda_{n}) E_{s_n} & 		\Block[borders={left}]{1-1}{A^{(n)}}	& 	 \\
				\Block{1-3}<\Large>{\mathbf{0}} & & &		\Block[borders={top}]{1-3}<\Large>{\mathbf{0}} 	& & & 			\lambda_i + A' & a_{n+1}^{(i)}	 			
			\end{pNiceArray}.		
	\end{align*}}
	
	So, the first~$s_{<i}+s_{>i}+(d_0-s)=r_i$ columns of~$A_2^{(i)}$ are uniquely determined by~$A$ while we can freely choose the remaining~$( d_i - r_i )$ ones.
	
	Thus, we see that there is a closed subset~$Y_{\underline{r}} \cong \C^{d_0\times (d_0-s)}\times \prod_i \C^{d_0\times (d_i-r_i) }$ of~$\widetilde{S}_{\underline{r}}$ such that~$G_\dimvectd Y_{\underline{r}} = \widetilde{S}_{\underline{r}}$.
	We now need to find the stabilizer of the matrices in the above form.
	Given an element~$( g_0, g_1, \dots, g_n , g_\infty )\in G_\dimvectd$ fixing an element of this form, we see immediately that~$g_0 = g_\infty$, because it has to stabilize~$\alpha= E_{d_0}$.
	Because~$g_0$ has to fix~$\ker(A_1^{ (i) })$, we also have that
	\[
	g_0 = \begin{pNiceArray}{ccc|c}
		g^{ (1) }_0 & & & \widetilde{g}_0^{ (1) } \\
		& \Ddots & & \Vdots \\
		& & g^{(n)}_0 & \widetilde{g}_0^{ (n) } \\
		\hline
		\Block{1-3}{\mathbf{0}} & & &  g_0^{ (n+1) }
	\end{pNiceArray},
	\]
	where~$g_0^{ (i) } \in \GL_{s_i}(\C)$ for~$i=1 , \dots , n$.
	Next observe that~$A^{(i)}_1$ has to map~$\ker(A_1^{(j)})$ for~$j\neq i$ to the first basis vectors and the complement of~$\bigoplus_j \ker(A_1^{(j)})$ to the next~$d_0-s$ basis vectors.
	Therefore, we have 
	\[
	g_i = \begin{pNiceMatrix}
		g_0^{ (<i) } & 0 & \widetilde{g}_i^{ (<i) } & g^{(1)}_i \\
		0 & g_0^{ (>i) } & \widetilde{g}_i^{ (>i) } & g^{(2)}_i \\
		0 & 0 & g_0^{ (n+1) } & g^{ (3) }_i \\
		0 & 0 & 0 & g^{ (4) }_i 
	\end{pNiceMatrix},
	\]	
	with
	\[
	g_0^{ (<i) } = \begin{pNiceMatrix}
		g_0^{ (1) } & & \\
		& \Ddots & \\
		& & g_0^{ (i-1) }
	\end{pNiceMatrix} \qquad \text{and} \qquad
	g_0^{ (>i) }=\begin{pNiceMatrix}
		g_0^{(i+1)} & & \\
		& \Ddots & \\
		& & g_0^{ (n) }
	\end{pNiceMatrix}.
	\]
	We therefore see that there is a bijective morphism~$G_{\dimvectd} \times^{G_{\underline{r}}} Y_{\underline{r}} \to \widetilde{S}_{\underline{r}}$ where
	\[
	Y_{\underline{r}}  = \C^{d_0\times (d_0-s)} \times \prod_i \C^{d_0\times (d_i-r_i)} 
	\]
	and 
	\begin{align*}
		G_{\underline{r}} & = \left\{ (g_0, g_1, \dots , g_n, g_\infty ) \in G_{\dimvectd} \ \middle\vert \ 
		\begin{aligned}
			& g_0 = g_\infty = \begin{pNiceArray}{ccc|c}
				g_0^{(1)} & & & \widetilde{g}_0^{ (1) } \\
				& \Ddots & & \Vdots \\
				& & g_0^{ (n) } & \widetilde{g}_0^{ (n) } \\
				\hline
				\Block{1-3}{\mathbf{0}} & & &  g_{n+1}
			\end{pNiceArray} \\ 
			& \text{ and } 
			g_i = 
			\begin{pNiceMatrix}
				g_0^{ (<i) } & 0 & \widetilde{g}_0^{ (<i) } & g_i^{(1)} \\
				0 & g_0^{ (>i) } & \widetilde{g}_0^{ (>i) } & g^{(2)}_i \\
				0 & 0 & g_0^{ (n+1) } & g_i^{ (3) } \\
				0 & 0 & 0 & g_i^{ (4) } 
			\end{pNiceMatrix}
		\end{aligned}  \right\}.
	\end{align*}
	That this morphism is in fact an isomorphism can be shown in the same way as in Step~1, i.e., by locally constructing an inverse morphism.
	
	By \Cref{induction iso}, we thus have~$[ \widetilde{S}_{\underline{r}} / G_\dimvectd ] \cong [ Y_{\underline{r}} / G_{\underline{r}} ]$.
	Notice that the reductive part of~$G_{\underline{r}}$ is~$\prod_i \GL_{s_i}(\C) \times \GL_{d_0-s}(\C) \times \prod_i \GL_{d_i-r_i}(\C)$ and the unipotent part is (as a variety isomorphic to)~$\C^{s\times (d_0-s)} \times \C^{(d_0-s_i)\times (d_i-r_i)}$.
	\mycomment{
		Therefore, the equivariant cycle map is an isomorphism.

		From these two steps it follows that 
		\begin{align*}
			H_\bullet^{\mathrm{BM}, G_\dimvectd}(\widetilde{S}_r(d)) & = H_\bullet^{\mathrm{BM}, \GL_r\times \GL_{d_1-(d_0-r)}\times \dots \times \GL_{d_n-(d_0-r)}\times \GL_{d_0-r}}(\widetilde{M}_r \times N_{d_0-r}) \\
			& = H_\bullet^{\mathrm{BM}, \GL_r\times \GL_{d_1-(d_0-r)}\times \dots \times \GL_{d_n-(d_0-r)}}(\widetilde{M}_r(d)) \otimes H_\bullet^{\mathrm{BM}, \GL_{d_0-r}}(N_{d_0-r}) \\
			& = H_\bullet^{\mathrm{BM}, G_{d'}}(\widetilde{S}_{d'_0}(d')) \otimes H_\bullet^{\mathrm{BM}, \GL_{d_0-r}}(N_{d_0-r})\\
			(\text{Step 2}+) & = A_\bullet^{G_{d'}}(\widetilde{S}_{d'_0}(d')) \otimes A_\bullet^{ \GL_{d_0-r}}(N_{d_0-r}) \\
			& = A_\bullet^{\GL_r\times \GL_{d_1-(d_0-r)}\times \dots \times \GL_{d_n-(d_0-r)}}(\widetilde{M}_r(d)) \otimes A_\bullet^{\GL_{d_0-r}}(N_{d_0-r}) \\
			& = A_\bullet^{\GL_r\times \GL_{d_1-(d_0-r)}\times \dots \times \GL_{d_n-(d_0-r)}\times \GL_{d_0-r}}(\widetilde{M}_r \times N_{d_0-r}) \\
			& = A_\bullet^{G_\dimvectd}(\widetilde{S}_r(d)),
		\end{align*}
		where~$d'=(r, d_1-(d_0-r),\dots , d_n-(d_0-r), r)$.
	}
\end{proof}

\begin{rk}
	The statement \Cref{Affine stratification C(2^n)} is qualitative, but the proof allows for a quantitative description of~$A_\bullet(\Mf_\dimvectd)$ (using \Cref{cylce map iso if iso on strata}).
	It should therefore be possible to calculate the Poincaré series of~$\Coha(\Pbb^1(2^n))$ from this stratification.
	However, we will use a different approach in \Cref{Coha Poincare series}.
\end{rk}

\section{Functoriality of Hall Algebras}\label{section4}

In this section, we relate the Chow--Hall algebras of different weighted projective lines.
Let~$n\geq0$,~$(\boldsymbol{\lambda}; \mathbf{w}) \coloneqq (\lambda_1, \dots, \lambda_n; w_1,\dots, w_n)$, and~$(\widetilde{\boldsymbol{\lambda}}; \widetilde{\mathbf{w}}) \coloneqq (\lambda_1, \dots, \lambda_{n+1}; w_1, \dots, w_{n+1})$.
For a dimension vector~$\dimvectd$ of~$C(\boldsymbol{\lambda}; \mathbf{w})$ we write~$\Mf_\dimvectd(\boldsymbol{\lambda}; \mathbf{w})$ for the stack of regular representations of~$C(\boldsymbol{\lambda}; \mathbf{w})$ of dimension~$\dimvectd$, and for a dimension vector~$\dimvectd$ of~$C(\widetilde{\boldsymbol{\lambda}}; \widetilde{\mathbf{w}})$ we write~$\Mf_{\dimvectd}(\widetilde{\boldsymbol{\lambda}}; \widetilde{\mathbf{w}})$ for the stack of regular representations of~$C(\widetilde{\boldsymbol{\lambda}}; \widetilde{\mathbf{w}})$ of dimension~$\dimvectd$.

\subsection{Morphisms between Weighted Projective Lines}
By our definition of~$\Pbb^1(\boldsymbol{\lambda}; \mathbf{w})$, there is a morphism of stacks
\[
\Pbb^1(\widetilde{\boldsymbol{\lambda}}; \widetilde{\mathbf{w}}) = \Pbb^1(\lambda_1, \dots, \lambda_{n+1}; w_1, \dots, w_{n+1} ) \to \Pbb^1(\lambda_1, \dots, \lambda_{n}; w_1, \dots, w_{n}) = \Pbb^1(\boldsymbol{\lambda}; \mathbf{w})
\]
such that the composition
\[
\Pbb^1(\widetilde{\boldsymbol{\lambda}}; \widetilde{\mathbf{w}}) \to \Pbb^1(\boldsymbol{\lambda}; \mathbf{w}) \to \Pbb^1(\lambda_1,\dots, \lambda_{n-1}; w_1, \dots, w_{n-1}) \to \cdots \to \Pbb^1(\lambda_1; w_1) \to \Pbb^1
\]
coincides with the structure morphism~$\Pbb^1(\widetilde{\boldsymbol{\lambda}}; \widetilde{\mathbf{w}}) \to \Pbb^1$.
We consider the pushforward on torsion sheaves.
We see that the functor~$\Tor(\Pbb^1(\widetilde{\boldsymbol{\lambda}}; \widetilde{\mathbf{w}}) )  \to \Tor(\Pbb^1(\boldsymbol{\lambda}; \mathbf{w}))$ is exact and corresponds under the equivalence~$\Tor(\Pbb(\boldsymbol{\lambda}; \mathbf{w})) \simeq \Reg(C(\boldsymbol{\lambda}; \mathbf{w}))$ from \Cref{Tor(Pbb1) = Reg(C) stacky} to the functor~$\Rep(C(\widetilde{\boldsymbol{\lambda}}; \widetilde{\mathbf{w}})) \to \Rep(C(\boldsymbol{\lambda}; \mathbf{w}))$ given by
\begin{small}\[
	\begin{tikzcd}[column sep = tiny]
		& {\C^{d_{1_1}}} & \cdots & {\C^{d_{1_{w_1-1}}}} &&&&&& {\C^{d_{1_{1}}}} & \cdots & {\C^{d_{1_{w_1-1}}}} \\
		& \vdots & \vdots & \vdots & &&&& & \vdots & \vdots & \vdots \\
		{\C^{d_0}} &&&& {\C^{d_\infty}} &&&& {\C^{d_0}} &&&& {\C^{d_\infty}}. \\
		& {\C^{d_{n_1}}} & \cdots & {\C^{d_{n_{w_{n}-1}}}} &&&&&& {\C^{d_{n_{1}}}} & \cdots & {\C^{d_{n_{w_{n}-1}}}} \\
		& {\C^{d_{(n+1)_1}}} & \cdots & {\C^{d_{(n+1)_{w_{n+1}-1}}}}
		\arrow[from=1-2, to=1-3]
		\arrow[from=1-3, to=1-4]
		\arrow[from=1-4, to=3-5]
		\arrow[from=1-10, to=1-11]
		\arrow[from=1-11, to=1-12]
		\arrow[from=1-12, to=3-13]
		\arrow[from=3-1, to=1-2]
		\arrow[shift right, from=3-1, to=3-5]
		\arrow[shift left, from=3-1, to=3-5]
		\arrow[from=3-1, to=4-2]
		\arrow[from=3-1, to=5-2]
		\arrow[maps to, from=3-5, to=3-9]
		\arrow[from=3-9, to=1-10]
		\arrow[shift left, from=3-9, to=3-13]
		\arrow[shift right, from=3-9, to=3-13]
		\arrow[from=3-9, to=4-10]
		\arrow[from=4-2, to=4-3]
		\arrow[from=4-3, to=4-4]
		\arrow[from=4-4, to=3-5]
		\arrow[from=4-10, to=4-11]
		\arrow[from=4-11, to=4-12]
		\arrow[from=4-12, to=3-13]
		\arrow[from=5-2, to=5-3]
		\arrow[from=5-3, to=5-4]
		\arrow[from=5-4, to=3-5]
	\end{tikzcd}
\]\end{small}
This functor maps regular representations to regular representations by \Cref{rep of C sst iff Kronecker sst}.
It extends to a functor on the level of~$S$-valued points for any base scheme~$S$ and thus induces a morphism of stacks
\[
\Mf_{\dimvectd}(\widetilde{\boldsymbol{\lambda}}; \widetilde{\mathbf{w}})  \to \Mf_{F({\dimvectd})} (\boldsymbol{\lambda}; \mathbf{w}), 
\]
for every dimension vector~$\dimvectd$ of~$C(\widetilde{\boldsymbol{\lambda}}; \widetilde{\mathbf{w}})$, where
\[
F\! \left((d_0, d_{1_1}, \dots, d_{n_{ w_{n} - 1 }}, d_{(n+1)_1}, \dots, d_{(n+1)_{ w_{n+1} - 1 }} , d_0) \right) = (d_0, d_{1_1}, \dots, d_{n_{ w_{n} - 1 }}, d_0).
\]

Taking Chow rings (or cohomology), we obtain a map in the opposite direction
\[
A^\bullet(\Mf_{F({\dimvectd})}(\boldsymbol{\lambda}; \mathbf{w})) \to A^\bullet(\Mf_{{\dimvectd}}(\widetilde{\boldsymbol{\lambda}}; \widetilde{\mathbf{w}})).
\]
Given a dimension vector~$\dimvectd$ of~$C(\boldsymbol{\lambda}; \mathbf{w})$, let~$\tilde{\dimvectd}$ be the dimension vector of~$C(\widetilde{\boldsymbol{\lambda}}; \widetilde{\mathbf{w}})$ defined by~$F(\tilde{\dimvectd}) = \dimvectd$ and~$\tilde{\dimvectd}_{(n+1)_i}= d_0$ for~$i=1,\dots, w_{n+1}-1$.
This choice of inverse image of~$\dimvectd$ under~$F$ gives a morphism of graded vector spaces
\begin{equation}\label{FunctorialityMap}
	\Chowha(\Reg(C(\boldsymbol{\lambda}; \mathbf{w}))) = \bigoplus_\dimvectd A^\bullet (\Mf_\dimvectd(\boldsymbol{\lambda}; \mathbf{w})) \to \Chowha(\Reg(C(\widetilde{\boldsymbol{\lambda}}; \widetilde{\mathbf{w}})).
\end{equation}

\subsection{Functoriality of Hall Algebras}

We now want to show that the morphism~\eqref{FunctorialityMap} is not just a map of graded vector spaces but a homomorphism of algebras.
We need some preparation.

\begin{prop}\label{open substack of top stack isomorphic to bottom}
	For~$\dimvectd$ and~$\tilde{\dimvectd}$ are as above let~$\Mf^{\mathrm{inv}}_{\tilde{\dimvectd}}$ be the open substack of~$\Mf_{\tilde{\dimvectd}}$ of representations such that~$A_{n+1}^{(1)},\, A_{n+1}^{(2)}, \dots,\, A_{n+1}^{(w_{n+1}-1)}$ are invertible (that is all morphisms in the last arm except the last one).
	Then, the composition
	\[
	\Mf^{\mathrm{inv}}_{\tilde{\dimvectd}}\mono \Mf_{\tilde{\dimvectd} }\to \Mf_\dimvectd
	\]
	is an isomorphism.
	Similarly, the open substack~$\Mf_{\tilde{\dimvectd},\tilde{\dimvecte}}^{\mathrm{inv}}\subseteq \Mf_{\tilde{\dimvectd},\tilde{\dimvecte}}$ defined as the inverse image of~$\Mf^{\mathrm{inv}}_{\tilde{\dimvectd}+\tilde{\dimvecte}}$ under~$\tilde{p}\colon \Mf_{\tilde{\dimvectd},\tilde{\dimvecte}}\to \Mf_{\tilde{\dimvectd}+\tilde{\dimvecte}}$ satisfies that the composition
	\[
	\Mf^{\mathrm{inv}}_{\tilde{\dimvectd},\tilde{\dimvecte}}\mono \Mf_{\tilde{\dimvectd},\tilde{\dimvecte}} \to \Mf_{\dimvectd,\dimvecte}
	\]
	is an isomorphism.
\end{prop}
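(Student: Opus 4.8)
The plan is to deduce both isomorphisms from the induction isomorphism \Cref{induction iso}, after exhibiting each invertible locus as a balanced product $G\times^{H}(\cdots)$. I carry out the first statement in detail; the second follows from the same computation performed with the parabolic groups and block upper triangular matrices.

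First I pass to atlases, writing $\Mf_{\tilde\dimvectd}=[R_{\tilde\dimvectd}/G_{\tilde\dimvectd}]$ and $\Mf_\dimvectd=[R_\dimvectd/G_\dimvectd]$ as in \Cref{Mfd as stack}. Since $\tilde\dimvectd_{(n+1)_k}=d_0$ for every $k$, the base change group on the last arm contributes a factor $\GL_{d_0}(\C)^{w_{n+1}-1}$, so that $G_{\tilde\dimvectd}=G_\dimvectd\times\GL_{d_0}(\C)^{w_{n+1}-1}$, and the morphism $R_{\tilde\dimvectd}\to R_\dimvectd$ forgetting the last arm is equivariant for the projection $G_{\tilde\dimvectd}\to G_\dimvectd$. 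Inside the open locus $R^{\mathrm{inv}}_{\tilde\dimvectd}$ where $A_{n+1}^{(1)},\dots,A_{n+1}^{(w_{n+1}-1)}$ are invertible, I single out the closed subvariety $Y=\{A_{n+1}^{(1)}=\cdots=A_{n+1}^{(w_{n+1}-1)}=E_{d_0}\}$; on $Y$ the defining relation forces $A_{n+1}^{(w_{n+1})}=\lambda_{n+1}^{(0)}\alpha+\lambda_{n+1}^{(1)}\beta$, so that forgetting the last arm restricts to an isomorphism $Y\cong R_\dimvectd$.

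The heart of the argument is the identity $R^{\mathrm{inv}}_{\tilde\dimvectd}\cong G_{\tilde\dimvectd}\times^{G_\dimvectd}R_\dimvectd$, with $G_\dimvectd$ embedded as the stabilizer of $Y$. For surjectivity of the orbit map I gauge-fix exactly as in Step~1 of the proof of \Cref{Affine stratification C(2^n)}: given invertible $A_{n+1}^{(k)}$ I put $g_0=g_\infty=E$ and define $g_{(n+1)_k}=g_{(n+1)_{k-1}}(A_{n+1}^{(k)})^{-1}$ recursively; this lies in $G_{\tilde\dimvectd}$, normalises every intermediate last-arm map to $E_{d_0}$, and by the relation $A_{n+1}^{(w_{n+1})}\cdots A_{n+1}^{(1)}=\lambda_{n+1}^{(0)}\alpha+\lambda_{n+1}^{(1)}\beta$ carries the point into $Y$. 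For injectivity I note that if $g$ carries a point of $Y$ back into $Y$, then the equalities $g_{(n+1)_k}E_{d_0}g_{(n+1)_{k-1}}^{-1}=E_{d_0}$ propagate to give $g_{(n+1)_1}=\cdots=g_{(n+1)_{w_{n+1}-1}}=g_0$, i.e.\ $g$ lies in the copy of $G_\dimvectd$. Thus the orbit map is bijective; since $R^{\mathrm{inv}}_{\tilde\dimvectd}$ is open in the smooth variety $R_{\tilde\dimvectd}$ (\Cref{Rd smooth + dim}) and hence normal, it is an isomorphism by Zariski's main theorem in characteristic zero, the gauge-fixing above furnishing in any case an explicit local polynomial inverse. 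Now \Cref{induction iso} yields $\Mf^{\mathrm{inv}}_{\tilde\dimvectd}\cong[R_\dimvectd/G_\dimvectd]=\Mf_\dimvectd$, and by construction this isomorphism is the one induced by forgetting the last arm.

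For the second statement I replace $G_{\tilde\dimvectd}$ by the parabolic $G_{\tilde\dimvectd,\tilde\dimvecte}$ and $R_{\tilde\dimvectd}$ by $R_{\tilde\dimvectd,\tilde\dimvecte}$. The only new point is that a block upper triangular matrix is invertible precisely when its two diagonal blocks are, so that $\Mf^{\mathrm{inv}}_{\tilde\dimvectd,\tilde\dimvecte}$ is cut out by invertibility of the same last-arm maps and is compatible with the sub- and quotient representations. The recursion $g_{(n+1)_k}=g_{(n+1)_{k-1}}(A_{n+1}^{(k)})^{-1}$ stays inside the parabolic, since inverses and products of invertible block upper triangular matrices are again block upper triangular; hence the gauge-fixing lands in $G_{\tilde\dimvectd,\tilde\dimvecte}$ and the balanced product identity $R^{\mathrm{inv}}_{\tilde\dimvectd,\tilde\dimvecte}\cong G_{\tilde\dimvectd,\tilde\dimvecte}\times^{G_{\dimvectd,\dimvecte}}R_{\dimvectd,\dimvecte}$ holds for the same reasons. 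A final application of \Cref{induction iso} gives $\Mf^{\mathrm{inv}}_{\tilde\dimvectd,\tilde\dimvecte}\cong\Mf_{\dimvectd,\dimvecte}$. I expect this parabolic bookkeeping to be the only genuine obstacle; once it is checked, both isomorphisms are formal consequences of \Cref{induction iso}.
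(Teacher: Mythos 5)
Your proof is correct and follows essentially the same route as the paper: both reduce to the atlases $[R_{\tilde{\dimvectd}}^{\mathrm{inv}}/G_{\tilde{\dimvectd}}]$, identify the invertible locus as $R_\dimvectd$ times a torsor under the extra factor $\GL_{d_0}(\C)^{w_{n+1}-1}$ of $G_{\tilde{\dimvectd}}$, and cancel that factor by the induction isomorphism \Cref{induction iso}, with the extension case handled by the same bookkeeping inside the parabolic. The only cosmetic difference is that the paper, rather than gauge-fixing the intermediate arm maps to the identity and invoking bijectivity plus Zariski's main theorem, keeps $A_{n+1}^{(1)},\dots,A_{n+1}^{(w_{n+1}-1)}$ as free coordinates and solves the relation for $A_{n+1}^{(w_{n+1})}$, which exhibits $R_{\tilde{\dimvectd}}^{\mathrm{inv}}\cong R_\dimvectd\times\GL_{d_0}(\C)^{w_{n+1}-1}$ directly and makes your normality/ZMT step unnecessary.
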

\begin{proof}
	We have~$\Mf_{\tilde{\dimvectd}}=[R_{\tilde{\dimvectd}}/G_{\tilde{\dimvectd}}]$ and~$\Mf_{\tilde{\dimvectd}}^{\mathrm{inv}}=[R_{\tilde{\dimvectd}}^{\mathrm{inv}}/G_{\tilde{\dimvectd}}]$, where
	\begin{small}\begin{align*}
		R_{\tilde{\dimvectd}}^{\mathrm{inv}} & = 
			\left\{(\alpha, \beta, A_1^{(1)}\!, \dots, A_{n}^{(w_{n})}\!, A_{n+1}^{(1)}, \dots , A_{n+1}^{ ( w_{n+1} ) }  ) \ \middle| \ \begin{aligned}
				& \lambda_i^{(0)} \alpha + \lambda_i^{(1)} \beta = A_i^{(w_i)}\cdots A_i^{(1)}  \ \text{and}  \\
				& A_{n+1}^{(1)}, \dots, A_{n+1}^{(w_{n+1}-1)} \ \text{are invertible}
			\end{aligned} \right\} 
		\\
		& = 
			\left\{(\alpha, \beta, A_1^{ (1) }\!, \dots,  A_{ n + 1 }^{ ( w_{n+1} ) }) \ \middle| \ \begin{aligned}
				& (\alpha, \dots , A_{n}^{ ( w_{n} ) } ) \in R_\dimvectd, \, A_{ n + 1 }^{(1)}, \dots, A_{ n + 1 }^{ (w_{n+1} - 1 ) } \in \GL_{d_0}(\C), \\
				& A_{ n + 1 }^{( w_{n+1} )}= (\lambda_{ n+1 }^{ ( 0 ) } \alpha + \lambda_{ n+1 }^{ ( 1 ) }\beta)(A_{n+1}^{ (1) })^{-1} \cdots (A_{n+1}^{ ( w_{n+1} - 1 ) })^{-1} 
			\end{aligned}\right\}  \\
		& = R_\dimvectd \times \GL_{d_0}(\C)^{ w_{n+1} -1 }
	\end{align*}
\end{small}
	\mycomment{
		\begin{align*}
			R_{\tilde{\dimvectd}}^{\mathrm{inv}} & = \left\{(\alpha, \beta, A_1^{(1)}, \dots, A_{n}^{(w_n)}, A_{n+1}^{(1)}, \dots , A_{n+1}^{ ( w_{n+1} ) }  ) \ \middle| \ \begin{aligned}
				& \lambda_i^{(0)} \alpha + \lambda_i^{(1)} \beta = A_i^{(w_i)}\cdots A_i^{(1)}  \text{ and } \\
				& A_{n+1}^{(1)}, \dots, A_{n+1}^{(w_{n+1}-1)} \text{ are invertible }
			\end{aligned} \right\} \\
			& = \left\{(\alpha, \beta, A_1^{ (1) }, \dots,  A_{ n + 1 }^{ ( w_{n+1} ) }) \ \middle| \ \begin{aligned}
				& (\alpha, \dots , A_{n}^{ ( w_n ) } ) \in R_\dimvectd, \, A_{ n+1 }^{(1)}, \dots, A_{ n+1 }^{ (w_{n+1} - 1 ) } \in \GL_{d_0}(\C), \\
				& A_{ n+1 }^{( w_{n+1} )}= (\lambda_{ n+1 }^{ ( 0 ) } \alpha + \lambda_{ n+1 }^{ ( 1 ) }\beta)(A_{n+1}^{ (1) })^{-1} \cdots (A_{n+1}^{ ( w_{n+1} - 1 ) })^{-1} 
			\end{aligned}\right\} \\
			& = R_\dimvectd \times \GL_{d_0}(\C)^{ w_{n+1} -1 }
	\end{align*}}
	and so we obtain
	\[
	\Mf_{\tilde{\dimvectd}}^{\mathrm{inv}}= [(R_\dimvectd\times \GL_{d_0}(\C)^{ w_{n+1} -1 })/(G_\dimvectd\times \GL_{d_0}(\C)^{ w_{n+1} - 1 } )] \cong [R_\dimvectd/G_\dimvectd]= \Mf_\dimvectd.
	\]
	A similar computation can be done for the stack of extensions.
\end{proof}


\begin{la}\label{functoriality morphisms are lci}
	Given~$\dimvectd$ and~$\tilde{\dimvectd}$ as above, then the morphisms~$R_{\tilde{\dimvectd}}\to R_\dimvectd$ and~$R_{\tilde{\dimvectd},\tilde{\dimvecte}}\to R_{\dimvectd , \dimvecte}$ are l.c.i. i.e., they factor as a regular closed immersion followed by a smooth morphism, see {{\cite[Section~6.6]{FultonIntersectionTheory}}}.  
\end{la}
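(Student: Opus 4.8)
The plan is to exhibit each morphism as a closed immersion into a product with an affine factor, followed by the projection onto the first factor. Consider $R_{\tilde{\dimvectd}}\to R_\dimvectd$ first. This is the forgetful morphism discarding the matrices $A_{n+1}^{(1)},\dots,A_{n+1}^{(w_{n+1})}$ of the last arm; it is well-defined because semistability only constrains $(\alpha,\beta)$ by \Cref{rep of C sst iff Kronecker sst} and the first $n$ relations are untouched. I would factor it as
\[
R_{\tilde{\dimvectd}} \mono \widetilde{R} \to R_\dimvectd ,
\]
where $\widetilde{R} \coloneqq R_\dimvectd \times V$ and $V$ is the affine space of all tuples of matrices for the last arm, with no relation imposed. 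The projection $\widetilde{R} \to R_\dimvectd$ is smooth, being the projection of a product with an affine space.

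The remaining point is that $R_{\tilde{\dimvectd}}\mono\widetilde{R}$ is a regular closed immersion. First I would observe that imposing the single last-arm relation $\lambda_{n+1}^{(0)}\alpha + \lambda_{n+1}^{(1)}\beta = A_{n+1}^{(w_{n+1})}\cdots A_{n+1}^{(1)}$ on $\widetilde{R}$ recovers exactly $R_{\tilde{\dimvectd}}$, so the inclusion identifies $R_{\tilde{\dimvectd}}$ with the corresponding closed subvariety of $\widetilde{R}$. Next I would note that both spaces are smooth: $\widetilde{R}$ because $R_\dimvectd$ is smooth by \Cref{Rd smooth + dim}, and $R_{\tilde{\dimvectd}}$ by the same proposition applied to the canonical algebra $C(\widetilde{\boldsymbol{\lambda}};\widetilde{\mathbf{w}})$. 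The key input is then the standard fact that a closed immersion between smooth varieties over $\C$ is automatically regular (locally, an ideal of a regular local ring with regular quotient is generated by part of a regular system of parameters); this yields that $R_{\tilde{\dimvectd}}\mono\widetilde{R}$ is regular of codimension $\dim\widetilde{R} - \dim R_{\tilde{\dimvectd}}$, and hence that $R_{\tilde{\dimvectd}}\to R_\dimvectd$ is l.c.i.

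For $R_{\tilde{\dimvectd},\tilde{\dimvecte}}\to R_{\dimvectd,\dimvecte}$ I would run the identical argument, replacing $V$ by the space of block upper triangular tuples for the last arm and invoking \Cref{Rde smooth + dim} for the smoothness of $R_{\tilde{\dimvectd},\tilde{\dimvecte}}$ and $R_{\dimvectd,\dimvecte}$. The only thing requiring care is the scheme-theoretic identification of $R_{\tilde{\dimvectd}}$ (resp.\ $R_{\tilde{\dimvectd},\tilde{\dimvecte}}$) with the relation-locus inside the product, together with the invocation of the smooth-implies-regular-immersion fact; once both source and target are known to be smooth by \Cref{Rd smooth + dim} and \Cref{Rde smooth + dim}, no further computation is needed. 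I expect no serious obstacle here, since the hard geometric content, namely the smoothness and dimension of these varieties, is already supplied by the cited propositions.
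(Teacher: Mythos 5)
Your proof is correct and follows essentially the same route as the paper: the paper factors $R_{\tilde{\dimvectd}}\to R_\dimvectd$ through exactly the same middle space $R_\dimvectd\times(\C^{d_0\times d_0})^{w_{n+1}}$ (and analogously for the extension spaces) and then invokes \Cref{Rd smooth + dim}, resp.\ \Cref{Rde smooth + dim}, for regularity of the closed immersion. The only cosmetic difference is how regularity is deduced from those propositions — you use smoothness of source and ambient space plus the fact that a closed immersion of smooth varieties is regular, while the paper's own propositions argue by matching the codimension against the number of defining equations in a smooth ambient variety; both are valid and rest on the same inputs.
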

\begin{proof}
	The first morphism factors as a closed immersion followed by a smooth morphism:
	\begin{equation*}
		\begin{tikzcd}[column sep = normal, ampersand replacement= \&, row sep = 0]
			{R_{\tilde{\dimvectd}}}\arrow[hook, r]  \&  {\left\{(\alpha, \beta, A_1^{(1)}\!, \dots , A_{n+1}^{(w_{n+1})}) \ \middle\vert \ \begin{aligned}
					& \lambda_i^{ ( 0 ) } \alpha+ \lambda_i^{ ( 1 ) } \beta = A_i^{(2)}A_i^{(1)}\!, \\
					& \text{for~$i=1,\dots, n$}
				\end{aligned} \right\}} = {{R_\dimvectd} \subnode{u1}{{}\times{}}  (\C^{d_0\times d_{0}})^{w_{n+1}} }  \\ 
			\& \phantom{{\left\{(\alpha, \beta, A_1^{(1)}\!, \dots , A_{n+1}^{(w_{n+1})}) \ \middle\vert \ \begin{aligned}
						& \lambda_i^{ ( 0 ) } \alpha+ \lambda_i^{ ( 1 ) } \beta = A_i^{(2)}A_i^{(1)}\!, 
						\\				& \text{for~$i=1,\dots, n$}
					\end{aligned} \right\}} = {}\times{} }\subnode{u2}{{}R_\dimvectd{}} \phantom{(\C^{d_0\times d_{0}})^{w_{n+1}} }
		\end{tikzcd}
		\begin{tikzpicture}[overlay, remember picture]
			\draw [->>] (u1) edge  (u2.north -| u1);
		\end{tikzpicture}
	\end{equation*}
	By \Cref{Rd smooth + dim} we see that the closed immersion is regular.
	The same argument with \Cref{Rde smooth + dim} shows the statement for~$R_{\dimvectd , \dimvecte}$.
\end{proof}

\mycomment{
	\begin{la}
		Let~$p\colon X\to Y$ be a proper morphism of relative dimension~$0$ with~$Y$ connected.
		Assume there is an open subset~$U\subseteq Y$ such that~$p^{-1}(U)\to U$ is an isomorphism.
		Then~$p_*(1_{H^\bullet(X)})= 1_{H^\bullet(Y)}$.
	\end{la}
	\begin{proof}
		The horizontal morphisms of the cartesian diagram
		\[
		\begin{tikzcd}
			p^{-1}(U) \rar{\sim} \dar{i} & U \dar{j} \\
			X \rar{p} & Y.
		\end{tikzcd}
		\]
		are proper of relative dimension~$0$.
		It follows that the diagram
		\[
		\begin{tikzcd}
			H^0(p^{-1}(U)) \rar{\sim}  & H^0(U)  \\
			H^0(X) \rar{p_*} \uar{i^*} & H^0(Y) \uar{j^*}
		\end{tikzcd}
		\]
		commutes.
		Therefore we have
		\begin{align*}
			j^*(p_*(1_{H^\bullet(X)}))= p_*(i^*(1_{H^\bullet(X)}))= p_*(1_{H^\bullet(p^{-1}(U))}) = 1_{H^\bullet(U)} = j^*(1_{H^\bullet(Y)}).
		\end{align*}
		The assumption that~$Y$ is connected implies that~$j^*$ is injective on zeroth cohomology.
	\end{proof}
}

For the proof of the next lemma, we need the notion of refined intersection products defined by Fulton in \cite[Section~8.1]{FultonIntersectionTheory}.
Associated to a diagram
\[
\begin{tikzcd}
	X' \dar{p_X} & Y' \dar{p_Y} \\
	X \rar{f} & Y
\end{tikzcd}
\]
with~$Y$ smooth of dimension~$n$, the refined intersection product is a map of the form
\[
-\cdot_f - \colon A_k(X') \otimes A_l(Y') \to A_{k+l-n}(X' \times_Y Y').
\]

\begin{la}\label{projection formula for birational morphism}
	Given two morphisms~$X\xrightarrow{f} Y \xrightarrow{g} Z$ between irreducible varieties such that~$g$ and~$g\circ f$ are l.c.i.,~$Z$ is smooth, and~$f$ is proper and birational. 
	Then, in Chow groups, the following diagram commutes:
	\[
	\begin{tikzcd}[column sep = tiny]
		{A_\bullet(X)} && {A_\bullet(Y)} \\
		& {A_\bullet(Z).}
		\arrow["{f_*}", from=1-1, to=1-3]
		\arrow["{(g\circ f)^*}", from=2-2, to=1-1]
		\arrow["{g^*}"', from=2-2, to=1-3]
	\end{tikzcd}
	\]
\end{la}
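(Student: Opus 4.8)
The plan is to express both $g^\ast$ and $(g\circ f)^\ast$ through the graph construction for the smooth target $Z$ and then to invoke the compatibility of refined Gysin homomorphisms with proper pushforward. Since $Z$ is smooth of dimension $n$, the graph $\gamma_g\colon Y\to Y\times Z$ is a regular closed immersion of codimension $n$: it is the base change of the diagonal $\Delta_Z\subseteq Z\times Z$ along $g\times\id_Z$, so that $N_{\gamma_g}=g^\ast T_Z$. With $\mathrm{pr}_Z\colon Y\times Z\to Z$ the (flat) projection, one has $g^\ast=\gamma_g^\ast\circ\mathrm{pr}_Z^\ast$; this is the description of the Gysin pullback to a smooth variety from \cite[Section~8.1]{FultonIntersectionTheory}, and it agrees with the l.c.i.\ Gysin map of $g$. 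The same applies to $g\circ f$, giving $(g\circ f)^\ast=\gamma_{g\circ f}^\ast\circ\mathrm{pr}_Z^\ast$. First I would set up the square
\[
\begin{tikzcd}
	X \rar{\gamma_{g\circ f}} \dar{f} & X\times Z \dar{f\times\id_Z} \\
	Y \rar{\gamma_g} & Y\times Z,
\end{tikzcd}
\]
and check that it is Cartesian: the fibre product $Y\times_{Y\times Z}(X\times Z)$ is the set of $x\in X$ equipped with the maps $x\mapsto f(x)$ and $x\mapsto(x,gf(x))$, that is, exactly $X$ with its two graph morphisms.

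Next I would verify the hypotheses needed to run Fulton's machinery on this square. The immersion $\gamma_g$ is regular of codimension $n$, and $f\times\id_Z$ is proper because $f$ is. Crucially, there is no excess intersection: the normal bundle of the pulled-back immersion $\gamma_{g\circ f}$ in $X\times Z$ is $(g\circ f)^\ast T_Z=f^\ast g^\ast T_Z=f^\ast N_{\gamma_g}$, so $\gamma_{g\circ f}$ carries precisely the expected normal bundle. Therefore the refined Gysin homomorphism $\gamma_g^!\colon A_\bullet(X\times Z)\to A_\bullet(X)$ associated to the Cartesian square coincides with the ordinary Gysin map $\gamma_{g\circ f}^\ast$ of the regular immersion $\gamma_{g\circ f}$.

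Finally, the compatibility of the refined Gysin homomorphism with proper pushforward \cite[Theorem~6.2(a)]{FultonIntersectionTheory} applied to the square gives, for every $\alpha\in A_\bullet(Z)$,
\[
\gamma_g^\ast\big((f\times\id_Z)_\ast\,\mathrm{pr}_Z^\ast\alpha\big)=f_\ast\big(\gamma_g^!\,\mathrm{pr}_Z^\ast\alpha\big).
\]
On the right-hand side, the previous step gives $\gamma_g^!\,\mathrm{pr}_Z^\ast\alpha=\gamma_{g\circ f}^\ast\,\mathrm{pr}_Z^\ast\alpha=(g\circ f)^\ast\alpha$. On the left-hand side, I would use that $f$ is proper and birational between irreducible varieties, so $f_\ast[X]=[Y]$; hence $(f\times\id_Z)_\ast\,\mathrm{pr}_Z^\ast\alpha=(f_\ast[X])\times\alpha=[Y]\times\alpha=\mathrm{pr}_Z^\ast\alpha$ as a class on $Y\times Z$, and applying $\gamma_g^\ast$ yields $g^\ast\alpha$. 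Thus $g^\ast\alpha=f_\ast(g\circ f)^\ast\alpha$, which is the asserted commutativity. I expect the main obstacle to be the clean identification of the abstract refined Gysin $\gamma_g^!$ with the honest pullback $\gamma_{g\circ f}^\ast$ — that is, confirming that the vanishing of the excess normal bundle lets the refined operation be computed as the l.c.i.\ pullback — together with ensuring that the graph-construction formulas for $g^\ast$ and $(g\circ f)^\ast$ really agree with the l.c.i.\ Gysin maps used in the functoriality map \eqref{FunctorialityMap}.
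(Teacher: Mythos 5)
Your proof is correct, and it rests on the same two pillars as the paper's: the identity $f_*[X]=[Y]$ (proper $+$ birational) and the smoothness of $Z$, which allows all pullbacks to be routed through graph constructions. The difference is one of packaging. The paper stays entirely inside Fulton's refined-intersection-product formalism of Section~8.1: it writes $g^*(z)=[Y]\cdot_g z$ and $(g\circ f)^*(z)=[X]\cdot_{g\circ f}z$ by \cite[Proposition~8.1.2.(b)]{FultonIntersectionTheory}, and then concludes in one line from the projection formula \cite[Proposition~8.1.1.(c)]{FultonIntersectionTheory}, which gives $f_*(x\cdot_{g\circ f}z)=f_*(x)\cdot_g z$. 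You instead re-derive exactly that projection formula in the case at hand from the Chapter~6 machinery: the Cartesian square of graphs $\gamma_{g\circ f}$ and $\gamma_g$, the vanishing of the excess bundle (since $N_{\gamma_{g\circ f}}=(g\circ f)^*T_Z=f^*N_{\gamma_g}$, so $\gamma_g^!=\gamma_{g\circ f}^*$ by the excess intersection formula, \cite[Theorem~6.3]{FultonIntersectionTheory}), and compatibility of refined Gysin maps with proper pushforward \cite[Theorem~6.2(a)]{FultonIntersectionTheory}. This buys self-containedness at the cost of length; all your intermediate verifications (the square being Cartesian, $\mathrm{pr}_Z^*\alpha=[X]\times\alpha$, pushforward of exterior products) are correct. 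The one point you flag as a residual worry --- that the graph-construction pullback $\gamma_g^*\circ\mathrm{pr}_Z^*$ agrees with the l.c.i.\ Gysin map of $g$ used in the functoriality map --- is precisely the content of \cite[Proposition~8.1.2.(b)]{FultonIntersectionTheory}, the same citation the paper uses, so that gap closes by reference rather than by further argument.
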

\begin{proof}
	Being birational and proper,~$f$ is surjective.
	Therefore we have~$f_*([X])=[Y]$.
	Applying the projection formula \cite[Proposition~8.1.1.(c)]{FultonIntersectionTheory} with~$p_X=\id_X$,~$p_Y= \id_Y$, and~$p_Z=\id_Z$, we have
	\[
	f_*(x\cdot_{g\circ f} z) = f_*(x)\cdot_g z.
	\]
	As we also have by compatibility of the refined intersection product with l.c.i\ pullbacks \cite[\mycomment{Definition~8.1.2 and }Proposition~8.1.2.(b)]{FultonIntersectionTheory} that~$g^*(z)= [Y]\cdot_g z$ and~$(g\circ f)^*(z)= [X]\cdot_{(g\circ f)} z$, it follows that
	\[
	f_*((g\circ f)^*(z))  = f_*([X]\cdot_{g\circ f} z) 
	= f_*([X])\cdot_g z 
	= [Y] \cdot_g z 
	= g^*(z). \qedhere
	\]
\end{proof}

Before we come to our main theorem of this section, we need the following trivial lemma from linear algebra:

\begin{la} \label{ProdOfSqMatsIsUpperTriangular}
	Given two square matrices~$A$,~$B\in \C^{n\times n}$ such that the product~$AB$ has a block from
	\[
	A  B = \twoByTwoMatrix{C_1}{C_2}{0}{C_3}
	\]
	with~$C_1  \in \C^{m\times m}$, then there is an invertible matrix~$g\in \GL_n(\C)$ such that 
	\[
	Ag = \twoByTwoMatrix{A_1}{A_2}{0}{A_3} \quad \text{and} \quad g^{-1}B = \twoByTwoMatrix{B_1}{B_2}{0}{B_3}
	\]
	with~$A_1$,~$B_1\in \C^{m\times m}$.
\end{la}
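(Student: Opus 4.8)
The plan is to recast the statement in coordinate-free language via the standard flag. Write $V \coloneqq \vsspan(e_1, \dots, e_m) \subseteq \C^n$, so that an $n\times n$ matrix is block upper-triangular for the splitting $n = m + (n-m)$ exactly when it maps $V$ into $V$. In this language the hypothesis becomes $(AB)(V) \subseteq V$, and the goal is to find $g \in \GL_n(\C)$ with $Ag(V) \subseteq V$ and $g^{-1}B(V) \subseteq V$. Since every $m$-dimensional subspace of $\C^n$ is of the form $g(V)$ for some $g \in \GL_n(\C)$ (extend a basis of the subspace to a basis of $\C^n$ and take $g$ to be the change-of-basis matrix), and since $g^{-1}B(V) \subseteq V$ is equivalent to $B(V) \subseteq g(V)$, the whole problem reduces to finding an $m$-dimensional subspace $U$ — which will then be taken to be $g(V)$ — satisfying
\[
B(V) \subseteq U \subseteq A^{-1}(V),
\]
where $A^{-1}(V) \coloneqq \{x \in \C^n : Ax \in V\}$ denotes the full preimage.

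The second step is to verify that such a $U$ exists, i.e.\ that the two subspaces are nested with room for an $m$-plane in between. The inclusion $B(V) \subseteq A^{-1}(V)$ is precisely the hypothesis $(AB)(V) \subseteq V$. For the dimensions, $\dim B(V) \le \dim V = m$ is immediate, while for the upper subspace I would view $A^{-1}(V)$ as the kernel of the composite $\C^n \xrightarrow{A} \C^n \twoheadrightarrow \C^n / V$; since the target has dimension $n - m$, rank--nullity forces
\[
\dim A^{-1}(V) \;\ge\; n - (n-m) \;=\; m.
\]
Hence $\dim B(V) \le m \le \dim A^{-1}(V)$, and extending a basis of $B(V)$ to an $m$-element linearly independent family lying inside $A^{-1}(V)$ produces the desired $U$.

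Finally I would choose any $g \in \GL_n(\C)$ with $g(V) = U$. Then $Ag(V) = A(U) \subseteq V$ shows $Ag$ is block upper-triangular, and $B(V) \subseteq U = g(V)$ gives $g^{-1}B(V) \subseteq V$, so $g^{-1}B$ is block upper-triangular as well, which is exactly the claim. There is no genuine obstacle here: the one computation requiring a moment's care is the bound $\dim A^{-1}(V) \ge m$, and the real content is the reformulation itself — once the problem is phrased as sandwiching an $m$-dimensional subspace between $B(V)$ and $A^{-1}(V)$, the hypothesis and the dimension inequalities make its existence automatic.
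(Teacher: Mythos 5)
Your proof is correct and complete. Note that the paper itself offers no proof of this lemma at all — it is stated as a ``trivial lemma from linear algebra'' and used immediately afterwards (to show surjectivity of $R_{\tilde{\dimvectd},\tilde{\dimvecte}} \times^{G_{\tilde{\dimvectd},\tilde{\dimvecte}}} G \to X$ in the proof of functoriality), so there is nothing to compare against. Your coordinate-free reformulation — block upper-triangularity means preserving $V = \vsspan(e_1,\dots,e_m)$, so the task is to sandwich an $m$-dimensional subspace $U = g(V)$ between $B(V)$ and the full preimage $A^{-1}(V)$ — is exactly the right way to make the ``trivial'' claim rigorous: the hypothesis gives the inclusion $B(V) \subseteq A^{-1}(V)$, rank--nullity gives $\dim A^{-1}(V) \geq m \geq \dim B(V)$, and basis extension produces $U$. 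It is also worth observing that your argument never assumes $A$ or $B$ invertible, which matters for the paper's application, where the matrices involved need not be invertible.
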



\begin{thm}\label{thm functorial}
	The maps~$\Chowha(\Pbb^1(\boldsymbol{\lambda}; \mathbf{w}))\to \Chowha(\Pbb^1(\widetilde{\boldsymbol{\lambda}}; \widetilde{\mathbf{w}}))$ are morphisms of algebras.
\end{thm}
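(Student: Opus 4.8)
The plan is to reduce the assertion to the commutativity of the pushforward step of the convolution product, and then to resolve the resulting non-Cartesian square using \Cref{ProdOfSqMatsIsUpperTriangular}. Write the convolution product as $(q_2)_*\circ q_1^*$ (after the Künneth isomorphism and the virtual shifts), where $q_1\colon\Mf_{\dimvectd,\dimvecte}\to\Mf_\dimvectd\times\Mf_\dimvecte$ and the proper map $q_2\colon\Mf_{\dimvectd,\dimvecte}\to\Mf_{\dimvectd+\dimvecte}$ come from the convolution diagram built on \Cref{Mfd as stack}, and likewise $\tilde q_1,\tilde q_2$ for $C(\widetilde{\boldsymbol{\lambda}};\widetilde{\mathbf{w}})$. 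In degree $\dimvectd$ the map~\eqref{FunctorialityMap} is the pullback $\pi_{\mathrm{obj}}^*$ along $\pi_{\mathrm{obj}}\colon\Mf_{\tilde\dimvectd}\to\Mf_\dimvectd$, and on the stacks of extensions we similarly have $\pi_{\mathrm{ext}}\colon\Mf_{\tilde\dimvectd,\tilde\dimvecte}\to\Mf_{\dimvectd,\dimvecte}$. Because $\tilde\dimvectd+\tilde\dimvecte=\widetilde{\dimvectd+\dimvecte}$ and the forgetful functor on representations commutes with passing to subobjects and quotients, the pullback square $q_1\circ\pi_{\mathrm{ext}}=(\pi_{\mathrm{obj}}\times\pi_{\mathrm{obj}})\circ\tilde q_1$ commutes on the nose, so $q_1^*$ commutes with the functoriality pullbacks. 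Hence~\eqref{FunctorialityMap} is an algebra morphism as soon as
\[
\pi_{\mathrm{obj}}^*\circ(q_2)_*=(\tilde q_2)_*\circ\pi_{\mathrm{ext}}^*
\]
and the virtual shifts agree; the shift bookkeeping is routine, since the one additional relation changes the Euler form by $d_0e_0$ (\Cref{EulerFormFromDimension}), matching the relative dimensions of the l.c.i.\ maps $\pi_{\mathrm{obj}},\pi_{\mathrm{ext}}$ computed in \Cref{Rd smooth + dim} and \Cref{Rde smooth + dim}.

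The square above is \emph{not} Cartesian: a point of $\Mf_{\tilde\dimvectd,\tilde\dimvecte}$ has all last-arm morphisms block upper triangular with respect to the filtration, whereas the fibre product $P\coloneqq\Mf_{\dimvectd,\dimvecte}\times_{\Mf_{\dimvectd+\dimvecte}}\Mf_{\widetilde{\dimvectd+\dimvecte}}$ imposes no such constraint. I would form $P$ together with its projection $p_1\colon P\to\Mf_{\dimvectd,\dimvecte}$ and the proper map $p_2\colon P\to\Mf_{\widetilde{\dimvectd+\dimvecte}}$ (a base change of $q_2$), as well as the canonical closed immersion $\iota\colon\Mf_{\tilde\dimvectd,\tilde\dimvecte}\hookrightarrow P$ satisfying $\pi_{\mathrm{ext}}=p_1\circ\iota$ and $\tilde q_2=p_2\circ\iota$. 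Since $\pi_{\mathrm{obj}}$ is l.c.i.\ (\Cref{functoriality morphisms are lci}) and $\Mf_{\dimvectd+\dimvecte}$ is smooth, base change for the Cartesian square defining $P$ yields $\pi_{\mathrm{obj}}^*\circ(q_2)_*=(p_2)_*\circ\pi^{!}$ for the refined Gysin pullback $\pi^{!}$ to $P$. As $(\tilde q_2)_*=(p_2)_*\circ\iota_*$, the identity we want becomes $(p_2)_*\circ\iota_*\circ\pi_{\mathrm{ext}}^*=(p_2)_*\circ\pi^{!}$, which will follow once $\iota$ is shown to be proper and birational, by combining base change with the projection formula exactly as in \Cref{projection formula for birational morphism}.

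The geometric heart, and the main obstacle, is the birationality of $\iota$, which is precisely what \Cref{ProdOfSqMatsIsUpperTriangular} supplies. Over the open ``inv'' locus the maps $\pi_{\mathrm{obj}}$ and $\pi_{\mathrm{ext}}$ are isomorphisms (\Cref{open substack of top stack isomorphic to bottom}), and this locus is dense because the varieties $R_{\tilde\dimvectd}$ and $R_{\tilde\dimvectd,\tilde\dimvecte}$ are smooth and irreducible (\Cref{Rd smooth + dim}, \Cref{Rde smooth + dim}). A point of $P$ lying over this locus carries last-arm data whose interior morphisms $A_{n+1}^{(1)},\dots,A_{n+1}^{(w_{n+1}-1)}$ are invertible and whose total composite equals $\lambda_{n+1}^{(0)}\alpha+\lambda_{n+1}^{(1)}\beta$, a matrix that is block upper triangular with respect to the filtration. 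Applying \Cref{ProdOfSqMatsIsUpperTriangular} successively along the arm produces base changes at the interior vertices that bring every $A_{n+1}^{(k)}$ simultaneously into block upper triangular form; this exhibits the point as lying in $\Mf_{\tilde\dimvectd,\tilde\dimvecte}$, and invertibility of the interior morphisms makes the gauge unique. Thus $\iota$ is an isomorphism over a dense open and is proper, being a closed immersion, hence birational.

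Finally, I would run the mechanism of \Cref{projection formula for birational morphism} in this setting. The factorization $\pi_{\mathrm{ext}}=p_1\circ\iota$ gives $\pi_{\mathrm{ext}}^*=\iota^*\circ\pi^{!}$ by compatibility of the refined Gysin pullback with the closed immersion $\iota$, and birationality gives $\iota_*[\Mf_{\tilde\dimvectd,\tilde\dimvecte}]=[P]$; the projection formula \cite[Proposition~8.1.1.(c)]{FultonIntersectionTheory} then yields $\iota_*\circ\iota^*=\mathrm{id}$ on $A_\bullet(P)$, whence $(p_2)_*\circ\iota_*\circ\pi_{\mathrm{ext}}^*=(p_2)_*\circ\pi^{!}=\pi_{\mathrm{obj}}^*\circ(q_2)_*$, which is the required pushforward identity. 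The steps demanding the most care are the irreducibility of $P$ and the density of the inv locus inside it, so that $\iota_*[\Mf_{\tilde\dimvectd,\tilde\dimvecte}]=[P]$ really holds; both reduce, via \Cref{Mfd as stack}, to the explicit equations for the defining $R$-varieties, where one argues $G$-equivariantly and descends along \Cref{induction iso}.
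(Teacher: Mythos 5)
Your proposal follows the same architecture as the paper's own proof: reduce to the pushforward square, form the fiber product $P$, use base change for the refined Gysin map, and handle the comparison map $\iota$ by a proper-birational projection-formula argument. But the execution of the last step contains a genuine gap, and it begins with the claim that $\iota$ is a closed immersion. It is not: a point of $P$ is a representation of $C(\widetilde{\boldsymbol{\lambda}};\widetilde{\mathbf{w}})$ together with a filtration of its restriction to $C(\boldsymbol{\lambda};\mathbf{w})$ only, and the fibre of $\iota$ over such a point is the variety of all extensions of that filtration to the new-arm vertices, which can be positive dimensional. For instance, with $w_{n+1}=2$ and $d_0=e_0=1$, take $\alpha=E_2$, $\beta=-(\lambda_{n+1}^{(0)}/\lambda_{n+1}^{(1)})E_2$, $A_{n+1}^{(1)}=A_{n+1}^{(2)}=0$, and any line as the filtration at the old vertices; every line at the new middle vertex is then admissible, so the fibre of $\iota$ is a whole $\Pbb^1$. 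Properness of $\iota$ is still true, but it must be obtained by cancellation (as $\tilde q_2=p_2\circ\iota$ is proper and $p_2$ is separated), and, more importantly, $\iota$ is neither flat nor l.c.i., which undermines the operations you build on it.

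This matters because your Chow-level identities rest on maps that do not exist. There is no pullback $\iota^{*}$ on Chow groups for such a morphism, and $A_\bullet(P)$ carries no intersection product since $P$ is in general singular, so neither $\pi_{\mathrm{ext}}^{*}=\iota^{*}\circ\pi^{!}$ nor $\iota_{*}\circ\iota^{*}=\mathrm{id}$ is meaningful as stated. \Cref{projection formula for birational morphism} is engineered precisely to avoid $\iota^{*}$: applied with $f=\iota$, $g=p_1$, $Z=\Mf_{\dimvectd,\dimvecte}$ (using that $\pi_{\mathrm{ext}}=p_1\circ\iota$ and $p_1$ are l.c.i.\ and $Z$ is smooth), it gives $\iota_{*}\circ\pi_{\mathrm{ext}}^{*}=p_1^{*}$ via refined products relative to the smooth base, never invoking a pullback along $\iota$. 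Even granting that, your argument does not close: you still need the identification $\pi^{!}=p_1^{*}$ of the refined Gysin map with the genuine l.c.i.\ pullback of the projection, and this is nowhere established in your proposal, nor is it automatic. The paper proves it by splitting the Cartesian square along the factorization of \Cref{functoriality morphisms are lci} into a regular immersion followed by a smooth map, checking via \Cref{Rde smooth + dim} that the induced immersion over $\Mf_{\dimvectd,\dimvecte}$ is again regular of the same codimension, and citing Fulton's Remark~6.2.1. Finally, the two points you defer at the end --- irreducibility of $P$ and density of the ``inv'' locus --- are exactly where \Cref{ProdOfSqMatsIsUpperTriangular} must be applied globally: used at an arbitrary point of $P$ (not just over the inv locus) it shows that $\iota$ is surjective, hence that $P$ is irreducible; your use of it only over the inv locus yields birationality onto the image of $\iota$ but not the equality $\iota_{*}[\Mf_{\tilde{\dimvectd},\tilde{\dimvecte}}]=[P]$ on which the whole projection-formula step depends.
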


\begin{proof}
	Let~$\dimvectd$ and~$\tilde{\dimvectd}$ be as above.
	The morphisms of stacks~$\Mf_{\tilde{\dimvectd}}\xrightarrow{{f}} \Mf_\dimvectd$ and~$\Mf_{\tilde{\dimvectd},\tilde{\dimvecte}}\xrightarrow{\tilde{f}} \Mf_{\dimvectd , \dimvecte}$ fit into the commutative diagram
	\[\begin{tikzcd}
		\Mf_{\tilde{\dimvectd} }\times \Mf_{\tilde{\dimvecte}} \dar & \Mf_{\tilde{\dimvectd},\tilde{\dimvecte}} \lar \rar{\tilde{p}} \dar{\tilde{f}} & \Mf_{\tilde{\dimvectd}+\tilde{\dimvecte}}\dar{f} \\
		\Mf_\dimvectd \times \Mf_\dimvecte & \Mf_{\dimvectd , \dimvecte} \lar \rar{p} & \Mf_{\dimvectd + \dimvecte}.
	\end{tikzcd}\]
	It suffices to show that
	\begin{equation}\label{functoriality goal}
		\begin{tikzcd}
			A_\bullet(\Mf_{\tilde{\dimvectd},\tilde{\dimvecte}})  \rar{\tilde{p}_*} & A_\bullet(\Mf_{\tilde{\dimvectd}+\tilde{\dimvecte}}) \\
			A_\bullet(\Mf_{\dimvectd , \dimvecte}) \rar{p_*} \uar{\tilde{f}^*} & A_\bullet(\Mf_{\dimvectd + \dimvecte}) \uar{f^*}
		\end{tikzcd}
	\end{equation}
	commutes.
	Consider the fiber product~$\mathfrak{X}\coloneqq \Mf_{\tilde{\dimvectd}+\tilde{\dimvecte}} \times_{\Mf_{\dimvectd + \dimvecte}} \Mf_{\dimvectd , \dimvecte}$ in the diagram
	\begin{equation}\label{pullback functoriality}\begin{tikzcd}
			\Mf_{\tilde{\dimvectd},\tilde{\dimvecte}}
			\arrow[bend left]{drr}{\tilde{p}}
			\arrow[bend right,swap]{ddr}{\tilde{f}}
			\arrow[dashed]{dr}[description]{h} & & \\
			& \mathfrak{X} \arrow{r}{p'} \arrow{d}[swap]{f'}
			& \Mf_{\tilde{\dimvectd}+\tilde{\dimvecte}} \arrow{d}{f} \\
			&  \Mf_{\dimvectd , \dimvecte} \arrow[swap]{r}{p}
			& \Mf_{\dimvectd + \dimvecte}
	\end{tikzcd}\end{equation}
	We get~$\mathfrak{X}=[R_{\tilde{\dimvectd}+\tilde{\dimvecte}}\times_{R_{\dimvectd + \dimvecte}} R_{\dimvectd , \dimvecte} / G_{\tilde{\dimvectd}+\tilde{\dimvecte}}\times_{G_{\dimvectd + \dimvecte}} G_{\dimvectd , \dimvecte}]\eqqcolon [ X / G ]$.
	The map~$p'$ is proper as the pullback of the proper~$p$, but then also~$h$ is proper by \cite[Tag:~0CPT]{stacks-project}.
	
	Observe that~$R_{\tilde{\dimvectd}, \tilde{\dimvecte}} \times^{G_{\tilde{\dimvectd},\tilde{\dimvecte}}} G \to  X$ is surjective by \Cref{ProdOfSqMatsIsUpperTriangular}.
	It follows that~$h$ is surjective and that~$X$ is irreducible.
	Additionally,~$h$ is birational by \Cref{open substack of top stack isomorphic to bottom}.
	
	Now, consider the induced diagram in Chow groups:
	\begin{equation}\label{functoriality goal with fiber product}
		\begin{tikzcd}
			A_\bullet(\Mf_{\tilde{\dimvectd},\tilde{\dimvecte}})
			\arrow[bend left]{drr}{\tilde{p}_*}
			\arrow{dr}[description]{h_*} & & \\
			& A_\bullet(\mathfrak{X}) \arrow{r}{(p')_*}  
			& A_\bullet(\Mf_{\tilde{\dimvectd}+\tilde{\dimvecte}}) \\
			&  A_\bullet(\Mf_{\dimvectd , \dimvecte}) \arrow[swap]{r}{p_*}\arrow[swap]{u}{f^!}\arrow[bend left]{uul}{\tilde{f}^*}
			& A_\bullet(\Mf_{\dimvectd + \dimvecte}) \arrow[swap]{u}{f^*},
		\end{tikzcd}
	\end{equation}
	where the morphism~$f^!$ is the refined Gysin pullback as defined in \cite[Chapter~6.6]{FultonIntersectionTheory}.
	The upper triangle commutes by functoriality and the lower square commutes by \cite[Theorem~6.2~(a) and Proposition~6.6]{FultonIntersectionTheory}.
	Now, we claim that~$f'$ is l.c.i.\ and~$f'^*=f^!$.
	Indeed, the fiber diagram decomposes into two fiber diagrams
	\[\begin{tikzcd}
		{\mathfrak{X}} & {[R_{\tilde{\dimvectd}+\tilde{\dimvecte}}/G_{\tilde{\dimvectd}+\tilde{\dimvecte}}]} \\
		{\widetilde{\mathfrak{X}}} & {[\widetilde{R_{\tilde{\dimvectd}+\tilde{\dimvecte}}}/G_{\tilde{\dimvectd}+\tilde{\dimvecte}}]} \\
		{[R_{\dimvectd , \dimvecte}/G_{\dimvectd , \dimvecte}]} & {[R_{\dimvectd + \dimvecte}/G_{\dimvectd + \dimvecte}]}.
		\arrow[from=1-1, to=1-2]
		\arrow["{f_1'}"', hook, from=1-1, to=2-1]
		\arrow["{f_1}", hook, from=1-2, to=2-2]
		\arrow[from=2-1, to=2-2]
		\arrow["{f_2'}"', from=2-1, to=3-1]
		\arrow["{f_2}", from=2-2, to=3-2]
		\arrow[from=3-1, to=3-2]
	\end{tikzcd}\]
	Here~$f=f_2\circ f_1$ is the factorization from \Cref{functoriality morphisms are lci}, i.e.,
	\[
	[\widetilde{R_{\tilde{\dimvectd}+\tilde{\dimvecte}}}/G_{\tilde{\dimvectd}+\tilde{\dimvecte}} ] = [ (R_{\dimvectd + \dimvecte}\times (\C^{d_0\times d_0})^{w_{n+1}}) / G_{\tilde{\dimvectd}+\tilde{\dimvecte}} ] .
	\]
	We have~$\widetilde{\mathfrak{X}} =  [ (R_{\dimvectd , \dimvecte}\times (\C^{d_0\times d_0})^{w_{n+1}} ) / G ]$.
	Because~$R_{\tilde{\dimvectd},\tilde{\dimvecte}} \mono R_{ \tilde{\dimvectd}, \tilde{\dimvecte} }^\sst(Q)$ is a regular embedding by \Cref{Rde smooth + dim},~$f_1'$ must also be a regular embedding and its codimension agrees with the codimension of~$f_1$.
	We deduce~$f^! \overset{\text{def.}}{=} f_1^! \circ f_2'^* = f_1'^*\circ f_2'^* \overset{\text{def.}}{=} f'^*$, using \cite[Remark~6.2.1]{FultonIntersectionTheory}.
	But then the left triangle in diagram~\eqref{functoriality goal with fiber product} commutes by \Cref{projection formula for birational morphism}.
	Thus, also diagram~\eqref{functoriality goal} commutes.
	\mycomment{

		and that by \Cref{open substack of top stack isomorphic to bottom} there is an open substack on which~$h$ is an isomorphism.
		It follows that~$\dim(\mathfrak{X}) = \dim(\Mf_{\tilde{\dimvectd},\tilde{\dimvecte}})$.
		
		
		We know that~$p'$ is proper as the pullback of the proper~$p$ and~$\reldim p' = \reldim \tilde{p} = \reldim p$ (to do: add calculation of dimensions somewhere earlier).
		But then also~$h$ is proper (\cite[Tag:~0CPT]{stacks-project}) and of relative dimension~$0$.
		We therefore get the commutative diagram
		\[\begin{tikzcd}
			H^\bullet(\Mf_{\tilde{\dimvectd},\tilde{\dimvecte}})
			\arrow[bend left]{drr}{\tilde{p}_*}
			\arrow[bend left]{dr}[description]{h_*} & & \\
			& H^\bullet(\mathfrak{X}) \arrow{r}{(p')_*}  \arrow[bend left]{ul}[description]{h^*}
			& H^\bullet(\Mf_{\tilde{\dimvectd}+\tilde{\dimvecte}}) \\
			&  H^\bullet(\Mf_{\dimvectd , \dimvecte}) \arrow[swap]{r}{p_*}\arrow{u}{(f')^*}
			& H^\bullet(\Mf_{\dimvectd + \dimvecte}) \arrow[swap]{u}{f^*}
		\end{tikzcd}\]
		However, we need to show that~$f^*\circ p_* = (p'\circ h)_*\circ (f'\circ h)^* = (p')_*\circ h_*\circ h^*\circ (f')^*$.
		It therefore suffices to show that~$h_*\circ h^* =\id_{H^\bullet(\mathfrak{X})}$.
		
		By the projection formula, we have that~$h_*\circ h^* (a)= a\cupprod h_*(1)$.
		We are therefore reduced to showing that~$h_*(1)=1$.
		However, we have already observed that~$h$ is an isomorphism on an open dense substack.
		Therefore~$h_*(1)=1$ and the diagram~\eqref{functoriality goal} commutes.
	}
\end{proof}


\section{The Algebras \texorpdfstring{$\Pp_n$}{Pn}}\label{Section alg Ppn}\label{section5}

We now give an algebraic description of a new class~$\Pp_n$ of algebras in terms of generators and relations.
We will prove in \Cref{section7} that the algebra~$\Coha(\Pbb^1(2^n))$ is given by~$\Pp_n$.

For~$n \in \N_0$ ,let~$\Lambda_n^+$ be the monoid~$\Lambda_n^+\coloneqq \{ \dimvectd\in \N_0^{Q_0(2^n)} \ \vert \ d_0 = d_\infty \}$ and consider the elements
\begin{align*}
	\dnull & = (1,1,\dots, 1,1),\\
	\mathrm{e}_k & = (0,0,\dots, 1, \dots, 0, 0)\qquad \text{with~$1$ only at the~$k$-th place, and} \\
	\mathrm{f}_k &  = \dnull - \mathrm{e}_k = (1,1,\dots, 0, \dots, 1, 1) \qquad \text{with~$0$ only at the~$k$-th place}
\end{align*}
for~$k=1,\dots, n$ in~$\Lambda_n^+$.

We write~$[- , -]$ for the commutator and~$\{- , -\}$ for the anticommutator.

We define the algebra~$(\Pp_n, *)$ as the~$\Z\times \Lambda_n^+$-graded~$\Q$-algebra generated by the elements
\begin{align*}
	& e_{1, 1}, e_{1, 3}, e_{1,5}, \dots , e_{1, 2i+1}, \dots , & 	& f_{1, 1}, f_{1, 3}, f_{1, 5}, \dots , f_{1, 2i+1}, \dots , \\
	& e_{2, 1}, e_{2, 3}, e_{2, 5}, \dots , e_{2, 2i+1}, \dots , &  	& f_{2, 1}, f_{2, 3}, f_{2, 5},  \dots , \\
	& \quad \vdots & 	& \quad \vdots \\
	& e_{n, 1}, e_{n ,3}, \dots , & 	& f_{n, 1}, f_{n ,3}, \dots , \\
	& h_2, h_4, h_6, \dots, h_{2i+2}, \dots , & 	& g_0, g_2, g_4, \dots ,  g_{2i} , \dots 
\end{align*}
with degrees
\begin{gather*}
	\deg(e_{k, 2i+1})  = (2i+1, \mathrm{e}_k),\qquad  	\deg(f_{k ,2i+1}) = (2i+1, \mathrm{f}_k), \\
	\deg(h_{2i+2})  = (2i+2, \dnull) ,  \qquad \deg(g_{2i})  = (2i, \dnull ) 
\end{gather*}
subject to the relations
\begin{align}
	\{e_{k, 2i+1},e_{k, 2j+1}\} & =0, \label{Pn rel: ek and ek}\\
	\{f_{k, 2i+1},f_{k, 2j+1}\} & =0, \label{Pn rel: fk and fk}\\
	\{e_{k, 2i+1},f_{k, 2j+1}\} & = h_{2(i+j)+2},\label{Pn rel: ek and fk}\\
	[e_{k, 2i+1}, e_{l, 2j+1}] &  = [f_{k, 2i+1}, f_{l, 2j+1}] = [e_{k, 2i+1}, f_{l, 2j+1}] = 0 \qquad \text{for~$k\neq l$}, \label{Pn rel: commutation} \\
	[h_{2i+2}, e_{k, 2j+1}] & = [h_{2i+2}, f_{k, 2j+1}] = [h_{2i+2}, h_{2j+2}] = 0, \label{Pn rel: h commutes} \\
	[g_{2i}, e_{k, 2j+1}] & = \sum_{r=0}^{j-1} h_{2(i+j-r)}*e_{k, 2r+1}, \label{Pn rel: g and e commutator}\\
	\begin{split}
		[g_{2i},f_{k, 2j+1}] & = -[g_{2j},f_{k, 2i+1}], \label{Pn rel: g and f commutator}\\
		[g_{2i},f_{k, 2j+1}] & = \sum_{r=0}^{j-i-1} f_{k, 2(j-1-r)+1} * h_{2(r+i)+2} \qquad \text{for~$i\leq $}j, 		
	\end{split}\\
	\begin{split}
		[g_{2i},h_{2j+2}] & = -[g_{2j},h_{2i+2}], \label{Pn rel: g and h commutator}\\
		[g_{2i},h_{2j+2}] & = \sum_{r=0}^{j-i-1} h_{2(j-1-r)+2}* h_{2(r + i)+2} \qquad \text{for~$i\leq j$}, 		
	\end{split}\\
	[g_{2i} , g_{2j}] & = 2 \cdot \sum_{r=0}^{j-i-1} g_{2(j-1-r)}* h_{2(r + i) + 2} \qquad \text{for~$i\leq j$}. 		
\end{align}

In terms of the generating series
\begin{align*}
	E_k(X) & = \sum_{i\geq 0} e_{k, 2i+1} X^i & F_k(X) &= \sum_{i\geq 0} f_{k, 2i+1} X^i, \\
	G(X) & = \sum_{i\geq 0} g_{2i} X^i, & H(X) &= \sum_{i\geq 0} h_{2i+2} X^i ,
\end{align*}
where~$X$ and~$Y$ are formal variable that commute with all generators and with each other, these relations can be rewritten as
\begin{align}
	\{E_k(X),E_k(Y)\} & = 0, \label{Pn rel: Ek and Ek}\\
	\{F_k(X),F_k(Y)\} & = 0, \label{Pn rel: Fk and Fk}\\
	\{E_k(X),F_k(Y)\} & =\frac{YH(Y)-XH(X)}{Y-X},  \label{Pn rel: Ek and Fk}\\
	[E_k(X),E_l(Y)] & = 0  \qquad \text{for~$k \neq l$}, \label{Pn rel: Ek and El} \\
	[F_k(X),F_l(Y)] & = 0 \qquad \text{for~$k \neq l$}, \label{Pn rel: Fk and Fl}\\
	[E_k(X),F_l(Y)] & = 0 \qquad \text{for~$k \neq l$}, \label{Pn rel: Ek and Fl}\\
	[H(X),E_k(Y)] & = 0, \label{Pn rel: H and E}\\
	[H(X),F_k(Y)] & = 0, \label{Pn rel: H and F}\\
	[H(X),H(Y)] & = 0, \label{Pn rel: H and H}\\
	[G(X),E_k(Y)] & = YE_k(Y)*\frac{YH(Y)-XH(X)}{Y-X}, \label{Pn rel: G and E}\\
	[G(X),F_k(Y)] & = \frac{(YF_k(Y)-XF_k(X))*(YH(Y)-XH(X))}{Y-X}, \label{Pn rel: G and F}\\
	[G(X),H(Y)] & = \frac{(YH(Y)-XH(X))*(YH(Y)-XH(X))}{Y-X}, \label{Pn rel: G and H}\\
	[G(X),G(Y)] & = 2\frac{(YG(Y)-XG(X))*(YH(Y)-XH(X))}{Y-X} \label{Pn rel: G and G} .
\end{align}

\mycomment{
	\begin{align}
		\{E_k(X),E_k(Y)\} & = 0 \label{Pn rel: Ek and Ek}\\
		\{F_k(X),F_k(Y)\} & = 0 \label{Pn rel: Fk and Fk}\\
		\{E_k(X),F_k(Y)\} & =\frac{YH(Y)-XH(X)}{Y-X}  \label{Pn rel: Ek and Fk}\\
		[E_k(X),E_l(Y)] & = 0  \quad \text{ for~$k \neq l$} \label{Pn rel: Ek and El} \\
		[F_k(X),F_l(Y)] & = 0 \quad \text{ for~$k \neq l$} \label{Pn rel: Fk and Fl}\\
		[E_k(X),F_l(Y)] & = 0 \quad \text{ for~$k \neq l$} \label{Pn rel: Ek and Fl}\\
		[H(X),E_k(Y)] & = 0 \label{Pn rel: H and E}\\
		[H(X),F_k(Y)] & = 0 \label{Pn rel: H and F}\\
		[H(X),H(Y)] & = 0 \label{Pn rel: H and H}\\
		[G(X),E_k(Y)] & = YE_k(Y)*\frac{YH(Y)-XH(X)}{Y-X} \label{Pn rel: G and E}\\
		[G(X),F_k(Y)] & = \frac{(YF_k(Y)-XF_k(X))*(YH(Y)-XH(X))}{Y-X} \label{Pn rel: G and F}\\
		[G(X),H(Y)] & = \frac{(YH(Y)-XH(X))*(YH(Y)-XH(X))}{Y-X} \label{Pn rel: G and H}\\
		[G(X),G(Y)] & = 2\frac{(YG(Y)-XG(X))*(YH(Y)-XH(X))}{Y-X} \label{Pn rel: G and G}
	\end{align}
	in terms of generating series
	\begin{align*}
		E_k(X) & = \sum_{i\geq 0} e_{k, 2i+1} X^i & F_k(X) &= \sum_{i\geq 0} f_{k, 2i+1} X^i, \\
		G(X) & = \sum_{i\geq 0} g_{2i} X^i, & H(X) &= \sum_{i\geq 0} h_{2i+2} X^i .
	\end{align*}
	More explicitly, these identities are equivalent to the following relations:
	
	\begin{align}
		\{e_{k, 2i+1},e_{k, 2j+1}\} & =0, \label{Pn rel: ek and ek}\\
		\{f_{k, 2i+1},f_{k, 2j+1}\} & =0, \label{Pn rel: fk and fk}\\
		\{e_{k, 2i+1},f_{k, 2j+1}\} & = h_{2(i+j)+2},\label{Pn rel: ek and fk}\\
		[e_{k, 2i+1}, e_{l, 2j+1}] &  = [f_{k, 2i+1}, f_{l, 2j+1}] = [e_{k, 2i+1}, f_{l, 2j+1}] =0, \quad \text{ for~$k\neq l$,} \label{Pn rel: commutation} \\
		[h_{2i+2}, e_{k, 2j+1}] & = [h_{2i+2}, f_{k, 2j+1}] = [h_{2i+2}, h_{2j+2}] = 0 \label{Pn rel: h commutes} \\
		[g_{2i}, e_{k, 2j+1}] & = \sum_{r=0}^{j-1} h_{2(i+j-r)}*e_{k, 2r+1}, \label{Pn rel: g and e commutator}\\
		\begin{split}
			[g_{2i},f_{k, 2j+1}] & = -[g_{2j},f_{2i+1}], \label{Pn rel: g and f commutator}\\
			[g_{2i},f_{k, 2j+1}] & = \sum_{r=0}^{j-i-1} f_{2(j-1-r)+1}* h_{2r+2i+2}, \text{ if } i\leq j, 		
		\end{split}\\
		\begin{split}
			[g_{2i},h_{2j+2}] & = -[g_{2j},h_{2i+2}], \label{Pn rel: g and h commutator}\\
			[g_{2i},h_{2j+2}] & = \sum_{r=0}^{j-i-1} h_{2(j-1-r)+2}* h_{2r+2i+2}, \text{ if } i\leq j, 		
		\end{split}\\
		[g_{2i},g_{2(i+j)}] & = 2\cdot \sum_{r=1}^{j}g_{2(i+j-r)}*h_{2(i+r)}. \label{Pn rel: g and g commutator}
	\end{align}
}

\begin{rk}\label{PpnToPpn+1}
	We have a homomorphism of graded algebras~$\Pp_n \to \Pp_{n+1}$, which maps a generator of~$\Pp_n$ to the corresponding generator of the same name in~$\Pp_{n+1}$.
	This map is even injective as the next proposition shows.
\end{rk}

\begin{prop}\label{Ppn Poincare series}
	The algebra~$\Pp_n$ has a PBW-type basis given by ordered products of generators of the form
	\begin{gather*}
		g_{a_1} * g_{a_2} *\cdots * h_{b_1} * h_{b_2}* \cdots * e_{1,c_{1,1}} * e_{1,c_{1,2}} *\cdots  * e_{n,c_{n,1}} * \cdots * f_{1, d_{1,1}} *\cdots * f_{n, d_{n,1}}*  \cdots 
	\end{gather*}
	with~$a_1\geq a_2\geq \cdots$,~$b_1 \geq b_2 \geq \cdots$,~$c_{k,1} > c_{k, 2} > \cdots$, and~$d_{k, 1} > d_{k, 2} > \cdots$.
	It follows that as a~$\Z\times \Lambda_n$-graded~$\Q$-vector space, we have~$\Pp_n \cong_{\mathrm{gr\text{-}vsp}} \Sym(V\otimes \Q[z])$ where~$\deg(z)=(2,0)$ and
	\begin{equation*}
		V=\C^-_{\mathrm{e}_1}[1] \oplus \C^-_{\mathrm{e}_2}[1] \oplus \cdots \oplus \C^-_{\mathrm{e}_n}[1] \oplus \C^-_{\mathrm{f}_1}[1] \oplus \dots \oplus \C^-_{\mathrm{f}_n}[1] \oplus \C^+_\dnull[0] \oplus \C^+_\dnull[2].
	\end{equation*}
	Here, the subscript is the dimension degree, the number in brackets is the cohomological shift, and the sign (which is the cohomological degree mod~$2$) denotes parity of the summand as a super vector space.
\end{prop}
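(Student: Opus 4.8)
The plan is to prove the PBW statement by a rewriting (diamond lemma) argument and then read off the graded vector space isomorphism as a bijective count of normal forms. First I would work in the free $\Z\times\Lambda_n^+$-graded algebra on the listed generators and orient each defining relation into a reduction rule that replaces an ``out of order'' product of two generators by a $\Q$-linear combination of products respecting the target ordering $g,h,e_1,\dots,e_n,f_1,\dots,f_n$ (with $g$- and $h$-subscripts weakly decreasing and each family of $e_k$- and $f_k$-subscripts strictly decreasing). Concretely, \eqref{Pn rel: ek and ek} and \eqref{Pn rel: fk and fk} sort same-arm $e$'s, resp.\ $f$'s, and annihilate a repeated index; \eqref{Pn rel: ek and fk} moves an $e$ in front of a same-arm $f$ at the cost of an $h$; \eqref{Pn rel: commutation} sorts letters of distinct arms; \eqref{Pn rel: h commutes} moves $h$'s left of $e$'s and $f$'s and sorts them among themselves; and the families \eqref{Pn rel: G and E}--\eqref{Pn rel: G and G} move a $g$ left past everything, producing correction terms each containing strictly fewer $g$-factors. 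Since every generator has nonzero dimension degree and nonnegative cohomological degree, each fixed bidegree $(c,\mathbf d)$ contains only finitely many monomials; hence the order comparing monomials first by number of $g$-factors, then by total length, then lexicographically by the number of inversions against the target order is a well-founded semigroup order for which every reduction strictly decreases the monomial. Termination is then immediate, and this already proves the spanning half: every element of $\Pp_n$ is a $\Q$-combination of the claimed ordered monomials.

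By the diamond lemma it remains to verify confluence, i.e.\ that for every length-three monomial $XYZ$ whose two adjacent pairs are both redexes, reducing $XY$ first and reducing $YZ$ first yield the same normal form. These overlap ambiguities fall into finitely many type-patterns. The ones involving only \eqref{Pn rel: ek and ek}--\eqref{Pn rel: h commutes} (no $g$) are routine: they encode the standard consistency of an exterior/Clifford-type algebra over the polynomial ring $\Q[h_2,h_4,\dots]$ together with the cross-arm commutations, the only genuinely mixed check being a triple $f_{k,*}\,e_{k,*}\,Z$, where \eqref{Pn rel: ek and fk} has to be compatible with the centrality of the produced $h$ on $e$ and $f$ coming from \eqref{Pn rel: h commutes} and with \eqref{Pn rel: ek and ek}, \eqref{Pn rel: fk and fk}.

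The hard part will be the ambiguities containing two or three $g$'s, namely $g_{2i}g_{2j}g_{2k}$ and $X\,g_{2j}\,g_{2k}$ for $X\in\{h_*,e_*,f_*\}$. Resolving these is equivalent to Jacobi-type identities for the bracket determined by \eqref{Pn rel: G and E}--\eqref{Pn rel: G and G}: the triple $g_{2i}g_{2j}g_{2k}$ forces $[[G(X),G(Y)],G(Z)]$ plus its cyclic permutations to vanish modulo the already-imposed relations, while $e\,g\,g$, $f\,g\,g$, and $h\,g\,g$ force compatibility of the $g$-action with the $g$--$g$ commutator \eqref{Pn rel: G and G}. This is precisely where the rational generating-function shape of the right-hand sides is essential: written in the form $\frac{YA(Y)-XA(X)}{Y-X}$, each such ambiguity collapses to an identity of rational expressions in three variables which holds identically. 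I expect this confluence check for the $g$-heavy triples to be the main obstacle; it is purely computational but needs careful bookkeeping, and the number of cases can be cut down by using the layered structure $\Pp_0=\langle g,h\rangle\subseteq\Pp_n$ (so that the base case reduces to the $g,h$-relations alone) together with the expectation that $\Pp_n$ is free over $\Pp_0$ on the fermionic monomials in the $e$'s and $f$'s.

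Once confluence is established, the diamond lemma guarantees that the ordered monomials form a $\Q$-basis. The graded vector space isomorphism is then a bidegree-preserving count: weakly decreasing products of $g$'s and of $h$'s biject with the monomial bases of $\Sym(\C^+_\dnull[0]\otimes\Q[z])$ and $\Sym(\C^+_\dnull[2]\otimes\Q[z])$ via $g_{2i}\mapsto z^i$ and $h_{2i+2}\mapsto z^i$, while strictly decreasing products of $e_{k,*}$, resp.\ $f_{k,*}$, biject with $\Lambda(\C^-_{\mathrm e_k}[1]\otimes\Q[z])$, resp.\ $\Lambda(\C^-_{\mathrm f_k}[1]\otimes\Q[z])$; multiplying over all factors gives $\Pp_n\cong_{\mathrm{gr\text-vsp}}\Sym(V\otimes\Q[z])$. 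Finally, the homomorphism of \Cref{PpnToPpn+1} sends the PBW basis of $\Pp_n$ to a subset of that of $\Pp_{n+1}$, which yields its injectivity.
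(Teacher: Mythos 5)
Your setup is sound as far as it goes: orienting the relations so that $g$'s move left, then $h$'s, then the $e$-families and $f$-families, an order of the kind you describe (number of $g$-factors, then length, then inversions) does make every rule strictly decreasing, so termination holds and the ordered monomials span each bidegree $(c,\dimvectd)$. The gap is that this only gives the spanning half. Linear independence \emph{is} confluence of the rewriting system, and you never verify it: you classify the overlap ambiguities and then assert that the $g$-heavy ones ``collapse to an identity of rational expressions in three variables which holds identically.'' That assertion is precisely the content of the proposition --- the diamond lemma gives nothing until each ambiguity is actually resolved, and for an arbitrary presentation of this shape resolvability can perfectly well fail, in which case $\Pp_n$ would be a proper quotient of the span of your normal forms and the claimed basis would be linearly dependent. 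The checks for the triples $(g,g,g)$, $(g,g,h)$, $(g,g,e_k)$, $(g,g,f_k)$, $(f_k,e_k,g)$, \dots\ are genuine computations with the inhomogeneous right-hand sides of \eqref{Pn rel: G and E}--\eqref{Pn rel: G and G}, of the same order of difficulty as the CoHA computations in \Cref{A2tilde Pp1 to Coha}; deferring all of them leaves the proof incomplete. There is also a circularity in your proposed shortcut: you want to use the ``layered structure $\Pp_0=\langle g,h\rangle\subseteq\Pp_n$'' and freeness over $\Pp_0$, but injectivity of $\Pp_0\to\Pp_n$ (\Cref{PpnToPpn+1}) is itself stated in the paper as a consequence of the very proposition you are proving.

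The cheapest correct completion avoids confluence altogether and uses the rest of the paper (which, note, states this proposition without proof, so there is no argument to compare against). Your spanning half gives $\dim_\Q(\Pp_n)_{(c,\dimvectd)}\le\dim_\Q\Sym(V\otimes\Q[z])_{(c,\dimvectd)}$ in every bidegree, via the bijection between ordered monomials and the monomial basis of $\Sym(V\otimes\Q[z])$ that you describe in your last paragraph. On the other hand, \Cref{PpnToCohaSurj} provides a surjection $\Pp_n\twoheadrightarrow\Coha(\Pbb^1(2^n))$, and by \Cref{Coha Poincare series} the target has exactly the graded dimension of $\Sym(V\otimes\Q[z])$; neither of these results relies on the present proposition. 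Hence all inequalities are equalities, the ordered monomials form a basis, and the surjection is an isomorphism --- non-circularly, since only the spanning half enters the dimension count, and this simultaneously recovers \Cref{CohaWPL(2^n)=Ppn}. If instead you insist on a self-contained algebraic proof, the ambiguities must actually be resolved; the generating-series formalism makes this feasible, but it has to be carried out, not expected.
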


\begin{rk}
	For~$n\geq 1$, the generators~$h_{2i+2} = \{ e_{1,1}, f_{1,2i+1}\}$ are extraneous.
	It turns out that all the relations involving commutators with~$h_{2i+2}$ already follow from the other relations.
\end{rk}

\section{Computations of Relations in Small CoHAs}\label{section6}

By \Cref{CanAlgsForSmalln}, the canonical algebra~$C(\boldsymbol{\lambda}; 2^n)$ is given by a quiver without any relations for~$n\leq 2$.
We now use the methods of \cite{FRChowHa} to construct well-defined algebra homomorphisms from~$\Pp_n$ to~$\Coha(\Pbb^1(2^n))$ for~$n=0,\, 1, \, 2$.

\subsection{Computations in CoHAs of Quivers with Stability}

Given a quiver~$Q$, a stability function~$\theta \colon \Z^{Q_0} \to \Z$, and a slope~$\mu_0 \in \Q$, we can consider the category~$\Rep^{\tsst, \mu_0}(Q)$ of~$\theta$-semistable representations with slope~$\mu_0$.
Let
\[
\Lambda^{\theta, \mu_0}_+ \coloneqq \left\{ \dimvectd \in \N_0^{Q_0} \ \middle\vert \ {\theta(\dimvectd)}/{\sum_i d_i} = \mu_0 \right\} 
\]
be the monoid of dimension vectors of slope~$\mu_0$.
The stack~$\Mf$ of objects of this category is the disjoint union~$\Mf= \coprod_{\dimvectd \in \Lambda^{\theta, \mu_0}_+} \Mf_\dimvectd$ with~$\Mf_\dimvectd = [R_\dimvectd^\sst(Q) / G_\dimvectd]$, where
\[
R_\dimvectd^\sst(Q) \subseteq R_\dimvectd(Q) = \prod_{i \xrightarrow{\alpha} j} \mathbb{A}^{d_j \times d_i}
\]
is the open subset of~$\theta$-semistable representations and~$G_\dimvectd = \prod_{i} \GL_{d_i}(\C)$.

We can now construct the CoHA and the ChowHA of the category~$\Rep^{\tsst, \mu_0}(Q)$, where we note that~$H^\bullet(\Mf_\dimvectd) = H^\bullet_{G_\dimvectd}(R_\dimvectd^\sst(Q))$.
By \cite[Theorem~5.1]{FRChowHa}, the CoHA and the ChowHA coincide.

We use the following two propositions to do calculations.

\begin{prop}[{\cite[Theorem~2.2]{KSCoHADef}}] \label{Quiver product explicit}
	We have
	\[
	H^\bullet_{G_{\dimvectd}}(R_\dimvectd(Q)) = \Q[x_{i, j} \ \vert \ i \in Q_0,\, j = 1, \dots, d_i]^{S_\dimvectd}.
	\]
	For~$f(x) \in H^\bullet_{G_{ \dimvectd }}(R_{ \dimvectd } (Q) )$ and~$g(x) \in H^\bullet_{G_{\dimvecte}}(R_{\dimvecte} (Q) )$, the CoHA multiplication is given by
	\begin{align*}
		f*g = \sum_{(\sigma_i)_{i \in Q_0}}  & f(x_{i, \sigma_i(r)} \ \vert \ i ,\, 1 \leq r \leq d_i) \cdot g(x_{i, \sigma_i(d_i + s)} \ \vert \ i ,\, 1 \leq s \leq e_i)  \\ 
		& \cdot \prod_{i,j \in Q_0} \prod_{r= 1}^{d_i} \prod_{s = 1}^{e_j} (x_{j, \sigma_j(d_j + s)} - x_{i, \sigma_i(r)} ) ^{a_{i,j}- \delta_{i,j}},
	\end{align*}
	where the sum runs over all~$Q_0$-tuples of~$(d_i,e_i)$--shuffle permutations~$\sigma_i$.
	Here,~$a_{i,j}$ is the number of arrows from~$i$ to~$j$ in~$Q$ and~$\delta_{i,j}$ is the Kronecker delta. 
\end{prop}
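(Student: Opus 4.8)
The plan is to establish the two assertions separately: first identify the equivariant cohomology ring, and then compute the convolution product by unwinding the correspondence that defines it.

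For the ring identification, I would use that $R_\dimvectd(Q) = \prod_{i \to j}\mathbb{A}^{d_j\times d_i}$ is an affine space and hence $G_\dimvectd$-equivariantly contractible, so $H^\bullet_{G_\dimvectd}(R_\dimvectd(Q)) \cong H^\bullet(BG_\dimvectd)$. Since $G_\dimvectd = \prod_{i\in Q_0}\GL_{d_i}(\C)$ and $H^\bullet(B\GL_d(\C);\Q) = \Q[c_1,\dots,c_d]$ is the polynomial ring on the Chern classes, the splitting principle identifies this with the ring $\Q[x_{i,1},\dots,x_{i,d_i}]^{S_{d_i}}$ of symmetric polynomials in the Chern roots; taking the product over $i \in Q_0$ yields $\Q[x_{i,j}]^{S_\dimvectd}$ with $S_\dimvectd = \prod_i S_{d_i}$.

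For the product formula, recall that $f * g = (\mathrm{pr}_2)_* (\mathrm{pr}_1\times\mathrm{pr}_3)^*(f\otimes g)$. Because the parabolic $G_{\dimvectd,\dimvecte}$ is homotopy equivalent to its Levi $G_\dimvectd\times G_\dimvecte$, the pullback $(\mathrm{pr}_1\times\mathrm{pr}_3)^*$ is an isomorphism onto $H^\bullet(BG_{\dimvectd,\dimvecte}) = \Q[x,y]^{S_\dimvectd\times S_\dimvecte}$, realized as the evident identification of $f$ with the first ($\dimvectd$) block of variables and of $g$ with the second ($\dimvecte$) block. I would then factor $\mathrm{pr}_2$ as a closed immersion $[R_{\dimvectd,\dimvecte}/G_{\dimvectd,\dimvecte}]\hookrightarrow [R_{\dimvectd+\dimvecte}/G_{\dimvectd,\dimvecte}]$ followed by the change-of-group map $[R_{\dimvectd+\dimvecte}/G_{\dimvectd,\dimvecte}]\to[R_{\dimvectd+\dimvecte}/G_{\dimvectd+\dimvecte}]$. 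The Gysin pushforward along the closed immersion multiplies by the equivariant Euler class of its normal bundle; since the normal directions are the lower-triangular Hom-blocks $\Hom(\C^{d_i},\C^{e_j})$ contributed by each arrow $i\to j$, this Euler class is $\prod_{i,j}\prod_{r=1}^{d_i}\prod_{s=1}^{e_j}(y_{j,s}-x_{i,r})^{a_{i,j}}$. The change-of-group map is a bundle with fiber $\prod_i \mathrm{Gr}(d_i, d_i+e_i)$, whose pushforward I would evaluate by Atiyah--Bott localization: the torus-fixed points are indexed by tuples of $d_i$-element subsets $I_i\subseteq\{1,\dots,d_i+e_i\}$, the tangent weights at a fixed point are $z_{i,b}-z_{i,a}$ for $a\in I_i$ and $b\notin I_i$, and the datum of such an $I_i$ with its induced ordering is exactly a $(d_i,e_i)$-shuffle $\sigma_i$. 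Substituting $x_{i,r}\mapsto z_{i,\sigma_i(r)}$ and $y_{i,s}\mapsto z_{i,\sigma_i(d_i+s)}$ and dividing by the fixed-point Euler class $\prod_i\prod_{r,s}(z_{i,\sigma_i(d_i+s)}-z_{i,\sigma_i(r)})$ cancels precisely the $i=j$ factor of the normal Euler class, converting each exponent $a_{i,j}$ into $a_{i,j}-\delta_{i,j}$ and producing the stated shuffle sum once $z$ is renamed back to $x$.

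The main obstacle is getting the partial-flag bundle pushforward right: one must track the tangent and normal weights and their signs carefully so that the Grassmannian Euler class appearing in the localization denominator combines with the Euler class of the normal bundle of $R_{\dimvectd,\dimvecte}$ to give exactly the exponent $a_{i,j}-\delta_{i,j}$, with no spurious sign. One should also verify that $\mathrm{pr}_2$ is proper, so that the pushforward is defined, and observe that the resulting expression, a priori a sum of rational functions, is automatically a symmetric polynomial since it is the image of $(\mathrm{pr}_2)_*$ in $H^\bullet(BG_{\dimvectd+\dimvecte}) = \Q[z_{i,j}]^{S_{\dimvectd+\dimvecte}}$.
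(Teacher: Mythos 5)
The paper does not prove this proposition at all: it is quoted verbatim from Kontsevich--Soibelman \cite[Theorem~2.2]{KSCoHADef}, so there is no internal proof to compare against. Your argument is correct and is essentially the standard proof underlying the cited result: identify $H^\bullet_{G_\dimvectd}(R_\dimvectd(Q))\cong H^\bullet(BG_\dimvectd)$ by equivariant contractibility, identify the pullback along $\mathrm{pr}_1\times\mathrm{pr}_3$ using the retraction of the parabolic onto its Levi, factor $\mathrm{pr}_2$ through the closed immersion into $[R_{\dimvectd+\dimvecte}/G_{\dimvectd,\dimvecte}]$ (whose Gysin map is multiplication by the Euler class $\prod_{i,j}\prod_{r,s}(y_{j,s}-x_{i,r})^{a_{i,j}}$ of the lower-triangular normal blocks --- note this uses that the restriction map is an isomorphism here, again by contractibility of both spaces), and then integrate over the fiber $\prod_i \Gr(d_i,d_i+e_i)$ of the change-of-group map. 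The only point where you genuinely deviate from the usual treatment is in evaluating that last pushforward by Atiyah--Bott localization at the coordinate-subspace fixed points rather than by quoting the classical symmetrization/shuffle formula for Gysin maps of Grassmannian bundles; both yield the same denominator $\prod_i\prod_{r,s}(x_{i,\sigma_i(d_i+s)}-x_{i,\sigma_i(r)})$, which converts the exponent $a_{i,j}$ into $a_{i,j}-\delta_{i,j}$, and your closing observations (properness of $\mathrm{pr}_2$, and polynomiality of the a priori rational sum) are exactly the right consistency checks.
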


\begin{prop}[{\cite[Theorem~8.1]{FRChowHa}}] \label{Quiver Tautological Presentation}
	The map~$H^\bullet_{G_\dimvectd}(R_\dimvectd(Q)) \to H^\bullet_{G_\dimvectd}(R_\dimvectd^\sst(Q))$ is surjective (because the cycle map is an isomorphism) and the kernel is given by
	\[
	\sum_{\substack{\dimvectd = \dimvectd_1 \! + \dimvectd_2, \\ 		\mu(\dimvectd_1) > \mu(\dimvectd_2)}} H^\bullet_{G_{\dimvectd_1}}(R_{\dimvectd_1}(Q)) * H^\bullet_{G_{\dimvectd_2}}(R_{\dimvectd_2}(Q)),
	\]
	where~$\mu(\dimvectd) = \theta(\dimvectd) / \sum_{i\in Q_0} d_i$.
\end{prop}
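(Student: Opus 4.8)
The plan is to study the open embedding $R_\dimvectd^\sst(Q) \mono R_\dimvectd(Q)$ through the $G_\dimvectd$-equivariant localization sequence and to identify its closed complement, the unstable locus $Z_\dimvectd \coloneqq R_\dimvectd(Q) \setminus R_\dimvectd^\sst(Q)$, by means of the Harder--Narasimhan stratification. Recall that $R_\dimvectd(Q)$ decomposes into finitely many smooth, locally closed, $G_\dimvectd$-invariant HN strata $S_{\dimvectd^\bullet}$ indexed by types $\dimvectd^\bullet = (\dimvectd_1, \dots, \dimvectd_s)$ with $\sum_k \dimvectd_k = \dimvectd$ and $\mu(\dimvectd_1) > \cdots > \mu(\dimvectd_s)$; the open stratum (the type with $s = 1$) is exactly $R_\dimvectd^\sst(Q)$, so $Z_\dimvectd$ is the union of the strata with $s \geq 2$, and the closure of each stratum is a union of strata with a later first slope. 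Each stratum is, after \Cref{induction iso}, a $G_\dimvectd$-equivariant affine bundle over the induced product $\prod_k R_{\dimvectd_k}^\sst(Q)$.

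First I would establish purity. The equivariant cohomology of $R_\dimvectd(Q)$, of every $R_{\dimvectd_k}^\sst(Q)$, and hence of every stratum $S_{\dimvectd^\bullet}$, is pure and concentrated in even degrees, so all odd equivariant Borel--Moore homology vanishes (this is Reineke's computation of quiver moduli, see \cite{ReinekeModuliOfRepresentationsOfQuivers}, which also underlies \Cref{Purity of cohomology if affine paving}). Consequently the localization long exact sequence in equivariant Borel--Moore homology for the closed embedding $Z_\dimvectd \mono R_\dimvectd(Q)$ with open complement $R_\dimvectd^\sst(Q)$ splits into short exact sequences. Dualizing via Poincaré duality on the smooth space $R_\dimvectd(Q)$, this yields at once the surjectivity of the restriction map together with the identification
\[
\ker\!\left( H^\bullet_{G_\dimvectd}(R_\dimvectd(Q)) \to H^\bullet_{G_\dimvectd}(R_\dimvectd^\sst(Q)) \right) = \im\!\left( H_\bullet^{\mathrm{BM}, G_\dimvectd}(Z_\dimvectd) \to H_\bullet^{\mathrm{BM}, G_\dimvectd}(R_\dimvectd(Q)) \right),
\]
i.e.\ the kernel is the image of the Gysin pushforward from the unstable locus.

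Next I would match the right-hand side with the stated sum of convolution products. For a two-part decomposition $\dimvectd = \dimvectd_1 + \dimvectd_2$ the convolution is $(\mathrm{pr}_2)_* (\mathrm{pr}_1 \times \mathrm{pr}_3)^*$ along the correspondence $R_{\dimvectd_1} \times R_{\dimvectd_2} \xleftarrow{\mathrm{pr}_1 \times \mathrm{pr}_3} R_{\dimvectd_1, \dimvectd_2} \xrightarrow{\mathrm{pr}_2} R_\dimvectd$; since the first map induces an isomorphism on equivariant cohomology (it is an affine fibration followed by a vector bundle), the image of $H^\bullet_{G_{\dimvectd_1}}(R_{\dimvectd_1}(Q)) * H^\bullet_{G_{\dimvectd_2}}(R_{\dimvectd_2}(Q))$ is exactly the Gysin image of $(\mathrm{pr}_2)_*$. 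The forward inclusion ``$\supseteq$'' is then immediate: the image of $\mathrm{pr}_2$ consists of representations admitting a subrepresentation of dimension $\dimvectd_1$, and the hypothesis $\mu(\dimvectd_1) > \mu(\dimvectd_2)$ is equivalent to $\mu(\dimvectd_1) > \mu_0$, so all such representations are unstable; hence every such product restricts to zero on $R_\dimvectd^\sst(Q)$ and lies in the kernel. For the reverse inclusion I would filter $Z_\dimvectd$ by closed unions of HN strata and use purity again, so that the total Gysin image equals the sum of the Gysin images of the individual strata; grouping the strata by their maximal-slope piece $\dimvectd_1$ and using associativity of $*$ together with induction on $\dimvectd$ then expresses each stratum's contribution inside the two-part products $H^\bullet_{G_{\dimvectd_1}}(R_{\dimvectd_1}(Q)) * H^\bullet_{G_{\dimvectd_2}}(R_{\dimvectd_2}(Q))$ of the \emph{full} tautological rings.

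The main obstacle is precisely this reverse inclusion: one must show that products of full (rather than semistable) tautological classes over two-part decompositions already absorb the Gysin contributions of all the deeper, higher-codimension strata. Rather than tracking the dévissage cycle by cycle, the clean way to force the equality is a Poincaré-series count. By the forward inclusion, the quotient of $H^\bullet_{G_\dimvectd}(R_\dimvectd(Q))$ by the sum of products surjects onto $H^\bullet_{G_\dimvectd}(R_\dimvectd^\sst(Q))$; on the other hand the Poincaré series of this quotient can be computed directly from the convolution formula of \Cref{Quiver product explicit}, and it satisfies the same Harder--Narasimhan recursion (in the sense of \cite{ReinekeModuliOfRepresentationsOfQuivers}) that computes the series of $H^\bullet_{G_\dimvectd}(R_\dimvectd^\sst(Q))$. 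Since the two series agree, the surjection is an isomorphism and the kernel is exactly the stated sum.
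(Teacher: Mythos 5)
Your opening moves are sound and match the strategy behind the cited result: purity of $R_\dimvectd(Q)$, of the semistable loci, and of the HN strata (Reineke, \cite{ReinekeModuliOfRepresentationsOfQuivers}) makes the equivariant localization sequence split into short exact sequences, so the restriction map is surjective with kernel equal to the Gysin image of the unstable locus; and your inclusion ``$\supseteq$'' is correct, since a product over a decomposition with $\mu(\dimvectd_1)>\mu(\dimvectd_2)$ is pushed forward from the closed locus of representations admitting a subrepresentation of dimension vector $\dimvectd_1$, which is disjoint from $R^\sst_\dimvectd(Q)$. The genuine gap is the reverse inclusion, and the Poincar\'e-series shortcut you propose in place of the d\'evissage does not close it. Writing $K_\dimvectd$ for the sum of products, the surjection $H^\bullet_{G_\dimvectd}(R_\dimvectd(Q))/K_\dimvectd \to H^\bullet_{G_\dimvectd}(R^\sst_\dimvectd(Q))$ gives only the inequality $\dim (H^\bullet_{G_\dimvectd}(R_\dimvectd(Q))/K_\dimvectd)^i \geq \dim H^i_{G_\dimvectd}(R^\sst_\dimvectd(Q))$; to conclude it is an isomorphism you need the opposite bound, i.e.\ a lower bound $\dim K_\dimvectd^i \geq \dim H^i_{G_\dimvectd}(R_\dimvectd(Q)) - \dim H^i_{G_\dimvectd}(R^\sst_\dimvectd(Q))$. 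Your assertion that the Poincar\'e series of the quotient ``can be computed directly from the convolution formula and satisfies the HN recursion'' \emph{is} this bound in disguise, and nothing in the shuffle formula of \Cref{Quiver product explicit} delivers it: the images of the products over different two-part decompositions overlap in an uncontrolled way, so the dimension of their sum is not directly computable. If it were, the theorem would be a purely algebraic triviality, which it is not.

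What is actually needed --- and what the proof of \cite[Theorem~8.1]{FRChowHa} supplies (the present paper gives no proof; it only cites that theorem) --- is one of two equivalent ingredients. Either the compatibility statement that for an HN type $(\dimvectd_1,\dots,\dimvectd_s)$ the product $\alpha_1 * \cdots * \alpha_s$ of arbitrary lifts $\alpha_k$ of classes $\bar\alpha_k \in H^\bullet_{G_{\dimvectd_k}}(R^\sst_{\dimvectd_k}(Q))$ is supported on the closure of the stratum $S_{\dimvectd^\bullet}$ and restricts on $S_{\dimvectd^\bullet}$ (an affine bundle over the induced product of semistable loci, via \Cref{induction iso}) to $\bar\alpha_1 \otimes \cdots \otimes \bar\alpha_s$; granting this, your d\'evissage in approach (a) goes through by Noetherian induction over the strata, with no dimension count at all, since every class on a stratum closure is then a product of HN type plus contributions from deeper strata, and each HN-type product lies in $K_\dimvectd$ by the grouping $\dimvectd_1$ versus $\dimvectd_2+\cdots+\dimvectd_s$. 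Or, alternatively, the Harder--Narasimhan factorization theorem for the (Chow--)Hall algebra --- the associated graded of the HN filtration is the ordered tensor product of the semistable algebras --- which yields the linear independence of HN-type products and hence the missing lower bound on $\dim K_\dimvectd$. You gesture at the first ingredient in (a) but explicitly defer it as ``the main obstacle'', and (b) tacitly presupposes the second; as written, neither is established, so the heart of the theorem is missing from your argument.
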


\subsection{Relations in the CoHA of the~\texorpdfstring{$2$}{2}-Kronecker Quiver}

The canonical algebra~$C(\lambda_1, \dots, \lambda_n ; w_1, \dots, w_n)$ for~$n=0$ is the path algebra of the Kronecker quiver~$K_2 =\begin{tikzcd}
	0 \rar[bend left] \rar[bend right] & \infty
\end{tikzcd}$ and regular representations are exactly the~$\theta$-semistable slope~$0$ representations for stability function~$\theta(\dimvectd) = \langle \dnull, \dimvectd\rangle = d_0 - d_\infty$, where~$\dnull = (1,1)$.

In \cite{FRKronecker} it has been shown that the CoHA of this category is isomorphic to the algebra~$\Pp_0$ from \Cref{Section alg Ppn}.
We recall a part of this calculation here.

We write~$H^\bullet_{G_\dimvectd}(R_\dimvectd(K_2)) = \Q[x_1,\dots, x_{d_0}, z_{1}, \dots, z_{d_\infty}]^{S_{d_0} \times S_{d_\infty}}$.
Using \Cref{Quiver product explicit} and \Cref{Quiver Tautological Presentation}, one can show:

\begin{la}[{\cite[Lemma~2]{FRKronecker}}]
	We have
	\[
	H^\bullet(\Mf_{(1,1)}) = H^\bullet_{G_{(1,1)}}(R_{(1,1)}^\sst(K_2)) = \Q[x,z] / (z-x)^2.
	\]
\end{la}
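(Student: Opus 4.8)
The plan is to apply \Cref{Quiver Tautological Presentation} directly to the dimension vector $\dimvectd = (1,1)$ and to compute the resulting kernel using the explicit shuffle formula of \Cref{Quiver product explicit}. The statement is purely about the tautological presentation, so no geometry is needed beyond the two quoted propositions (though as a sanity check one may keep in mind that $R_{(1,1)}^\sst(K_2) = \C^2\setminus\{0\}$ with the residual scaling action of the torus $G_{(1,1)} = (\C^\ast)^2$).

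First I would record the ambient ring: by \Cref{Quiver product explicit} we have $H^\bullet_{G_{(1,1)}}(R_{(1,1)}(K_2)) = \Q[x_1,z_1]^{S_1\times S_1} = \Q[x,z]$, where $x$ is the equivariant variable at the vertex $0$ and $z$ the one at the vertex $\infty$, each in cohomological degree $2$. The restriction map to the semistable locus is surjective by \Cref{Quiver Tautological Presentation}, so the whole task reduces to identifying its kernel ideal.

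Next I would enumerate the destabilizing decompositions $(1,1) = \dimvectd_1 + \dimvectd_2$ with $\mu(\dimvectd_1) > \mu(\dimvectd_2)$, where $\mu(\dimvectd) = (d_0 - d_\infty)/(d_0 + d_\infty)$. The only decomposition into nonzero parts satisfying the strict inequality is $(1,0) + (0,1)$, since $\mu((1,0)) = 1 > -1 = \mu((0,1))$, whereas the reversed order violates the inequality. Hence the kernel equals the image of the single product map $H^\bullet_{G_{(1,0)}}(R_{(1,0)}(K_2)) * H^\bullet_{G_{(0,1)}}(R_{(0,1)}(K_2)) = \Q[x] * \Q[z]$. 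To finish, for $f(x)\in\Q[x]$ and $g(z)\in\Q[z]$ the shuffle sum of \Cref{Quiver product explicit} collapses to a single term, because each vertex carries total dimension $1$; the only nontrivial factor comes from the pair $(i,j) = (0,\infty)$ and contributes $(z-x)^{a_{0,\infty}-\delta_{0,\infty}} = (z-x)^2$ (there are two arrows $0\to\infty$ and no loop), while no diagonal factors occur. Thus $f * g = f(x)\,g(z)\,(z-x)^2$, and since the products $f(x)g(z)$ span $\Q[x,z]$ as a $\Q$-vector space, the image is exactly the ideal $\bigl((z-x)^2\bigr)$, giving $H^\bullet_{G_{(1,1)}}(R_{(1,1)}^\sst(K_2)) = \Q[x,z]/(z-x)^2$.

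The computation is essentially mechanical, so the main point requiring care is the bookkeeping in the product formula: correctly reading off that the exponent is $a_{0,\infty}-\delta_{0,\infty} = 2$ and that no self-difference factors appear, and then checking that the image of $f(x)g(z)(z-x)^2$ is the full ideal generated by $(z-x)^2$ rather than a proper subspace. I expect this last span argument to be the only step where one must pause, but it is immediate once one observes that one-variable polynomials in $x$ and in $z$ together generate $\Q[x,z]$.
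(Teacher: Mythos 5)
Your proposal is correct and follows exactly the route the paper intends: the lemma is stated there as a direct consequence of \Cref{Quiver Tautological Presentation} and \Cref{Quiver product explicit} (citing the original computation in \cite{FRKronecker}), and your identification of $(1,0)+(0,1)$ as the unique destabilizing decomposition together with the shuffle-product factor $(z-x)^{a_{0,\infty}-\delta_{0,\infty}}=(z-x)^2$ is precisely that computation. The span argument closing the proof is also handled correctly, so there is nothing to add.
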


\begin{mydef}
	We set
	\begin{equation*}
		g_{2i}   \coloneqq x^i \in H^{2i}_{G_{(1,1)}}(R^\sst_{(1,1)}(K_2)) \quad \text{and} \quad 
		h_{2i+2}   \coloneqq x^i(z-x) \in H^{2i+2}_{G_{(1,1)}}(R^\sst_{(1,1)}(K_2)).
	\end{equation*} 
\end{mydef}

Note that these elements form a homogeneous~$\Q$-basis of~$H^{\bullet}_{G_{(1,1)}}(R^\sst_{(1,1)}(K_2))$.

\begin{rk}
	In \cite{FRKronecker}, these elements were called~$e_i$ and~$f_{i+1}$, respectively.
	We choose to use the index to denote the grading of the cohomology instead of the Chow groups.
\end{rk}

The following is an application of \Cref{Quiver product explicit} and \Cref{Quiver Tautological Presentation}.

\begin{prop}[{\cite[Lemma~3]{FRKronecker}}] \label{A1tilde Pp0 to Coha}
	We have a well-defined morphism of algebras
	\begin{align}\label{A1tilde Pp0 to Coha map equation}
		\Pp_0 \to \Coha(\Pbb^1) = \Coha(\Pbb^1(2^0))
	\end{align}
	mapping the generators~$h_{2i+2}$, and~$g_{2i}$ of~$\Pp_0$ to the corresponding elements in~$\Coha(\Pbb^1)$.
\end{prop}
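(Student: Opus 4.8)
The plan is to exploit that $\Pp_0$ is given by a presentation: since $n = 0$, the only generators are the $g_{2i}$ and $h_{2j+2}$, and the only relations among them are \eqref{Pn rel: h commutes} (the $h$--$h$ part), \eqref{Pn rel: g and h commutator}, and the $g$--$g$ relation, or equivalently their generating-series forms \eqref{Pn rel: H and H}, \eqref{Pn rel: G and H} and \eqref{Pn rel: G and G}. Hence a morphism of algebras $\Pp_0 \to \Coha(\Pbb^1(2^0))$ is nothing but an assignment of the generators to elements of the target that satisfies these identities. Since the preceding lemma gives $H^\bullet(\Mf_{(1,1)}) = \Q[x,z]/(z-x)^2$, with $g_{2i} = x^i$ and $h_{2j+2} = x^j(z-x)$ forming a homogeneous $\Q$-basis, the proposed images are well-defined elements and it remains only to verify the three families of relations.

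First I would package the images into generating series $G(X) = \sum_{i\geq 0} x^i X^i$ and $H(X) = \sum_{j\geq 0} x^j(z-x) X^j$ with coefficients in $H^\bullet(\Mf_{(1,1)})$; working with series lets me treat each family of relations in a single stroke. Recall that the shuffle formula of \Cref{Quiver product explicit} computes products in the \emph{full} CoHA on $R_\dimvectd(K_2)$, and that by \Cref{Quiver Tautological Presentation} the restriction $H^\bullet_{G_\dimvectd}(R_\dimvectd(K_2)) \to H^\bullet(\Mf_\dimvectd)$ is a surjective algebra homomorphism whose kernel is the span of products $f_1 * f_2$ coming from decompositions $\dimvectd = \dimvectd_1 + \dimvectd_2$ with $\mu(\dimvectd_1) > \mu(\dimvectd_2)$. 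So I would compute the three products $H(X)*H(Y)$, $G(X)*H(Y)$ and $G(X)*G(Y)$ upstairs, in the degree-$(2,2)$ component, and then reduce modulo this tautological ideal.

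Concretely, for $K_2$ one has $a_{0,\infty} = 2$ and all other $a_{i,j} = 0$, so each product is a sum over the four pairs of $(1,1)$-shuffles $(\sigma_0, \sigma_\infty)$ of the two evaluated series times the kernel factor $\frac{(z_{\sigma_\infty(2)} - x_{\sigma_0(1)})^2}{(x_{\sigma_0(2)} - x_{\sigma_0(1)})(z_{\sigma_\infty(2)} - z_{\sigma_\infty(1)})}$, producing a symmetric rational function in $x_1, x_2, z_1, z_2$. Subtracting the opposite-order products yields the commutators $[H(X), H(Y)]$, $[G(X), H(Y)]$ and $[G(X), G(Y)]$, and I would check that modulo the destabilizing products each agrees with the prescribed right-hand side of \eqref{Pn rel: H and H}, \eqref{Pn rel: G and H} and \eqref{Pn rel: G and G}; since those right-hand sides are themselves products of the images, this is a matching of two explicit symmetric rational functions after reduction. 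The main obstacle is this last step for \eqref{Pn rel: G and G}: the coefficient $2$ and the reappearance of $G$ on the right force one to track precisely which shuffle contributions survive modulo the products with $\mu(\dimvectd_1) > \mu(\dimvectd_2)$, and to organize the partial-fraction expansion so that the difference of the two shuffle sums collapses onto terms of the shape $\frac{YG(Y)-XG(X)}{Y-X}$ times $H$. This is exactly the content of \cite[Lemma~3]{FRKronecker}, whose computation we are invoking.
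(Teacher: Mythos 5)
Your proposal is correct and takes essentially the same route as the paper: the paper treats this proposition precisely as an application of \Cref{Quiver product explicit} and \Cref{Quiver Tautological Presentation}, attributing the decisive shuffle computation (including the $[G,G]$ relation) to \cite[Lemma~3]{FRKronecker}, which you also invoke at exactly that step. Your added detail on the kernel factor and reduction modulo the destabilizing ideal matches how that cited computation is carried out, so there is no gap.
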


It can now be shown that the elements~$g_{2i}$ and~$h_{2i+2}$ generate the CoHA and that the Poincaré series of the CoHA coincides with the one of~$\Pp_0$, thus showing that the map~\eqref{A1tilde Pp0 to Coha map equation} is an isomorphism.
We give these arguments in our more general situation of~$C(\boldsymbol{\lambda}; 2^n)$ in \Cref{Section Coha = Ppn}.

\subsection{Relations in the CoHA of \texorpdfstring{$\tilde{A}_2$}{A2tilde}}

Consider the acyclic~$\tilde{A}_2$ quiver
\[
Q_1 = \begin{tikzcd}[column sep = tiny, row sep = tiny]
	& 1 \ar{dr} & \\
	0 \ar{rr} \ar{ur} & & \infty.
\end{tikzcd}
\]

This quiver is (equivalent to) the canonical algebra~$C([1:0]; 2^1)$ by \Cref{CanAlgsForSmalln} and regular representations are the same as slope~$0$ semistable representations of stability~$\langle \dnull, -\rangle$, where~$\dnull$ is the dimension vector with all~$1$.
We also write~$\mathrm{e}_1$ for the dimension vector~$(0,1,0)$ and~$\mathrm{f}_1 \coloneqq \dnull - \mathrm{e}_1 = (1,0,1)$.

\begin{la}\label{A2tilde explicit H101 H010 H111}
	We have
	\begin{itemize}
		\item$H^\bullet(\Mf_{\mathrm{e}_1}) = H^\bullet_{G_{(0,1,0)}}(R^\sst_{(0,1,0)}(Q_1)) = \Q[y]$,
		\item $H^\bullet(\Mf_{\mathrm{f}_1}) = H^\bullet_{G_{(1,0,1)}}(R^\sst_{(1,0,1)}(Q_1)) = \Q[x,z]/(z-x)$, and 
		\item $H^\bullet(\Mf_{{\dnull}}) = H^\bullet_{G_{(1,1,1)}}(R^\sst_{(1,1,1)}(Q_1)) = \Q[x,y,z]/((z-x)(z-y),(z-x)(y-x))$.
	\end{itemize}
	Here, all variables have cohomological degree~$2$.
\end{la}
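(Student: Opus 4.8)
The plan is to apply the two structural results \Cref{Quiver product explicit} and \Cref{Quiver Tautological Presentation} directly, with no geometric input beyond them. Since $R_\dimvectd(Q_1) = \prod_{i\to j}\mathbb{A}^{d_j\times d_i}$ is an affine space, its $G_\dimvectd$-equivariant cohomology agrees with that of a point, so by \Cref{Quiver product explicit} we have $H^\bullet_{G_\dimvectd}(R_\dimvectd(Q_1)) = \Q[x_{i,j}]^{S_\dimvectd}$; as every dimension vector in the statement has all entries in $\{0,1\}$, the group $S_\dimvectd$ is trivial and this is simply the polynomial ring with one Chern root per vertex of dimension one, which I name $x,y,z$ for vertices $0,1,\infty$ (all of cohomological degree $2$). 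It then remains to compute the kernel of the restriction $H^\bullet_{G_\dimvectd}(R_\dimvectd(Q_1)) \to H^\bullet_{G_\dimvectd}(R^\sst_\dimvectd(Q_1))$, which by \Cref{Quiver Tautological Presentation} is the sum of the images of the products $H^\bullet_{G_{\dimvectd_1}}(R_{\dimvectd_1}(Q_1)) * H^\bullet_{G_{\dimvectd_2}}(R_{\dimvectd_2}(Q_1))$ over all decompositions $\dimvectd = \dimvectd_1 + \dimvectd_2$ with $\mu(\dimvectd_1) > \mu(\dimvectd_2)$, where $\mu(\dimvectd) = (d_0 - d_\infty)/(d_0+d_1+d_\infty)$.

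A crucial simplification is that, because all entries lie in $\{0,1\}$, each vertex belongs entirely to exactly one of the two parts $\dimvectd_1,\dimvectd_2$; hence no vertex occurs in both parts, every shuffle in \Cref{Quiver product explicit} is trivial, and no factors with the diagonal exponent $a_{i,i}-1$ (which would be negative, since $Q_1$ has no loops) ever appear. Consequently each summand is the image of the map $f\otimes g \mapsto f\,g\cdot\prod_{i\to j}(x_j - x_i)$, where the product runs over the arrows from a vertex $i$ of $\dimvectd_1$ to a vertex $j$ of $\dimvectd_2$; since the source is a polynomial ring in all the variables with none repeated, this image is exactly the principal ideal generated by that Euler-class factor. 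The whole computation thus reduces to enumerating the destabilizing decompositions and reading off these factors. For $\mathrm{e}_1 = (0,1,0)$ there is no nontrivial decomposition, so the kernel is zero and the answer is $\Q[y]$; for $\mathrm{f}_1 = (1,0,1)$ the only destabilizing decomposition is $(1,0,0)+(0,0,1)$ (slopes $1 > -1$), whose single arrow $0\to\infty$ contributes $(z-x)$, giving $\Q[x,z]/(z-x)$.

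The remaining case $\dimvectd = \dnull = (1,1,1)$ carries the only genuine bookkeeping. Here I would list the six proper nonzero sub-dimension-vectors together with their slopes and find exactly two destabilizing decompositions, namely $(1,0,0)+(0,1,1)$ and $(1,1,0)+(0,0,1)$; the arrows $0\to1$ and $0\to\infty$ of the first yield the factor $(y-x)(z-x)$, while the arrows $0\to\infty$ and $1\to\infty$ of the second yield $(z-x)(z-y)$, so the kernel is the ideal $\big((z-x)(y-x),\,(z-x)(z-y)\big)$ and $H^\bullet(\Mf_{\dnull}) = \Q[x,y,z]/\big((z-x)(z-y),(z-x)(y-x)\big)$, exactly as claimed. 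The step most prone to error, and the one I would check carefully, is the slope comparison: the decompositions $(0,1,0)+(1,0,1)$ and $(1,0,1)+(0,1,0)$ have equal slopes $0=0$ and therefore drop out by the \emph{strict} inequality in \Cref{Quiver Tautological Presentation}; including either would wrongly enlarge the ideal. Apart from this, the lemma is a routine application of the shuffle formula.
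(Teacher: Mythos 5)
Your proposal is correct and is exactly the paper's approach: the paper's proof consists of the single line ``Direct calculation using \Cref{Quiver product explicit} and \Cref{Quiver Tautological Presentation},'' and you have carried out that calculation in full, including the two genuinely destabilizing decompositions of $\dnull$ and the correct exclusion of the equal-slope decompositions $(0,1,0)+(1,0,1)$ by the strict inequality.
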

\begin{proof}
	Direct calculation using \Cref{Quiver product explicit} and \Cref{Quiver Tautological Presentation}.
\end{proof}

\begin{mydef}
	We set 
	\begin{align*}
		e_{2i+1} & \coloneqq y^i\in H^{2i+1}(\Mf_{\mathrm{e}_1};\Q^\mathrm{vir}) = H^{2i}_{G_{(0,1,0)}}(R^\sst_{(0,1,0)}(Q_1)), \\
		f_{2i+1} & \coloneqq x^i \in H^{2i+1}(\Mf_{\mathrm{f}_1};\Q^\mathrm{vir}) =  H^{2i}_{G_{(1,0,1)}}(R^\sst_{(1,0,1)}(Q_1)), \\
		g_{2i} & \coloneqq x^i \in H^{2i}(\Mf_{\dnull};\Q^\mathrm{vir}) = H^{2i}_{G_{(1,1,1)}}(R^\sst_{(1,1,1)}(Q_1)), \\
		h_{2i+2} & \coloneqq (z-x)x^i\in H^{2i+2}(\Mf_{\dnull}; \Q^{\mathrm{vir}}) = H^{2i+2}_{G_{(1,1,1)}}(R^\sst_{(1,1,1)}(Q_1)).
	\end{align*}
\end{mydef}

\begin{cor}\label{A2tilde basis for e1 and f1 degree}
	The elements~$e_{2i+1}$ form a (graded)~$\Q$-basis of~$H^\bullet(\Mf_{(0,1,0)})$ and the elements~$f_{2i+1}$ of~$H^\bullet(\Mf_{(1,0,1)})$.	
\end{cor}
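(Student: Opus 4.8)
The plan is to obtain both assertions directly from the explicit presentations in \Cref{A2tilde explicit H101 H010 H111}, since each of the two relevant cohomology rings is (isomorphic to) a polynomial ring in a single variable.

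First I would treat the dimension vector $\mathrm{e}_1 = (0,1,0)$. By \Cref{A2tilde explicit H101 H010 H111} we have $H^\bullet(\Mf_{\mathrm{e}_1}) = \Q[y]$ with $y$ in cohomological degree $2$; the monomials $y^i$ for $i \geq 0$ therefore form a graded $\Q$-basis, with $y^i$ sitting in degree $2i$. As $e_{2i+1} = y^i$ by definition, the family $\{e_{2i+1}\}_{i\geq 0}$ is exactly this monomial basis. The passage to virtual degree $2i+1$ is accounted for by $\langle \mathrm{e}_1, \mathrm{e}_1\rangle = 1$, which one reads off from \Cref{Euler Form symmetric iff w=2^n}.

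Next I would treat $\mathrm{f}_1 = (1,0,1)$. Here \Cref{A2tilde explicit H101 H010 H111} gives $H^\bullet(\Mf_{\mathrm{f}_1}) = \Q[x,z]/(z-x)$; imposing the relation $z = x$ identifies this quotient with the polynomial ring $\Q[x]$, whose graded $\Q$-basis is $\{x^i\}_{i \geq 0}$. Since $f_{2i+1} = x^i$ by definition, the family $\{f_{2i+1}\}_{i\geq 0}$ is this basis, again with the virtual shift $\langle \mathrm{f}_1, \mathrm{f}_1\rangle = 1$ matching the odd index. There is essentially no obstacle here: both statements are immediate once the one-variable polynomial descriptions are in hand, and the only bookkeeping is to confirm that the cohomological-degree shift to virtual cohomology places each basis element in the claimed odd degree.
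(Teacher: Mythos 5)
Your proposal is correct and matches the paper's (implicit) argument: the corollary is stated as an immediate consequence of \Cref{A2tilde explicit H101 H010 H111}, since $H^\bullet(\Mf_{\mathrm{e}_1}) = \Q[y]$ and $H^\bullet(\Mf_{\mathrm{f}_1}) = \Q[x,z]/(z-x) \cong \Q[x]$ have the monomials as graded bases, and $e_{2i+1} = y^i$, $f_{2i+1} = x^i$ by definition. Your additional check that the virtual shift $\langle \mathrm{e}_1,\mathrm{e}_1\rangle = \langle \mathrm{f}_1,\mathrm{f}_1\rangle = 1$ places these elements in odd virtual degree $2i+1$ is exactly the bookkeeping the paper leaves unstated.
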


\begin{prop}\label{A2tilde Pp1 to Coha}
	We have a well-defined morphism of algebras
	\begin{align}\label{A2tilde Pp1 to Coha map equation}
		\Pp_1 \to \Coha(\Pbb^1(2^1)) = \Coha^{\sst, 0}(Q_1),
	\end{align}
	mapping the generators~$e_{1, 2i+1}$,~$f_{1, 2i+1}$,~$h_{2i+2}$, and~$g_{2i}$ of~$\Pp_1$ to the corresponding elements in~$\Coha(\Pbb^1(2^1))$.
\end{prop}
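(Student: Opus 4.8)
The plan is to use the universal property of a presented algebra: $\Pp_1$ is the free $\Q$-algebra on the generators $e_{2i+1}, f_{2i+1}, g_{2i}, h_{2i+2}$ modulo the two-sided ideal generated by the relations \eqref{Pn rel: ek and ek}--\eqref{Pn rel: G and G} specialised to $n=1$ (so that the cross-index relations \eqref{Pn rel: commutation} are vacuous and only the single index $k=1$ occurs). Hence a homomorphism $\Pp_1 \to \Coha(\Pbb^1(2^1))$ is well defined by the prescribed assignment on generators as soon as those images satisfy every remaining relation. The assignment respects the $\Z \times \Lambda_1^+$-grading by construction, since the cohomological and dimension degrees of the elements fixed in the definition above coincide with the prescribed degrees of the corresponding generators; so the entire content is the verification of the relations in $\Coha^{\sst,0}(Q_1)$.

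My main tools would be the shuffle formula of \Cref{Quiver product explicit} for the product on the ambient spaces $H^\bullet_{G_\dimvectd}(R_\dimvectd(Q_1))$, together with the tautological presentation of \Cref{Quiver Tautological Presentation}, which realises each $H^\bullet(\Mf_\dimvectd)$ as a quotient of such an ambient space and through which the CoHA product on the semistable loci is induced from the ambient shuffle product. Concretely, I would lift each generator to its evident polynomial representative ($e_{2i+1}\leftrightarrow y^i$, $f_{2j+1}\leftrightarrow x^j$, $g_{2i}\leftrightarrow x^i$, $h_{2i+2}\leftrightarrow (z-x)x^i$), compute the relevant shuffle products in the rings attached to $2\mathrm{e}_1$, $2\mathrm{f}_1$, $\mathrm{e}_1+\mathrm{f}_1=\dnull$, $\dnull+\mathrm{e}_1$ and $\dnull+\mathrm{f}_1$, and then check each relation after projecting to the semistable locus, i.e.\ modulo the ideals recorded in \Cref{A2tilde explicit H101 H010 H111}. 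A useful shortcut is available for the relations involving only $g$ and $h$: by \Cref{A1tilde Pp0 to Coha} these relations of $\Pp_0$ already hold among the images $g_{2i},h_{2i+2}$ in $\Coha(\Pbb^1)$, and the functoriality map $\Coha(\Pbb^1)\to \Coha(\Pbb^1(2^1))$ of \Cref{thm functorial} (using \Cref{A=H for Pn} to identify Chow groups with cohomology) is an algebra homomorphism carrying $g_{2i}\mapsto g_{2i}$ and $h_{2i+2}\mapsto h_{2i+2}$, as it is the pullback preserving the Chern roots $x,z$ of the vertices $0,\infty$. Hence the relations \eqref{Pn rel: h commutes}, \eqref{Pn rel: g and h commutator} and \eqref{Pn rel: G and G} among $g$ and $h$ are inherited and need no separate computation.

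This leaves the relations genuinely involving the new generators $e,f$. The two anticommutator relations \eqref{Pn rel: ek and ek} and \eqref{Pn rel: fk and fk} already hold in the ambient ring: the shuffle product gives $e_{2i+1}*e_{2j+1}=(y_1^i y_2^j - y_2^i y_1^j)/(y_2-y_1)$, which is antisymmetric under exchanging $i$ and $j$, so the anticommutator vanishes, and similarly for $f$. The defining relation \eqref{Pn rel: ek and fk} is where the semistability reduction enters: one computes $e_{2i+1}*f_{2j+1}=y^i x^j (z-y)$ and $f_{2j+1}*e_{2i+1}=x^j y^i(y-x)$, whose sum $x^j y^i(z-x)$ agrees with $h_{2(i+j)+2}=x^{i+j}(z-x)$ only after reducing modulo $(z-x)(y-x)$, which is one of the tautological relations defining $H^\bullet(\Mf_\dnull)$. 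The centrality relations $[h,e]=[h,f]=0$ from \eqref{Pn rel: h commutes} are comparably short computations in the $(1,2,1)$- and $(2,1,2)$-graded pieces.

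I expect the main obstacle to be the mixed $g$-commutators \eqref{Pn rel: g and e commutator} and \eqref{Pn rel: g and f commutator}, whose right-hand sides are sums of several products. Here one must expand the shuffle sums over the $(1,1)$-shuffles at the vertices $0$ and $\infty$, combine the resulting rational expressions into genuine polynomials, and reduce the difference of the two sides modulo the semistability ideals of \Cref{A2tilde explicit H101 H010 H111}; matching these expansions term by term against the prescribed sums is the only genuinely laborious part, while all other steps are direct applications of \Cref{Quiver product explicit} and \Cref{Quiver Tautological Presentation}.
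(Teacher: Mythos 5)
Your proposal is correct and follows essentially the same route as the paper's proof: the relations involving only $g_{2i}$ and $h_{2i+2}$ are inherited from $\Coha(\Pbb^1(2^0)) = \Pp_0$ via the functoriality homomorphism of \Cref{thm functorial} (together with \Cref{A=H for Pn}), while the remaining relations are verified by direct shuffle computations using \Cref{Quiver product explicit} and reduction modulo the tautological ideals of \Cref{Quiver Tautological Presentation} and \Cref{A2tilde explicit H101 H010 H111}. Your explicit checks (antisymmetry of $e*e$ and $f*f$ in the ambient ring, and $\{e_{2i+1},f_{2j+1}\} = y^i x^j(z-x) \equiv x^{i+j}(z-x) = h_{2(i+j)+2}$ modulo $(z-x)(y-x)$) are accurate, and the $[G,F]$-type commutators you flag as laborious are exactly the ones the paper computes in generating-series form.
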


\begin{proof}
	We need to verify the commutator-relations between the images of the generators of~$\Pp_1$.
	We write the relations in terms of generating series as in \Cref{section5}.
	Also abbreviate~$\tilde{H}(X,Y) = \frac{YH(Y) - XH(X)}{Y - X}$.
	
	The relations involving only~$G(X)$ and~$H(X)$ follow from the corresponding relations in~$\Coha(\Pbb^1(2^0)) = \Pp_0$ and functoriality \Cref{thm functorial}.
	
	For the relation~$[G(X), F(Y)] = (YF(Y) - XF(X)) *\tilde{H}(X,Y)$, we calculate
	\begin{small}
		\begin{align*}
			F(&Y)Y*\tilde{H}(X,Y)  = \frac{1}{1-xY}Y* \frac{z-x}{(1-xX)(1-xY)} \\
			& = \frac{1}{(x_2-x_1)(z_2-z_1)}\left(\begin{aligned}
				& \frac{1}{(1-x_1Y)(1-x_2X)(1-x_2Y)}(z_2-x_2)(z_2-x_1)(y-x_1)Y \\
				- & \frac{1}{(1-x_1Y)(1-x_2X)(1-x_2Y)}(z_1-x_2)(z_1-x_1)(y-x_1)Y \\
				- & \frac{1}{(1-x_2Y)(1-x_1X)(1-x_1Y)}(z_2-x_1)(z_2-x_2)(y-x_2) Y\\
				+ & \frac{1}{(1-x_2Y)(1-x_1X)(1-x_1Y)}(z_1-x_1)(z_1-x_2)(y-x_2) Y			
			\end{aligned}\right) \\
			& = \frac{(z_1+z_2)-(x_1+x_2)}{x_2-x_1} \left(\begin{aligned}
				& \frac{1}{(1-x_2X)(1-x_2Y)} - \frac{1-yY}{(1-x_1Y)(1-x_2X)(1-x_2Y)} \\
				- & \frac{1}{(1-x_1X)(1-x_1Y)} + \frac{1-yY}{(1-x_2Y)(1-x_1X)(1-x_1Y)}
			\end{aligned}\right). 
		\end{align*}
	\end{small}
	It follows that
	\begin{align*}
		(F(Y) & Y-F(X)X) * \tilde{H}(X,Y)  \\
		& =  \frac{(z_1+z_2)-(x_1+x_2)}{x_2-x_1} \left(\begin{aligned}
			& - \frac{1-yY}{(1-x_1Y)(1-x_2X)(1-x_2Y)} \\ & + \frac{1-yY}{(1-x_2Y)(1-x_1X)(1-x_1Y)} \\
			& + \frac{1-yX}{(1-x_1X)(1-x_2Y)(1-x_2X)} \\ & - \frac{1-yX}{(1-x_2X)(1-x_1Y)(1-x_1X)} 
		\end{aligned}\right) \\
		& = \frac{(z_1+z_2)-(x_1+x_2)}{(1-x_1X)(1-x_2X)(1-x_1Y)(1-x_2Y)} \cdot (Y-X).
	\end{align*}
	
	We also have
	\begin{align*}
		G(X)*F(Y) & = \frac{1}{(x_2-x_1)(z_2-z_1)}\left(\begin{aligned}
			& \frac{1}{(1-x_1X) (1-x_2Y)}(z_2-x_1)(z_2-y) \\
			- & \frac{1}{(1-x_1X) (1-x_2Y)}(z_1-x_1)(z_1-y) \\
			- & \frac{1}{(1-x_2X) (1-x_1Y)}(z_2-x_2)(z_2-y) \\
			+ & \frac{1}{(1-x_2X) (1-x_1Y)}(z_1-x_2)(z_1-y) 
		\end{aligned}
		\right) \\
		& = \frac{1}{x_2-x_1}\left(\begin{aligned}
			& \frac{1}{(1-x_1X) (1-x_2Y)}(z_1+z_2-x_1-y) \\
			- & \frac{1}{(1-x_2X) (1-x_1Y)}(z_1+z_2-x_2-y)
		\end{aligned}\right)\\
		\intertext{and similarly}
		F(Y)*G(X) & = \mycomment{\frac{1}{(x_2-x_1)(z_2-z_1)}\left(\begin{aligned}
				& \frac{1}{(1-x_1Y)(1-x_2X)}(z_2-x_1)(y-x_1) \\
				- & \frac{1}{(1-x_1Y)(1-x_2X)}(z_1-x_1)(y-x_1) \\
				- & \frac{1}{(1-x_2Y)(1-x_1X)}(z_2-x_2)(y-x_2) \\
				+ & \frac{1}{(1-x_2Y)(1-x_1X)}(z_1-x_2)(y-x_2) 
			\end{aligned}\right) \\
			& = }\frac{1}{x_2-x_1}\left(\begin{aligned}
			& \frac{1}{(1-x_1Y)(1-x_2X)}(y-x_1) \\
			- & \frac{1}{(1-x_2Y)(1-x_1X)}(y-x_2) \\
		\end{aligned}\right).
	\end{align*}
	It follows that
	\begin{align*}
		[G(X),F(Y)] & = \frac{1}{x_2-x_1}\left(\begin{aligned}
			& \frac{1}{(1-x_1X)(1-x_2Y)}((z_1+z_2)-(x_1+x_2)) \\ 
			- & \frac{1}{(1-x_2X)(1-x_1Y)}((z_1+z_2)-(x_1+x_2))
		\end{aligned}\right) \\
		& = \frac{(z_1+z_2)-(x_1+x_2)}{(1-x_1X)(1-x_2X)(1-x_1Y)(1-x_2Y)}\cdot(Y-X) \\
		& = (F(Y)Y-F(X)X)*\tilde{H}(X,Y).
	\end{align*}
	
	We omit the calculations for the remaining relations.
\end{proof}

\mycomment{
	\begin{proof}
		
		The proof consists of a sequence of lemmas verifying all relations.
		
		As in \Cref{Section alg Ppn}, we set
		\begin{gather*}
			E(X)\coloneqq \sum_{i\geq 0} e_{2i+1}X^i  = \frac{1}{1-yX},  \qquad 	F(X)\coloneqq \sum_{i\geq 0} f_{2i+1}X^i = \frac{1}{1-xX} \\
			H(X)\coloneqq \sum_{i\geq 0} h_{2i+2} X^i = \frac{z-x}{1-xX},\qquad  \text{and} \qquad
			G(X) \coloneqq \sum_{i \geq 0} g_{2i}X^i = \frac{1}{1-xX}.	
		\end{gather*}
		\begin{la}\label{A2tilde relation e relation f with proof}
			In the semistable CoHA, we have
			\begin{align*}
				\{E(X),E(Y)\}= 0 \quad \text{and} \quad  \{F(X),F(Y) \}= 0.
			\end{align*}
		\end{la}
		\begin{proof}
			We have 
			\begin{align*}
				E(X)*E(Y) & = \frac{1}{1-yX}*\frac{1}{1-yY} \\
				& = \frac{1}{y_2-y_1}\left( \frac{1}{(1-y_1X)(1-y_2Y)} - \frac{1}{(1-y_2X)(1-y_1)Y} \right) \\ 
				& = -E(Y)*E(X)
			\end{align*}
			and
			\begin{align*}
				&F(X) *F(Y)  = \frac{1}{1-xX}*\frac{1}{1-xY} \\
				& = \frac{1}{(x_2-x_1)(z_2-z_1)}\left(\begin{aligned}
					& \left( \frac{1}{1-x_1X}\frac{1}{1-x_2Y}(z_2-x_1)-\frac{1}{1-x_1X}\frac{1}{1-x_2Y}(z_1-x_1) \right) \\
					- & \left( \frac{1}{1-x_2X}\frac{1}{1-x_1Y}(z_2-x_2)-\frac{1}{1-x_2X}\frac{1}{1-x_1Y}(z_1-x_2) \right) 
				\end{aligned}\right) \\
				& = \frac{1}{x_2-x_1} \left( \frac{1}{(1-x_1X)(1-x_2Y)} - \frac{1}{(1-x_2X)(1-x_1Y)} \right) \\
				& = -F(Y)*F(X). \qedhere
			\end{align*}
		\end{proof}
		We now set
		\[
		\tilde{H}(X,Y) \coloneqq \frac{YH(Y)-XH(X)}{Y-X} = \sum_{i,j\geq 0} h_{2(i+j)+2} X^iY^j= \frac{z-x}{(1-xX)(1-xY)} .
		\]
		\begin{la}\label{A2tilde relation e and f with proof}
			In the semistable CoHA, we have
			\begin{align*}
				\{E(X),F(Y)\} & = \tilde{H}(X,Y).
			\end{align*}
		\end{la}
		\begin{proof}
			We have
			\begin{align*}
				E(X)*F(Y) & = \frac{1}{1-yX}*\frac{1}{1-xY} = \frac{1}{(1-yX)(1-xY)} (z-y) \\
				F(Y)*E(X) & = \frac{1}{1-xY}*\frac{1}{1-yX} = \frac{1}{(1-yX)(1-xY)} (y-x)
			\end{align*}
			and so by \Cref{A2tilde explicit H101 H010 H111}
			\[
			\{E(X),F(Y)\} = \frac{z-x}{(1-xX)(1-yY)} = \frac{z-x}{(1-xX)(1-xY)} = \tilde{H}(X,Y). \qedhere
			\]
		\end{proof}
		
		\begin{la}\label{A2tilde relation e h and f h with proof}
			In the semistable CoHA, we have
			\[
			[E(X),H(Y)]= 0, \quad [F(X),H(Y)]= 0, \quad  \text{and} \quad [H(X),H(Y)]= 0.
			\]
		\end{la}
		\begin{proof}
			These relations follow from the earlier ones by \Cref{PpnExtraneousGens}. 
		\end{proof}
		
		\begin{la}\label{A2tilde relation g e with proof}
			In the semistable CoHA, we have
			\[
			[G(X),E(Y)]  = E(Y)Y * \tilde{H}(X,Y).
			\]
		\end{la}
		\begin{proof}
			We have
			\begin{align*}
				\tilde{H}(X,Y)*E(Y)Y & = \frac{z-x}{(1-xX)(1-xY)}* \frac{1}{1-yY} Y\\
				& = \frac{1}{y_2-y_1}\left(\begin{aligned}
					& \frac{z-x}{(1-xX)(1-xY)(1-y_2Y)} (y_2-x) \\
					- & \frac{z-x}{(1-xX)(1-xY)(1-y_1Y)}(y_1-x)
				\end{aligned}\right) Y\\
				& = \frac{z-x}{1-xX}\cdot \frac{1}{y_2-y_1}\left(\begin{aligned}
					& \frac{1}{(1-xY)(1-y_2Y)}((1-xY)-(1-y_2Y)) \\
					-& \frac{1}{(1-xY)(1-y_1Y)}((1-xY)-(1-y_1Y)) 
				\end{aligned}\right) \\
				& = \frac{z-x}{1-xX}\frac{1}{y_2-y_1}\left(\frac{1}{1-y_2Y} - \frac{1}{1-y_1Y}\right).
			\end{align*}
			Also, we have
			\begin{align*}
				G(X)*E(Y) & = \frac{1}{y_2-y_1}\left( \frac{y_2-x}{(1-xX)(1-y_2Y)} - \frac{y_1-x}{(1-xX)(1-y_1Y)} \right) \\
				& = \frac{1}{1-xX}\cdot \frac{1}{y_2-y_1}\left(\frac{y_2}{1-y_2Y}- \frac{y_1}{1-y_1Y} -\frac{x}{1-y_2Y} + \frac{x}{1-y_1Y} \right) 
				\intertext{and}
				E(Y)*G(X) & = \frac{1}{y_2-y_1}\left( \frac{z-y_1}{(1-y_1Y)(1-xX)} - \frac{z-y_2}{(1-y_2Y)(1-xX)} \right) \\
				&= \frac{1}{1-xX}\cdot\frac{1}{y_2-y_1} \left( \frac{z}{1-y_1Y} - \frac{z}{1-y_2Y} + \frac{y_2}{1-y_2Y} - \frac{y_1}{1-y_1Y} \right)
			\end{align*}
			and so
			\begin{align*}
				[G(X),E(Y)] & = \frac{z-x}{1-xX}\cdot \frac{1}{y_2-y_1}\left(\frac{1}{1-y_2Y} - \frac{1}{1-y_1Y}\right) \\
				& = \tilde{H}(X,Y)*E(Y)Y \\
				\text{(\Cref{A2tilde relation e h and f h with proof})} & = E(Y)Y*\tilde{H}(X,Y). \qedhere
			\end{align*}
		\end{proof}
		
		\begin{la}\label{A2tilde relation g f with proof}
			In the semistable CoHA, we have
			\begin{align*}
				[G(X),F(Y)] & = (YF(Y)-XF(X))*\tilde{H}(X,Y).
			\end{align*}
		\end{la}
		
		\begin{proof}
			We have
			\begin{small}
				\begin{align*}
					F(&Y)Y*\tilde{H}(X,Y)  = \frac{1}{1-xY}Y* \frac{z-x}{(1-xX)(1-xY)} \\
					& = \frac{1}{(x_2-x_1)(z_2-z_1)}\left(\begin{aligned}
						& \frac{1}{(1-x_1Y)(1-x_2X)(1-x_2Y)}(z_2-x_2)(z_2-x_1)(y-x_1)Y \\
						- & \frac{1}{(1-x_1Y)(1-x_2X)(1-x_2Y)}(z_1-x_2)(z_1-x_1)(y-x_1)Y \\
						- & \frac{1}{(1-x_2Y)(1-x_1X)(1-x_1Y)}(z_2-x_1)(z_2-x_2)(y-x_2) Y\\
						+ & \frac{1}{(1-x_2Y)(1-x_1X)(1-x_1Y)}(z_1-x_1)(z_1-x_2)(y-x_2) Y			
					\end{aligned}\right) \\
					& = \frac{1}{x_2-x_1}\left(\begin{aligned}
						& \frac{1}{(1-x_1Y)(1-x_2X)(1-x_2Y)}((z_1+z_2)-(x_1+x_2))(1-x_1Y-(1-yY)) \\
						& \frac{1}{(1-x_2Y)(1-x_1X)(1-x_1Y)}((z_1+z_2)-(x_1+x_2))(1-x_2Y-(1-yY)) 
					\end{aligned}\right) \\
					& = \frac{(z_1+z_2)-(x_1+x_2)}{x_2-x_1} \left(\begin{aligned}
						& \frac{1}{(1-x_2X)(1-x_2Y)} - \frac{1-yY}{(1-x_1Y)(1-x_2X)(1-x_2Y)} \\
						- & \frac{1}{(1-x_1X)(1-x_1Y)} + \frac{1-yY}{(1-x_2Y)(1-x_1X)(1-x_1Y)}
					\end{aligned}\right). 
				\end{align*}
			\end{small}
			It follows that
			\begin{align*}
				& (F(Y)Y-F(X)X)*\tilde{H}(X,Y) \\
				& =  \frac{(z_1+z_2)-(x_1+x_2)}{x_2-x_1} \left(\begin{aligned}
					& - \frac{1-yY}{(1-x_1Y)(1-x_2X)(1-x_2Y)} \\ & + \frac{1-yY}{(1-x_2Y)(1-x_1X)(1-x_1Y)} \\
					& + \frac{1-yX}{(1-x_1X)(1-x_2Y)(1-x_2X)} \\ & - \frac{1-yX}{(1-x_2X)(1-x_1Y)(1-x_1X)} 
				\end{aligned}\right) \\
				& = \begin{aligned}
					& \frac{(z_1+z_2)-(x_1+x_2)}{(1-x_1X)(1-x_2X)(1-x_1Y)(1-x_2Y)} \cdot 
					\frac{1}{x_2-x_1} \left(\begin{aligned}
						& -(1-yY)(1-x_1X) \\
						& + (1-yY)(1-x_2X) \\
						& + (1-yX)(1-x_1Y) \\
						& - (1-yX)(1-x_2Y)
					\end{aligned}\right)
				\end{aligned} \\
				& = \frac{(z_1+z_2)-(x_1+x_2)}{(1-x_1X)(1-x_2X)(1-x_1Y)(1-x_2Y)} \cdot (Y-X).
			\end{align*}
			
			We also have
			\begin{align*}
				G(X)*F(Y) & = \frac{1}{(x_2-x_1)(z_2-z_1)}\left(\begin{aligned}
					& \frac{1}{(1-x_1X) (1-x_2Y)}(z_2-x_1)(z_2-y) \\
					- & \frac{1}{(1-x_1X) (1-x_2Y)}(z_1-x_1)(z_1-y) \\
					- & \frac{1}{(1-x_1X) (1-x_1Y)}(z_2-x_2)(z_2-y) \\
					+ & \frac{1}{(1-x_1X) (1-x_1Y)}(z_1-x_2)(z_1-y) 
				\end{aligned}
				\right) \\
				& = \frac{1}{x_2-x_1}\left(\begin{aligned}
					& \frac{1}{(1-x_1X) (1-x_2Y)}(z_1+z_2-x_1-y) \\
					& \frac{1}{(1-x_2X) (1-x_1Y)}(z_1+z_2-x_1-y)
				\end{aligned}\right)\\
				\intertext{and}
				F(Y)*G(X) & = \frac{1}{(x_2-x_1)(z_2-z_1)}\left(\begin{aligned}
					& \frac{1}{(1-x_1Y)(1-x_2X)}(z_2-x_1)(y-x_1) \\
					& \frac{1}{(1-x_1Y)(1-x_2X)}(z_1-x_1)(y-x_1) \\
					& \frac{1}{(1-x_2Y)(1-x_1X)}(z_2-x_2)(y-x_2) \\
					& \frac{1}{(1-x_2Y)(1-x_1X)}(z_1-x_2)(y-x_2) 
				\end{aligned}\right) \\
				& = \frac{1}{x_2-x_1}\left(\begin{aligned}
					& \frac{1}{(1-x_1Y)(1-x_2X)}(y-x_1) \\
					- & \frac{1}{(1-x_2Y)(1-x_1X)}(y-x_2) \\
				\end{aligned}\right).
			\end{align*}
			It follows that
			\begin{align*}
				[G(X),F(Y)] & = \frac{1}{x_2-x_1}\left(\begin{aligned}
					& \frac{1}{(1-x_1X)(1-x_2Y)}((z_1+z_2)-(x_1+x_2)) \\ 
					- & \frac{1}{(1-x_2X)(1-x_1Y)}((z_1+z_2)-(x_1+x_2))
				\end{aligned}\right) \\
				& =  \begin{aligned}[t]
					& \frac{(z_1+z_2)-(x_1+x_2)}{(1-x_1X)(1-x_2X)(1-x_1Y)(1-x_2Y)}  \\
					& \cdot \frac{1}{x_2-x_1}\left((1-x_2X)(1-x_1Y) -(1-x_1X)(1-x_2Y)\right)
				\end{aligned} \\
				& = \frac{(z_1+z_2)-(x_1+x_2)}{(1-x_1X)(1-x_2X)(1-x_1Y)(1-x_2Y)}\cdot(Y-X) \\
				& = (F(Y)Y-F(X)X)*\tilde{H}(X,Y). \qedhere
			\end{align*}
			
		\end{proof}

		\begin{la}\label{A2tilde relation g and h with proof}
			In the semistable CoHA, we have
			\begin{align*}
				[G(X),H(Y)] & = (YH(Y)-XH(X))\tilde{H}(X,Y).
			\end{align*}
		\end{la}
		\begin{proof}
			This relation follows from the earlier ones by \Cref{Pn rel: G and H unnec}.
		\end{proof}
		\begin{rk}
			Note that the relation~$[G(X),H(Y)]= (Y - X)*\tilde{H}(X,Y)^{*2}$ already follows from the corresponding relation from \cite{FRKronecker} and functoriality, as in the  next proof.
		\end{rk}
		
		\begin{la}\label{A2tilde relation g and g with proof}
			In the semistable CoHA, we have
			\begin{align*}
				[G(X),G(Y)] & = 2(YG(Y)-XG(X))*\tilde{H}(X,Y).
			\end{align*}
		\end{la}
		\begin{proof}
			The functor~$F\colon \Rep(Q_1) \to \Rep(K_2)$ given by
			\[
			\begin{tikzcd}[column sep = tiny, row sep = tiny]
				& \C^{d_1} \ar{dr}{A_2} & \\
				\C^{d_0} \ar{rr}{B} \ar{ur}{A_1} & & \C^{d_\infty}
			\end{tikzcd} \mapsto \begin{tikzcd}
				\C^{d_0} & \C^{d_\infty}
				\arrow["{A_2A_1}", bend left, from=1-1, to=1-2]
				\arrow["B"', bend right, from=1-1, to=1-2]
			\end{tikzcd}
			\]
			restricts to~$F\colon \Reg(Q_1) \to \Reg(K_2)$ and induces a morphism of graded vector spaces
			\begin{align}\label{Coha(C1) to Coha(C2) map}
				\Coha(\Reg(K_2)) \to \Coha(\Reg(Q_1)).
			\end{align}
			The following diagram commutes by functoriality
			\[
			\begin{tikzcd}
				H^\bullet_{G_{(1,1)}}(R_{(1,1)}(K_2)) = \Q[x,z] \rar \dar[twoheadrightarrow] & H^\bullet_{G_{(1,1,1)}}(R_{(1,1,1)}(Q_1)) = \Q[x,y,z] \dar[twoheadrightarrow] \\
				H^\bullet_{G_{(1,1)}}(R_{(1,1)}^\sst(K_2)) \rar & H^\bullet_{G_{(1,1,1)}}(R^\sst_{(1,1,1)}(Q_1)),
			\end{tikzcd}
			\]
			where the upper horizontal arrow on polynomial rings is given by the obvious inclusion.
			It follows that the lower horizontal morphism maps
			\begin{align*}
				g_{2i} = x^i \in \Coha(\Reg(K_2))_{(1,1)} \quad  & \text{to} \quad g_{2i} \in \Coha(\Reg(Q_1))_{(1,1,1)} \quad \text{and} \\
				h_{2i+2} = (z-x)x^i \in \Coha(\Reg(K_2))_{(1,1)}\quad &  \text{to} \quad h_{2i + 2}\in \Coha(\Reg(Q_1))_{(1,1,1)}.
			\end{align*}
			By \Cref{thm functorial}, the map~\eqref{Coha(C1) to Coha(C2) map} is a morphism of algebras and so \Cref{A1tilde Pp0 to Coha} gives us the assertion.
		\end{proof}
		This finishes the proof of \Cref{A2tilde Pp1 to Coha}.
	\end{proof}
}

\subsection{Relations in the CoHA of \texorpdfstring{$\tilde{A}_3$}{A3tilde}}
We consider now the affine~$\tilde{A}_3$ quiver 
\[
Q_2 = {\begin{tikzcd}[ampersand replacement =\&, row sep = small]
		\&  1 \ar{dr} \& \\
		0 \ar{ur}\ar{dr} \& \&  \infty \\
		\& 2 \ar{ur} \& 
\end{tikzcd}}
\]
with stability~$\theta(\dimvectd) = \langle \dnull , \dimvectd \rangle = d_0 - d_\infty$ for~$\dnull = (1,1,1,1)$.

The quiver~$Q_2$ is (equivalent to) the canonical algebra~$C([1 : 0], [0 : 1]; 2,2)$ and regular representations are the same as semistable representations of slope~$0$.
We again denote~$\mathrm{e}_1 \coloneqq (0,1,0,0)$ and~$\mathrm{e}_2 \coloneqq (0 , 0 , 1 , 0)$, as well as~$\mathrm{f}_1 \coloneqq \dnull - \mathrm{e}_1 = (1, 0 , 1 , 1)$ and~$\mathrm{f}_2 \coloneqq \dnull - \mathrm{e}_2 = ( 1 , 1 , 0 , 1)$.

\begin{la}\label{A3tilde cohomology rings for d < d0}
	We have
	\begin{itemize}
		\item $H^\bullet(\Mf_{\mathrm{e}_1}) = H^\bullet_{G_{(0,1,0,0)}}(R^\sst_{(0,1,0,0)}(Q_2)) = \Q[u]$,
		\item $H^\bullet(\Mf_{\mathrm{e}_2}) = H^\bullet_{G_{(0,0,1,0)}}(R^\sst_{(0,0,1,0)}(Q_2)) = \Q[v]$,
		\item $H^\bullet(\Mf_{\mathrm{f}_1}) = H^\bullet_{G_{(1,0,1,1)}}(R^\sst_{(1,0,1,1)}(Q_2)) = \Q[x,v,z]/(x-v,v-z)$, and
		\item $H^\bullet(\Mf_{\mathrm{f}_2}) = H^\bullet_{G_{(1,1,0,1)}}(R^\sst_{(1,1,0,1)}(Q_2)) = \Q[x,u,z]/(x-u,u-z)$.
	\end{itemize}
\end{la}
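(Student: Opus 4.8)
The plan is to compute all four rings by the same elementary mechanism used for \Cref{A2tilde explicit H101 H010 H111}, namely by combining the explicit product formula of \Cref{Quiver product explicit} with the tautological presentation of \Cref{Quiver Tautological Presentation}. Write $x, u, v, z$ for the equivariant Chern roots attached to the vertices $0, 1, 2, \infty$. Every dimension vector occurring in the statement has all entries in $\{0,1\}$, so the symmetric group $S_{\dimvectd}$ acts trivially and \Cref{Quiver product explicit} identifies $H^\bullet_{G_{\dimvectd}}(R_{\dimvectd}(Q_2))$ with an ordinary polynomial ring: $\Q[u]$, $\Q[v]$, $\Q[x,v,z]$, $\Q[x,u,z]$ for $\mathrm{e}_1, \mathrm{e}_2, \mathrm{f}_1, \mathrm{f}_2$ respectively. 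It remains only to identify the kernel of the restriction to the semistable locus.

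By \Cref{Quiver Tautological Presentation}, that kernel is the sum, over all decompositions $\dimvectd = \dimvectd_1 + \dimvectd_2$ with $\mu(\dimvectd_1) > \mu(\dimvectd_2)$, of the images of the convolution maps $H^\bullet_{G_{\dimvectd_1}}(R_{\dimvectd_1}) * H^\bullet_{G_{\dimvectd_2}}(R_{\dimvectd_2}) \to H^\bullet_{G_{\dimvectd}}(R_{\dimvectd})$. For the simple vectors $\mathrm{e}_1$ and $\mathrm{e}_2$ there is no such decomposition with both summands nonzero, so the kernel is $0$ and we obtain $\Q[u]$ and $\Q[v]$. For $\mathrm{f}_1 = (1,0,1,1)$, since $\mu(\dimvectd) = 0$ a summand $\dimvectd_1$ contributes exactly when $\mu(\dimvectd_1) > 0$, which forces $(\dimvectd_1)_0 = 1$ and $(\dimvectd_1)_\infty = 0$; the only possibilities are $(1,0,0,0)$ and $(1,0,1,0)$. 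Analogously for $\mathrm{f}_2 = (1,1,0,1)$ the relevant summands are $(1,0,0,0)$ and $(1,1,0,0)$.

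For each decomposition the shuffle sum in \Cref{Quiver product explicit} reduces to a single term (because at each vertex at most one of $\dimvectd_1, \dimvectd_2$ is nonzero), and the convolution is multiplication by the Euler factor $\prod_{i,j} (x_j - x_i)^{a_{i,j} - \delta_{i,j}}$ taken over roots $x_i$ of $\dimvectd_1$ at vertex $i$ and roots $x_j$ of $\dimvectd_2$ at vertex $j$; thus an arrow of $Q_2$ contributes precisely when its source sits in $\dimvectd_1$ and its target in $\dimvectd_2$. For $\mathrm{f}_1$ the split $(1,0,0,0) + (0,0,1,1)$ sees only the arrow $0 \to 2$ and gives the factor $v - x$, while $(1,0,1,0) + (0,0,0,1)$ sees only $2 \to \infty$ and gives $z - v$; their images generate the ideal $(x - v, v - z)$. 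For $\mathrm{f}_2$ the arrows $0 \to 1$ and $1 \to \infty$ of the other arm give the factors $u - x$ and $z - u$, producing $(x - u, u - z)$. Collecting these four computations yields the stated presentations.

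I expect no real obstacle: the $\{0,1\}$-condition on the dimension vectors collapses every shuffle to one term and makes each convolution a single Euler factor, so the whole proof is a short bookkeeping exercise. The one point that needs care is tracking which of the two symmetric arms of $Q_2$ is active in each decomposition -- i.e.\ correctly applying the exponent $a_{i,j} - \delta_{i,j}$ so that vertex $1$ (variable $u$) and vertex $2$ (variable $v$) are never conflated -- together with checking that the condition $\mu(\dimvectd_1) > 0$ genuinely exhausts the destabilizing summands.
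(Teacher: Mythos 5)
Your proposal is correct and is exactly the paper's proof: the paper disposes of this lemma with "Follows immediately from \Cref{Quiver product explicit} and \Cref{Quiver Tautological Presentation}," and your write-up simply spells out that computation (trivial shuffles since all $d_i\leq 1$, destabilizing summands $\dimvectd_1$ characterized by $\theta(\dimvectd_1)>0$, single Euler factors $v-x$, $z-v$, resp.\ $u-x$, $z-u$, whose images generate the stated ideals). The details you supply, including the identification of the only destabilizing decompositions and the observation that the image of each convolution is the full ideal generated by its Euler factor, are all accurate.
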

\begin{proof}
	Follows immediately from \Cref{Quiver product explicit} and \Cref{Quiver Tautological Presentation}.
\end{proof}


\begin{mydef}
	We set 
	\begin{align*}
		e_{1, 2i+1} & \coloneqq u^i\in H^{2i+1}(\Mf_{\mathrm{e}_1}; \Q^\mathrm{vir}) = H^{2i}(\Mf_{\mathrm{e}_1}),  & e_{2, 2i+1} & \coloneqq v^i\in H^{2i+1}(\Mf_{\mathrm{e}_2}; \Q^\mathrm{vir}), \\
		f_{1, 2i+1} & \coloneqq x^i \in H^{2i+1}(\Mf_{\mathrm{f}_1}; \Q^\mathrm{vir}) = H^{2i}(\Mf_{\mathrm{f}_1}), & 
		f_{2, 2i+1} & \coloneqq x^i \in H^{2i+1}(\Mf_{\mathrm{f}_2}; \Q^\mathrm{vir}) , \\
		g_{2i} & \coloneqq x^i \in H^{2i}(\Mf_{\dnull}; \Q^\mathrm{vir}) = H^{2i}(\Mf_\dnull), &
		h_{2i+2} & \coloneqq (z-x)x^i \in H^{2i + 2}(\Mf_{\dnull}; \Q^\mathrm{vir}).
	\end{align*}
\end{mydef}

The functor~$\Phi_1 \colon \Rep(Q_2) \to \Rep(Q_1)$ given by
\[
\begin{tikzcd}[column sep = tiny, row sep = tiny]
	& \C^{d_1} \ar{dr}{A_2} & \\
	\C^{d_0}  \ar{ur}{A_1}\ar{dr}{B_1} & & \C^{d_\infty} \\
	& \C^{d_2} \ar{ur}{B_2} & 
\end{tikzcd} \mapsto 	\begin{tikzcd}[column sep = tiny, row sep = tiny]
	& \C^{d_1} \ar{dr}{A_2} & \\
	\C^{d_0}  \ar{ur}{A_1} & &\ar[leftarrow]{ll}{B_2\cdot B_1} \C^{d_\infty}  
\end{tikzcd} 
\]
restricts to~$\Phi_1\colon  \Rep^{\sst}(Q_2) \to \Rep^{\sst}(Q_1)$ and induces a morphism of graded vector spaces
\[
\Coha^{\sst}(Q_1) \to \Coha^{\sst}(Q_2),
\]
which is an algebra homomorphism by \Cref{thm functorial}.

This morphism maps
\begin{align*}
	e_{2i+1 }\in \Coha^{\sst}(Q_1) \quad & \text{to} \quad e_{1, 2i+2 }\in \Coha^{\sst}(Q_2), \\
	f_{2i+1 }\in \Coha^{\sst}(Q_1) \quad & \text{to} \quad f_{1, 2i+2 }\in \Coha^{\sst}(Q_2), \\
	h_{2i+2 }\in \Coha^{\sst}(Q_1) \quad & \text{to} \quad h_{ 2i + 2 }\in \Coha^{\sst}(Q_2),\quad \text{and} \\
	g_{ 2i }\in \Coha^{\sst}(Q_1) \quad & \text{to} \quad g_{ 2i }\in \Coha^{\sst}(Q_2).
\end{align*}

We get a similar functor and thus a morphism between the CoHAs if we contract the other arm.

\begin{prop}\label{A3tilde Pp2 to Coha}
	We have a well-defined morphism of algebras
	\begin{align}\label{A3tilde Pp2 to Coha map equation}
		\Pp_2 \to \Coha(\Pbb^1(2^2)) = \Coha^{\sst,0}(Q_2),
	\end{align}
	mapping the generators~$e_{k, 2i+1}$,~$f_{k, 2i+1}$ for~$k = 1, 2$,~$h_{2i+2}$, and~$g_{2i}$ of~$\Pp_2$ to the corresponding elements in~$\Coha^\sst(Q_2)$.
\end{prop}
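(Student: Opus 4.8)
The plan is to check that the proposed images of the generators of $\Pp_2$ satisfy every defining relation \eqref{Pn rel: Ek and Ek}--\eqref{Pn rel: G and G}; once this is verified, the universal property of the presentation of $\Pp_2$ produces the asserted algebra homomorphism. I would split the relations into those that are internal to a single arm of $Q_2$ and those that genuinely mix the two arms.

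First I would dispose of all single-arm relations by functoriality. For each $k \in \{1,2\}$ there is the algebra homomorphism $\Coha^{\sst}(Q_1) \to \Coha^{\sst}(Q_2)$ constructed just before the proposition (obtained by contracting the arm $\neq k$), which sends the generators $e_{2i+1}, f_{2i+1}, g_{2i}, h_{2i+2}$ of $\Coha^{\sst}(Q_1)$ to $e_{k,2i+1}, f_{k,2i+1}, g_{2i}, h_{2i+2}$ of $\Coha^{\sst}(Q_2)$. Precomposing with the already-established map $\Pp_1 \to \Coha^{\sst}(Q_1)$ of \Cref{A2tilde Pp1 to Coha} and using that these maps are algebra homomorphisms by \Cref{thm functorial} (recall $\Chowha(\Pbb^1(2^n)) = \Coha(\Pbb^1(2^n))$ by \Cref{A=H for Pn}), every relation of $\Pp_2$ carrying only a single arm index $k$ -- namely \eqref{Pn rel: Ek and Ek}, \eqref{Pn rel: Fk and Fk}, \eqref{Pn rel: Ek and Fk}, \eqref{Pn rel: H and E}, \eqref{Pn rel: H and F}, \eqref{Pn rel: G and E}, \eqref{Pn rel: G and F} -- holds in $\Coha^{\sst}(Q_2)$ for both values of $k$. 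The purely $G$-$H$ relations \eqref{Pn rel: H and H}, \eqref{Pn rel: G and H}, \eqref{Pn rel: G and G} already hold in $\Coha^{\sst}(Q_1)$, so they too descend through either of these maps.

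What remains are the cross-arm commutation relations \eqref{Pn rel: Ek and El}, \eqref{Pn rel: Fk and Fl}, \eqref{Pn rel: Ek and Fl} with $\{k,l\}=\{1,2\}$, which cannot come from a single arm and must be checked directly from the shuffle formula of \Cref{Quiver product explicit} together with the tautological presentation of \Cref{Quiver Tautological Presentation}. The relation $[E_1(X),E_2(Y)]=0$ is immediate: the dimension vectors $\mathrm{e}_1=(0,1,0,0)$ and $\mathrm{e}_2=(0,0,1,0)$ have disjoint support and there is no arrow between the vertices $1$ and $2$ of $Q_2$, so in the sum $\mathrm{e}_1+\mathrm{e}_2=(0,1,1,0)$ no shuffling occurs and the correction factor in \Cref{Quiver product explicit} is trivial; by \Cref{A3tilde cohomology rings for d < d0} both orderings therefore equal the ordinary product $\tfrac{1}{(1-uX)(1-vY)}$ in $\Q[u,v]$, whence the commutator vanishes.

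The real computational content, and the main obstacle, lies in the three remaining relations $[F_1(X),F_2(Y)]=0$, $[E_1(X),F_2(Y)]=0$, and $[E_2(X),F_1(Y)]=0$, where the two factors share the vertices $0$ and $\infty$, so the shuffle sum is nontrivial and the correction factor collects the arrow contributions along both arms. I would evaluate each product using the explicit presentations of \Cref{A3tilde cohomology rings for d < d0} (in which, for instance, the variables on $\mathrm{f}_2$ collapse via $x=u=z$), carry out the $(1,1)$-shuffles at the shared vertices, and simplify the resulting rational expressions -- in the same spirit as the series manipulations performed for $[G(X),F(Y)]$ in \Cref{A2tilde Pp1 to Coha} -- finally reducing modulo the kernel of \Cref{Quiver Tautological Presentation} to conclude that the two orderings agree. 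The delicate point is precisely this bookkeeping of the shuffle products and their reduction to the semistable locus; the symmetry of the outcome in the two orderings is what forces each commutator to be zero.
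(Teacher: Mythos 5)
Your proposal is correct and follows essentially the same route as the paper: the single-arm and $G$--$H$ relations are imported from $\Coha^{\sst}(Q_1)$ via the arm-contraction homomorphisms of \Cref{thm functorial}, and only the cross-arm commutators $[E_1,E_2]$, $[E_1,F_2]$, $[E_2,F_1]$, $[F_1,F_2]$ require a direct shuffle computation with \Cref{Quiver product explicit} and \Cref{Quiver Tautological Presentation}, with $[E_1,E_2]=0$ being immediate from the disjoint supports. One small slip in your description: for $[E_1(X),F_2(Y)]$ the two dimension vectors $\mathrm{e}_1=(0,1,0,0)$ and $\mathrm{f}_2=(1,1,0,1)$ overlap at vertex $1$ (not at $0$ and $\infty$), so the nontrivial $(1,1)$-shuffle happens there; only $[F_1,F_2]$ shuffles at the shared vertices $0$ and $\infty$ -- but this does not affect the validity of the strategy.
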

\begin{proof}
	Almost all relations follow from the corresponding relations in~$\Coha(\Pbb^1(2^1))$ and functoriality.
	The only relations that are remaining are
	\begin{gather*}
		[E_1(X),E_2(Y)] = [E_1(X) , F_2(Y) ] = [ F_1(X) , F_2(Y) ] = 0.
	\end{gather*}
	This is a direct calculation, similar to the one before.
\end{proof}

\mycomment{\begin{proof}
		We write~$E_{1}(X)\coloneqq \sum_{i\geq 0} e_{1, 2i+1}X^i$, \dots,~$G(X) \coloneqq \sum_{i\geq 0} g_{2i} X^i$.
		
		\begin{la}
			In~$\Coha^\sst(\tilde{A}_3)$, we have
			\begin{itemize}
				\item $\{ E_k(X), E_k(Y)\} = 0$ for~$k = 1,2$,
				\item $\{ F_k(X), F_k(Y)\} = 0$ for~$k=1,2$,
				\item $\{E_k(X), F_k(Y)\} = \tilde{H}(X,Y)$ for~$k=1, 2$,
				\item $[E_k(X),H(Y)] = [F_k(X),H(Y)] = [H(X),H(Y)] = 0$ for~$k=1,2$,
				\item $[G(X),E_k(Y)] = YE_k(Y)*\tilde{H}(X,Y)$ for~$k=1,2$,
				\item $[G(X),F_k(Y)] = (YF_k(Y) - XF_k(X)) * \tilde{H}(X,Y)$,
				\item $[G(X),H(Y)] = (YH(Y)-XH(X))*\tilde{H}(X,Y)$, and
				\item $[G(X),G(Y)] = 2(YG(Y)-XG(X))*\tilde{H}(X,Y)$.
			\end{itemize}
		\end{la}
		\begin{proof}
			This follows from the corresponding relations in the CoHA of~$Q_1$ and the fact that the morphism between the CoHAs is a map of algebras by \Cref{thm functorial}.
		\end{proof}
		
		The functoriality of CoHAs does not help us to find the relations between~$E_1$ and~$E_2$, between~$E_1$ and~$F_2$, between~$F_1$ and~$E_2$, or between~$F_1$ and~$F_2$, respectively.
		Thus, we need to do some more calculations.
		
		\begin{la}
			We have
			\begin{itemize}
				\item[(1)] $[ E_1(X), E_2(Y) ]=0$,
				\item [(2)] $[ E_1(X), F_2(Y) ] = [E_2(X),F_1(Y) ] = 0$, and
				\item[(3)] $[ F_1(X), F_2(Y) ]=0$.
			\end{itemize}
		\end{la}
		\begin{proof}
			Part (1) follows because~$e_1$ and~$e_2$ have disconnected support in~$Q$.
			
			For part (2) we have
			\begin{align*}
				E_1(X)*F_2(Y) & = \frac{1}{1-uX}*\frac{1}{1-xY} \\
				& = \frac{1}{u_2-u_1}\left(\frac{1}{1-u_1X}\frac{1}{1-xY}(z-u_1) - \frac{1}{1-u_2X}\frac{1}{1-xY}(z-u_2) \right), \\
				F_2(Y)*E_1(X) & = \frac{1}{1-xY}*\frac{1}{1-uX} \\
				& = \frac{1}{u_2-u_1}\left(\frac{1}{1-xY}\frac{1}{1-u_2X}(u_2-x) - \frac{1}{1-xY}\frac{1}{1-u_1X}(u_1-x) \right),
				\intertext{and therefore}
				[E_1(X),F_2(Y)] & = \frac{1}{1-xY}\frac{1}{u_2-u_1}\left( \frac{1}{1-u_1X} - \frac{1}{1-u_2X} \right) \cdot (z-x).
			\end{align*}
			We now claim that the element~$(z-x)$ is zero in~$H^\bullet_{G_{(1,2,0,1)}}(R^\sst_{(1,2,0,1)}(Q_2))$.
			The element~$(z-x)$ is equal to zero in~$H^\bullet_{G_{(1,1,0,1)}}(R^\sst_{(1,1,0,1)}(Q_2))$ by \Cref{A3tilde cohomology rings for d < d0} and so we have for~$1\in H_{0,1,0,0}$ that
			\[
			0 = 1* (z-x) = \frac{1}{u_2-u_1}(1\cdot (z-x) \cdot (z-u_1)-1\cdot (z-x)\cdot (z-u_2)) = (z-x).
			\]
			The relation~$[E_2(X), F_1(Y)] = 0$ follows by symmetry.
			
			By \Cref{A3tilde cohomology rings for d < d0} we have~$F_1(X) \coloneqq \frac{1}{1-xX}= \frac{1}{1-uX}$ and~$F_2(Y) = \frac{1}{1-vY}$.
			It follows that
			\begin{align*}
				F_1(X)* F_2(Y) & = \frac{1}{(x_2-x_1)(z_2-z_1)}\left(\begin{aligned}
					& \frac{1}{1-uX}\frac{1}{1-vY}(z_2-u)(v-x_1) \\
					- & \frac{1}{1-uX}\frac{1}{1-vY}(z_2-u)(v-x_2) \\
					- & \frac{1}{1-uX}\frac{1}{1-vY}(z_1-u)(v-x_1) \\
					+ & \frac{1}{1-uX}\frac{1}{1-vY}(z_1-u)(v-x_2) 
				\end{aligned}\right) \\
				& = \frac{1}{1-uX}\frac{1}{1-vY}
				\intertext{and}
				F_2(Y)* F_1(X) & = \frac{1}{(x_2-x_1)(z_2-z_1)}\left(\begin{aligned}
					& \frac{1}{1-vY}\frac{1}{1-uX}(z_2-v)(u-x_1) \\
					- & \frac{1}{1-vY}\frac{1}{1-uX}(z_2-v)(u-x_2) \\
					- & \frac{1}{1-vY}\frac{1}{1-uX}(z_1-v)(u-x_1) \\
					+ & \frac{1}{1-vY}\frac{1}{1-uX}(z_1-v)(u-x_2) 
				\end{aligned}\right) \\
				& = \frac{1}{1-uX}\frac{1}{1-vY}. 	 \qedhere 
			\end{align*}
		\end{proof}
		
		This finishes the proof of \Cref{A2tilde Pp1 to Coha}.
	\end{proof}
}

\section{CoHA of Weighted Projective Lines}\label{Section Coha = Ppn} \label{section7}

In this section, we finally come to the main result of this thesis.
We express the algebra~$\Coha(\Pbb^1(2^n))$ -- i.e., the cohomological Hall algebra of the hereditary abelian category~$\Tor(\Pbb^1(\boldsymbol{\lambda}; 2^n)) = \Reg(C(\boldsymbol{\lambda}; 2 ^n))$ -- in terms of generators and relations:

\begin{thm}\label{CohaWPL(2^n)=Ppn}
	We have~$\Coha(\Pbb^1(2^n)) \cong \Pp_n$ as~$\Z\times \Lambda_n^+$-graded~$\Q$-algebras, where~$\Pp_n$ is the algebra defined in \Cref{section5}.
\end{thm}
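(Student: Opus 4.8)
The plan is to build a graded algebra homomorphism $\phi_n \colon \Pp_n \to \Coha(\Pbb^1(2^n))$, and then prove surjectivity and injectivity separately. For well-definedness, I would send each generator of $\Pp_n$ to the element of the same name produced in \Cref{section6}, so that $e_{k,2i+1}, f_{k,2i+1}$ live on the components $\Mf_{\mathrm{e}_k}, \Mf_{\mathrm{f}_k}$ and $g_{2i}, h_{2i+2}$ on $\Mf_{\dnull}$. The decisive point is that every defining relation of $\Pp_n$ is supported on at most two arms: relations \eqref{Pn rel: ek and ek}--\eqref{Pn rel: ek and fk}, \eqref{Pn rel: g and e commutator}, \eqref{Pn rel: g and f commutator}, and the $h$-$e_k$, $h$-$f_k$ parts of \eqref{Pn rel: h commutes} involve a single arm; relation \eqref{Pn rel: commutation} involves two arms; and the $h$-$h$ part of \eqref{Pn rel: h commutes}, together with \eqref{Pn rel: g and h commutator} and the final $g$-$g$ relation, involve only $\dnull$. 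Including one or two arms into $C(\boldsymbol{\lambda}; 2^n)$ induces, via \Cref{thm functorial}, algebra homomorphisms $\Coha(\Pbb^1(2^0)), \Coha(\Pbb^1(2^1)), \Coha(\Pbb^1(2^2)) \to \Coha(\Pbb^1(2^n))$ carrying named generators to named generators; hence each relation is the functorial image of one already verified in \Cref{A1tilde Pp0 to Coha}, \Cref{A2tilde Pp1 to Coha}, or \Cref{A3tilde Pp2 to Coha}, and $\phi_n$ is a well-defined morphism of $\Z\times\Lambda_n^+$-graded algebras.

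For surjectivity I would apply \Cref{Chowha primitive part}, whose hypothesis holds because each $\Mf_\dimvectd$ has an affine paving (\Cref{Affine stratification C(2^n)}, \Cref{KünnethIsoIfAffinePaving}) and $\Chowha = \Coha$ by \Cref{A=H for Pn}. It then suffices that the images of the generators span $\Chowha^\simp$. The simple objects of $\Reg(C(\boldsymbol{\lambda}; 2^n))$ are the simple torsion sheaves — the length-one skyscrapers at ordinary points, of dimension vector $\dnull$, and the two simple sheaves at each stacky point $\lambda_k$, of dimension vectors $\mathrm{e}_k$ and $\mathrm{f}_k$ — so $\Mf_\dimvectd^\simp\neq\emptyset$ only for $\dimvectd\in\{\mathrm{e}_k,\mathrm{f}_k,\dnull\}$. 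In the degrees $\mathrm{e}_k$ and $\mathrm{f}_k$ the entire stack is simple and the classes $e_{k,2i+1}$, respectively $f_{k,2i+1}$, form a basis (\Cref{A3tilde cohomology rings for d < d0} and its evident extension to all $n$); in degree $\dnull$ the simple locus is the complement of the stacky points, over which the arm variables are identified with $x$ and $g_{2i}, h_{2i+2}$ restrict to a basis, matching the Kronecker computation via \Cref{thm functorial}. Thus the generators span $\Chowha^\simp$ and $\phi_n$ is surjective.

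Finally, injectivity will follow from a Poincaré-series comparison. Cohomology of every $\Mf_\dimvectd$ is pure by \Cref{Purity of cohomology if affine paving}, so \Cref{Poincare series of Coha = Sym(DT} writes $P(\Coha(\Pbb^1(2^n)))$ as $\chi_E\Sym\bigl((\mathbb{L}^{1/2}-\mathbb{L}^{-1/2})^{-1}\sum_\dimvectd \mathrm{DT}_\dimvectd\,t^\dimvectd\bigr)$ evaluated at $u=v=-q$. I would compute that the nonzero Donaldson--Thomas invariants occur only in degrees $\mathrm{e}_k,\mathrm{f}_k$ (each a point-class) and $\dnull$ (a two-dimensional pure contribution corresponding to the even generators $g_0,h_2$); once the factor $(\mathbb{L}^{1/2}-\mathbb{L}^{-1/2})^{-1}$ is matched with the free polynomial factor $\Q[z]$, with $\deg z=(2,0)$, in the description $\Pp_n\cong\Sym(V\otimes\Q[z])$ of \Cref{Ppn Poincare series}, the two series agree term-by-term. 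Hence $\dim_\Q(\Pp_n)_{(j,\dimvectd)}=\dim_\Q\Coha(\Pbb^1(2^n))_{(j,\dimvectd)}$ for all $(j,\dimvectd)$, and since $\phi_n$ is a degreewise surjection of finite-dimensional graded pieces it is an isomorphism. \emph{The main obstacle} is precisely this last step: pinning down the Donaldson--Thomas invariants (equivalently, carrying out the Poincaré-series computation announced separately in the text) and verifying that the super-symmetric algebra built on the DT-series reproduces $\Sym(V\otimes\Q[z])$ with the correct parities, the odd $\mathbb{L}^{1/2}$-twisted contributions of the $\mathrm{e}_k,\mathrm{f}_k$ requiring the most delicate bookkeeping.
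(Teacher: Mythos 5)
Your proposal is correct and follows essentially the same route as the paper: well-definedness by pushing the relations (each supported on at most two arms) through the functoriality maps of \Cref{thm functorial} from the $n=0,1,2$ computations, surjectivity via \Cref{Chowha primitive part} after identifying the simple locus in degrees $\mathrm{e}_k$, $\mathrm{f}_k$, $\dnull$, and injectivity by matching the DT/Poincaré series of \Cref{Coha Poincare series} against $\Sym(V\otimes\Q[z])$ from \Cref{Ppn Poincare series}. The only cosmetic differences are that the paper argues the classification of simples representation-theoretically rather than via skyscraper sheaves, and it records the injectivity of the maps $\Coha(\Pbb^1(2^n))\to\Coha(\Pbb^1(2^{n+1}))$ (\Cref{Coha injection}) to justify the basis claims in degrees $\mathrm{e}_k$, $\mathrm{f}_k$.
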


Before we come to the proof, we have the following corollaries:

\begin{cor}
	The CoHA of regular representations of the type~$\tilde{A}_3$ quiver
	\[
	\begin{tikzcd}
		\bullet \rar \drar & \bullet \\
		\bullet \rar \urar & \bullet
	\end{tikzcd}
	\]
	is given by~$\Pp_2$.
\end{cor}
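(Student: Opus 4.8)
The plan is to recognize the displayed quiver—call it $Q_2'$—as a reflection of the canonical $\tilde{A}_3$ quiver $Q_2$ from \Cref{A3tilde Pp2 to Coha}, and then transport the main theorem along the resulting equivalence. Concretely, label the four vertices of $Q_2'$ so that the two sources are $0,\infty$ and the two sinks are $1,2$; with this labeling $Q_2'$ has arrows $0\to 1$, $0\to 2$, $\infty\to 1$, $\infty\to 2$. This is exactly the quiver obtained from $Q_2$ (unique source $0$, unique sink $\infty$) by applying the BGP reflection at the vertex $\infty$, which turns the sink into a source and reverses the arrows $1\to\infty$ and $2\to\infty$. I would begin by making this identification explicit.

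Next I would recall that the reflection functor $S_\infty^+$ restricts to an equivalence $\Reg(Q_2)\xrightarrow{\sim}\Reg(Q_2')$. The simple module at the sink $\infty$ is injective, hence preinjective, so it is never a summand of a regular module; consequently, for every regular $M$ the structure map $M_1\oplus M_2\to M_\infty$ is surjective (otherwise a copy of the simple at $\infty$ would split off). On this locus the functor $M\mapsto\bigl(\ker(M_1\oplus M_2\to M_\infty),\ \text{reversed arrows}\bigr)$ is exact and is an equivalence with quasi-inverse the opposite reflection. Since reflections are isometries of the symmetrized Euler form and preserve defect, they send regular (defect-zero) dimension vectors to regular ones; write $\sigma$ for the induced bijection. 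On regular vectors the Euler form is symmetric (axiom~(8) holds here, as for any $C(\boldsymbol{\lambda};2^2)$) and agrees with the symmetrized form, so $\langle\sigma\dimvectd,\sigma\dimvecte\rangle=\langle\dimvectd,\dimvecte\rangle$.

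The crux of the argument, and where I expect the real work to lie, is lifting $S_\infty^+$ to the level of the moduli theory of \Cref{section1}. The kernel construction is geometric: over any test scheme $S$, for a family of regular representations the map $M_1\oplus M_2\to M_\infty$ is a surjection of locally free $\mathcal{O}_S$-modules, so its kernel is again locally free of the expected rank, and the reversed arrows are defined canonically. This should produce isomorphisms of the stacks of objects $\Mf_\dimvectd(Q_2)\cong\Mf_{\sigma(\dimvectd)}(Q_2')$ and, by applying it simultaneously to the sub-, middle, and quotient terms, compatible isomorphisms of the stacks of extensions $\mathfrak{Exact}$. One must check that these isomorphisms commute with the projections $\mathrm{pr}_1\times\mathrm{pr}_3$ and $\mathrm{pr}_2$ in the convolution diagram, track the open ``surjectivity'' locus through the reflection, and verify functoriality in $S$; combined with the matching of cohomological shifts from the previous paragraph, this yields an isomorphism of $\Z\times\Lambda_2^+$-graded algebras $\Coha(\Reg(Q_2'))\cong\Coha(\Reg(Q_2))$, with the grading transported by $\sigma$.

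Finally, by \Cref{CohaWPL(2^n)=Ppn} with $n=2$ we have $\Coha(\Reg(Q_2))=\Coha(\Pbb^1(2^2))\cong\Pp_2$, and composing with the reflection isomorphism gives $\Coha(\Reg(Q_2'))\cong\Pp_2$. The main obstacle is precisely the stacky lift: the abelian-categorical equivalence via BGP reflection is classical, but ensuring that it induces isomorphisms of the stacks of objects \emph{and} of extensions that are compatible with the Hall correspondence—so that the convolution product, and not merely the underlying graded vector space, is preserved—is the step requiring genuine care, in the same spirit as \Cref{Tor(Pbb1) = Reg(C) stacky}.
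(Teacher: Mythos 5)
Your proposal is correct and takes essentially the same route as the paper: identify the displayed quiver as the BGP reflection of the canonical $\tilde{A}_3$ quiver at the sink, use that reflection functors induce isomorphic CoHAs on regular representations, and conclude via \Cref{CohaWPL(2^n)=Ppn}. The only difference is that the paper disposes of the crux you flag (the stacky lift of the reflection equivalence, compatible with the Hall convolution) by citing \cite[Section~3]{astruc2024motivescentralslopekronecker} and \cite[Proposition~5.2]{KSCoHADef}, whereas you sketch that argument directly.
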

\begin{proof}
	The category of regular representations of
	\[
	\begin{tikzcd}
		\bullet \rar \drar & \bullet \\
		\bullet \rar \urar & \bullet
	\end{tikzcd} \qquad \text{and} \qquad 
	\begin{tikzcd}[column sep = small, row sep = small]
		& \bullet \drar & \\
		\bullet \urar \drar & & \bullet \\
		& \bullet \urar & 
	\end{tikzcd}
	\]	
	are related by reflection functors and thus induce isomorphic CoHAs, see \cite[Section~3]{astruc2024motivescentralslopekronecker} or \cite[Proposition~5.2]{KSCoHADef}.
\end{proof}
\begin{cor}
	The CoHA of regular representations of any orientation of a~$\tilde{D}_4$ quiver is given by~$\Pp_3$.
\end{cor}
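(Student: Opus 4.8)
The plan is to reduce to the main theorem \Cref{CohaWPL(2^n)=Ppn} by identifying $\Reg(\tilde{D}_4)$ with the category of torsion sheaves on a weighted projective line. Unlike the $\tilde{A}_3$ case treated above, no orientation of $\tilde{D}_4$ is literally a canonical algebra, so I cannot appeal to \Cref{CanAlgsForSmalln} directly; instead I would use the derived equivalence for domestic weighted projective lines. The line $\Pbb^1(2^3)$ has weight type $(2,2,2)$, which is domestic, and by \cite{GeigleLenzingWPCarisinginRepTh} its tubular family — three tubes of rank $2$ over the weight points together with a $\Pbb^1$-family of homogeneous tubes — is equivalent to the regular (defect-zero) part of the tame hereditary algebra of affine type $\tilde{D}_4$, whose three non-homogeneous tubes all have rank $2$. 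Concretely, there is a tilting bundle $T$ on $\Pbb^1(2^3)$ with $\End(T)^{\mathrm{op}} = \C\tilde{D}_4$ for a suitable orientation, and the resulting equivalence $D^b(\Pbb^1(2^3)) \simeq D^b(\tilde{D}_4)$ restricts to an exact equivalence $\Tor(\Pbb^1(2^3)) \xrightarrow{\sim} \Reg(\tilde{D}_4)$.

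First I would promote this abelian equivalence to an isomorphism of the stacks of objects and of extensions, following verbatim the argument of \Cref{Tor(Pbb1) = Reg(C) stacky}. Since $T$ is a bundle, the functor $(\mathrm{pr}_1)_*\shom(\mathrm{pr}_2^* T, -)$ is defined over every test scheme $S$, is functorial in $S$, and sends $\Tor(\Pbb^1(2^3))_S$ to $\Reg(\tilde{D}_4)_S$; this identifies the two moduli functors and hence the quotient stacks $\Mf_{\dimvectd}$ together with the correspondence $\mathfrak{Exact}$ used in the convolution product. Consequently the CoHAs agree, $\Coha(\Reg(\tilde{D}_4)) \cong \Coha(\Tor(\Pbb^1(2^3))) = \Coha(\Pbb^1(2^3))$, and \Cref{CohaWPL(2^n)=Ppn} then gives $\Coha(\Reg(\tilde{D}_4)) \cong \Pp_3$ for this particular orientation.

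To reach an arbitrary orientation I would argue exactly as in the preceding corollary. Any two orientations of the underlying graph $\tilde{D}_4$ are connected by a sequence of reflections at sources and sinks; the associated BGP reflection functors preserve the defect and hence restrict to equivalences between the regular subcategories, and by \cite[Section~3]{astruc2024motivescentralslopekronecker} (or \cite[Proposition~5.2]{KSCoHADef}) they induce isomorphisms of the corresponding CoHAs. Transporting the result along these isomorphisms yields $\Coha(\Reg(\tilde{D}_4)) \cong \Pp_3$ for every orientation.

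The main obstacle is the stacky lift in the second step: while the abelian equivalence and the tube combinatorics are classical, one must check that the $\tilde{D}_4$-tilting bundle — which differs from the canonical bundle $T_{\mathrm{can}}$ of \Cref{section2} — gives a functor that is exact, preserves flatness over the base, and matches short exact sequences, so that the proper convolution morphisms correspond on the two sides. One should also confirm that regularity for $\tilde{D}_4$ is semistability of slope $0$ for the defect stability $\dimvectd \mapsto \langle \dnull, \dimvectd \rangle$, guaranteeing that the stacks $\Mf_{\dimvectd}$ are of the global-quotient form demanded by the axiomatics of \Cref{Subsection Axiomatics}; this is immediate from the tame hereditary structure.
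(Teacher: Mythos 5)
Your proposal is correct and follows essentially the same route as the paper: a tilting equivalence identifying $\Reg(\tilde{D}_4)$ with $\Tor(\Pbb^1(2^3)) = \Reg(C(2^3))$ (the paper phrases this as a tilting equivalence between $\Rep(\tilde{D}_4)$ and $\Rep(C(2^3))$ restricting to regular representations, citing the same sources), combined with reflection functors to pass to an arbitrary orientation. Your extra care about lifting the abelian equivalence to the stacks of objects and extensions is a point the paper leaves implicit, but it does not change the approach.
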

\begin{proof}
	By reflection functors, the CoHA is independent of the orientation of~$\tilde{D}_4$.
	It is known that~$\Rep(\tilde{D}_4)$ is tilting equivalent to~$\Rep(C(2^3))$, see \cite[Chapter~5]{RingelSLN1099}, \cite[Section~5.4.1]{GeigleLenzingWPCarisinginRepTh}, or more explicitly \cite{KussinMeltzerIndecsForDomesticCanAlgs}, and that this equivalence induces an equivalence between regular representations.
\end{proof}

\subsection{Construction of the Map}

Recall that by \Cref{thm functorial} and \Cref{A=H for Pn} we have a graded algebra homomorphism
\begin{equation} \label{CohaFunctorialMorphism}
	\Coha(\Pbb^1(2^n)) \to \Coha(\Pbb^1(2^{n+1}))
\end{equation}
for every~$n\in \N_0$.

\begin{prop}\label{Coha injection}
	The map~$\Coha(\Pbb^1(2^{n}))\to \Coha(\Pbb^1(2^{n+1}))$ is injective.
\end{prop}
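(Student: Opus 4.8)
The plan is to analyze the map \eqref{CohaFunctorialMorphism} one dimension-vector grading at a time. By \Cref{A=H for Pn} we may work with Chow groups throughout, and by construction the component of \eqref{CohaFunctorialMorphism} indexed by a regular dimension vector $\dimvectd$ of $C(\boldsymbol{\lambda}; 2^n)$ is the pullback
\[
f^\ast \colon A^\bullet(\Mf_\dimvectd) \to A^\bullet(\Mf_{\tilde{\dimvectd}})
\]
along the morphism $f \colon \Mf_{\tilde{\dimvectd}} \to \Mf_\dimvectd$, where $\tilde{\dimvectd}$ is the chosen lift with $\tilde{\dimvectd}_{(n+1)_i} = d_0$. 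Since the assignment $\dimvectd \mapsto \tilde{\dimvectd}$ is injective (it is a section of $F$) and the target splits as a direct sum over dimension vectors of $C(\widetilde{\boldsymbol{\lambda}}; 2^{n+1})$, distinct source summands land in distinct target summands; hence it suffices to show that each individual $f^\ast$ is injective.

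The key geometric input is \Cref{open substack of top stack isomorphic to bottom}. Let $j \colon \Mf^{\mathrm{inv}}_{\tilde{\dimvectd}} \hookrightarrow \Mf_{\tilde{\dimvectd}}$ denote the open immersion onto the substack of representations whose last-arm arrows $A_{n+1}^{(1)}, \dots, A_{n+1}^{(w_{n+1}-1)}$ are invertible, as in that proposition. It tells us precisely that the composition
\[
\Mf^{\mathrm{inv}}_{\tilde{\dimvectd}} \xrightarrow{\ j\ } \Mf_{\tilde{\dimvectd}} \xrightarrow{\ f\ } \Mf_\dimvectd
\]
is an \emph{isomorphism}. In other words, $f$ becomes an isomorphism after restriction to a suitable open substack, so that its pullback is forced to be split.

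From this the injectivity is immediate: functoriality of pullbacks gives $j^\ast \circ f^\ast = (f \circ j)^\ast$, and the right-hand side is an isomorphism of Chow rings because $f \circ j$ is an isomorphism of stacks. An isomorphism is in particular injective, and a map through which an injective map factors must itself be injective; hence $f^\ast$ is injective. Assembling these injections over all $\dimvectd$ yields the injectivity of \eqref{CohaFunctorialMorphism}.

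I do not expect a genuine obstacle here: once \Cref{open substack of top stack isomorphic to bottom} is available, the argument is formal. The only points requiring care are the bookkeeping that identifies the $\dimvectd$-graded component of \eqref{CohaFunctorialMorphism} with the pullback $f^\ast$, and checking that the open restriction used in \Cref{open substack of top stack isomorphic to bottom} is taken with respect to the same lift $\tilde{\dimvectd}$ (with $\tilde{\dimvectd}_{(n+1)_i} = d_0$) that defines the map.
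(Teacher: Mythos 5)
Your proposal is correct and is essentially the paper's own argument: both use \Cref{open substack of top stack isomorphic to bottom} to see that the composite $\Mf^{\mathrm{inv}}_{\tilde{\dimvectd}} \hookrightarrow \Mf_{\tilde{\dimvectd}} \to \Mf_\dimvectd$ is an isomorphism, so that restriction to the open substack provides a left inverse (post-split) for $f^*$, forcing injectivity in each graded piece. The only cosmetic difference is that you phrase it in Chow rings via \Cref{A=H for Pn} and spell out the grading bookkeeping, which the paper leaves implicit.
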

\begin{proof}
	By \Cref{open substack of top stack isomorphic to bottom}, we have that
	\[
	\Mf_{\tilde{\dimvectd}}^{\mathrm{inv}}(2^{n+1}) \mono \Mf_{\tilde{\dimvectd}}(2^{n+1}) \to \Mf_\dimvectd(2^n)
	\]
	is an isomorphism.
	It follows that the restriction map~$H^\bullet(\Mf_{\tilde{\dimvectd}}(2^{n+1}) ) \to H^\bullet( \Mf_{\tilde{\dimvectd}}^{\mathrm{inv}}(2^{n+1}) )$ is a 
	post-split for~$H^\bullet(\Mf_\dimvectd(2^{n}))\to H^\bullet(\Mf_{\tilde{\dimvectd}}(2^{n + 1}))$.
\end{proof}

In \Cref{section6}, we considered~$\Coha(\Pbb^1(2^n))$ for~$n= 0,\, 1,\, 2$ and defined certain elements in these algebras.
We now lift these elements along the maps~\eqref{CohaFunctorialMorphism} to elements of~$\Coha(\Pbb^1(2^n))$ for every~$n\geq 0$.

\begin{mydef}
	Consider the (injective) algebra homomorphisms
	\[
	\Phi_k \colon \Coha(\Pbb^1(2^1))\to \Coha(\Pbb^1(2^n)) \quad \text{for~$k = 1, \dots, n$}	
	\]
	induced by the functors~$\Reg(C(2^n)) \to \Reg(C(2^1))$ which forgets all but the~$k$-th arm, and
	\[
	\Phi_0\colon \Coha(\Pbb^1) \to \Coha(\Pbb^1(2^n))
	\]
	induced by the functor~$\Reg(C(2^n)) \to \Reg(C(2^0)) = \Rep^\sst(K_2)$.
	We define the elements
	\begin{align*}
		e_{k, 2i+1} & \in \Coha(\Pbb^1(2^n))_{ (2i+1, \mathrm{e}_k)},  &	f_{k, 2i+1} & \in \Coha(\Pbb^1(2^n))_{ (2i+1, \mathrm{f}_k)}, \\
		h_{2i+2} & \in \Coha(\Pbb^1(2^n))_{(2i+2, \dnull)}, & g_{2i} & \in \Coha(\Pbb^1(2^n))_{(2i, \dnull)} \\
		\intertext{as the images of the respective elements}
		e_{2i+1} & \in \Coha(\Pbb^1(2^1))_{ (2i+1, \mathrm{e}_k)},  &	f_{2i+1} & \in \Coha(\Pbb^1(2^1))_{ (2i+1, \mathrm{f}_k)}, \\
		h_{2i+2} & \in \Coha(\Pbb^1)_{(2i + 2, \dnull)}, & g_{2i} & \in \Coha(\Pbb^1)_{(2i, \dnull)}
	\end{align*}
	under these homomorphisms.
\end{mydef}


\begin{prop}\label{PpnToCohaProp}
	The elements~$e_{k, 2i+1}$,~$f_{k, 2i+1}$,~$h_{2i+2}$,~$g_{2i}$ satisfy the defining relations of~$\Pp_n$.
	We thus have a well-defined morphism of~$\Z\times \Lambda_n^+$-graded~$\Q$-algebras
	\begin{align}\label{PpnToCohaMorphism}
		\Pp_n \to \Coha(\Pbb^1(2^n)),
	\end{align}
	mapping the generators~$e_{k, 2i+1}$,~$f_{k, 2i+1}$,~$h_{2i+2}$,~$g_{2i}\in \Pp_n$ to the corresponding elements in~$\Coha(\Pbb^1(2^n))$.
\end{prop}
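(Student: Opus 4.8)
The plan is to verify the defining relations of $\Pp_n$ family by family, transporting each from the already-settled cases $n\le 2$ by means of the functoriality homomorphisms. The crucial combinatorial observation is that every defining relation of $\Pp_n$ involves at most two of the arm-indices $1,\dots,n$: relations \eqref{Pn rel: ek and ek}, \eqref{Pn rel: fk and fk}, \eqref{Pn rel: ek and fk}, \eqref{Pn rel: g and e commutator}, \eqref{Pn rel: g and f commutator}, together with the first two commutators in \eqref{Pn rel: h commutes}, mention a single index $k$; relation \eqref{Pn rel: g and h commutator}, the commutator $[h_{2i+2},h_{2j+2}]$ in \eqref{Pn rel: h commutes}, and the final $[g_{2i},g_{2j}]$-relation involve no arm-index at all; and only \eqref{Pn rel: commutation} mentions a pair $k\neq l$. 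It therefore suffices to realise each relation inside the image of a map from $\Coha(\Pbb^1(2^m))$ with $m\le 2$.

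Concretely, let $\Phi_0$ and $\Phi_k$ be as in the preceding definition, and for $k\neq l$ let $\Phi_{\{k,l\}}\colon\Coha(\Pbb^1(2^2))\to\Coha(\Pbb^1(2^n))$ be the analogous map induced by the functor $\Reg(C(2^n))\to\Reg(C(2^2))$ forgetting all but the $k$-th and $l$-th arms. Each of these is a composite of the one-arm functoriality maps \eqref{CohaFunctorialMorphism} (after reordering the $\lambda_i$, which is harmless by \Cref{PGL2 on C(lambdaw)} and the order-independence of the construction), and is hence an algebra homomorphism by \Cref{thm functorial}. First I would record that the named generators are matched consistently: $\Phi_0$ sends the Kronecker elements $g_{2i},h_{2i+2}$ to the elements of the same name, $\Phi_k$ sends $e_{2i+1},f_{2i+1},g_{2i},h_{2i+2}$ to $e_{k,2i+1},f_{k,2i+1},g_{2i},h_{2i+2}$, and $\Phi_{\{k,l\}}$ sends the $\Pp_2$-images to the correspondingly named elements. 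This holds because the elements of $\Coha(\Pbb^1(2^2))$ were themselves defined in \Cref{section6} as images from $\Coha(\Pbb^1(2^1))$ under the same functoriality maps, so the relevant squares of functoriality maps commute.

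With this in place the verification is immediate. The arm-free relations \eqref{Pn rel: g and h commutator}, $[h_{2i+2},h_{2j+2}]=0$, and the $[g_{2i},g_{2j}]$-relation are the images under $\Phi_0$ of the corresponding relations in $\Coha(\Pbb^1)$, which hold by \Cref{A1tilde Pp0 to Coha}. Every single-index relation already holds among $e_{2i+1},f_{2i+1},g_{2i},h_{2i+2}$ in $\Coha(\Pbb^1(2^1))$ by \Cref{A2tilde Pp1 to Coha}, and applying the algebra homomorphism $\Phi_k$ yields the required identity in $\Coha(\Pbb^1(2^n))$. Finally the cross-arm relation \eqref{Pn rel: commutation} holds in $\Coha(\Pbb^1(2^2))$ by \Cref{A3tilde Pp2 to Coha}, and applying $\Phi_{\{k,l\}}$ transports it to the pair $(k,l)$. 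As these exhaust the defining relations, the assignment on generators extends to a well-defined graded algebra homomorphism \eqref{PpnToCohaMorphism}.

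The main obstacle, and the only point requiring genuine care, is the compatibility bookkeeping in the second paragraph: one must check that $\Phi_0,\Phi_k,\Phi_{\{k,l\}}$ are mutually compatible composites of the one-arm maps \eqref{CohaFunctorialMorphism} and that each carries the abstractly defined generators of its source to the intended generators of the target. This rests on the commutativity of the directed system of functoriality maps and on the order-independence of the weighted projective line; once it is in place, no computation is needed beyond the low-arm cases already treated in \Cref{section6}.
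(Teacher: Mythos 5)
Your proposal is correct and follows essentially the same route as the paper: the paper's proof is a one-line appeal to the relation computations in $\Coha(\Pbb^1(2^1))$ and $\Coha(\Pbb^1(2^2))$ (\Cref{A2tilde Pp1 to Coha}, \Cref{A3tilde Pp2 to Coha}), with the transport to general $n$ via the algebra homomorphisms of \Cref{thm functorial} left implicit, which is exactly the mechanism you spell out. Your explicit observation that every relation involves at most two arm indices, and your bookkeeping that $\Phi_0$, $\Phi_k$, $\Phi_{\{k,l\}}$ are compatible composites of the one-arm maps matching generators to generators, is precisely the content the paper's terse proof presupposes.
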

\begin{proof}
	This follows from the calculation of the relations in the algebras~$\Coha(\Pbb^1(2^1))$, and~$\Coha(\Pbb^1(2^2))$ in \Cref{A2tilde Pp1 to Coha} and \Cref{A3tilde Pp2 to Coha}, respectively.
\end{proof}


\subsection{Surjectivity}

\begin{prop}\label{stable reps of canonical algebra}
	The only simple regular (= stable of slope~$0$) representations of~$C(2^n)$ appear in dimensions~$\mathrm{e}_k = (0,0,\dots, 1,0,\dots, 0,0)$,~$\mathrm{f}_k = (1,1,\dots, 0, \dots, 1,1) = \dnull - \mathrm{e}_k$ for~$k = 1,\dots, n$, and in~$\dnull = (1,1,\dots, 1,1)$.
	We have
	\begin{gather}
		R^\simp_{\mathrm{e}_k}  = R_{\mathrm{e}_k} = \pt, \, [R^\simp_{\mathrm{e}_k}/G_{\mathrm{e}_k}] = {B}\mathbb{G}_m, \, \text{and } \mathcal{M}_{\mathrm{e}_k} = \pt, \label{stableRepsOfCanAlgs:ek} \\
		R^\simp_{\mathrm{f}_k}  = R_{\mathrm{f}_k} = (\C^*)^n,\,  [R^\simp_{\mathrm{f}_k}/G_{\mathrm{f}_k}] = B\mathbb{G}_m, \, \text{and } \mathcal{M}_{\mathrm{f}_k} = \pt, \label{stableRepsOfCanAlgs:fk} \\
		\begin{gathered}
			R^\simp_{\dnull}  = \{ (\alpha, \beta) \ \vert \  [ \alpha : \beta ] \neq \lambda_i  \} \times (\C^*)^{n-1},\, \\ [ R^\simp_{ \dnull } / G_{ \dnull } ] = \mathbb{P}^1\setminus \{\lambda_1 , \dots, \lambda_n \} \times B\mathbb{G}_m, \, \text{and } \mathcal{M}_\dnull = \Pbb^1, 
		\end{gathered}\label{stableRepsOfCanAlgs:d0}
	\end{gather}
	\mycomment{
		\begin{itemize}
			\item $R^\simp_{\mathrm{e}_k} = R_{\mathrm{e}_k} = \pt$,~$[R^\simp_{\mathrm{e}_k}/G_{\mathrm{e}_k}] = {B}\mathbb{G}_m$, and~$\mathcal{M}_{\mathrm{e}_k} = \pt$,
			\item $R^\simp_{\mathrm{f}_k} = R_{\mathrm{f}_k} = (\C^*)^n$,~$[R^\simp_{\mathrm{f}_k}/G_{\mathrm{f}_k}] = B\mathbb{G}_m$, and~$\mathcal{M}_{\mathrm{f}_k} = \pt$,
			\item $R^\simp_{\dnull} = \{ M \in R_{(1,1)}^\mathrm{simp}(K_2) \ \vert \  [\alpha:\beta] \neq \lambda_i  \} \times (\C^*)^{n-1}$,~$[ R^\simp_{ \dnull } / G_{ \dnull } ] = \mathbb{P}^1\setminus \{\lambda_1 , \dots, \lambda_n \} \times B\mathbb{G}_m$, and~$\mathcal{M}_\dnull = \Pbb^1$,
	\end{itemize}}
	where~$\mathcal{M}_\dimvectd = R_\dimvectd \gitquotient PG_\dimvectd$ is the course moduli space parametrizing semisimple objects, see \cite[Section~3.2]{meinhardt2015donaldsonthomasinvariantsvsintersection}.
\end{prop}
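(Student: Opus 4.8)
The plan is to determine first which dimension vectors can support a simple object by a numerical (Euler-form) argument, then to invoke the tubular structure of $\Reg(C(\boldsymbol{\lambda}; 2^n))$ to pin the list down exactly, and finally to read off each variety $R^\simp_\dimvectd$, quotient stack, and coarse moduli space by elementary linear algebra on the description in \Cref{Mfd as stack}. The starting observation, immediate from \Cref{Euler Form symmetric iff w=2^n}, is the clean identity
\[
\langle \dimvectd, \dimvectd \rangle = \sum_{k=1}^n (d_k - d_0)^2 .
\]
Since a simple object $S$ is a brick ($\End(S) = \C$ by Schur's lemma over $\C$) and the category is hereditary, $\langle \dimvectd, \dimvectd \rangle = 1 - \dim \Ext^1(S,S) \le 1$, while the right-hand side above is visibly $\ge 0$; hence $\sum_k (d_k - d_0)^2 \in \{0,1\}$. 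This forces either $\dimvectd = d_0\,\dnull$, or a vector agreeing with $d_0\,\dnull$ in all but one coordinate, where $d_k = d_0 \pm 1$.

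To cut this down to $\dnull$, $\mathrm{e}_k$, $\mathrm{f}_k$, I would use that $\Reg(C(\boldsymbol{\lambda}; 2^n)) \simeq \Tor(\Pbb^1(\boldsymbol{\lambda}; 2^n))$ (\Cref{Tor(Pbb1) = Reg(C) stacky}) is a coproduct of tubes indexed by the points of $\Pbb^1$, homogeneous (rank $1$) over the generic points and of rank $w_k = 2$ over each $\lambda_k$; see \cite{RingelCanonicalAlgs, GeigleLenzingWPCarisinginRepTh}. A tube of rank $r$ has exactly $r$ simple objects, at its mouth, and every object of larger length is non-simple; this excludes $d_0 \ge 2$ as well as the ``$d_k = d_0+1$'' candidates with $d_0 \ge 1$. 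Each homogeneous tube contributes a single simple, which I would identify with $\dnull$: its underlying Kronecker representation (\Cref{rep of C sst iff Kronecker sst}) is $R_1(\mu)$ with $[\alpha:\beta] = \mu \notin \{\lambda_i\}$ (\Cref{Affine stratification K_2}), of dimension vector $\dnull$. The rank-$2$ tube at $\lambda_k$ contributes two mouth simples whose dimension vectors sum to $\dnull$; one is the representation supported only at the $k$-th node, which is manifestly simple of dimension $\mathrm{e}_k$, and the other is then $\dnull - \mathrm{e}_k = \mathrm{f}_k$. A short computation of $\Ext^1(\mathrm{e}_k, \mathrm{f}_k)$ and $\Ext^1(\mathrm{f}_k, \mathrm{e}_k)$ confirms the rank-$2$ tube.

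It then remains to compute the geometry stratum by stratum. For $\mathrm{e}_k$ the only representation has all maps zero, so $R^\simp_{\mathrm{e}_k} = R_{\mathrm{e}_k} = \pt$, $G_{\mathrm{e}_k} = \mathbb{G}_m$, giving $[\pt/\mathbb{G}_m] = B\mathbb{G}_m$ and $\mathcal{M}_{\mathrm{e}_k} = \pt$. For $\mathrm{f}_k$ the $k$-th node is $0$-dimensional, so the $k$-th relation degenerates to $\lambda_k^{(0)}\alpha + \lambda_k^{(1)}\beta = 0$, forcing $[\alpha:\beta] = \lambda_k$; solving the remaining $n-1$ arm relations $A_j^{(2)}A_j^{(1)} = \lambda_j^{(0)}\alpha + \lambda_j^{(1)}\beta \ne 0$ shows $R^\simp_{\mathrm{f}_k} = R_{\mathrm{f}_k} \cong (\C^*)^n$, and normalizing the arm maps and the Kronecker pair by the torus $G_{\mathrm{f}_k}$ collapses the stack to $B\mathbb{G}_m$ with $\mathcal{M}_{\mathrm{f}_k} = \pt$. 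For $\dnull$ I would observe that a regular representation is stable (rather than merely semistable) exactly when it has no $\mathrm{e}_k$- or $\mathrm{f}_k$-subobject, i.e. when $A_k^{(1)} \ne 0$ and $A_k^{(2)} \ne 0$ for every $k$, equivalently $\lambda_k^{(0)}\alpha + \lambda_k^{(1)}\beta \ne 0$, i.e. $[\alpha:\beta] \ne \lambda_k$ for all $k$; this identifies $R^\simp_\dnull$, and quotienting by $G_\dnull$ (again normalizing the arm data and splitting off the trivially-acting diagonal $\mathbb{G}_m$) yields $(\Pbb^1 \setminus \{\lambda_1, \dots, \lambda_n\}) \times B\mathbb{G}_m$ with coarse space $\mathcal{M}_\dnull = \Pbb^1$.

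The main obstacle is the completeness claim in the second step: the Euler-form bound still admits the spurious candidates $d_0\,\dnull$ with $d_0 \ge 2$ and the ``$+1$'' vectors, and excluding these genuinely requires the tube (equivalently, torsion-sheaf) structure, not homological counting alone. A secondary subtlety is the bookkeeping of stability versus semistability in each stratum — verifying that the listed representations are actually stable and that $R^\simp_\dimvectd = R_\dimvectd$ in the $\mathrm{e}_k$ and $\mathrm{f}_k$ cases — together with keeping the $\PGL_2$-convention for $\lambda_k$ consistent throughout.
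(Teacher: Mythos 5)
Your proposal is correct, but its first half takes a genuinely different route from the paper, so a comparison is in order. The paper classifies the admissible dimension vectors by purely elementary means: the forgetful functor $M \mapsto (\alpha,\beta)$ of \Cref{rep of C sst iff Kronecker sst} sends a simple regular representation either to zero or to a simple regular $K_2$-representation (a proper nonzero slope-$0$ subrepresentation of $(\alpha,\beta)$ pulls back to one of $M$), which immediately forces $d_0 = d_\infty \in \{0,1\}$; the remaining exclusions are the two observations that $d_0=0$ leaves only $\mathrm{e}_k$, and that two vanishing arms would force $\alpha=\beta=0$. You instead combine the brick inequality $\langle \dimvectd,\dimvectd\rangle = \sum_k (d_k-d_0)^2 \le 1$ with the Ringel/Geigle--Lenzing structure of $\Reg(C(\boldsymbol{\lambda};2^n)) \simeq \Tor(\Pbb^1(\boldsymbol{\lambda};2^n))$ as a $\Pbb^1$-family of stable tubes, of rank $2$ at the $\lambda_k$ and rank $1$ elsewhere, whose mouth objects are exactly the simples and whose mouth dimension vectors sum to $\dnull$ in each tube. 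This is legitimate -- the separating tubular family theorem of \cite{RingelCanonicalAlgs} holds for all canonical algebras regardless of representation type -- and in fact the tube structure alone already yields the complete list, so your Euler-form step is a useful cross-check rather than a logically necessary reduction. The trade-off is clear: your argument imports a substantial structure theorem as a black box but identifies the simples conceptually (and would adapt to higher weights), whereas the paper's argument is self-contained linear algebra tailored to the weight-$2$ case. The second half of your proposal -- $R_{\mathrm{e}_k}=\pt$, $R_{\mathrm{f}_k}\cong(\C^*)^n$, stability in dimension $\dnull$ being equivalent to $[\alpha:\beta]\neq\lambda_i$ for all $i$, and the resulting stacks and coarse spaces -- coincides with the paper's computations; the subtlety you flag, that stability is automatic in dimensions $\mathrm{e}_k$ and $\mathrm{f}_k$, is settled in the paper by noting that $\mathrm{f}_k$ is not a sum of two dimension vectors supporting regular representations, which is the same vanishing-arms argument, and it also follows from your own observation that all arm maps in $R_{\mathrm{f}_k}$ are invertible. (One bookkeeping remark: both your count and the paper's own proof give the factor $(\C^*)^n$ in dimension $\dnull$, not the $(\C^*)^{n-1}$ recorded in the statement of the proposition, which appears to be a typo there.)
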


\begin{proof}
	Any simple regular representation~$M = (\alpha, \beta, A_1^{(1)} \! , \dots, A_n^{(2)})$ will be sent either to a simple regular representation of~$K_2$ or to the zero representation under the natural functor~$M \mapsto (\alpha, \beta)$.
	It follows that the dimension vector of~$M$ will satisfy~$d_0 = d_\infty \in \{ 0,1\}$.
	If~$d_0 = 0$, then we can find a subrepresentation supported on a single middle vertex and so~$M$ is of dimension~$\mathrm{e}_{k}$.
	If~$d_0 = 1$, then we also see that~$d_{k}\leq 1$.
	If we had~$d_k = d_{l}= 0$ for~$k\neq l$, then~$\lambda_k^{(1)} \alpha + \lambda_k^{(2)} \beta = \lambda_l^{(1)} \alpha + \lambda_l^{(2)} \beta = 0$ and so~$\alpha = \beta = 0$, because~$\lambda_k  \neq \lambda_l \in \Pbb^1(\C)$.
	It follows that~$(\alpha, \beta)$ and thus~$M$ is not regular.
	\mycomment{
		Simple regular representations of~$C(2^n)$ are the same as simple torsion sheaves on~$\Pbb^1(2^n)$ by \Cref{Tor(Pbb1) = Reg(C) stacky}.
		The simple objects in~$\Tor(\mathbb{P}(2^n))$ are the length~$1$ objects.
		The length~$1$ objects are either supported on an exceptional/stacky point or on a point of the open subscheme.
		For any non-stacky point, there is a unique simple object.
		This corresponds to a stable representation of dimension vector~$\dnull = (1,1,\dots,1,1)$.
		The stacky points have exactly~$2$ simples.
		These correspond to dimension vectors~$\mathrm{e}_k$, resp.~$\mathrm{f}_k$.}
	
	We have~$R^\mathrm{simp}_{\mathrm{e}_k} = R_{\mathrm{e}_k} = R_{\mathrm{e}_k}(Q(2^n)) = \pt$.
	Therefore~$[R^\mathrm{simp}_{\mathrm{e}_k}/G_{\mathrm{e}_k}] = [\pt/\C^*] = B\C^*$ and~$\mathcal{M}_{\mathrm{e}_k} = \pt$.
	
	In dimension vector~$\mathrm{f}_k$, every regular representation is also automatically simple regular, because~$\mathrm{f}_k$ is not the sum of two smaller regular dimension vectors.
	We deduce
	\begin{align*}
		R&^\mathrm{simp}_{\mathrm{f}_k}  = R_{\mathrm{f}_k} \\
		& =  \left\{ (\alpha, \beta, A_i^{(1)}\!, A_i^{(2)})_{i \neq k} \in R_{(1,1)}^\sst(K_2)\times (\C^*)^{2(n-1)} \ \middle\vert \ \begin{aligned}
			& \lambda_i^{( 0 ) } \alpha + \lambda_i^{ ( 1 ) } \beta = A_i^{ ( 2 ) } A_i^{ ( 1 ) } \! , \, i\neq k, \\
			& \text{and } \lambda_k^{( 0 ) } \alpha + \lambda_k^{ ( 1 ) } \beta = 0 
		\end{aligned}\right\} \\
		& \cong \{ (\alpha, \beta)\in R_{(1,1)}^\sst(K_2) \ \vert \ \lambda_k^{ ( 0 ) } \alpha + \lambda_k^{ ( 1 ) } \beta = 0 \} \times (\C^*)^{n-1}  \cong (\C^*)^n.
	\end{align*}
	We see that~$[R_{\mathrm{f}_k}^\simp/G_{\mathrm{f}_k}]= [\pt/\C^*]$ and~$\mathcal{M}_{\mathrm{f}_k} = \pt$.
	
	A regular representation of dimension~$\dnull$ is simple regular if and only if it contains no subrepresentation of dimension vector~$\mathrm{e}_k$ or~$\mathrm{f}_k$.
	Thus, we have
	\begin{align*}
		R^\simp_{\dnull} & = \{ M\in R_\dnull \mid A_i^{ ( 1 ) } \! , A_i^{ ( 2 ) } \neq 0\} \\
		& = \{ M= ( \alpha, \beta)  \in R_{(1,1)}^\mathrm{st}(K_2) \ \vert \ [\alpha:\beta]\neq \lambda_i \} \times (\C^*)^{n} \\
		& = \{ ( \alpha , \beta ) \in \C^2 \setminus \{0\} \ \vert \ [ \alpha : \beta ] \neq \lambda_i \} \times (\C^*)^{n}.
	\end{align*}
	We get~$[R_\dnull^\simp/G_\dnull] = \mathbb{P}^1\setminus \{\lambda_1,\dots, \lambda_n\} \times [\pt/\C^*]$.
	From this it follows that~$\mathcal{M}_\dnull = \Pbb^1$.
\end{proof}

\begin{cor}\label{Coha generating degrees}
	The~$\Q$-algebra~$\Coha(\Pbb^1(2^n))(=\Chowha(\Pbb^1(2^n)))$ is generated in dimension degrees~$\mathrm{e}_k$,~$\mathrm{f}_k$, and~$\dnull$.
\end{cor}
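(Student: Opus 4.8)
The plan is to apply \Cref{Chowha primitive part}, so I would first verify its hypothesis. By \Cref{Affine stratification C(2^n)}, every stack $\Mf_\dimvectd$ admits an affine paving; hence \Cref{KünnethIsoIfAffinePaving} shows that for all dimension vectors $\dimvectd_1, \dimvectd_2$ the Künneth morphism $A^\bullet(\Mf_{\dimvectd_1}) \otimes_\Q A^\bullet(\Mf_{\dimvectd_2}) \to A^\bullet(\Mf_{\dimvectd_1}\times \Mf_{\dimvectd_2})$ is an isomorphism, in particular surjective. Thus \Cref{Chowha primitive part} applies and tells us that any family of elements of $\Chowha(\Pbb^1(2^n))$ lifting a $\Q$-basis of the simple ChowHA $\Chowha^\simp$ generates the whole algebra.

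Next I would determine in which dimension degrees $\Chowha^\simp$ is supported. By its definition, $\Chowha^\simp = \Q[0] \oplus \bigoplus_{\dimvectd\neq 0} A^\bullet(\Mf^\simp_\dimvectd)$, where $\Mf^\simp_\dimvectd$ is the stack of simple (= stable slope~$0$) objects. By \Cref{stable reps of canonical algebra}, the only nonzero dimension vectors for which a simple regular representation of $C(2^n)$ exists are the $\mathrm{e}_k$, the $\mathrm{f}_k$, and $\dnull$; for every other nonzero $\dimvectd$ the stack $\Mf^\simp_\dimvectd$ is empty, so $A^\bullet(\Mf^\simp_\dimvectd) = 0$. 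Consequently a homogeneous $\Q$-basis of $\Chowha^\simp$ in positive dimension degree is concentrated entirely in the degrees $\mathrm{e}_k$, $\mathrm{f}_k$, and $\dnull$. Lifting such a basis along $\Chowha(\Pbb^1(2^n)) \to \Chowha^\simp$ therefore produces a generating set whose elements all lie in these dimension degrees, which is precisely the assertion for the ChowHA. Finally, since $\Chowha(\Pbb^1(2^n)) = \Coha(\Pbb^1(2^n))$ by \Cref{A=H for Pn}, the same generation statement holds verbatim for the CoHA.

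All the substantial geometric content — the affine paving of \Cref{Affine stratification C(2^n)} and the classification of simple regular representations in \Cref{stable reps of canonical algebra} — is already established, so I do not expect a genuine obstacle here; the corollary is a formal consequence of chaining these results together. The only point requiring a little care is that \Cref{Chowha primitive part} is formulated for the ChowHA rather than the CoHA, so the identification $\Chowha = \Coha$ from \Cref{A=H for Pn} must be invoked at the end to transport the conclusion.
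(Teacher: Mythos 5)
Your proposal is correct and follows exactly the chain of results the paper intends for this corollary (which it leaves implicit): Künneth surjectivity via the affine pavings of \Cref{Affine stratification C(2^n)} and \Cref{KünnethIsoIfAffinePaving}, then \Cref{Chowha primitive part} combined with the classification of simples in \Cref{stable reps of canonical algebra}, and finally the identification $\Chowha = \Coha$ from \Cref{A=H for Pn}. Your closing remark about needing \Cref{A=H for Pn} to transport the statement from the ChowHA to the CoHA is exactly the right point of care, and nothing is missing.
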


\begin{cor}\label{CohaEFIsPolyRing}
	We have~$H^\bullet(\Mf_{\mathrm{e}_k}) \cong \Q[x]$ and~$H^\bullet(\Mf_{\mathrm{f}_k}) \cong \Q[x]$, where the variable~$x$ is of degree~$2$.
\end{cor}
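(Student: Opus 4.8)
The plan is to read both identifications off directly from \Cref{stable reps of canonical algebra} and then invoke the standard computation of the cohomology of the classifying stack $B\mathbb{G}_m$. The entire geometric content has already been supplied by the preceding proposition, so the corollary is essentially a bookkeeping statement.

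First I would observe that \Cref{stable reps of canonical algebra} identifies both stacks with $B\mathbb{G}_m$. For the dimension vector $\mathrm{e}_k$ this is immediate: there $R_{\mathrm{e}_k} = \pt$ and $G_{\mathrm{e}_k} = \GL_1(\C) = \mathbb{G}_m$, so $\Mf_{\mathrm{e}_k} = [\pt/\mathbb{G}_m] = B\mathbb{G}_m$. For the dimension vector $\mathrm{f}_k$ the same proposition records that every regular representation of dimension $\mathrm{f}_k$ is already simple regular, whence $R^\simp_{\mathrm{f}_k} = R_{\mathrm{f}_k}$ and $\Mf_{\mathrm{f}_k} = [R_{\mathrm{f}_k}/G_{\mathrm{f}_k}] = [R^\simp_{\mathrm{f}_k}/G_{\mathrm{f}_k}] \cong B\mathbb{G}_m$. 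Thus in both cases the problem reduces to computing $H^\bullet(B\mathbb{G}_m)$.

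Next I would compute this cohomology. By the convention fixed in the Notation, the cohomology of a quotient stack is its equivariant cohomology, so $H^\bullet(B\mathbb{G}_m) = H^\bullet_{\mathbb{G}_m}(\pt)$. This is the classical computation $H^\bullet_{\mathbb{G}_m}(\pt) = H^\bullet(\mathbb{CP}^\infty; \Q) \cong \Q[x]$, where $x$ is the first Chern class of the universal line bundle and lives in cohomological degree $2$, giving the asserted isomorphism with the polynomial ring on a single degree-$2$ generator. There is essentially no obstacle here: the only point worth flagging is the bookkeeping of the grading, namely that the generator $x$ sits in degree $2$ (consistent with the degrees later assigned to $e_{k,2i+1}$ and $f_{k,2i+1}$), which is immediate from the above.
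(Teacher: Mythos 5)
Your proposal is correct and matches the paper's intended argument: the corollary is stated without proof precisely because it follows immediately from \Cref{stable reps of canonical algebra}, which identifies both $\Mf_{\mathrm{e}_k}$ and $\Mf_{\mathrm{f}_k}$ (using $R^\simp_{\mathrm{f}_k}=R_{\mathrm{f}_k}$) with $B\mathbb{G}_m$, combined with the standard fact $H^\bullet(B\mathbb{G}_m)=H^\bullet_{\mathbb{G}_m}(\pt)\cong\Q[x]$ with $x$ in degree $2$.
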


\begin{la}\label{CohaEFBasis}
	The elements~$(e_{k,2i+1})_{i=0,1,2,\dots}$ form a graded~$\Q$-basis of~$\Coha(\Pbb^1(2^n))_{\mathrm{e}_k}$ and the elements~$(f_{k,2i+1})_{i=0,1,2,\dots}$ form a graded~$\Q$-basis of~$\Coha(\Pbb^1(2^n))_{\mathrm{f}_k}$.
\end{la}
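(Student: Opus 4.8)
The plan is to reduce the statement to a one-dimensionality count in each graded degree combined with the injectivity of the maps $\Phi_k$. First I would pin down the relevant graded dimensions. By \Cref{CohaEFIsPolyRing} we have $H^\bullet(\Mf_{\mathrm{e}_k}) \cong \Q[x]$ and $H^\bullet(\Mf_{\mathrm{f}_k}) \cong \Q[x]$ with $\deg x = 2$, so in ordinary cohomology each of these rings is one-dimensional in every even degree and zero in odd degrees. Passing to the virtual grading that defines the CoHA introduces the shift by $\langle \dimvectd, \dimvectd \rangle$, and a direct computation with the formula of \Cref{Euler Form symmetric iff w=2^n} gives $\langle \mathrm{e}_k, \mathrm{e}_k \rangle = \langle \mathrm{f}_k, \mathrm{f}_k \rangle = 1$. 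Hence $H^{2i+1}(\Mf_{\mathrm{e}_k}; \Q^{\mathrm{vir}}) = H^{2i}(\Mf_{\mathrm{e}_k}; \Q) = \Q \cdot x^i$, so each graded piece $\Coha(\Pbb^1(2^n))_{(2i+1, \mathrm{e}_k)}$ is one-dimensional for $i \geq 0$ while all other virtual degrees in dimension vector $\mathrm{e}_k$ vanish, and symmetrically for $\mathrm{f}_k$.

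Next I would locate the elements in this grading. By construction $e_{k,2i+1}$ and $f_{k,2i+1}$ are the images under $\Phi_k$ of the elements $e_{2i+1}, f_{2i+1} \in \Coha(\Pbb^1(2^1))$, which live in ordinary cohomological degree $2i$ and therefore in virtual degree $2i+1$; since $\Phi_k$ preserves the cohomological grading and sends dimension vector $\mathrm{e}_1$ to $\mathrm{e}_k$ (resp.\ $\mathrm{f}_1$ to $\mathrm{f}_k$), the element $e_{k,2i+1}$ sits in the one-dimensional piece $\Coha(\Pbb^1(2^n))_{(2i+1, \mathrm{e}_k)}$, and likewise $f_{k,2i+1}$ sits in $\Coha(\Pbb^1(2^n))_{(2i+1, \mathrm{f}_k)}$.

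It then remains only to show these elements are nonzero, after which one-dimensionality finishes the argument. Here I would invoke that $e_{2i+1}$ and $f_{2i+1}$ are basis elements of $H^\bullet(\Mf_{\mathrm{e}_1})$ and $H^\bullet(\Mf_{\mathrm{f}_1})$ by \Cref{A2tilde basis for e1 and f1 degree}, hence nonzero, together with the injectivity of $\Phi_k$, which is a composite of the injective arm-adding maps of \Cref{Coha injection}. Injectivity forces $e_{k,2i+1} = \Phi_k(e_{2i+1}) \neq 0$ and $f_{k,2i+1} = \Phi_k(f_{2i+1}) \neq 0$, and a nonzero vector spans a one-dimensional space; since the pieces in degrees $2i+1$ account for all of $\Coha(\Pbb^1(2^n))_{\mathrm{e}_k}$ and $\Coha(\Pbb^1(2^n))_{\mathrm{f}_k}$, the families $(e_{k,2i+1})_{i \geq 0}$ and $(f_{k,2i+1})_{i \geq 0}$ are graded $\Q$-bases. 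I do not expect a genuine obstacle; the only point that demands care is the bookkeeping of the cohomological shift, so that the constructed elements are confirmed to occupy exactly the asserted degrees rather than being annihilated or landing off the one-dimensional strata.
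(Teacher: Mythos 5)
Your proposal is correct and follows essentially the same route as the paper: the paper's (very terse) proof likewise combines the $n=1$ basis statement (\Cref{A2tilde basis for e1 and f1 degree}), injectivity of the arm-adding maps (\Cref{Coha injection}), and the one-dimensionality of each graded piece coming from $H^\bullet(\Mf_{\mathrm{e}_k}) \cong \Q[x] \cong H^\bullet(\Mf_{\mathrm{f}_k})$ (\Cref{CohaEFIsPolyRing}). Your careful bookkeeping of the virtual shift (using $\langle \mathrm{e}_k,\mathrm{e}_k\rangle = \langle \mathrm{f}_k,\mathrm{f}_k\rangle = 1$) just makes explicit what the paper leaves implicit.
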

\begin{proof}
	The elements~$e_{2i+1}$, respectively~$f_{2i+1}$, form a graded~$\Q$-basis of~$\Coha(\Pbb^1(2^1))_{\mathrm{e}_k}$, respectively~$\Coha(\Pbb^1(2^1))_{\mathrm{f}_k}$, by \Cref{A2tilde basis for e1 and f1 degree}.
	The assertion follows by \Cref{Coha injection} and \Cref{CohaEFIsPolyRing}.
\end{proof}

\begin{la}\label{CohaD0Basis}
	The elements~$g_{2i}$,~$h_{2i+2}$,~$e_{k, 2i+1} * f_{k, 2j+1}$, and~$f_{k, 2i+1} * e_{k, 2j+1}$ generate~$H^\bullet(\Mf_{\dnull})$ as a~$\Q$-vector space.
\end{la}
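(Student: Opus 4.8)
The plan is to split $H^\bullet(\Mf_\dnull)$ into its decomposable part and its primitive (simple) quotient, and to match each piece with the listed elements. First I would invoke \Cref{Chowha primitive part}: its hypothesis holds since the Künneth morphism is an isomorphism by \Cref{Affine stratification C(2^n)} together with \Cref{KünnethIsoIfAffinePaving}, and by \Cref{A=H for Pn} we may identify $\Coha(\Pbb^1(2^n)) = \Chowha(\Pbb^1(2^n))$ throughout. In dimension degree $\dnull$ this produces a short exact sequence
\[
0 \to \bigl(\Coha(\Pbb^1(2^n))_+ * \Coha(\Pbb^1(2^n))_+\bigr)_\dnull \to H^\bullet(\Mf_\dnull) \xrightarrow{j^*} A^\bullet(\Mf^\simp_\dnull) \to 0,
\]
where $j^*$ is restriction to the open simple locus. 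It therefore suffices to show that the products span the kernel and that $g_{2i}, h_{2i+2}$ surject onto the quotient.

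For the kernel I would run through the decompositions $\dnull = \dimvectd_1 + \dimvectd_2$ into nonzero regular dimension vectors. Since regular vectors satisfy $d_0 = d_\infty$ and here $d_0 = 1$, exactly one summand, say $\dimvectd_1$, vanishes at the vertices $0$ and $\infty$; thus $\dimvectd_1 = \sum_{k \in S}\mathrm{e}_k$ for a nonempty $S$, and because the middle vertices of distinct arms have disconnected supports the multiplication $\bigotimes_{k\in S}H^\bullet(\Mf_{\mathrm{e}_k}) \to H^\bullet(\Mf_{\dimvectd_1})$ is the Künneth isomorphism. Choosing $k_1 \in S$ and using associativity, any product $a * b$ with $a \in H^\bullet(\Mf_{\dimvectd_1})$ becomes a sum of terms $e_{k_1, 2i+1} * c$ with $c \in H^\bullet(\Mf_{\dnull - \mathrm{e}_{k_1}}) = H^\bullet(\Mf_{\mathrm{f}_{k_1}})$; by \Cref{CohaEFBasis} each such $c$ is a combination of the $f_{k_1, 2j+1}$, so $a * b$ lies in the span of the $e_{k,2i+1} * f_{k,2j+1}$. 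The symmetric case $d_{1,0} = 1$ yields the $f_{k,2i+1} * e_{k,2j+1}$, so the products span the kernel.

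For the quotient, \Cref{stable reps of canonical algebra} gives $\Mf^\simp_\dnull \cong (\Pbb^1 \setminus \{\lambda_1, \dots, \lambda_n\}) \times B\mathbb{G}_m$. For $n \geq 1$ the open curve is affine with trivial class group, so $A^\bullet(\Mf^\simp_\dnull) \cong \Q[x]$ with $x$ the degree-$2$ generator of $A^\bullet(B\mathbb{G}_m)$; since $g_{2i}$ is the image of $x^i$ under $\Phi_0$ and the tautological class at the vertex $0$ restricts to the generator of the residual $B\mathbb{G}_m$, one gets $j^*(g_{2i}) = x^i$, while the point classes $h_{2i+2}$ restrict to $0$. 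For $n = 0$ one instead has $\Pbb^1 \times B\mathbb{G}_m$ with $A^\bullet \cong (\Q[t]/t^2) \otimes \Q[x]$, and $g_{2i} \mapsto x^i$, $h_{2i+2} \mapsto x^i t$ together surject. In all cases $\{g_{2i}, h_{2i+2}\}$ maps onto $A^\bullet(\Mf^\simp_\dnull)$, and combined with the previous paragraph the exact sequence shows the listed elements span $H^\bullet(\Mf_\dnull)$. I expect the only genuine work to lie in this last paragraph: computing $A^\bullet(\Mf^\simp_\dnull)$ and, in particular, checking that the classes $h_{2i+2}$ die on the open locus when $n \geq 1$ (so that they become decomposable). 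This vanishing is precisely the phenomenon that distinguishes the present case from the Kronecker quiver and explains why the $h_{2i+2}$ are redundant once there is at least one arm.
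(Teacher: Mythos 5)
Your proposal is correct and takes essentially the same route as the paper: both split $H^\bullet(\Mf_\dnull)$ via \Cref{Chowha primitive part} into the decomposable kernel and the Chow ring of the simple locus $\Mf^\simp_\dnull \cong (\Pbb^1\setminus\{\lambda_1,\dots,\lambda_n\})\times B\mathbb{G}_m$ from \Cref{stable reps of canonical algebra}, checking that $g_{2i}$, $h_{2i+2}$ cover the latter while the products $e_{k,2i+1}*f_{k,2j+1}$, $f_{k,2i+1}*e_{k,2j+1}$ cover the former. The only cosmetic difference is that the paper gets surjectivity onto $A^\bullet(\Mf^\simp_\dnull)$ by restricting from the $n=0$ simple locus $\Pbb^1\times B\mathbb{G}_m$ rather than by your direct computation of $\Q[x]$ with $h_{2i+2}\mapsto 0$, and it leaves the kernel bookkeeping (including the multi-arm decompositions of $\dnull$) implicit, which you spell out.
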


\begin{proof}
	By \Cref{stable reps of canonical algebra} we have the identifications
	\[
	\Mf^{\mathrm{simp}}_{(1,1)}(2^0) \cong \Pbb^1 \times B\mathbb{G}_m \quad \text{ and }\quad  \Mf^{\mathrm{simp}}_{\dnull}(2^n) \cong \mathbb{P}^1\setminus \{\lambda_1,\dots, \lambda_n\} \times B\mathbb{G}_m
	\]
	and thus that $A_\bullet(\Mf^{\mathrm{simp}}_{(1,1)}(2^0))  \to A_\bullet(\Mf^{\mathrm{simp}}_{\dnull}(2^n))$ is surjective.
	The assertion now follows from \Cref{Chowha primitive part}.
\end{proof}


\begin{cor}\label{PpnToCohaSurj}
	The algebra morphism~$\Pp_n \to \Coha(\Pbb^1(2^n))$ is surjective.
\end{cor}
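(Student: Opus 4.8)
The plan is to deduce surjectivity directly from the generation and basis statements established just above, using only that~\eqref{PpnToCohaMorphism} is an algebra homomorphism. By \Cref{Coha generating degrees}, the target algebra~$\Coha(\Pbb^1(2^n))$ is generated as a~$\Q$-algebra by its graded pieces in the dimension degrees~$\mathrm{e}_k$,~$\mathrm{f}_k$ (for~$k = 1, \dots, n$), and~$\dnull$. Since the image of any algebra homomorphism is a subalgebra, it therefore suffices to check that each of these finitely many graded components lies in the image of the map~$\Pp_n \to \Coha(\Pbb^1(2^n))$.

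First I would dispatch the degrees~$\mathrm{e}_k$ and~$\mathrm{f}_k$. By \Cref{CohaEFBasis}, the elements~$(e_{k, 2i+1})_{i\geq 0}$ form a graded~$\Q$-basis of~$\Coha(\Pbb^1(2^n))_{\mathrm{e}_k}$ and the elements~$(f_{k, 2i+1})_{i\geq 0}$ form a graded~$\Q$-basis of~$\Coha(\Pbb^1(2^n))_{\mathrm{f}_k}$. As each~$e_{k, 2i+1}$ and~$f_{k, 2i+1}$ is by construction the image of the corresponding generator of~$\Pp_n$, the components~$\Coha(\Pbb^1(2^n))_{\mathrm{e}_k}$ and~$\Coha(\Pbb^1(2^n))_{\mathrm{f}_k}$ are contained in the image.

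It remains to treat the degree~$\dnull$, which is where the only genuine content sits. By \Cref{CohaD0Basis}, the graded piece~$H^\bullet(\Mf_\dnull) = \Coha(\Pbb^1(2^n))_\dnull$ is spanned as a~$\Q$-vector space by the elements~$g_{2i}$,~$h_{2i+2}$,~$e_{k, 2i+1} * f_{k, 2j+1}$, and~$f_{k, 2i+1} * e_{k, 2j+1}$. The elements~$g_{2i}$ and~$h_{2i+2}$ are images of generators of~$\Pp_n$, and the remaining spanning elements are convolution products of elements that have already been shown to lie in the image; since the map is a homomorphism of algebras, these products are images of the corresponding products in~$\Pp_n$. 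Hence the entire spanning set, and therefore all of~$\Coha(\Pbb^1(2^n))_\dnull$, lies in the image.

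Combining the three cases, every generating graded component of~$\Coha(\Pbb^1(2^n))$ lies in the image of~\eqref{PpnToCohaMorphism}, so the image is all of~$\Coha(\Pbb^1(2^n))$ and the map is surjective. The argument itself is a short assembly; the substance was relegated to \Cref{Coha generating degrees}, \Cref{CohaEFBasis}, and \Cref{CohaD0Basis}, whose proofs invoke the stratification results of \Cref{section3} and the primitive-part description of \Cref{Chowha primitive part}. The one point deserving care is to observe that the degree-$\dnull$ spanning set from \Cref{CohaD0Basis} is built only from images of generators and their products, so that no additional generator in dimension degree~$\dnull$ beyond~$g_{2i}$ and~$h_{2i+2}$ is required.
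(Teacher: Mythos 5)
Your proposal is correct and is essentially the argument the paper intends: the corollary is stated without proof precisely because it follows by combining \Cref{Coha generating degrees} (generation in dimension degrees $\mathrm{e}_k$, $\mathrm{f}_k$, $\dnull$), \Cref{CohaEFBasis} (the images of the generators span the pieces in degrees $\mathrm{e}_k$, $\mathrm{f}_k$), and \Cref{CohaD0Basis} (the degree-$\dnull$ piece is spanned by $g_{2i}$, $h_{2i+2}$ and products of images of generators), exactly as you assemble them. Your only loose phrasing is calling the generating graded components ``finitely many'' — there are finitely many dimension degrees, each carrying an infinite-dimensional graded piece — but this does not affect the argument.
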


\subsection{Poincaré Series of \texorpdfstring{$\Coha(\Pbb^1(2^n))$}{Coha(P1(2n))}}


\begin{prop}\label{Coha Poincare series}
	We have an isomorphism~$\Coha(\Pbb^1(2^n))\cong_{\mathrm{gr\text{-}vsp}} \Sym(V\otimes \Q[z])$ of (abstract)~$\Z \times \Lambda_n^+$-graded vector spaces, where~$z$ has bidgree~$(2,0)$ and
	\begin{equation*}
		V=\Q^-_{\mathrm{e}_1}[1] \oplus \Q^-_{\mathrm{e}_2}[1] \oplus \cdots \oplus \Q^-_{\mathrm{e}_n}[1] \oplus \Q^-_{\mathrm{f}_1}[1] \oplus \dots \oplus \Q^-_{\mathrm{f}_n}[1] \oplus \Q^+_\dnull[0] \oplus \Q^+_\dnull[2].
	\end{equation*}
	Here, the subscript is the dimension degree, the number in brackets is the cohomological shift, and the sign denotes parity of the summand as a super vector space.
\end{prop}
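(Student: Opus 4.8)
The plan is to compute the class of $\Coha(\Pbb^1(2^n))$ in $K_0(\MHS)[\mathbb{L}^{\pm 1/2}][\![t_i]\!]$ directly from the Donaldson--Thomas invariants, following the ``different approach'' announced after \Cref{Affine stratification C(2^n)} rather than extracting Betti numbers from the stratification. First I would record that the cohomology of every $\Mf_\dimvectd$ is pure, which is exactly \Cref{Purity of cohomology if affine paving} applied to the affine paving of \Cref{Affine stratification C(2^n)}. Purity does two things. By \Cref{BettiNumbersFromK0(MHS)} it guarantees that the class of $\Coha(\Pbb^1(2^n))$ in $K_0(\MHS)$ recovers its graded dimensions, so identifying that class is enough to pin down the abstract graded vector space. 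Moreover the hypothesis of \Cref{Poincare series of Coha = Sym(DT} is met, and the defining equation of the DT-invariants (\Cref{Def:DT}) yields
\[
[\Coha(\Pbb^1(2^n))] = \Sym\!\left(\frac{1}{\mathbb{L}^{1/2}-\mathbb{L}^{-1/2}}\sum_\dimvectd \mathrm{DT}_\dimvectd\, t^\dimvectd\right)
\]
as an identity of $\Z\times\Lambda_n^+$-graded objects (up to the cohomological grading reversal noted in \Cref{Poincare series of Coha = Sym(DT}). Since all $w_i=2$, assumption~(8) holds by \Cref{RegRepsSatisfyAssumptions}, so the invariants live in $K_0(\MHS)[\mathbb{L}^{-1/2}]$ and the results of \cite{meinhardt2015donaldsonthomasinvariantsvsintersection} are available.

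The heart of the argument is computing the $\mathrm{DT}_\dimvectd$. Here I would invoke the identification from \cite{meinhardt2015donaldsonthomasinvariantsvsintersection} of $\mathrm{DT}_\dimvectd$ with (a half-Tate twist of) the intersection cohomology $\IC(\mathcal{M}_\dimvectd)$ of the coarse moduli space, together with its support statement that $\mathrm{DT}_\dimvectd=0$ unless a simple object of dimension $\dimvectd$ exists. By \Cref{stable reps of canonical algebra} the only such dimension vectors are $\mathrm{e}_k$, $\mathrm{f}_k$ for $k=1,\dots,n$, and $\dnull$, with $\mathcal{M}_{\mathrm{e}_k}=\mathcal{M}_{\mathrm{f}_k}=\pt$ and $\mathcal{M}_\dnull=\Pbb^1$. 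As all three are smooth, $\IC$ reduces to ordinary cohomology, so $\mathrm{DT}_{\mathrm{e}_k}$ and $\mathrm{DT}_{\mathrm{f}_k}$ are single Tate classes while $\mathrm{DT}_\dnull=[H^\bullet(\Pbb^1)]=1+\mathbb{L}$, each up to the standard normalization.

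Finally I would substitute these values. The prefactor $\tfrac{1}{\mathbb{L}^{1/2}-\mathbb{L}^{-1/2}}$ is, up to a half-twist, the class of $H^\bullet(B\mathbb{G}_m)=\Q[z]$ with $\deg z=(2,0)$, and $\Sym$ turns the finite sum into a product of symmetric algebras, contributing one tensor factor $\Q[z]$ per DT-class. This produces $\Sym(V\otimes\Q[z])$ where $V$ has a one-dimensional summand in each dimension degree $\mathrm{e}_k$ and $\mathrm{f}_k$ (from $\mathrm{DT}_{\mathrm{e}_k}$, $\mathrm{DT}_{\mathrm{f}_k}$) and two summands in degree $\dnull$ (from $1$ and $\mathbb{L}$ in $\mathrm{DT}_\dnull$), exactly as stated. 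The super-parities are forced by the sign rule in $\Sym$: the summand in dimension $\dimvectd$ is odd or even according to the parity of $\langle\dimvectd,\dimvectd\rangle$, and a direct computation with \Cref{Euler Form symmetric iff w=2^n} gives $\langle\mathrm{e}_k,\mathrm{e}_k\rangle=\langle\mathrm{f}_k,\mathrm{f}_k\rangle=1$ (odd) and $\langle\dnull,\dnull\rangle=0$ (even), matching the superscripts $-$ and $+$ in $V$.

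I expect the main obstacle to be bookkeeping rather than conceptual: keeping the half-Tate twists $\mathbb{L}^{\pm1/2}$, the cohomological shifts $[1]$, $[0]$, $[2]$, and the compact-support versus ordinary grading reversal mutually consistent so that the computed class matches $V$ on the nose, and citing the $\IC$-identification of \cite{meinhardt2015donaldsonthomasinvariantsvsintersection} in a form that simultaneously delivers the vanishing of $\mathrm{DT}_\dimvectd$ outside $\{\mathrm{e}_k,\mathrm{f}_k,\dnull\}$. Once this is done, comparison with \Cref{Ppn Poincare series} shows that $\Pp_n$ and $\Coha(\Pbb^1(2^n))$ have the same graded dimensions, which is what makes the surjection of \Cref{PpnToCohaSurj} an isomorphism.
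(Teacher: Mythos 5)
Your proposal is correct and follows essentially the same route as the paper's proof: both extract the class of $\Coha(\Pbb^1(2^n))$ from the defining equation of the DT-invariants, identify $\mathrm{DT}_\dimvectd$ with (compactly supported) intersection cohomology of $\mathcal{M}_\dimvectd$ via \cite[Theorem~5.6]{meinhardt2015donaldsonthomasinvariantsvsintersection}, use \Cref{stable reps of canonical algebra} to see that only $\mathrm{e}_k$, $\mathrm{f}_k$, $\dnull$ contribute with $\mathcal{M}_{\mathrm{e}_k}=\mathcal{M}_{\mathrm{f}_k}=\pt$, $\mathcal{M}_\dnull\cong\Pbb^1$, and then use purity from \Cref{Affine stratification C(2^n)} together with \Cref{Poincare series of Coha = Sym(DT} to convert the $K_0(\MHS)$-identity into the stated graded-vector-space isomorphism. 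Your explicit parity check via \Cref{Euler Form symmetric iff w=2^n} is a correct elaboration of a step the paper leaves implicit.
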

\begin{proof}
	We write~$\mathcal{R}^\mathrm{Hdg} \coloneqq K_0(\MHS)[\mathbb{L}^{-1/2}, \,  (\mathbb{L}^N-1)^{-1} : N \in \Z_{\geq 1}]$. 
	We have by definition of DT-invariants for categories of homological dimension one (c.f.\ \Cref{Def:DT}) that
	\[
	\sum_{\dimvectd\in \Lambda_n^+} [ \Mf_{\dimvectd} ]^\mathrm{vir}  t^\dimvectd = \Sym\left(	\frac{1}{\mathbb{L}^{1/2} - \mathbb{L}^{ - 1/2}} \sum_{\dimvectd} \mathrm{DT}_{\dimvectd} t^\dimvectd \right) 
	\]
	as elements in~$\mathcal{R}^\mathrm{Hdg} [\![t_i : i \in Q_0 ]\!]$, where
	\[
	[\Mf_{\dimvectd}]^\mathrm{vir} = (-\mathbb{L}^{1/2})^{ - \dim \Mf_{\dimvectd}} [\Mf_{\dimvectd}] = (-\mathbb{L}^{1/2})^{\langle \dimvectd , \dimvectd \rangle} [\Mf_{\dimvectd}] \in \mathcal{R}^{\mathrm{Hdg}}. 
	\]
	By \cite[Theorem~5.6]{meinhardt2015donaldsonthomasinvariantsvsintersection}, the DT-invariants satisfy 
	\[
	\mathrm{DT}_{\dimvectd} = \begin{cases}
		\IC_c (\mathcal{M}_{\dimvectd} ;  \Q), & \text{if~$R_{\dimvectd}^\simp \neq \emptyset$}, \\
		0, & \text{otherwise}
	\end{cases}
	\]
	as elements of~$\mathcal{R}^{\mathrm{Hdg}}$. 
	By \Cref{stable reps of canonical algebra},~$\mathrm{e}_k$,~$\mathrm{f}_k$, and~$\dnull$ are the only dimension vectors with simple objects, and we have~$\mathcal{M}_{\mathrm{e}_k} = \mathcal{M}_{\mathrm{f}_k} = \pt$ and~$\mathcal{M}_\dnull \cong \Pbb^1$.
	These spaces are smooth projective and so compactly supported intersection homology coincides with usual cohomology.
	We therefore find
	\[
	\sum_{\dimvectd\in \Lambda_n^+} [ \Mf_{\dimvectd}]^\mathrm{vir} t^\dimvectd = \Sym\left(	\frac{1}{\mathbb{L}^{1/2} - \mathbb{L}^{ - 1/2}}   \left(\sum_{k = 1}^n \left(t^{\mathrm{e}_k} + t^{\mathrm{f}_k} \right) +  ( \mathbb{L}^{-1/2} + \mathbb{L}^{1/2})t^\dnull\right) \right).
	\]
	In a last step, we use purity of the cohomology of~$\Mf_{\dimvectd} = [R_{\dimvectd} / G_{\dimvectd}]$ (see \Cref{Purity of cohomology if affine paving} and \Cref{Affine stratification C(2^n)}) to deduce from \Cref{Poincare series of Coha = Sym(DT} that we can recover the Poincaré series of the CoHA from the class~$\sum_{\dimvectd\in \Lambda_n^+} [ \Mf_\dimvectd ]^\mathrm{vir}  t^\dimvectd \in \mathcal{R}^\mathrm{Hdg} [\![t_i : i \in Q_0 ]\!]$ and that we have
	\[
	P_{q,t}(\Coha(\Pbb^1(2 ^n))) = \Sym\left( \frac{1}{1-q^2}\left( \sum_{k=1}^n ( (-q)t^{\mathrm{e}_k} + (-q) t^{\mathrm{f}_k} ) + (1 + q^2) t^\dnull \right)\right). \qedhere
	\]
\end{proof}

\begin{rk}
	It should be possible to circumvent the use of \cite{meinhardt2015donaldsonthomasinvariantsvsintersection} and DT-invariants entirely by calculating the Poincaré series directly from the stratification of the stacks~$\Mf_{\dimvectd}$ from the proof of \Cref{Affine stratification C(2^n)}.
	This has been done for the~$n = 0$ case in \cite[Section~10.2]{FRChowHa}.
\end{rk}

\begin{proof}[Proof of \Cref{CohaWPL(2^n)=Ppn}]
	By \Cref{Ppn Poincare series} and \Cref{Coha Poincare series} the source and target of the surjective algebra homomorphism~$\Pp_n \to \Coha(\Pbb^1(2^n))$ have the same graded dimension.
	It follows that the map is an isomorphism.
\end{proof}

	

	\addcontentsline{toc}{section}{References}
	\printbibliography

%
%
%
%

\end{document}